\documentclass[11pt]{amsart}

\usepackage[T1]{fontenc}
\usepackage{lmodern}
\usepackage{amssymb, amsmath}
\allowdisplaybreaks
\usepackage[all]{xy}
\usepackage[inline]{enumitem}
\usepackage{hyphenat}
\usepackage{xcolor}
\usepackage{mathtools}
\usepackage[labelformat=empty]{caption}
\usepackage[left=3cm, right=3cm, top=2.8cm, bottom=2.8cm]{geometry}
\usepackage[colorlinks,linkcolor=blue,citecolor=blue,urlcolor=teal]{hyperref}
\usepackage{aliascnt}
\usepackage[nameinlink,noabbrev]{cleveref}

\usepackage{tikz-cd}
\tikzset{no body/.style={/tikz/dash pattern=on 0 off 1mm}}
\newcommand{\beq}{\begin{equation}}
\newcommand{\eeq}{\end{equation}}

\newcommand{\F}{\mathbb{F}}

\newcommand{\cA}{\mathcal{A}}
\newcommand{\cB}{\mathcal{B}}
\newcommand{\cE}{\mathcal{E}}

\newcommand{\cO}{\mathcal{O}}

\newcommand{\cC}{\mathcal{C}}
\newcommand{\cD}{\mathcal{D}}

\newcommand{\cR}{\mathcal{R}}

\newcommand{\cN}{\mathcal{N}}

\newcommand{\Cart}{\operatorname{Cart}}
\newcommand{\Hom}{\operatorname{Hom}}

\newcommand{\id}{\operatorname{id}}

\newcommand{\Map}{\operatorname{Map}}
\newcommand{\map}{\operatorname{map}}

\newcommand{\Mod}{\operatorname{Mod}}
\newcommand{\cMod}{\mathcal{M}\mathrm{od}}

\newcommand{\op}{{\operatorname{op}}}

\renewcommand{\lim}{\operatornamewithlimits{\rm{lim}}}
\newcommand{\colim}{\operatornamewithlimits{\rm{colim}}}

\newcommand{\QCoh}{\operatorname{{QCoh}}}
\newcommand{\Coh}{\operatorname{{Coh}}}

\newcommand{\Spec}{\operatorname{Spec}}

\newcommand{\Fil}{{\operatorname{Fil}}}

\renewcommand{\phi}{{\varphi}}

\newcommand{\Cat}{\mathrm{Cat}}

\renewcommand{\Pr}{\mathrm{Pr}}

\newcommand{\Aff}{\mathrm{Aff}}

\newcommand{\Ch}{\mathrm{Ch}}

\newcommand{\CAlg}{\mathrm{CAlg}}

\DeclareMathOperator{\Fun}{\mathrm{Fun}}

\newcommand{\LEq}{\mathrm{LEq}}

\newcommand{\ev}{\mathrm{ev}}

\newcommand{\cofib}{\operatorname{cofib}}

\newcommand{\Pro}{\operatorname{Pro}}
\newcommand{\Ind}{\operatorname{Ind}}

\newcommand{\Catbig}{\widehat{\mathrm{Cat}}}

\newcommand{\shHom}{\mathcal Hom}
\newcommand{\RHom}{\mathrm R\mathcal Hom}
\newcommand{\Crys}{\operatorname{Crys}}
\newcommand{\QCrys}{\operatorname{QCrys}}
\newcommand{\Nil}{\operatorname{Nil}}
\newcommand{\LNil}{\operatorname{LNil}}
\newcommand{\Perf}{\operatorname{Perf}}
\newcommand{\Frob}{\operatorname{Frob}}
\newcommand{\Ndg}{\mathrm N_{\mathrm{dg}}}
\newcommand{\fib}{\operatorname{fib}}
\newcommand{\Dqc}{\mathcal D_{\mathrm{qc}}}
\newcommand{\Dbc}{\mathcal D^b_{\mathrm{coh}}}
\newcommand{\Db}{\mathcal D^b}
\newcommand{\KF}{\mathcal K_F}
\newcommand{\KC}{\mathcal K_C}
\newcommand{\Perv}{\operatorname{Perv}}

\theoremstyle{plain}
\newtheorem{introthm}{Theorem}

\crefname{prop}{Proposition}{Propositions}
\Crefname{prop}{Proposition}{Propositions}

\newaliascnt{proposition}{prop}
\newtheorem{proposition}[proposition]{Proposition}
\aliascntresetthe{proposition}
\crefname{proposition}{Proposition}{Propositions}
\Crefname{proposition}{Proposition}{Propositions}

\newaliascnt{lemma}{prop}
\newtheorem{lemma}[lemma]{Lemma}
\aliascntresetthe{lemma}
\crefname{lemma}{Lemma}{Lemmas}
\Crefname{lemma}{Lemma}{Lemmas}

\newaliascnt{Sublemma}{prop}

\aliascntresetthe{Sublemma}
\crefname{Sublemma}{Sublemma}{Sublemmas}
\Crefname{Sublemma}{Sublemma}{Sublemmas}

\newaliascnt{corollary}{prop}
\newtheorem{corollary}[corollary]{Corollary}
\aliascntresetthe{corollary}
\crefname{corollary}{Corollary}{Corollaries}
\Crefname{corollary}{Corollary}{Corollaries}

\newaliascnt{theorem}{prop}
\newtheorem{theorem}[theorem]{Theorem}
\aliascntresetthe{theorem}
\crefname{theorem}{Theorem}{Theorems}
\Crefname{theorem}{Theorem}{Theorems}

\theoremstyle{definition}
\newaliascnt{definition}{prop}
\newtheorem{definition}[definition]{Definition}
\aliascntresetthe{definition}
\crefname{definition}{Definition}{Definitions}
\Crefname{definition}{Definition}{Definitions}

\newaliascnt{construction}{prop}
\newtheorem{construction}[construction]{Construction}
\aliascntresetthe{construction}
\crefname{construction}{Construction}{Constructions}
\Crefname{construction}{Construction}{Constructions}

\newaliascnt{notation}{prop}

\aliascntresetthe{notation}

\newaliascnt{example}{prop}

\aliascntresetthe{example}

\newaliascnt{rmk}{prop}
\newtheorem{rmk}[rmk]{Remark}
\aliascntresetthe{rmk}
\crefname{rmk}{Remark}{Remarks}
\Crefname{rmk}{Remark}{Remarks}

\newaliascnt{remark}{prop}
\newtheorem{remark}[remark]{Remark}
\aliascntresetthe{remark}
\crefname{remark}{Remark}{Remarks}
\Crefname{remark}{Remark}{Remarks}

\Crefname{introthm}{Theorem}{Theorems}

\numberwithin{equation}{section}
\setlist[enumerate]{label=\textup{(\arabic*)},leftmargin=*}
\setlist[enumerate,2]{label=\textup{(\alph*)},leftmargin=*}

\title{Frobenius--Cartier duality via lax equalizers}
\author{Fei Ren}
\address{Bergische Universit\"at Wuppertal, Gau\ss str.\ 20, D-42119 Wuppertal, Germany}
\email{renfei@uni-wuppertal.de}
\date{7 September 2026}
\subjclass[2020]{14G17, 13A35, 18N60, 14F10}
\keywords{Frobenius modules, Cartier modules, crystals, Grothendieck duality, stable infinity-categories}
\begin{document}
\begin{abstract}
We prove a duality theorem for lax equalizers of stable $\infty$-categories, and we apply it to Frobenius and Cartier modules on a Noetherian $F$-finite scheme $X$ of characteristic $p$ with a unit dualizing complex.
When $X$ has affine diagonal, comparison with the derived categories of coherent modules and passage to homotopy categories recovers Baudin's derived Hom duality.
The duality of coherent modules exchanges the standard $t$-structure on one side with the perverse $t$-structure determined by the dualizing complex on the other.
In particular, it identifies the abelian category of coherent Cartier modules with a perverse heart of coherent Frobenius modules, and conversely.
\end{abstract}
\maketitle
\tableofcontents

\section{Introduction}
Fix a prime $p$. For an $\F_p$-scheme $X$, write $F:X\to X$ for absolute Frobenius.
\begin{definition}
A \textit{Frobenius module} is a quasi-coherent sheaf $M$ with a map $\tau:M\to F_*M$; a \textit{Cartier module} is a quasi-coherent sheaf $N$ with a map $\kappa:F_*N\to N$.
We write $\Coh^F_X$ and $\Coh^C_X$ for the abelian categories whose underlying sheaves are coherent.
\end{definition}
Frobenius modules and Cartier modules have been studied for a long time on each side separately.
Lyubeznik's unit $F$-modules \cite{Lyu97}, Emerton--Kisin's Riemann--Hilbert correspondence for unit $F$-crystals \cite{EK}, and B\"ockle--Pink's crystals \cite{BP} treat the Frobenius side.
Blickle--B\"ockle developed Cartier modules and crystals and proved strong finiteness results \cite[Theorem 4.6; Corollary 4.7]{BB}, with further derived and functorial developments in \cite{BB13}.
On smooth schemes the two sides are linked by Grothendieck--Serre duality, or by twisting by a dualizing sheaf: see \cite[Theorem 5.9; Theorem 5.15]{BB} and, for unit modules on regular rings, \cite[Theorem 2.5]{Ma14}.
Schedlmeier and Ohkawa extended Riemann--Hilbert-type equivalences with constructible or perverse $\F_p$-sheaves to many singular embeddable schemes \cite[Theorem 4.12; Corollary 4.14]{Sch18}, \cite[Theorem 4.12; Theorem 5.5]{Ohk18}.
Bhatt--Lurie later gave a broad Riemann--Hilbert functor into Frobenius modules \cite[Theorem 1.0.2; Theorem 12.1.5]{BL}.
Baudin then constructed an explicit derived Hom duality between Frobenius and Cartier complexes using a \textit{unit} dualizing complex \cite[Theorem 4.2.7]{Bau}, passed it to crystals \cite[Theorem 4.3.5]{Bau}, and recovered a duality with perverse constructible sheaves without an embedding hypothesis \cite[Theorem 5.2.7]{Bau}.
Here ``unit'' refers to the adjoint Cartier map $\omega\simeq F^!\omega$, rather than to invertibility of $F_*\omega\to\omega$ \cite[Definition 4.2.1; Remark 4.2.2]{Bau}.

The aim of this article is to study the duality phenomenon between Frobenius and Cartier modules via lax equalizers.
Write $\Dbc(X)$ for the full subcategory of the Zariski limit $\Dqc(X)$ of \Cref{def:dqc} on objects whose affine restrictions lie in $\mathcal D^b_{\mathrm{coh}}(R)$.
Finite duality and a chosen equivalence $u:\omega\simeq F^!\omega$ give a natural equivalence
\begin{equation}\label{eq:intro-gamma}
 \gamma:F_*D_\omega\simeq
                  D_\omega F_*^{\op},\qquad
 D_\omega=\RHom(-,\omega).
\end{equation}
This is an equivalence in a functor $\infty$-category, constructed in \Cref{prop:finite-duality}.
Using only the underlying dualizing equivalence $D_\omega=\RHom(-,\omega)$ on $\Dbc(X)$ and the natural equivalence $\gamma$, one obtains an equivalence of lax equalizers $\Frob(\Dbc(X),F_*)^{\op}\simeq\Cart(\Dbc(X),F_*)$ on $\Dbc(X)$.

\begin{introthm}[{\Cref{thm:intrinsic}}]\label{thm:intro-leq}
Let $X$ be Noetherian and $F$-finite, with a dualizing complex $\omega$ and a chosen equivalence $u:\omega\simeq F^!\omega$.
There is an exact equivalence
\begin{equation}\label{eq:intro-leq}
 \Frob(\Dbc(X),F_*)^{\op}\simeq\Cart(\Dbc(X),F_*).
\end{equation}
The Cartier map of the image of $(M,\tau)$ is
\[
 F_*D_\omega M\xrightarrow{\gamma_M}
 D_\omega(F_*M)\xrightarrow{D_\omega(\tau)}
 D_\omega M.
\]
On objects, a quasi-inverse sends $(N,\kappa:F_*N\to N)$ to $\bigl(D_\omega N,\ \sigma\bigr)$, where the Frobenius map of the image is
\[
 \sigma:
 D_\omega N
 \xrightarrow{D_\omega(\kappa)}
 D_\omega(F_*N)
 \xrightarrow{\gamma_N^{-1}}
 F_*D_\omega N.
\]
\end{introthm}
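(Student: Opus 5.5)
The proof will be formal. I will first establish an abstract duality statement for lax equalizers of stable $\infty$-categories, and then specialize it to $\Dbc(X)$ using the equivalence $\gamma$ of \eqref{eq:intro-gamma} from \Cref{prop:finite-duality}.

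\emph{The abstract statement.} Let $\cC,\cD$ be stable $\infty$-categories with exact endofunctors $G:\cC\to\cC$ and $H:\cD\to\cD$. Suppose we are given an exact equivalence $D:\cC^{\op}\simeq\cD$ and a natural equivalence $\gamma:H D\simeq D G^{\op}$ of functors $\cC^{\op}\to\cD$. The claim is then
\[
\Frob(\cC,G)^{\op}\simeq\Cart(\cD,H).
\]
To prove it, I will use the description of lax equalizers as pullbacks. We have $\Frob(\cC,G)=\LEq(\id,G)$, which is the pullback of $\Fun(\Delta^1,\cC)\to\cC\times\cC$ along $(\id,G):\cC\to\cC\times\cC$. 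Since $(-)^{\op}$ is an automorphism of $\Cat_\infty$, it preserves pullbacks. Moreover $\Fun(\Delta^1,\cC)^{\op}\simeq\Fun(\Delta^1,\cC^{\op})$, and this identification swaps source and target. Hence $\Frob(\cC,G)^{\op}$ is the pullback of $\Fun(\Delta^1,\cC^{\op})\to\cC^{\op}\times\cC^{\op}$ along $(G^{\op},\id)$. In other words, it is $\LEq(G^{\op},\id)$ on $\cC^{\op}$, which is the Cartier-type lax equalizer for $G^{\op}$.

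\emph{Transporting along $D$.} Transport along $D$ gives an equivalence from this pullback to the one built from $\Fun(\Delta^1,\cD)\to\cD\times\cD$ and the functor $(DG^{\op},D)$. I will then use $\gamma$ to supply a natural equivalence between $(DG^{\op},D)$ and $(HD,D)$. Pullbacks along equivalent functors are equivalent, and the pullback along $(HD,D)$ is identified with $\Cart(\cD,H)=\LEq(H,\id)$ via $D$. Exactness is automatic, because lax equalizers of exact functors are stable and the projections detect exactness. Tracing an object $(M,\tau)$ through the construction gives the Cartier map $D(\tau)\circ\gamma_M$. Running the inverse equivalences backwards gives $\gamma_N^{-1}\circ D(\kappa)$. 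These are the formulas stated in the theorem.

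\emph{Specialization and the expected obstacle.} Finally I specialize to $\cC=\cD=\Dbc(X)$, $G=H=F_*$ and $D=D_\omega$. Here I need three inputs. First, $F_*$ preserves $\Dbc(X)$, which follows from $F$-finiteness. Second, $D_\omega$ is an anti-autoequivalence of $\Dbc(X)$; this is the defining property of a dualizing complex. Third, $\gamma$ is the equivalence of \Cref{prop:finite-duality}. The main obstacle I expect is making the "change of pullback along a natural equivalence" step fully coherent. This requires showing that the equivalence $\gamma$ of functors induces an equivalence of pullbacks of $\infty$-categories, rather than just an objectwise identification. I would handle it as follows. The map $\Fun(\Delta^1,\cD)\to\cD\times\cD$ is a categorical fibration, being an inner fibration with isofibration property. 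So the pullback is a homotopy pullback, and it is invariant under replacing $(DG^{\op},D)$ by the equivalent functor $(HD,D)$. The equivalence $\gamma$ is exactly the datum of an edge in $\Fun(\cC^{\op},\cD\times\cD)$ connecting the two.
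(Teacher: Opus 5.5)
Your construction of the equivalence and of the Cartier structure map $D_\omega(\tau)\circ\gamma_M$ follows the paper's argument, which is \Cref{prop:reversal}. Opposition preserves the defining pullback and swaps $\ev_0,\ev_1$ on arrow categories, and the cospan is then transported along $D_\omega$ and $\gamma$. Your handling of coherence via the categorical fibration $\Fun(\Delta^1,\cD)\to\cD\times\cD$ is also fine.

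\textbf{The gap: the quasi-inverse formula.} It does not follow by ``running the inverse backwards.'' In your abstract setting $\gamma_N^{-1}\circ D(\kappa)$ does not even typecheck, since $N$ lies in $\cD$ while $D$ and $\gamma$ are defined on $\cC^{\op}$. Running the inverse backwards produces an object with underlying $D^{-1}N$. After specializing and identifying $D_\omega^{-1}$ with $D_\omega^{\op}$ via the unit $\eta$ of \Cref{lem:grothendieck}, the Frobenius structure on $D_\omega N$ is the map $\tau$ with $D_\omega(\tau)\simeq\eta_N\circ\kappa\circ F_*(\eta_N)^{-1}\circ\gamma_{D_\omega N}^{-1}$. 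So it involves $\gamma$ at $D_\omega N$, not at $N$. Naturality of $\eta$ gives $D_\omega(\sigma)\simeq\eta_N\circ\kappa\circ\eta_{F_*N}^{-1}\circ D_\omega(\gamma_N)^{-1}$. Hence identifying $\tau$ with $\sigma$ requires
\[
 \gamma_{D_\omega N}\circ F_*(\eta_N)\simeq D_\omega(\gamma_N)\circ\eta_{F_*N}\colon F_*N\longrightarrow D_\omega(F_*D_\omega N).
\]

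This is not a formal consequence of $\gamma$ being a natural equivalence, and for an abstract pair $(D,\gamma)$ it can fail. Take $\Perf(k)$ with $G=H=(-)^{\oplus 2}$, $D$ the linear dual, and $\gamma$ induced by a symplectic form on $k^2$. Then the two sides differ by $-1$. The forward equivalence sends $(Dk,\gamma_k^{-1}\circ D(\kappa))$, for $\kappa=(1,0)\colon k^{\oplus 2}\to k$, to $(k,-\kappa)$. This is not equivalent to $(k,\kappa)$ when $\operatorname{char}k\neq 2$.

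In the case at hand the identity does hold. By \Cref{prop:trace-formula}(1), $\gamma$ is adjoint to pairing against $\kappa_\omega$ through the lax monoidal structure of $F_*$. So both sides are adjoint to the pairing $F_*N\otimes F_*D_\omega N\to F_*(N\otimes D_\omega N)\to F_*\omega\to\omega$, up to the symmetry isomorphism. That lax structure is symmetric, because $F_*$ is right adjoint to the symmetric monoidal $F^*$.

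A clean way to organize the fix:
\begin{enumerate}
\item The assignment $(N,\kappa)\mapsto(D_\omega N,\sigma)$ is itself a functor. It is given by \Cref{prop:reversal} applied to $\Cart(\Dbc(X),F_*)^{\op}\simeq\LEq(\id,F_*^{\op})$ with $D_\omega$ and $\gamma^{-1}$.
\item The displayed identity, as a homotopy natural in $N$, is exactly what lifts $\eta$ to an equivalence between the identity and the composite of this functor with $D_\omega^{\mathrm{LEq}}$.
\end{enumerate}
The paper's proof of this theorem likewise records only the forward formula. It returns to the inverse in \Cref{thm:bidual-leq} via $\eta$.
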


\begin{introthm}[{\Cref{thm:bidual-leq}}]\label{thm:intro-bidual}
In the situation of \Cref{thm:intro-leq}, write $D_\omega^{\mathrm{LEq}}$ for the equivalence \eqref{eq:intro-leq} and write $\eta$ for the unit of tensor-Hom $\mathrm{id}\simeq D_\omega\circ D_\omega^{\op}$ on $\Dbc(X)$.
There are canonical equivalences
\[
 (D_\omega^{\mathrm{LEq}})^{\op}\circ D_\omega^{\mathrm{LEq}}
 \simeq
 \mathrm{id}_{\Frob(\Dbc(X),F_*)}
 \qquad\text{and}\qquad
 D_\omega^{\mathrm{LEq}}\circ (D_\omega^{\mathrm{LEq}})^{\op}
 \simeq
 \mathrm{id}_{\Cart(\Dbc(X),F_*)}
\]
in the respective functor $\infty$-categories, with underlying arrow $\eta$ on the Frobenius side.
A quasi-inverse of $D_\omega^{\mathrm{LEq}}$ is realized by reversing the lax equalizer a second time, using the mate of $\gamma^{-1}$ under $\eta$.
\end{introthm}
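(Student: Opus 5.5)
The plan is to work at the level of a general duality statement for lax equalizers and then specialize. Recall that $\Frob(\cC,G)=\LEq(\id,G)$ and $\Cart(\cC,G)=\LEq(G,\id)$. Both are pullbacks of $\Fun(\Delta^1,\cC)\to\cC\times\cC$ along $(\id,G)$, respectively $(G,\id)$. Suppose we are given an equivalence $D:\cC^{\op}\to\cC$ and a natural equivalence $\gamma:GD\simeq DG^{\op}$. Then the functoriality of lax equalizers in the triple (category, pair of endofunctors, comparison equivalences) produces the equivalence $D^{\LEq}:\Frob(\cC,G)^{\op}\to\Cart(\cC,G)$ of \Cref{thm:intro-leq}. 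Here we use that $\Fun(\Delta^1,\cC)^{\op}\simeq\Fun(\Delta^1,\cC^{\op})$ reverses arrows, which is how a Frobenius structure becomes a Cartier structure.

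The key steps, in order, are as follows.

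First, I will show that this construction is $2$-functorial. Given a second duality datum $(D',\gamma')$ going back from $\Cart$ to $\Frob$, the composite $D'^{\LEq}\circ D^{\LEq}$ is the functor induced by the composite datum $(D'D^{\op},\,\text{pasted }\gamma)$. A natural equivalence $\alpha:\id\simeq D'D^{\op}$ compatible with the pasted $2$-cell then induces an equivalence $\id\simeq D'^{\LEq}\circ D^{\LEq}$ of functors on the lax equalizer. Concretely, I will realize $\LEq$ as a pullback in $\Catbig$. A morphism of pullback diagrams, together with a homotopy between cospans, gives a natural transformation between the induced functors. Since $\alpha$ is objectwise an equivalence, and equivalences in a lax equalizer are detected on underlying objects (the forgetful functor is conservative), the resulting transformation is an equivalence.

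Second, I will choose the reverse datum. It is $D'=D_\omega$, viewed now as a functor $\cC\to\cC^{\op}$, together with the mate $\gamma'$ of $\gamma^{-1}$ under $\eta$: explicitly,
\[
D_\omega F_*\xrightarrow{\eta}D_\omega F_*D_\omega D_\omega\xrightarrow{\gamma^{-1}}D_\omega D_\omega F_* D_\omega\xrightarrow{\eta^{-1}}F_*D_\omega .
\]
This matches the formula for $\sigma$ in \Cref{thm:intro-leq}, so $D_\omega'^{\LEq}$ is the stated quasi-inverse.

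Third, I will verify the compatibility condition in the first step for $\alpha=\eta$. This means checking that the pasting of $\gamma$ and $\gamma'$ agrees with the $2$-cell $F_*\eta\simeq\eta F_*$ furnished by naturality of $\eta$. Since $\gamma'$ is built as a mate, this should reduce to the triangle identities for the self-adjunction $D_\omega\dashv D_\omega^{\op}$ whose unit and counit are both $\eta$. The Cartier-side equivalence then follows either by the symmetric argument, or formally: an equivalence with a left inverse has that inverse as a two-sided inverse. For a canonical transformation, however, I would instead run the symmetric mate argument with $\eta$ on the Cartier side.

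The main obstacle is the third step, carried out coherently in $\infty$-categories. One needs the equivalence $\gamma'\circ\gamma\simeq(\text{naturality of }\eta)$ as a $2$-cell, not merely objectwise. A further point is that $\eta$ for $D_\omega$ is symmetric, in the sense that $D_\omega\eta\circ\eta_{D_\omega}=\id$. I would first try to encode $D_\omega$ as an adjoint equivalence in the $(\infty,2)$-category $\Catbig$. Mates are then functorial and the triangle identities hold coherently, so the required identity becomes the standard fact that taking mates is compatible with pasting.
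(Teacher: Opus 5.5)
Your proposal follows essentially the same route as the paper: two applications of \Cref{prop:reversal}, the second with the mate $\gamma^\vee$ of $\gamma^{-1}$ under $\eta$, and then identifying the composite with the identity using $\eta$, naturality, and the triangle identity $D_\omega(\eta_N)\circ\eta_{D_\omega N}=\mathrm{id}$; the paper's proof is no more explicit than yours about the $\infty$-categorical coherence you flag as the main obstacle. One caveat on your aside that the mate ``matches the formula for $\sigma$'': the second reversal gives the Frobenius map $\gamma^\vee_N\circ D_\omega(\kappa)$, which equals $\sigma=\gamma^{-1}_N\circ D_\omega(\kappa)$ only if $\gamma^\vee\simeq\gamma^{-1}$, and this is not formal, since for an arbitrary $\gamma$ the mate is a transpose of $\gamma^{-1}$ rather than $\gamma^{-1}$ itself. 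It does hold here, because $\gamma$ is induced by the symmetric pairing $F_*D_\omega M\otimes F_*M\to\omega$ through $\kappa_\omega$, but that takes a separate check, and the statement you are proving does not need it.
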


\begin{remark}\label{rem:intro-no-baudin}
The results of Theorems \ref{thm:intro-leq} and \ref{thm:intro-bidual} are independent of Baudin’s work \cite{Bau}. Baudin works exclusively with triangulated categories of Frobenius and Cartier complexes, and therefore has no direct analogue of the lax-equalizer statements. When the diagonal is affine (for instance when $X$ is separated; see also Remark \ref{rem:affine-diagonal}), Theorem \ref{thm:comparison} identifies \eqref{eq:intro-leq} with the module duality of Theorem \ref{thm:modules}. Passing to homotopy categories then recovers Baudin’s derived Hom duality \cite[Theorem 4.2.7]{Bau}. Finally, the double-dual equivalences of Theorem \ref{thm:intro-bidual} recover the involutivity of Baudin’s Hom duality.
\end{remark}

We next enhance Baudin's Hom duality on module categories, and pass to crystals.
Let $\Db$ be the bounded derived $\infty$-category of \Cref{def:bounded}.
Let $\cN_F(X)\subset\Db(\Coh^F_X)$ and $\cN_C(X)\subset\Db(\Coh^C_X)$ be the full subcategories of complexes with nilpotent cohomology.

\begin{introthm}[{\Cref{thm:modules}}]\label{thm:intro}
Let $X$ be Noetherian and $F$-finite, and let $\omega$ be a unit dualizing complex.
There is an exact equivalence
\[
 \mathbb D_\omega:\Db(\Coh^F_X)^{\op}\xrightarrow{\ \simeq\ }\Db(\Coh^C_X)
\]
whose homotopy functor agrees with Baudin's derived Hom duality \cite[Theorem 4.2.7]{Bau}.
\end{introthm}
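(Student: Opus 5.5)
The plan is to build $\mathbb D_\omega$ by transporting the lax-equalizer duality of \Cref{thm:intro-leq} through a comparison between module categories and lax equalizers. The key steps, in order, are as follows.

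\textbf{Step 1.} I would first construct comparison functors
\[
 \Phi_F:\Db(\Coh^F_X)\to\Frob(\Dbc(X),F_*),\qquad
 \Phi_C:\Db(\Coh^C_X)\to\Cart(\Dbc(X),F_*).
\]
Quasi-coherent Frobenius modules are modules over the noncommutative ring $\cO_X[F]$, and likewise Cartier modules over $\cO_X[C]$. Since $F_*$ is exact and $F$ is affine, a complex of Frobenius modules $(M,\tau)$ yields a map $M\to F_*M$ in $\Dqc(X)$, functorially at the $\infty$-level. To do this properly I would use the universal property of the lax equalizer together with the fact that $\Db(\Coh^F_X)$ is the $\infty$-categorical localization of bounded complexes. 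Coherence is preserved because $X$ is $F$-finite, so the values lie in $\Dbc(X)$.

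\textbf{Step 2.} Next I would compose $\Phi_C$ with the equivalence of \Cref{thm:intro-leq}. I cannot simply set $\mathbb D_\omega=\Phi_C^{-1}\circ D^{\LEq}_\omega\circ\Phi_F^{\op}$, since the $\Phi$'s need not be equivalences: Hom in the lax equalizer is a fiber of $\map(M,N)\to\map(M,F_*N)$, while Ext of Frobenius modules is computed by a similar equalizer only after suitable resolutions. Instead, I would define
\[
 \mathbb D_\omega=\RHom_{\cO_X[F]}(-,\omega),
\]
with the Cartier structure $F_*\RHom(M,\omega)\simeq\RHom(F_*M,\omega)\xrightarrow{\tau^*}\RHom(M,\omega)$. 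Here the first equivalence is $\gamma$ of \eqref{eq:intro-gamma}, which uses that $\omega$ is unit.

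\textbf{Step 3.} I would then show $\Phi_C\circ\mathbb D_\omega\simeq D^{\LEq}_\omega\circ\Phi_F^{\op}$. This is a diagram chase once $\mathbb D_\omega$ is realized at the level of complexes. It requires choosing a bounded complex of injective quasi-coherent $\cO_X$-modules representing $\omega$, with a strict model of the unit structure $F_*I\to I$. On a non-separated $X$ this would need care, but Zariski descent lets one glue.

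\textbf{Step 4.} To see that $\mathbb D_\omega$ is an equivalence, I would build the analogous functor in the other direction and show that the biduality map is an equivalence. Forgetting to $\Dbc(X)$ is conservative and carries biduality to the classical $\eta$. By \Cref{thm:intro-bidual} the compatibilities hold, so the composites are equivalent to the identity. Exactness is automatic, since $\mathbb D_\omega$ is a composite of exact functors.

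\textbf{Step 5.} Finally, to see that the homotopy functor agrees with Baudin's, note that both are given on objects by $\RHom(-,\omega)$ with Cartier structure induced by the adjoint of $u$. On morphisms they agree by construction at the level of complexes.

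The main obstacle is Step 3: making $\RHom(-,\omega)$ with its Cartier structure a genuine $\infty$-functor on $\Db(\Coh^F_X)$, since this needs a strictly Frobenius-compatible injective model of $\omega$. My first attempt would be to take an injective resolution of the Cartier module $(\omega,F_*\omega\to\omega)$ in quasi-coherent Cartier modules. Such a resolution is injective as $\cO_X$-modules because $F_*$ has an exact left adjoint-type property on $F$-finite schemes.
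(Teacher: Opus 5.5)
\textbf{The gap is in Step 4.} Write $\mathbb D'$ for your functor in the other direction. \Cref{thm:intro-bidual} is a statement about the lax equalizers $\Frob(\Dbc(X),F_*)$ and $\Cart(\Dbc(X),F_*)$, not about $\Db(\Coh^F_X)$ and $\Db(\Coh^C_X)$. Combined with your Step 3 compatibility, it only gives equivalences $\Phi_F\bigl(\mathbb D'\mathbb D_\omega^{\op}M\bigr)\simeq\Phi_F(M)$ and the Cartier analogue.

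To conclude $\mathbb D'\circ\mathbb D_\omega^{\op}\simeq\mathrm{id}$, you need a natural transformation $M\to\mathbb D'\mathbb D_\omega^{\op}M$ \emph{in} $\Db(\Coh^F_X)$. Conservativity of the forgetful functor to $\Dbc(X)$ can only be applied to a morphism that already exists there. A morphism or equivalence between $\Phi_F$-images, such as the one supplied by \Cref{thm:intro-bidual}, lifts to $\Db(\Coh^F_X)$ only if $\Phi_F$ is fully faithful. That is exactly what you correctly decline to assume in Step 2. The paper obtains it only under an affine-diagonal hypothesis, via Mattis--Wei\ss\ in \Cref{thm:comparison}, whereas the statement is for arbitrary Noetherian $F$-finite $X$.

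Full faithfulness of $\mathbb D_\omega$ runs into the same problem. After applying $\Phi_C$, your square only identifies $\mathbb D_\omega$ on mapping spaces with $\Phi_F$ on mapping spaces, which proves nothing without full faithfulness of both comparison functors. So the lax-equalizer detour in Steps 1 and 3 does not contribute to proving the equivalence.

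\textbf{The paper's route.} It needs neither a second functor nor biduality:
\begin{enumerate}
\item It constructs $\mathbb D_\omega=J_C^{-1}H_I$ from the Hom-complex model, using the localization universal property. The terms of $H_I(M)$ are quasi-coherent, not coherent, so the realization equivalence $J_C$ of \Cref{prop:realization} is needed to land in $\Db(\Coh^C_X)$.
\item Exactness holds because Hom carries cones to shifted cones.
\item It identifies the homotopy functor with \cite[Definition 4.2.5]{Bau} and imports from \cite[Theorem 4.2.7]{Bau} that this is an equivalence.
\item It concludes by \Cref{lem:exact-detection}.
\end{enumerate}

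\textbf{If you keep a biduality argument.} You must build the evaluation map at the level of Frobenius and Cartier complexes, as in \cite[Lemma 4.2.6]{Bau}, and then test it on underlying $\cO_X$-complexes. This requires a model $I$ of $\omega$ with a termwise isomorphism $I\cong F^\flat I$: the naive Frobenius structure on $\Hom^\bullet(N,I)$ for a Cartier complex $N$ inverts $\gamma$ on the nose. An injective resolution in $\QCoh^C_X$ does have injective underlying terms, because the forgetful functor has the exact left adjoint $P\mapsto\bigoplus_{e\ge0}F^e_*P$. But such a resolution is unit only up to quasi-isomorphism. One resolves instead in $\Fil\Coh^{C,\mathrm{unit}}_X$, and the resulting $I$ is in general only bounded below, not bounded as you assume in Step 3.
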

\begin{rmk}
We wish to stress here that  \Cref{thm:intro} is dependent on the work of Baudin: its proof passes through \Cref{lem:exact-detection} and ultimately reduces to a computation carried out in the triangulated setting, namely \cite[Theorem 4.2.7]{Bau}.
\end{rmk}

\begin{introthm}[{\Cref{thm:crystals}}]\label{thm:intro-crystals}
In the situation of \Cref{thm:intro}, the functor $\mathbb D_\omega$ induces an exact equivalence
\[
 \bigl(\Db(\Coh^F_X)/\cN_F(X)\bigr)^{\op}
       \simeq\Db(\Coh^C_X)/\cN_C(X)
\]
whose homotopy functor agrees with Baudin's duality of crystals \cite[Theorem 4.3.5]{Bau}.
There are exact equivalences
\begin{align*}
 \Db(\Coh^F_X)/\cN_F(X)
       &
       \simeq\Db(\Crys^F_X),\\
 \Db(\Coh^C_X)/\cN_C(X)
       &
       \simeq\Db(\Crys^C_X).
\end{align*}
\end{introthm}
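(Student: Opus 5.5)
The plan is to deduce \Cref{thm:intro-crystals} from \Cref{thm:intro} by checking that $\mathbb D_\omega$ exchanges the two nilpotent subcategories, and then to identify each Verdier quotient with the bounded derived category of crystals.

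\textbf{Step 1: the nilpotent subcategories match.} I would show that $\mathbb D_\omega$ carries $\cN_F(X)^{\op}$ onto $\cN_C(X)$.
\begin{enumerate}
\item Both $\cN_F(X)$ and $\cN_C(X)$ are thick subcategories. This holds because nilpotent coherent Frobenius (resp.\ Cartier) modules form a Serre subcategory of $\Coh^F_X$ (resp.\ $\Coh^C_X$).
\item Each of them is generated under finite extensions and shifts by its nilpotent objects sitting in a single degree, using the truncation triangles.
\item It therefore suffices to treat a single nilpotent $(M,\tau)$ in degree $0$, say $\tau^e=0$.
\item The Cartier structure on $D_\omega M$ is the composite $D_\omega(\tau)\circ\gamma_M$. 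Its $e$-th iterate is $D_\omega(\tau^e)$ composed with iterates of $\gamma$, which is naturality of $\gamma$ along the compatible iterates of $F_*$. Hence the $e$-th iterate vanishes.
\item Passing to cohomology sheaves: each $\mathcal H^i(D_\omega M)$ carries a Cartier structure whose $e$-fold iterate is zero, so it is nilpotent.
\item The symmetric argument, via the quasi-inverse and \Cref{thm:intro-bidual}, gives the reverse containment.
\end{enumerate}
Exact equivalences preserving thick subcategories descend to Verdier quotients. Using $(\mathcal C/\mathcal N)^{\op}\simeq\mathcal C^{\op}/\mathcal N^{\op}$, this yields the first equivalence.

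\textbf{Step 2: compatibility with Baudin's crystal duality.} The homotopy category of a Verdier quotient of stable $\infty$-categories is the triangulated Verdier quotient of the homotopy categories. Baudin's crystal duality \cite[Theorem 4.3.5]{Bau} is defined by descending his Hom duality along the same localization. Since the homotopy functor of $\mathbb D_\omega$ is his Hom duality by \Cref{thm:intro}, the two descended functors agree by the universal property of the triangulated quotient.

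\textbf{Step 3: comparison with $\Db(\Crys)$.} The abelian category of crystals is the Serre quotient of coherent modules by the nilpotent ones. I would argue as follows.
\begin{enumerate}
\item The exact quotient functor $\Coh^F_X\to\Crys^F_X$ induces an exact functor $\Db(\Coh^F_X)\to\Db(\Crys^F_X)$.
\item This functor kills $\cN_F(X)$, so it factors through the quotient $\Db(\Coh^F_X)/\cN_F(X)$.
\item To show the induced functor is an equivalence, I would use the standard criterion (Miyachi, or Keller's lemma) for $\Db(\mathcal A)/\Db_{\mathcal S}(\mathcal A)\simeq\Db(\mathcal A/\mathcal S)$ for a Serre subcategory $\mathcal S$. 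The criterion requires that every monomorphism $S\hookrightarrow A$ with $S\in\mathcal S$ be dominated by some $A\hookrightarrow A'$ for which the composite $S\to A'$ factors through an object of $\mathcal S$. Equivalently, $\mathcal S$ must be suitably coreflective up to extensions.
\item Equivalence at the triangulated level then lifts to the $\infty$-level, because conservativity and full faithfulness are detected on homotopy categories.
\end{enumerate}

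The main obstacle is verifying this effaceability condition for coherent Frobenius and Cartier modules. My first attempt would use the finiteness results of Blickle--Böckle \cite{BB}. A coherent Cartier module has a canonical maximal nilpotent submodule. Moreover, the descending chain of images $\kappa^e(F^e_*N)$ stabilizes, which should give the factorization needed for Cartier modules. For Frobenius modules I would use the analogous Böckle--Pink \cite{BP} result, in which nilpotent subobjects are bounded by a uniform exponent. Alternatively, I would try to transport the Cartier case across $\mathbb D_\omega$ using Step 1. That is not immediate, however, since the duality is not $t$-exact.
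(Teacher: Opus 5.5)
Steps 1 and 2 follow the paper's proof. You iterate the dual structure map on a nilpotent heart object, extend by the truncation argument of \Cref{lem:nil-heart-generation}, and descend along the Nikolaus--Scholze quotient. Two small corrections are needed. First, without an affine diagonal $\mathbb D_\omega$ is not identified with $D^{\mathrm{LEq}}_\omega$. So the formula $D_\omega(\tau)\circ\gamma_M$ should be read through the chain-level trace formula of \Cref{prop:trace-formula}(3) on $H_I(M)$, whose $e$-th iterate is $s\mapsto\kappa_I^e\circ F^e_*s\circ\tau^e$; this is what the paper iterates. Second, the reverse inclusion should not go through \Cref{thm:bidual-leq}, which concerns $D^{\mathrm{LEq}}_\omega$. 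Use instead Baudin's inverse pairing, as the paper does, or apply the same naturality computation to the Frobenius structure of a preimage, which writes its $e$-th iterate through $D_\omega(\kappa^e)$.

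Step 3 takes a different route from the paper, and as written you leave it open because of an obstacle that is not there. The criterion you quote is vacuous: any composite $S\to A'$ already factors through $S\in\mathcal S$. The genuine Keller-type effaceability conditions govern a different comparison, $\Db(\mathcal S)\to\Db_{\mathcal S}(\mathcal A)$. That comparison never arises here, because $\cN_F(X)$ is by definition $\Db_{\Nil^F_X}(\Coh^F_X)$, a subcategory cut out by cohomology. For the comparison you actually need, Miyachi's theorem gives $\Db(\mathcal A)/\Db_{\mathcal S}(\mathcal A)\simeq\Db(\mathcal A/\mathcal S)$ on homotopy categories for every Serre subcategory $\mathcal S$, with no further hypothesis. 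The Serre property of $\Nil^F_X$ and $\Nil^C_X$ is \Cref{lem:nil-stable}(1). So no Blickle--B\"ockle or B\"ockle--Pink finiteness is needed. The argument is: build the exact functor by applying the exact quotient termwise (it preserves acyclic complexes and cones), note that it kills $\cN_F(X)$, factor it through the quotient, and apply \Cref{lem:exact-detection}.

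The paper instead passes through the intermediate model $\KF(X)\subset\mathcal D(\QCrys^F_X)$. It obtains $\Db(\Coh^F_X)/\cN_F(X)\simeq\KF(X)\simeq\Db(\Crys^F_X)$, and the Cartier analogue, from \cite[Corollaries 3.3.8, 3.3.9; Proposition 2.2.15]{Bau} and \cite[Theorem 5.3.1]{BP}. Your corrected route is shorter and uses only the Serre property. The paper's route additionally produces $\KF(X)$ and $\KC(X)$, which it reuses for the crystal $t$-structures in \Cref{cor:crystal-t}.
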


When $X$ has affine diagonal, the derived comparison theorems of Mattis--Wei\ss\ identify \eqref{eq:intro-leq} with the module equivalence of \Cref{thm:intro}; see \Cref{thm:comparison}.

\begin{introthm}[{\Cref{thm:comparison}}]\label{thm:intro-comparison}
Assume in addition that $X$ has affine diagonal.
There are exact equivalences, commuting with the underlying-complex functors,
\begin{align*}
 \Db(\Coh^F_X)&\simeq\Frob(\Dbc(X),F_*),\\
 \Db(\Coh^C_X)&\simeq\Cart(\Dbc(X),F_*).
\end{align*}
With the unit data induced by an injective representative of $\omega$, these equivalences identify $\mathbb D_\omega$ with the lax-equalizer duality of \Cref{thm:intro-leq}.
\end{introthm}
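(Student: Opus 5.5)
The plan is to build both equivalences from one general comparison principle and then check compatibility with the dualities. For an abelian category $\mathcal A$ (here $\QCoh$ or $\Coh$) with an exact endofunctor $F_*$, the abelian category of Frobenius modules is the lax equalizer $\Frob(\mathcal A,F_*)$ of $\id$ and $F_*$ at the abelian level. There is a canonical exact functor $\Db(\Frob(\mathcal A,F_*))\to\Frob(\Db(\mathcal A),F_*)$, induced by the universal property of the lax equalizer: the underlying-complex functor, together with the natural transformation $\id\to F_*$ coming from $\tau$, extended using that $F_*$ is exact. Since $X$ is $F$-finite, $F_*$ preserves coherence, so this applies to $\Coh^F_X$ and $\Coh^C_X$, and analogously on the Cartier side.

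The key steps, in order, are these.

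First, affine case. On $\Spec R$, Frobenius modules are modules over the twisted polynomial ring $R[F]$, and Cartier modules are modules over $R[F^{-1}]$ (with the opposite twist). The lax equalizer $\Frob(\mathcal D(R),F_*)$ is identified with $\mathcal D(R[F])$ by the Mattis--Wei\ss\ comparison theorem. This theorem says that the lax equalizer of $\id,\ F_*$ on $\mathcal D(R)$ is modules over the free algebra on the bimodule $F_*R$, which is exactly $R[F]$. I then restrict to complexes with bounded coherent underlying object. The key point is that $\Db(\Coh^F_R)\to\mathcal D(R[F])$ is fully faithful with essential image the complexes whose underlying $R$-complex lies in $\mathcal D^b_{\mathrm{coh}}(R)$. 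Essential surjectivity follows by truncation, since $t$-structures are compatible with $F_*$ being exact. Full faithfulness follows by comparing Ext groups: both sides compute $\mathrm{Ext}$ via the fiber sequence $\Map(M,N)\to\Map_R(M,N)\rightrightarrows\Map_R(M,F_*N)$.

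Second, globalization. I write $\Dqc(X)$ as a limit over affines, and use that lax equalizers commute with limits. The affine-diagonal hypothesis enters here: as in Mattis--Wei\ss, it ensures that the derived category of the abelian category of (coherent) Frobenius modules satisfies Zariski descent. Concretely, $\Db(\Coh^F_X)$ is computed by Čech complexes on affines whose intersections are again affine. This lets me glue the affine equivalences into the stated global ones. Commutation with the underlying-complex functors holds by construction.

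Third, identification of the dualities. I take an injective representative $I$ of $\omega$. Then $\mathbb D_\omega$ is computed by the sheaf-Hom complex into $I$ on chain complexes. The unit map $u$ is represented by a genuine chain map, and the equivalence $\gamma$ of \Cref{prop:finite-duality} is represented by the strict finite-duality isomorphism $F_*\shHom(M,I)\cong\shHom(F_*M,F^!I)$ composed with $u$. Under the comparison functors, the Cartier structure on $\mathbb D_\omega(M,\tau)$ is then literally $D_\omega(\tau)\circ\gamma_M$, matching the formula in \Cref{thm:intro-leq}.

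The main obstacle is the last step: promoting this objectwise agreement to an equivalence of functors of $\infty$-categories. Lax-equalizer functors are determined by an underlying functor plus a coherent natural transformation. The plan is therefore to exhibit both $\mathbb D_\omega$ and $D_\omega^{\mathrm{LEq}}$ as induced, via the universal property, by the same pair: the functor $D_\omega$ and the transformation $\gamma$. This pair must be realized strictly at the dg level from the injective model and then transported along $\Ndg$. I would check that Baudin's construction of $\mathbb D_\omega$ factors through this dg model, using \Cref{lem:exact-detection} to reduce the remaining verification to homotopy categories.
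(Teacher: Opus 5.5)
Your last step follows the paper's strategy: by the defining pullback, a functor into $\Cart(\Dbc(X),F_*)$ is a pair (underlying functor, Cartier arrow), and one matches the two pairs. Your route to the equivalences (affine case, then gluing) differs from the paper's, and that is where the argument breaks as written.

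\textbf{The affine Ext comparison.} The fibre sequence $\map(M,N)\to\map_R(M,N)\to\map_R(M,F_*N)$ is the mapping-spectrum formula \eqref{eq:map-fiber} in $\Frob(\mathcal D(R),F_*)\simeq\mathcal D(R[F])$. In $\Db(\Coh^F_R)$ the mapping spaces are Yoneda Ext groups in the abelian category of \emph{coherent} Frobenius modules. That these satisfy the same sequence is exactly the full faithfulness to be proved, so the argument is circular. The statement is true, and it is \Cref{prop:realization}, resting on Baudin and B\"ockle--Pink. The usual locally Noetherian argument does not give it: $R[F]$ is compact in $\QCoh^F_R$ but not $R$-finite, so it is not a filtered colimit of coherent Frobenius modules. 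On the Frobenius side an effaceability argument would also work. Lift $\tau$ along a free cover $R^n\twoheadrightarrow M$; the fibre sequence then kills $\mathrm{Ext}^{\ge2}_{R[F]}(R^n,-)$, and $\mathrm{Ext}^1$ is effaced by the extension itself. This does not transfer to Cartier modules: both lifting $\kappa$ to a free cover and the vanishing use projectivity of $F_*R^n$, which fails unless $R$ is regular.

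\textbf{The gluing step.} Gluing needs $\Db(\Coh^F_X)\simeq\lim\Db(\Coh^F_{U_n})$, but \v Cech complexes of coherent modules are only quasi-coherent. So you need the global realization equivalence plus descent for $\mathcal D^+(\QCoh^F)$, in effect redoing the Mattis--Wei\ss\ globalization. The paper avoids any descent for Frobenius or Cartier modules. It imports the global Mattis--Wei\ss\ comparisons, restricts along \Cref{prop:realization}, and compares only $\mathcal D^+(\QCoh(X))$ with $\Dqc^+(X)$ via $\Phi_X$.

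\textbf{The duality step.} The substance here is that two abstract objects have chain-level models:
the functor $\RHom(-,\omega)$ on the limit $\Dqc(X)$ is modelled by $\shHom^\bullet(-,I)$, and the transformation $\gamma$ of \Cref{prop:finite-duality} is modelled by the pairing $s\mapsto\kappa_I\circ F_*s\circ\tau$. You assert this, but it needs proof. The paper supplies it in three moves. Both underlying functors represent $A\mapsto\Map(A\otimes M,\omega)$, so uniqueness of right adjoints identifies them. \Cref{prop:trace-formula} identifies the arrows. The localization universal property then produces the pair in $\Fun$.

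Finally, \Cref{lem:exact-detection} cannot do what you ask of it. It decides whether a single exact functor is an equivalence. It does not turn agreement of two functors on homotopy categories into an equivalence in $\Fun\bigl(\Db(\Coh^F_X)^{\op},\Db(\Coh^C_X)\bigr)$. You first need an $\infty$-level comparison morphism, for instance from the pullback universal property as above; only then can you check it objectwise on homotopy categories.
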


The perverse $t$-structure is determined by $\omega$.
For $x\in X$, write $d_\omega(x)$ for the unique cohomological degree in which the local cohomology complex $\mathrm R\Gamma_{\mathfrak m_x}(\omega_x)$ is nonzero; equivalently, after shifting to a normalized dualizing complex \cite[\href{https://stacks.math.columbia.edu/tag/0AWF}{Tag 0AWF}]{SP}, $\mathrm R\Gamma_{\mathfrak m_x}(\omega_x)$ is an injective hull of $k(x)$ placed in degree $d_\omega(x)$ \cite[\href{https://stacks.math.columbia.edu/tag/0A82}{Tag 0A82}]{SP}.

\begin{introthm}[{\Cref{thm:perverse}}]\label{thm:intro-perverse}
In the situation of \Cref{thm:intro}, the equivalence $\mathbb D_\omega$ exchanges the standard $t$-structure on one module category with the perverse $t$-structure determined by $d_\omega$ on the other.
In particular there are anti-equivalences of abelian hearts
\[
 \Perv_\omega(\Coh^F_X)^{\op}\simeq\Coh^C_X,
 \qquad
 \Perv_\omega(\Coh^C_X)^{\op}\simeq\Coh^F_X.
\]
\end{introthm}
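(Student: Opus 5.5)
The plan is to reduce the statement to the underlying-sheaf duality on $\Dbc(X)$. By \Cref{thm:intro-comparison}, or directly from how $\mathbb D_\omega$ is built in \Cref{thm:intro}, the functor $\mathbb D_\omega$ commutes with the forgetful functors to $\Dbc(X)$, up to the underlying duality $D_\omega=\RHom(-,\omega)$.

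We define the perverse $t$-structure on $\Db(\Coh^F_X)$ (respectively $\Db(\Coh^C_X)$) by requiring the underlying complex to lie in the perverse aisle/coaisle on $\Dbc(X)$ determined by $d_\omega$. This is the Gabber/Kashiwara-style perversity $p(x)=-d_\omega(x)$ (up to normalization), and we take
\[
 {}^pD^{\le 0}=D_\omega\bigl(D^{\ge 0}\bigr),\qquad {}^pD^{\ge 0}=D_\omega\bigl(D^{\le 0}\bigr)
\]
on $\Dbc(X)$. The first step is the classical fact that $D_\omega$ exchanges the standard $t$-structure on $\Dbc(X)$ with this perverse $t$-structure. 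Pointwise this says: $K\in D^{\le 0}$ iff for every $x$, $\mathrm R\Gamma_{\mathfrak m_x}(D_\omega K)_x$ is concentrated in degrees $\ge -d_\omega(x)$ (shifted appropriately). We check it by local duality at each stalk. Local duality identifies $\mathrm R\Gamma_{\mathfrak m_x}((D_\omega K)_x)$ with the Matlis dual of $K_x$, shifted by $d_\omega(x)$. Since $K\in D^{\le 0}$ iff all stalks $K_x\in D^{\le 0}$, the characterization follows. Consequently, on $\Dbc(X)$,
\[
 D_\omega(D^{\le 0})={}^pD^{\ge 0},\qquad D_\omega(D^{\ge 0})={}^pD^{\le 0}.
\]

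The main step is to show that the perverse condition, imposed on underlying complexes, actually defines a $t$-structure on the module categories $\Db(\Coh^F_X)$ and $\Db(\Coh^C_X)$, and not just an orthogonality pair. I would avoid constructing truncations directly. Instead, transport the standard $t$-structure through the equivalence $\mathbb D_\omega$: on $\Db(\Coh^F_X)$ set
\[
 {}^p\Db(\Coh^F_X)^{\le 0}:=\mathbb D_\omega^{-1}\bigl(\Db(\Coh^C_X)^{\ge 0}\bigr),\qquad {}^p\Db(\Coh^F_X)^{\ge 0}:=\mathbb D_\omega^{-1}\bigl(\Db(\Coh^C_X)^{\le 0}\bigr).
\]
Because $\mathbb D_\omega$ is an exact anti-equivalence, this is a $t$-structure. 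By the previous paragraph and the compatibility of $\mathbb D_\omega$ with $D_\omega$, its aisles are exactly the Frobenius complexes whose underlying complexes lie in the perverse aisles. The reason is that the standard $t$-structures on $\Db(\Coh^F_X)$ and $\Db(\Coh^C_X)$ are detected on underlying complexes, since the forgetful functors are $t$-exact and conservative on cohomology. So the transported $t$-structure is intrinsic, depending only on $d_\omega$. The symmetric argument, using a quasi-inverse of $\mathbb D_\omega$ (for instance via \Cref{thm:intro-bidual} and the comparison), handles the Cartier side.

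The hearts then correspond: $\mathbb D_\omega$ restricts to anti-equivalences $\Perv_\omega(\Coh^F_X)^{\op}\simeq\Coh^C_X$ and, by the other direction, $\Perv_\omega(\Coh^C_X)^{\op}\simeq\Coh^F_X$. The expected obstacle is the compatibility of $\mathbb D_\omega$ with $D_\omega$ on underlying complexes, without the affine-diagonal hypothesis. If \Cref{thm:intro-comparison} is unavailable, I would extract it from the construction of $\mathbb D_\omega$ in \Cref{thm:modules}, which is defined by the Hom into $\omega$ with Frobenius/Cartier structure. The underlying complex of that Hom is $\RHom(M,\omega)$ because the forgetful functor preserves injective-type representatives of $\omega$ (cf. \Cref{lem:exact-detection}).
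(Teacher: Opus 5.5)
Your route is the paper's: local duality at each point, transport of the standard $t$-structure along the exact anti-equivalence $\mathbb D_\omega$ as in \eqref{eq:transport}, and detection of the standard aisles on underlying complexes. The gap is that you argue only one of the two pointwise identifications.

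Your local-duality step shows that $K\in\Dbc(X)^{\le0}$ if and only if $H^j_{\mathfrak m_x}((D_\omega K)_x)=0$ for all $j<d_\omega(x)$ and all $x$. (The threshold is $d_\omega(x)$, not $-d_\omega(x)$.) So $D_\omega$ matches the standard $\le0$ part with the support condition \eqref{eq:perv-support}. Your ``consequently'' then also asserts that $D_\omega$ matches $\Dbc(X)^{\ge0}$ with the stalk condition \eqref{eq:perv-stalk}. This does not follow from the fact that $D^{\le0}$ is detected on stalks.

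Running local duality the other way (\Cref{lem:local-duality}) converts ``$H^j(N_x)=0$ for $j>d_\omega(x)$ at every $x$'' into ``$H^i_{\mathfrak m_x}(K_x)=0$ for $i<0$ at every $x$'', where $K=D_\omega N$. You still need that this pointwise local-cohomology condition is equivalent to $K\in\Dbc(X)^{\ge0}$. That is exactly the zero-perversity statement the paper imports from \cite[Theorem 6.6; Lemma 6.7]{MWC}. Without it, only the transported coaisle is identified, not the aisle. So the heart $\mathbb D_\omega^{-1}(\Coh^C_X)$ is not yet shown to equal $\Perv_\omega(\Coh^F_X)$, which is defined as the intersection of both conditions.

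The missing fact is easy to supply:
\begin{enumerate}
\item If $K\in\Dbc(X)^{\ge0}$, then $R\Gamma_{\mathfrak m_x}(K_x)\in D^{\ge0}$ by left exactness of $\Gamma_{\mathfrak m_x}$.
\item Conversely, suppose $j<0$ is the lowest degree with $H^j(K)\ne0$. Take $x$ a generic point of an irreducible component of $\operatorname{Supp}H^j(K)$, so that $H^j(K)_x$ is nonzero and $\mathfrak m_x$-torsion. Since $K_x\in D^{\ge j}$, you get $H^j_{\mathfrak m_x}(K_x)=\Gamma_{\mathfrak m_x}(H^j(K)_x)=H^j(K)_x\ne0$.
\end{enumerate}

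Two smaller corrections.

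For the Cartier side, do not route through \Cref{thm:intro-bidual} and the comparison theorem, since that would reimpose the affine-diagonal hypothesis. It suffices that for $N=\mathbb D_\omega M$, the underlying complex of $M$ is $D_\omega$ of that of $N$, by biduality on $\Dbc(X)$ (\Cref{lem:grothendieck}). The same two pointwise equivalences then apply with the roles of $M$ and $N$ swapped.

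The underlying complex of $\mathbb D_\omega M$ is $\RHom(M,\omega)$ because the terms of $I$ in \Cref{cons:hom} are injective $\cO_X$-modules, so $H_I(M)$ computes derived sheaf Hom. \Cref{lem:exact-detection} plays no role in this. Only this homotopy-level compatibility is needed, since $t$-structures are homotopy-category data.
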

\Cref{rem:middle-counterexample} shows why the formula $d_\omega(x)=-\dim\overline{\{x\}}$ needs an extra normalization hypothesis.

\begin{introthm}[{\Cref{thm:descent}}]\label{thm:intro-descent}
Let $\{U_i\}_{i=1}^r$ be a finite Zariski open cover of a Noetherian $F$-finite scheme $X$, and write $U_\bullet$ for its \v Cech nerve.
Restriction induces equivalences
\begin{align*}
 \Dbc(X)&\simeq\lim_{[n]\in\Delta}\Dbc(U_n),\\
 \Frob(\Dbc(X),F_*)&\simeq\lim_{[n]\in\Delta}\Frob(\Dbc(U_n),F_*),\\
 \Cart(\Dbc(X),F_*)&\simeq\lim_{[n]\in\Delta}\Cart(\Dbc(U_n),F_*).
\end{align*}
For fixed unit dualizing data, the equivalence of \Cref{thm:intro-leq} commutes with the diagrams obtained by restricting along the open immersions $U_n\to X$ of the \v Cech nerve.
\end{introthm}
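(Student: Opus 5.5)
The plan is to reduce all three descent statements to Zariski descent for $\Dqc$, the fact that a lax equalizer is a limit, and the compatibility of $F_*$, $D_\omega$ and $\gamma$ with restriction along open immersions.

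\emph{Descent for $\Dbc$.} By \Cref{def:dqc}, $\Dqc(X)$ is a limit of $\mathcal D(R)$ over affine opens $\Spec R\subset X$. The first step is to show that restriction gives an equivalence $\Dqc(X)\simeq\lim_{[n]\in\Delta}\Dqc(U_n)$, using a cofinality argument: affine opens of $X$ contained in some $U_i$ form a basis, and every $U_n$ is itself the Zariski limit over its affine opens. The full subcategory $\Dbc$ is cut out by a condition on affine restrictions. Since the $U_i$ cover $X$, an object of $\Dqc(X)$ lies in $\Dbc(X)$ exactly when each restriction lies in $\Dbc(U_i)$. This uses that bounded coherence on affines is Zariski local, which holds because $X$ is Noetherian and the cover is finite, so boundedness is uniform. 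Restricting the equivalence to these full subcategories then gives the first line.

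\emph{Descent for $\Frob$ and $\Cart$.} Absolute Frobenius commutes with open immersions $j:U\to X$, and $j^*F_*\simeq F_*j^*$ by flat base change. Moreover $F_*$ preserves $\Dbc$ by $F$-finiteness. Hence $F_*$ gives a natural endotransformation of the cosimplicial diagram $[n]\mapsto\Dbc(U_n)$, and the object $(\Dbc(X),F_*)$ is the limit in the $\infty$-category of pairs (category, endofunctor). The lax equalizer $\LEq(\id,F_*)$ is itself a limit: a pullback of $\Fun(\Delta^1,\cC)\to\cC\times\cC$ along the map $(\id,F_*):\cC\to\cC\times\cC$. Functor categories from a fixed source and pullbacks both commute with limits in $\Cat_\infty$, so limits commute with limits and the second and third lines follow. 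To make this functorial, I would construct $\Frob(-,F_*)$ and $\Cart(-,F_*)$ as functors from the $\infty$-category of endomorphism pairs to $\Cat_\infty$. I expect this to be routine bookkeeping.

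\emph{Compatibility of the duality.} Restriction to an open subscheme preserves a dualizing complex, and $j^*\RHom(-,\omega)\simeq\RHom(j^*-,j^*\omega)$ because coherent Hom localizes. The equivalence $u:\omega\simeq F^!\omega$ also restricts, since $j^*F^!\simeq F^!j^*$ for the étale map $j$. The main obstacle is to show that the equivalence $\gamma$ of \Cref{prop:finite-duality} restricts to the corresponding $\gamma$ on each $U_n$. This must hold coherently, as a natural transformation of cosimplicial diagrams and not merely objectwise, so that the equivalence of \Cref{thm:intrinsic} is a morphism of diagrams of lax equalizers.

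My first attempt at this would be to note that $\gamma$ is built from finite duality $F_*\RHom(-,F^!\omega)\simeq\RHom(F_*-,\omega)$ composed with $u$. Finite duality is the mate of the adjunction $F_*\dashv F^!$, and open restriction is compatible with these adjunctions via the base change equivalence. A Beck--Chevalley argument on the square formed by $j$ and $F$ then identifies $j^*\gamma_X$ with $\gamma_U j^*$. Higher coherence is automatic once everything is phrased as a morphism in the functor category $\Fun(\Delta,\Cat_\infty)$ obtained from the Zariski sheaf $U\mapsto(\Dbc(U),F_*,D_\omega,\gamma)$.

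Finally, the construction in \Cref{thm:intrinsic} sends the data $(D_\omega,\gamma)$ functorially to an equivalence of lax equalizers. Applying it levelwise therefore gives a cosimplicial equivalence whose limit is $D_\omega^{\mathrm{LEq}}$ on $X$, which proves the last claim.
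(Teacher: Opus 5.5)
Your overall route is the paper's own. You cut the $\Dqc$ descent equivalence down to $\Dbc$ using locality of coherence and a uniform bound from the finite cover. You make $F_*$ an endomorphism of the \v{C}ech diagram via $j^*F_*\simeq F_*j^*$ (\Cref{lem:open-push}), commute lax equalizers with limits in $\mathrm{End}(\Catbig_\infty)$ (\Cref{lem:leq-limits}), and feed the restricted data $(D_\omega,u,\gamma)$ into the same lemma.

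The gap is the first step: the equivalence $\Dqc(X)\simeq\lim_{[n]\in\Delta}\Dqc(U_n)$, which you propose to obtain ``by a cofinality argument'' because the affine opens lying in some $U_i$ form a basis. Cofinality can only reindex. Let $\mathcal B\subset\Aff_X$ be the poset of affine opens contained in some $U_i$. Reindexing does identify the \v{C}ech limit with $\lim_{W\in\mathcal B^{\op}}\cD(\Gamma(W,\cO_W))$, since for each $W$ the relevant fibre is governed by the \v{C}ech nerve of the nonempty set $\{i:W\subset U_i\}$, which is contractible. The remaining comparison with $\Dqc(X)=\lim_{\Aff_X^{\op}}\cD(\Gamma(-,\cO))$ is not a cofinality statement. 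If an affine $U$ lies in no $U_i$, then no object of $\mathcal B$ contains $U$, so the comma category testing left cofinality of $\mathcal B^{\op}\subset\Aff_X^{\op}$ at $U$ is empty. Already for $X=\Spec R$ the claim reads $\cD(R)\simeq\lim_{W\in\mathcal B^{\op}}\cD(\Gamma(W,\cO_W))$. This fails for a general presheaf of $\infty$-categories on $\Aff_X$, and for $\cD$ it is exactly Zariski descent for $R\mapsto\cD(R)$. That is the one non-formal input behind the first line, and it is what your proposal omits.

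The paper supplies this input in \Cref{lem:affine-cech}. After restriction to each $D(f_j)$, the \v{C}ech nerve of a standard cover becomes split, hence a limit diagram. The comparison functor, a morphism of $\Pr^{\mathrm L}$, has a right adjoint whose unit and counit are shown to be equivalences using jointly conservative restrictions. \Cref{lem:dqc-zariski} extends this to arbitrary finite covers of an affine, and then to general $X$ by exchanging the limits over $\Aff_X^{\op}$ and $\Delta$. The proof of the theorem simply cites that lemma.

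With that lemma, or another genuine descent theorem such as faithfully flat descent for module $\infty$-categories, in place of cofinality, the rest of your argument goes through as in the paper. One minor correction: the base change $j^*F^!\simeq F^!j^*$ is established here only on bounded-below objects (\Cref{lem:open-rhom}), not ``for the \'etale map $j$'' in general. This suffices for restricting $u:\omega\simeq F^!\omega$.
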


\begin{introthm}[{\Cref{cor:indpro-modules}}]\label{thm:intro-indpro}
An exact equivalence $D:\cC^{\op}\simeq\cD$ of small stable $\infty$-categories extends to an exact equivalence $\Pro(\cC)^{\op}\simeq\Ind(\cD)$.
This applies to the dualities of \Cref{thm:intro-leq,thm:intro,thm:intro-crystals}, and to the further dualities listed in \Cref{cor:indpro-modules}.
\end{introthm}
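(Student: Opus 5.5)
The plan is to build the equivalence formally out of ind- and pro-objects, using only universal properties and the fact that $(-)^{\op}$ interchanges colimits and limits. First I will show that for a small $\infty$-category $\cC$ there is a canonical equivalence $\Pro(\cC)^{\op}\simeq\Ind(\cC^{\op})$. One can take this as the definition of $\Pro$, namely $\Pro(\cC)=\Ind(\cC^{\op})^{\op}$. Alternatively, if $\Pro(\cC)$ is defined as the full subcategory of $\Fun(\cC,\mathcal S)^{\op}$ spanned by cofiltered limits of corepresentables, then the identification $\Fun(\cC,\mathcal S)=\Fun((\cC^{\op})^{\op},\mathcal S)$ exchanges corepresentables on $\cC$ with representables on $\cC^{\op}$. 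Under $(-)^{\op}$, cofiltered limits become filtered colimits. Either way, the resulting equivalence restricts to the identity on the Yoneda images of $\cC$.

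Second, the given exact equivalence $D:\cC^{\op}\simeq\cD$ induces an equivalence $\Ind(D):\Ind(\cC^{\op})\simeq\Ind(\cD)$, since $\Ind$ is functorial in equivalences of small $\infty$-categories. Concretely, $\Ind(D)$ is the unique filtered-colimit-preserving extension of $\cC^{\op}\xrightarrow{D}\cD\hookrightarrow\Ind(\cD)$. Applying the same construction to a quasi-inverse of $D$ gives a quasi-inverse of $\Ind(D)$, because the two composites are filtered-colimit-preserving extensions of functors equivalent to the inclusions. Composing with the first step gives
\[
\Pro(\cC)^{\op}\simeq\Ind(\cC^{\op})\xrightarrow{\ \Ind(D)\ }\Ind(\cD).
\]

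Third, I must check exactness, and this is the only point needing care. Since $\cC$ is stable, $\cC^{\op}$ is stable, and $\Ind$ of a small stable $\infty$-category is stable; dually, $\Pro(\cC)$ is stable. An equivalence between stable $\infty$-categories automatically preserves finite limits and colimits, hence is exact. The subtle point is that the stable structure on $\Pro(\cC)^{\op}$ obtained this way should agree with the opposite of the natural stable structure on $\Pro(\cC)$. This holds because stability is a property rather than extra structure, and the suspension of $\Pro(\cC)^{\op}$ is the loop functor of $\Pro(\cC)$, computed levelwise. So no compatibility needs to be arranged by hand.

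Finally, for the applications I will only verify smallness and stability of the categories involved. For \Cref{thm:intro-leq}, $\Dbc(X)$ is essentially small and stable, and lax equalizers of exact endofunctors of stable $\infty$-categories are stable, with finite (co)limits computed on underlying objects. For \Cref{thm:intro}, the $\Db$ of the essentially small abelian categories $\Coh^F_X$ and $\Coh^C_X$ are small stable. For \Cref{thm:intro-crystals}, Verdier quotients of small stable $\infty$-categories are small stable. I expect none of these steps to present a real obstacle; the only thing requiring attention is the convention for $\Pro$ in the first step and, if needed, the size issue of replacing each category by an equivalent small one.
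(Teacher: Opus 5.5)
Your proposal is correct and follows the paper's route: with $\Pro(\cC):=\Ind(\cC^{\op})^{\op}$, the equivalence is $\Ind(D)$, obtained from the universal property of $\Ind$. Exactness is automatic because $\Ind$ of a small stable $\infty$-category is stable and any equivalence of stable $\infty$-categories is exact. The applications then reduce, as in the paper's proof of \Cref{cor:indpro-modules}, to checking that the listed dualities are exact equivalences of essentially small stable $\infty$-categories, replaced by small equivalent ones.
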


\begin{remark}[Proof strategy]\label{rem:intro-cost}
The technical main input behind \Cref{thm:intro-leq,thm:intro-bidual} is \Cref{prop:reversal}: an equivalence $D:\cC^{\op}\simeq\cD$ together with compatible natural equivalences $DF^{\op}\simeq F'D$ and $DG^{\op}\simeq G'D$ induces $\LEq(F,G)^{\op}\simeq\LEq(G',F')$.
Concretely, the natural equivalence $\gamma:F_*D_\omega\simeq D_\omega F_*^{\op}$ of \Cref{prop:finite-duality}, obtained from finite Grothendieck duality and a chosen unit equivalence $u:\omega\simeq F^!\omega$, is the input to which we apply \Cref{prop:reversal}.
This yields the exact equivalence
\[
 D_\omega^{\mathrm{LEq}}:\Frob(\Dbc(X),F_*)^{\op}\simeq\Cart(\Dbc(X),F_*)
\]
of \Cref{thm:intrinsic}.
The unit $\eta$ of tensor-Hom from \Cref{lem:grothendieck} gives a mate $\gamma^\vee$ of $\gamma$; a second application of \Cref{prop:reversal} yields
\[
 (D_\omega^{\mathrm{LEq}})^{\op}\circ D_\omega^{\mathrm{LEq}}
 \simeq
 \mathrm{id}
\]
as morphisms in $\Fun\bigl(\Frob(\Dbc(X),F_*),\Frob(\Dbc(X),F_*)\bigr)$ (and dually on the Cartier side); see \Cref{thm:bidual-leq}.

Under an affine diagonal, the derived comparison theorems of Mattis--Wei\ss\ identify $\Db(\Coh^F_X)$ and $\Db(\Coh^C_X)$ with $\Frob(\Dbc(X),F_*)$ and $\Cart(\Dbc(X),F_*)$, respectively (\Cref{thm:comparison}), and thereby identify $D_\omega^{\mathrm{LEq}}$ with the module duality $\mathbb D_\omega$ of \Cref{thm:modules}.
\Cref{thm:modules} constructs $\mathbb D_\omega$ as an exact $\infty$-functor on $\Db(\Coh^F_X)$ and $\Db(\Coh^C_X)$; its homotopy functor is Baudin's derived Hom duality \cite[Theorem 4.2.7]{Bau}, and \Cref{lem:exact-detection} upgrades that triangulated equivalence to an equivalence of stable $\infty$-categories.
For crystals the localization at nilpotent cofiber cohomology is a stable Verdier quotient \cite[Theorem I.3.3(i)]{NS}. An abstract internal Hom pairing is \Cref{prop:abstract-hom}.
\end{remark}

\subsection{Conventions and notations}\label{subsec:conventions}
We fix universes $\mathbb U\in\mathbb V$.
We write $\Cat_\infty$ for the $\infty$-category of $\mathbb U$-small $\infty$-categories and $\Catbig_\infty$ for the larger one, as in \cite[Definition 3.0.0.1; Remark 3.0.0.5]{HTT}.
Abelian categories used as input to $\Db(-)$ are assumed $\mathbb U$-small.
An $\infty$-category is essentially small if and only if it is equivalent to a small $\infty$-category \cite[Definition 5.4.1.3; Proposition 5.4.1.2(3)]{HTT}.
Essentially small bounded $\infty$-categories are replaced by small equivalent $\infty$-categories when forming Ind- and Pro-completions.

All limits, localizations and mapping spaces below are $\infty$-categorical.
We use cohomological grading.
$h\cC$ denotes the homotopy category of $\cC$.
If $\cC$ is an $\infty$-category, $\Map_{\cC}(X,Y)$ denotes the mapping space.
If $\cC$ is stable, $\map_{\cC}(X,Y)$ denotes the mapping spectrum, so that
\[
 \Omega^\infty(\map_{\cC}(X,Y)[n])\simeq\Map_{\cC}(X,Y[n]).
\]
\cite[Definition 2.15]{BGT} via \cite[Notation 1.4.2.20]{HA}.
We use $\Mod_R, \Mod(\cO_X)$ for the abelian categories of modules over the corresponding rings, while we use $\cMod_\cR$ for the stable $\infty$-category over a ring spectrum $\cR$.

\section{Stable derived categories and lax equalizers}\label{sec:foundations}
We use the conventions of \Cref{subsec:conventions}.
Many of the results in \Cref{sec:foundations} and \Cref{sec:finite} are of course well known in the triangulated setting, but since we could not find a satisfactory reference for them in the $\infty$-categorical framework, we take some care to record them here.

\begin{definition}[{Stable Verdier quotient, \cite[Definition 1.1, Definition 3.1(ii)]{Drew}}]\label{def:bounded}
Let $\cA$ be a $\mathbb U$-small abelian category.
Let $\Ch^b(\cA)$ be the differential graded category of bounded cochain complexes and let $\operatorname{Ac}^b(\cA)$ be its full subcategory of acyclic complexes.
Set
\[
 \Db(\cA):=\Ndg\Ch^b(\cA)\big/\Ndg\operatorname{Ac}^b(\cA).
\]
\end{definition}

\begin{lemma}[{\cite[Definition 3.1; Theorem 1.3]{Drew}}]\label{lem:bounded-stable}
Let $\cA$ be a $\mathbb U$-small abelian category.
Then:
\begin{enumerate}
\item
$\Db(\cA)$ is stable.
\item
If $W$ denotes quasi-isomorphisms in the ordinary category of bounded complexes and chain maps, there is an equivalence of $\infty$-categories
\begin{equation}\label{eq:relative-model}
 \Db(\cA)\simeq\mathrm N(\Ch^b(\cA))[W^{-1}].
\end{equation}
\item
The homotopy category $h\Db(\cA)$ is the usual bounded derived category of $\cA$.
\end{enumerate}
\end{lemma}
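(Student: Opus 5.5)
The plan is to deduce all three assertions from Drew's results on stable Verdier quotients of dg-nerves, as the citation indicates.

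\textbf{Part (1).} First I would check that $\Ndg\Ch^b(\cA)$ is a stable $\infty$-category. $\Ch^b(\cA)$ is a pretriangulated dg-category: it has a zero object, shifts, and mapping cones. By the standard criterion, its dg-nerve is therefore stable, since cone sequences become fiber/cofiber sequences and shifts act as suspension. Next, $\Ndg\operatorname{Ac}^b(\cA)$ is a full stable subcategory. Indeed, acyclic complexes are closed under shifts and cones by the long exact cohomology sequence, and it is idempotent-closed up to equivalence where needed. The Verdier quotient of a stable $\infty$-category by a stable subcategory is stable and the quotient functor is exact. This is the content of \cite[Definition 3.1]{Drew}; alternatively one can cite \cite[Theorem I.3.3(i)]{NS}. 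This gives (1).

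\textbf{Part (2).} I would compare two universal properties. The quotient $\Db(\cA)$ is universal among exact functors out of $\Ndg\Ch^b(\cA)$ that kill acyclics. Since these functors are exact, killing acyclics is equivalent to inverting maps with acyclic cone, that is, the quasi-isomorphisms. So $\Db(\cA)\simeq\Ndg\Ch^b(\cA)[W^{-1}]$ as a localization of $\infty$-categories, using that exact localizations of stable categories coincide with plain Dwyer--Kan localizations.

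It then remains to replace $\Ndg\Ch^b(\cA)$ by the ordinary nerve $\mathrm N(\Ch^b(\cA))$. Consider the canonical functor $\mathrm N(\Ch^b(\cA))\to\Ndg\Ch^b(\cA)$. It sends chain homotopy equivalences to equivalences, and every chain homotopy equivalence is a quasi-isomorphism. A cylinder-object argument shows that inverting $W$ on $\mathrm N(\Ch^b(\cA))$ already identifies chain-homotopic maps, so that localization factors through the dg-nerve. One checks the two localizations agree; \cite[Theorem 1.3]{Drew} asserts exactly this equivalence \eqref{eq:relative-model}.

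\textbf{Part (3).} From (2), the homotopy category of a Dwyer--Kan localization $\mathrm N(\mathcal C)[W^{-1}]$ is the 1-categorical localization $\mathcal C[W^{-1}]$. For $\mathcal C=\Ch^b(\cA)$, this is by definition the classical bounded derived category. Its triangulated structure agrees with the one induced from stability, because cofiber sequences in $\Db(\cA)$ come from mapping cones in $\Ch^b(\cA)$.

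\textbf{Main obstacle.} The delicate step is the identification in (2) of the Verdier quotient of the dg-nerve with the localization of the ordinary nerve, namely the passage from chain homotopies to quasi-isomorphism inversion. I would cite Drew for this step rather than reprove it.
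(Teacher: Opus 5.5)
Your proposal is correct and follows essentially the paper's route: stability of the dg nerve of bounded complexes, identification of the Verdier quotient with the localization at quasi-isomorphisms via Drew, and the fact that the homotopy category of an $\infty$-localization is the $1$-categorical localization \cite[Tag 01MV]{Kerodon}. The one difference is in your ``cylinder-object'' step. The paper makes it precise by citing \cite[Proposition 1.3.4.5]{HA}: localizing $\mathrm N(\Ch^b(\cA))$ at chain homotopy equivalences yields $\Ndg\Ch^b(\cA)$, and the mapping-cylinder hypothesis of that proposition holds for bounded complexes. It then localizes further at quasi-isomorphisms, rather than leaving the passage between the two nerves entirely to Drew.
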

\begin{proof}
Since the citations in \cite{Drew} are based on an earlier version of \cite{HA}, we recall the comparison used here.
Bounded complexes are closed under shifts and mapping cones, so their dg nerve is stable \cite[Proposition 1.3.2.10; Remark 1.3.2.17]{HA}.
The ordinary nerve localized at chain homotopy equivalences is this dg nerve by \cite[Proposition 1.3.4.5]{HA}: the hypothesis of that proposition is that for every object $M_\bullet$ of the chosen full subcategory of chain complexes, the mapping cylinder $N_*(\Delta^1)\otimes M_\bullet$ of $\id_{M_\bullet}$ still lies in the subcategory.
For bounded complexes the mapping cylinder is obtained by a finite sum with a shift, hence remains bounded.
Localizing further at quasi-isomorphisms is the Verdier quotient of \Cref{def:bounded} by \cite[Theorem 1.3]{Drew}.
The homotopy category of an $\infty$-localization is the localization of the homotopy category \cite[\href{https://kerodon.net/tag/01MV}{Tag 01MV}]{Kerodon}, which is the usual bounded derived category.
\end{proof}

\begin{lemma}\label{lem:grothendieck-derived}
Let $\cB$ be a Grothendieck abelian category.
The derived $\infty$-category $\cD(\cB)$ is stable and presentable.
Its standard $t$-structure has heart $\cB$.
\end{lemma}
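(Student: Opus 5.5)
This is a foundational statement, and the plan is to assemble it from Lurie's treatment in \cite{HA} rather than reprove anything. First I fix the model. By \cite[Proposition 1.3.5.3]{HA}, the category $\Ch(\cB)$ of unbounded cochain complexes in a Grothendieck abelian category carries the injective model structure: cofibrations are the degreewise monomorphisms and weak equivalences are the quasi-isomorphisms. This model structure is combinatorial. I then define $\cD(\cB)$ as the dg nerve of the full subcategory of fibrant (K-injective) complexes. Equivalently, $\cD(\cB)$ is the $\infty$-categorical localization $\mathrm N(\Ch(\cB))[W^{-1}]$ at quasi-isomorphisms, by \cite[Proposition 1.3.5.15]{HA}; this is the unbounded analogue of \eqref{eq:relative-model}.

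Stability follows as in the proof of \Cref{lem:bounded-stable}. Fibrant complexes are closed under shifts and mapping cones. Hence their dg nerve is stable by \cite[Proposition 1.3.2.10; Remark 1.3.2.17]{HA}, and alternatively this is recorded directly in \cite[Proposition 1.3.5.9]{HA}.

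Presentability is the main point, and I expect it to need the most care. The $\infty$-category underlying a combinatorial simplicial (or dg) model category is presentable by \cite[Proposition A.3.7.6]{HTT}. Lurie states the presentability of $\cD(\cB)$ explicitly in \cite[Proposition 1.3.5.21]{HA}, so I would cite that, noting only that it rests on the combinatoriality of the injective model structure. That combinatoriality in turn uses the fact that a Grothendieck abelian category is locally presentable with a generator, so the generating cofibrations can be chosen as a set.

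For the $t$-structure, take $\cD(\cB)_{\le 0}$ (homological convention) to consist of complexes with $H^i=0$ for $i>0$, and $\cD(\cB)_{\ge 0}$ to consist of those with $H^i=0$ for $i<0$. The truncation functors $\tau^{\le n}$ and $\tau^{\ge n}$ on complexes preserve quasi-isomorphisms, so they descend to $\cD(\cB)$. The orthogonality $\pi_0\Map(X,Y)=0$ for $X$ connective and $Y$ coconnective is checked on a K-injective model of $Y$, by replacing $Y$ with a coconnective K-injective complex. This is \cite[Proposition 1.3.5.21]{HA}. The functor $H^0$ then identifies the heart with $\cB$, since an object of the heart is quasi-isomorphic to $H^0$ concentrated in degree $0$. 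Full faithfulness reduces to $\Hom_{h\cD(\cB)}(A,B)=\Hom_\cB(A,B)$ for $A,B\in\cB$, which holds because $\operatorname{Ext}^0=\Hom$. Finally, I translate Lurie's homological indexing into the cohomological grading fixed in \Cref{subsec:conventions}.
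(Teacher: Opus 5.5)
Your proposal is correct and is essentially the paper's proof. The paper likewise only cites \cite[Proposition 1.3.5.9]{HA} for stability and \cite[Proposition 1.3.5.21(1)(2)]{HA} for presentability and the $t$-structure. The one difference is the heart: the paper cites \cite[Remark 1.3.5.19]{HA}, while you argue by hand via $H^0$ and $\operatorname{Ext}^0=\Hom$. That argument only gives an equivalence of homotopy categories, so to conclude at the $\infty$-level you should add that hearts are discrete, \cite[Remark 1.2.1.12]{HA}. Also fix one slip: the orthogonality axiom is $\pi_0\Map(X,Y[-1])=0$ for $X$ connective and $Y$ coconnective, and without the shift it already fails for $X=Y$ a nonzero object of $\cB$.
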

\begin{proof}
Stability of $\cD(\cB)$ is \cite[Proposition 1.3.5.9]{HA}.
Presentability and the standard $t$-structure are \cite[Proposition 1.3.5.21(1)(2)]{HA}.
The heart of that $t$-structure is equivalent to $\cB$ by \cite[Remark 1.3.5.19]{HA}.
\end{proof}

\begin{lemma}\label{lem:bounded-serre}
Let $\cB$ be a Grothendieck abelian category and let $\cA\subset\cB$ be a Serre subcategory.
For $*\in\{+,-,b\}$, write $\mathcal D^*_{\cA}(\cB)$ for the full subcategory of $\cD(\cB)$ consisting of objects whose cohomology lies in $\cA$ and vanishes outside the usual range indicated by $*$.
Then:
\begin{enumerate}
\item
Each $\mathcal D^*_{\cA}(\cB)$ is stable.
\item
The homotopy category $h\mathcal D^*_{\cA}(\cB)$ is the usual triangulated subcategory $\mathrm D^*_{\cA}(\cB)\subset\mathrm D(\cB)$.
\end{enumerate}
\end{lemma}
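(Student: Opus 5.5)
The approach is to show that each $\mathcal D^*_{\cA}(\cB)$ is a full subcategory of the stable $\infty$-category $\cD(\cB)$ (\Cref{lem:grothendieck-derived}) that contains $0$ and is closed under shifts and cofibers. By \cite[Lemma 1.1.3.3]{HA}, such a subcategory is a stable subcategory and the inclusion is exact. Statement (2) then follows by identifying homotopy categories.

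For (1), let $\pi_n=H^n$ denote the cohomology functors on $\cD(\cB)$ associated to the standard $t$-structure, whose heart is $\cB$ by \Cref{lem:grothendieck-derived}. These agree with the usual cohomology of complexes.

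First, $0$ lies in every $\mathcal D^*_{\cA}(\cB)$. Shifts only reindex cohomology, so the condition ``$H^n(M)\in\cA$ for all $n$'' is preserved. The boundedness condition for $*\in\{+,-,b\}$ is preserved because a shift moves the range by a finite amount.

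For cofibers, take a cofiber sequence $M\to N\to P$ in $\cD(\cB)$ with $M,N\in\mathcal D^*_{\cA}(\cB)$. The long exact sequence in cohomology gives, for each $n$, an exact sequence
\[
 H^n(M)\to H^n(N)\to H^n(P)\to H^{n+1}(M)\to H^{n+1}(N).
\]
The group $H^n(P)$ is an extension of $\ker\bigl(H^{n+1}(M)\to H^{n+1}(N)\bigr)$ by $\operatorname{coker}\bigl(H^n(M)\to H^n(N)\bigr)$. Both terms are in $\cA$, because a Serre subcategory is closed under subobjects, quotients and extensions in $\cB$. Hence $H^n(P)\in\cA$.

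The same exact sequence shows that $H^n(P)=0$ whenever the relevant cohomology of $M$ and $N$ vanishes. This gives the bounded-below, bounded-above or bounded range for $P$, enlarged by one degree. Therefore $\mathcal D^*_{\cA}(\cB)$ is stable.

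For (2), observe that $h\mathcal D^*_{\cA}(\cB)$ is the full subcategory of $h\cD(\cB)$ on the same objects, since the homotopy category of a full subcategory is full. By \cite[Remark 1.3.5.9]{HA}, or the comparison used in \Cref{lem:bounded-stable}, $h\cD(\cB)$ is the classical derived category $\mathrm D(\cB)$ as a triangulated category. Under this identification, the distinguished triangles of the stable subcategory are exactly the triangles of $\mathrm D(\cB)$ whose vertices lie in it.

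The one point needing care is that $\pi_n$ computed via the $t$-structure matches ordinary complex cohomology. This is what makes the object class coincide with the classical $\mathrm D^*_{\cA}(\cB)$; it follows from the construction of the standard $t$-structure in \cite[Proposition 1.3.5.21]{HA}. With that checked, $h\mathcal D^*_{\cA}(\cB)\simeq\mathrm D^*_{\cA}(\cB)$ as full triangulated subcategories of $\mathrm D(\cB)$.
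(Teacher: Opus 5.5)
Your proof is correct and follows the paper's argument. You show $\mathcal D^*_{\cA}(\cB)$ is closed under shifts and cofibers using the long exact cohomology sequence and the Serre property, deduce stability from \cite[Lemma 1.1.3.3]{HA}, and get (2) because the homotopy category of a full subcategory is full. The only cosmetic difference is that you take the long exact sequence from the standard $t$-structure rather than from the mapping-cone model of cofibers. That is why you need the extra remark that $t$-structure cohomology agrees with complex cohomology.
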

\begin{proof}
The ambient category $\cD(\cB)$ is stable by \Cref{lem:grothendieck-derived}.
A cofiber may be computed by a localized mapping cone \cite[Remark 1.3.2.17; Proposition 1.3.5.13]{HA}.
The associated cohomology sequence is exact in $\cB$.
The Serre condition keeps those cohomology objects in $\cA$.
Shifts and finite cofibers preserve the boundedness conditions encoded by $*$.
A full subcategory of a stable $\infty$-category closed under shifts and finite cofibers is stable \cite[Lemma 1.1.3.3]{HA}.
This proves (1).
For (2): $\mathcal D^*_{\cA}(\cB)$ is a full subcategory of the stable $\infty$-category $\cD(\cB)$, so its homotopy category is the full subcategory of $h\cD(\cB)$ on the same objects, which is the classical triangulated subcategory $\mathrm D^*_{\cA}(\cB)$.
\end{proof}

\begin{lemma}[{\cite[Proposition 5.10; Corollary 5.11]{BGT}}]\label{lem:exact-detection}
Let $G:\cC\to\cD$ be an exact functor between stable $\infty$-categories.
If $hG$ is fully faithful, then $G$ is fully faithful.
If $hG$ is an equivalence, then $G$ is an equivalence.
\end{lemma}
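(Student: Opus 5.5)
The argument rests on the fact that $\pi_0$ of mapping spectra in a stable $\infty$-category is detected in the homotopy category, and that shifts commute with exact functors. For objects $X,Y\in\cC$, the functor $G$ induces a map of mapping spectra
\[
 G_{X,Y}:\map_{\cC}(X,Y)\to\map_{\cD}(GX,GY).
\]
Because $G$ is exact, it commutes with suspension: $G(Y[n])\simeq (GY)[n]$ naturally. Combined with the convention of \Cref{subsec:conventions}, this gives
\[
 \pi_{-n}\map_{\cC}(X,Y)\cong\pi_0\Map_{\cC}(X,Y[n])=\Hom_{h\cC}(X,Y[n]),
\]
for all $n\in\mathbb Z$, and the same formula holds in $\cD$. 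Under these identifications, $\pi_{-n}(G_{X,Y})$ becomes the map
\[
 \Hom_{h\cC}(X,Y[n])\to\Hom_{h\cD}(GX,(GY)[n])
\]
induced by $hG$, followed by the isomorphism $G(Y[n])\simeq GY[n]$.

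If $hG$ is fully faithful, each of these maps is a bijection. Hence $G_{X,Y}$ is a $\pi_*$-isomorphism of spectra, and therefore an equivalence of spectra. Applying $\Omega^\infty$ shows that $\Map_{\cC}(X,Y)\to\Map_{\cD}(GX,GY)$ is an equivalence, so $G$ is fully faithful.

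The step that needs the most care is compatibility: the spectrum-level map must really induce $hG$ on homotopy groups after the shift identifications. I would handle this by citing the enhancement of exact functors to spectral functors in \cite[Proposition 5.10]{BGT}. Alternatively, one can argue directly. The map on $\pi_0$ of $\Map(X,Y[n])$ is literally $hG$. For the negative homotopy groups $\pi_k\Map(X,Y)$ with $k>0$, use $\Map(X,Y)\simeq\Omega^k\Map(X,Y[k])$ together with the fact that $G$ preserves loop objects, being exact.

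For the second claim, fully faithfulness follows from the first part. For essential surjectivity, take any $Z\in\cD$. Since $hG$ is essentially surjective, there is some $X\in\cC$ with $hG(X)\cong Z$ in $h\cD$. An isomorphism in $h\cD$ lifts to an equivalence in $\cD$, because equivalences in an $\infty$-category are exactly the morphisms that become invertible in the homotopy category. So $GX\simeq Z$, and a fully faithful, essentially surjective functor of $\infty$-categories is an equivalence.

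The main, though mild, obstacle is the shift-compatibility bookkeeping in the first part.
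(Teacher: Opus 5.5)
Your proof is correct and is essentially the paper's argument. Exactness gives $G(Y[n])\simeq(GY)[n]$, stability identifies the homotopy groups of mapping objects with $\Hom$-sets in the homotopy category so that full faithfulness of $hG$ forces an equivalence on mapping objects, and essential surjectivity follows because isomorphisms in $h\cD$ are exactly the equivalences of $\cD$. The one difference is packaging: you work in mapping spectra via \cite[Proposition 5.10]{BGT}, which handles all components of $\Map_{\cC}(X,Y)$ at once, whereas the paper checks $\pi_n$ of mapping spaces at the zero basepoint via $\Map(M[n],N)\simeq\Omega^n\Map(M,N)$.

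There is a small slip in your direct alternative. For $k>0$, $\pi_k\Map_{\cC}(X,Y)$ is computed by $\Map_{\cC}(X,Y[-k])\simeq\Omega^k\Map_{\cC}(X,Y)$, or by $\Map_{\cC}(X[k],Y)$. It is not computed by $\Map_{\cC}(X,Y)\simeq\Omega^k\Map_{\cC}(X,Y[k])$, which only relates $\pi_k\Map_{\cC}(X,Y[k])$ to $\pi_0\Map_{\cC}(X,Y)$.
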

\begin{proof}
Exactness implies that $G$ preserves zero objects and cofibres \cite[Proposition 1.1.4.1]{HA}, hence $G(M[n])\simeq(GM)[n]$ for all $n\in\mathbb Z$.
The induced maps of mapping spaces $\Map_{\cC}(M,N)\to\Map_{\cD}(GM,GN)$ are pointed by the zero maps.
On $\pi_0$ they are $\Hom_{h\cC}(M,N)\to\Hom_{h\cD}(GM,GN)$, which are bijections if $hG$ is fully faithful.
Stability gives $\Map_{\cC}(M[n],N)\simeq\Omega^n\Map_{\cC}(M,N)$ \cite[Remark 1.1.2.8]{HA}, so
\[
 \pi_n\Map_{\cC}(M,N)\simeq\Hom_{h\cC}(M[n],N)
\]
for $n\ge 1$ \cite[Notation 1.1.2.17]{HA}, and likewise in $\cD$.
The maps on $\pi_n$ are therefore identified with
\[
 \Hom_{h\cC}(M[n],N)\longrightarrow\Hom_{h\cD}(G(M[n]),GN),
\]
which are bijections by fully faithfulness of $hG$ together with $G(M[n])\simeq(GM)[n]$.
Thus $G$ is fully faithful.

If $hG$ is an equivalence, it is essentially surjective.
A morphism of an $\infty$-category is an equivalence if and only if it is an isomorphism in the homotopy category \cite[\href{https://kerodon.net/tag/004Q}{Tag 004Q}]{Kerodon}, so $G$ is essentially surjective as well, hence an equivalence.
\end{proof}

A localization $L:\cC\to\cD$ of $\infty$-categories is \textit{reflective} if it admits a fully faithful right adjoint \cite[Definition 5.2.7.2]{HTT}.
Equivalently, $L$ is a localization whose right adjoint is fully faithful, so that $\cD$ is identified with a reflective subcategory of $\cC$.

\begin{lemma}\label{lem:reflective-quotient}
Let $F:\cA\to\cB$ be an exact quotient functor of Grothendieck abelian categories, and suppose $F$ admits a fully faithful right adjoint $G$.
Then the derived adjunction $\cD(F)\dashv\mathrm RG$ is a reflective localization of stable $\infty$-categories.
\end{lemma}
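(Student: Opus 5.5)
The strategy is to reduce the statement to the counit of the derived adjunction. A right adjoint is fully faithful exactly when the counit is an equivalence, so it suffices to show that $\cD(F)\circ\mathrm RG\to\id$ is an equivalence on $\cD(\cB)$.

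\textbf{Step 1: the derived adjunction.} Since $F$ is exact, it extends degreewise to an exact functor $\cD(F):\cD(\cA)\to\cD(\cB)$; this uses \Cref{lem:grothendieck-derived} and the description of the derived $\infty$-category as a localization of complexes.

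\textbf{Step 2: $G$ preserves injectives.} The functor $G$ is a right adjoint, so it is left exact and preserves limits. Because its left adjoint $F$ is exact, $\Hom_\cA(-,GI)\simeq\Hom_\cB(F(-),I)$ is exact whenever $I$ is injective. Hence $G$ sends injectives to injectives.

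\textbf{Step 3: construct $\mathrm RG$ and the adjunction.} Define $\mathrm RG$ using K-injective (fibrant) replacements in the injective model structure on $\Ch(\cB)$. The adjunction $\cD(F)\dashv\mathrm RG$ is a Quillen adjunction for the injective model structures: $F$ preserves monomorphisms (cofibrations) and quasi-isomorphisms, since it is exact. This gives the $\infty$-categorical adjunction via \cite[Proposition 1.3.5.13]{HA} and the standard passage from Quillen adjunctions to adjunctions of underlying $\infty$-categories (\cite[Proposition 5.2.4.6]{HTT}).

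\textbf{Step 4: the counit is an equivalence.} For $M\in\cD(\cB)$, choose a K-injective complex $I$ with $M\simeq I$, whose terms are injective. Then
\[
\cD(F)\mathrm RG(M)\simeq F(G(I)),
\]
computed degreewise, because $\cD(F)$ needs no resolution. Since $G$ is fully faithful, the abelian counit $FG\to\id_\cB$ is an isomorphism. Applied termwise, it gives an isomorphism of complexes $FG(I)\cong I$. Therefore the counit is an equivalence, and $\mathrm RG$ is fully faithful.

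Alternatively, one could check the counit only in the homotopy category, which suffices by \cite[Tag 004Q]{Kerodon}; the formula is the same. Stability of both sides is \Cref{lem:grothendieck-derived}, and $\cD(F)$ is exact since it preserves cofiber sequences.

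\textbf{Main obstacle.} The delicate step is Step 3. Unbounded complexes require K-injective rather than merely termwise injective resolutions. One must also verify that $G$ applied to a K-injective complex computes $\mathrm RG$ compatibly with the $\infty$-categorical adjunction, rather than just producing a triangulated adjunction. I would handle this by working entirely with the Quillen adjunction between injective model structures, where fibrant objects are exactly the K-injective complexes. The right Quillen functor $G$ then computes $\mathrm RG$ on fibrant objects by definition, so the counit computation in Step 4 is legitimate.
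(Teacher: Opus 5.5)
Your argument is correct. Its core matches the paper's: the derived counit at an injective-type replacement $I$ is the termwise map $FG(I)\to I$, which is an isomorphism of complexes because $G$ is fully faithful, and full faithfulness of $\mathrm RG$ then follows from the counit criterion.

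\textbf{Where the routes differ.} The difference is in how the $\infty$-categorical adjunction $\cD(F)\dashv\mathrm RG$ is produced.
\begin{itemize}
\item The paper computes $\mathrm RG$ on bounded-below complexes of injectives. It observes $\Hom^\bullet(A,GI)=\Hom^\bullet(FA,I)$, deduces $\Map(A,\mathrm RG\,I)\simeq\Map(FA,I)$, and assembles the right adjoint pointwise via \cite[Proposition 5.2.2.8]{HTT}.
\item You instead use that $F\dashv G$ is a Quillen adjunction for the injective model structures, since $F$ preserves monomorphisms and quasi-isomorphisms. You then pass to underlying $\infty$-categories.
\end{itemize}

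\textbf{What each buys.} Your route covers the whole unbounded $\cD(\cB)$. Bounded-below injective resolutions exist only for cohomologically bounded-below complexes, so the paper's argument as literally written reaches only $\cD^+(\cB)$. To treat arbitrary objects it needs K-injective (fibrant) replacements, which is exactly what you use. The paper's route, in turn, avoids general model-category machinery.

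\textbf{Corrections.}
\begin{itemize}
\item \cite[Proposition 5.2.4.6]{HTT} is about simplicial Quillen adjunctions between simplicial model categories, and that is not how the injective model structure on chain complexes is presented. Either cite the general theorem that Quillen adjunctions induce adjunctions of underlying $\infty$-categories (Mazel-Gee; Hinich), or run the paper's pointwise representability argument with fibrant $I$ in place of bounded-below ones.
\item State explicitly that every complex is cofibrant in the injective model structure. This is why the derived counit at a fibrant $I$ is literally $FG(I)\to I$, with no cofibrant replacement of $G(I)$ intervening.
\item Your Step 2 is unnecessary in your route, since right Quillen functors preserve fibrant objects automatically, though it is harmless.
\end{itemize}
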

\begin{proof}
In a Grothendieck abelian category every object has a bounded below resolution by injectives \cite[Corollary 1.3.5.7]{HA}.
For such a complex $I$, the complex $\Hom^\bullet(A,I)$ is acyclic whenever $A$ is acyclic, so $\mathrm{RHom}(-,I)$ may be computed by the underived Hom complex into $I$ \cite[\href{https://stacks.math.columbia.edu/tag/070J}{Tag 070J}]{SP}.
Thus $\mathrm RG$ may be computed by applying $G$ degreewise to a bounded below injective resolution.

If $I$ is a bounded below complex of injectives in $\cB$, then so is $GI$ in $\cA$: one has $\Hom^\bullet(A,GI)=\Hom^\bullet(FA,I)$, and $F$ preserves acyclic complexes, so the same Tag~070J criterion applies to $GI$.
The underived counit $\varepsilon:FG\to\mathrm{id}$ is an isomorphism on injectives (since $FGI=I$ termwise), hence induces an equivalence on $\cD(\cB)$ after deriving.

The injective model structure on chain complexes in a Grothendieck abelian category is given by \cite[Proposition 1.3.5.3]{HA}.
The $\infty$-categorical localization at quasi-isomorphisms is given by \cite[Proposition 1.3.5.15]{HA}.
Those two propositions produce the stable $\infty$-categories $\cD(\cA)$ and $\cD(\cB)$, not yet the adjunction between them.

For $I$ a bounded below complex of injectives, the natural isomorphism of mapping complexes gives an equivalence of derived mapping spaces $\Map(A,\mathrm RG\,I)\simeq\Map(FA,I)$.
This uses the dg-nerve comparison of \cite[Remark 1.3.1.12]{HA}, together with \cite[Proposition 1.3.1.17]{HA} and \cite[Proposition 1.3.5.13]{HA}.

Together with the induced unit, one obtains an adjunction of $\infty$-categories by \cite[Proposition 5.2.2.8]{HTT}.
The counit of this adjunction is an equivalence, as recorded above.
For an adjunction $L\dashv R$ of $\infty$-categories, $R$ is fully faithful if and only if the counit is an equivalence: fully faithfulness means that $\Map(X,Y)\to\Map(RX,RY)$ is an equivalence for all $X,Y$ \cite[Definition 1.2.10.1]{HTT}, and the triangle identities identify this map with precomposition against the counit (equivalently, \cite[\href{https://kerodon.net/tag/02FF}{Tag 02FF}]{Kerodon}).
Hence $\mathrm RG$ is fully faithful, and the localization $\cD(F)$ is reflective.
\end{proof}

\begin{proposition}\label{prop:realization}
For a Noetherian $F$-finite $X$, termwise inclusion induces exact equivalences
\begin{align*}
 J_F:\Db(\Coh^F_X)&\simeq\mathcal D^b_{\mathrm{coh}}(\QCoh^F_X),\\
 J_C:\Db(\Coh^C_X)&\simeq\mathcal D^b_{\mathrm{coh}}(\QCoh^C_X).
\end{align*}
\end{proposition}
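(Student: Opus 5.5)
The plan is to reduce to the classical criterion: for a Serre subcategory $\cA$ of an abelian category $\cB$, the functor $\mathrm D^b(\cA)\to\mathrm D^b_{\cA}(\cB)$ is an equivalence as soon as every monomorphism $A\hookrightarrow B$ with $A\in\cA$, $B\in\cB$ admits $A'\subset B$ with $A'\in\cA$ and $A'\cap A=0$ in a suitable sense. The convenient form is the dual ``Artin--Rees'' condition: every epimorphism $B\twoheadrightarrow A$ onto an object of $\cA$ can be refined by an epimorphism from an object of $\cA$; equivalently, every $B\in\cB$ is the filtered union of its subobjects lying in $\cA$. We apply this with $\cA=\Coh^F_X$ (resp.\ $\Coh^C_X$) and $\cB=\QCoh^F_X$ (resp.\ $\QCoh^C_X$). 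First we check that $\QCoh^F_X$ and $\QCoh^C_X$ are Grothendieck abelian, since they are modules over the noncommutative sheaves of rings $\cO_X[F]$ and $\cO_X[F]^{\mathrm{op}}$ (quasi-coherent flavour); colimits and kernels are computed on underlying sheaves because $F_*$ is exact and commutes with colimits (it is affine). We also check that the coherent objects form Serre subcategories: closure under subobjects, quotients and extensions is detected on underlying sheaves, using that $X$ is Noetherian.

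The key step is the ``union'' property. For a Cartier module $(N,\kappa)$ and a coherent subsheaf $N_0\subset N$, the image of $F_*N_0\to N$ is coherent because $X$ is $F$-finite, so $N_0+\kappa(F_*N_0)$ is coherent. For the Frobenius side, given a coherent $M_0\subset M$, we consider the $\cO_X$-submodule generated by $\tau(M_0)$, i.e.\ the image of the adjoint map $F^*M_0\to M$, which is coherent. In both cases iterating does \emph{not} obviously stabilize, and I expect this to be the main obstacle. The Cartier side works instead by taking the sub-Cartier module generated, $\sum_e\kappa^e(F^e_*N_0)$, which need not be coherent in general. So I would rather use the criterion in the epimorphism form: for $B\twoheadrightarrow A$ with $A$ coherent, choose a coherent $B_0\subset B$ surjecting onto $A$ (possible since $B$ is the union of coherent subsheaves). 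For Cartier modules we then replace $B_0$ by $B_0+\kappa(F_*B_0)+\dots$. For the Cartier side I would instead invoke the relevant finiteness input, namely Blickle--Böckle's \cite[Corollary 4.7]{BB} and Gabber's results. Alternatively, and more robustly, I would use the criterion of \cite[Tag 0FCL]{SP} type: it suffices that for every $B$ and every coherent quotient, or every $\mathrm{Ext}$ class, one can effaceably reduce. The effaceability check for $\mathrm{Ext}^i_{\cB}(A,-)$ with $A$ coherent in the $\cA$-direction uses the filtered-colimit presentation $B=\colim B_\lambda$ by coherent subobjects together with compatibility of $\mathrm{Ext}^i_{\cB}(A,-)$ with filtered colimits for $A$ finitely presented (coherent modules over $\cO_X[F]$ are finitely presented because $\cO_X[F]$ is left Noetherian on an $F$-finite Noetherian scheme). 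I will try this compactness route first, since it bypasses stabilization of the iteration.

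Once the triangulated statement $h\Db(\Coh^F_X)\simeq \mathrm D^b_{\mathrm{coh}}(\QCoh^F_X)$ is established, I upgrade it. The termwise inclusion gives an exact $\infty$-functor $\Db(\Coh^F_X)\to\mathcal D^b_{\mathrm{coh}}(\QCoh^F_X)\subset\cD(\QCoh^F_X)$. This functor exists by the universal property of the Verdier quotient in \Cref{def:bounded}, since acyclic complexes go to zero objects, and it lands in the stable subcategory of \Cref{lem:bounded-serre}. Its homotopy functor is the classical comparison functor by \Cref{lem:bounded-stable}(3) and \Cref{lem:bounded-serre}(2). Then \Cref{lem:exact-detection} promotes the triangulated equivalence to an equivalence $J_F$, and likewise $J_C$.
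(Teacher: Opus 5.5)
\noindent The last step of your plan matches the paper. You build the exact $\infty$-functor by localizing termwise inclusion, then promote a triangulated equivalence with \Cref{lem:exact-detection}. The gap is the triangulated equivalence itself. That is the entire mathematical content of the proposition, and the paper does not reprove it; it imports it from \cite[Proposition 2.2.15; Corollary 3.2.24]{Bau}.

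\emph{The Serre criterion fails.} Your route needs every quasi-coherent Frobenius or Cartier module to be a filtered union of coherent sub-objects, or at least the epimorphism form of the Serre-subcategory criterion. Both are false. Take $X=\Spec k$ with $k$ perfect, and let $N=\bigoplus_{i\ge0}ke_i$ with $\kappa(F_*(ae_i))=a^{1/p}e_{i+1}$. This is the free module over the skew polynomial ring $k\{\kappa\}$. If $v\neq0$ has top index $m$, then $\kappa^j(v)$ has top index $m+j$. So every nonzero Cartier submodule of $N$ is infinite-dimensional, and the only coherent Cartier submodule is $0$. Yet $\sum_i a_ie_i\mapsto a_0$ is an epimorphism onto the coherent Cartier module $(k,0)$, and nothing coherent inside $N$ surjects onto it. The same module with $\tau(ae_i)=a^pe_{i+1}$ does this on the Frobenius side. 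The index-lowering variant ($ae_i\mapsto a^{1/p}e_{i-1}$, resp.\ $a^pe_{i-1}$, and $e_0\mapsto0$) has every nonzero submodule containing $e_0$. It shows the dual monomorphism form of the criterion fails as well. So the non-stabilizing iteration you flag is a genuine obstruction, not a technicality. \cite[Corollary 4.7]{BB} and Gabber's results concern descending images $\kappa^e(F^e_*N)$ for \emph{coherent} $N$, and they do not address this ascending generation problem.

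\emph{The fallback does not repair this.} Your compactness route again presupposes a presentation $B=\colim B_\lambda$ by coherent sub-objects, which need not exist. Your justification that $\cO_X[F]$ is left Noetherian is also wrong. For $R=k[x]$, the left ideals $\sum_{e\le n}R[F]\,xF^e$ increase strictly, since their $F^{n+1}$-components are $Rx^p\subsetneq Rx$.

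The proposition is still true; Baudin proves it, and the Serre criterion is simply the wrong tool. Over a perfect field, for example, $k\{F\}$ and $k\{\kappa\}$ are hereditary. A Yoneda-$\mathrm{Ext}$ argument then kills $\mathrm{Ext}^{\ge2}$ in the coherent subcategory too. On a general $X$, however, you would need a genuinely different effaceability argument. As written, either cite Baudin's comparison theorems as the paper does, or supply such an argument. Your upgrade step is fine after that.
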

\begin{proof}
Termwise inclusion preserves quasi-isomorphisms and mapping cones.
By \cite[Proposition 1.3.5.13]{HA}, the stable $\infty$-categories $\mathcal D^b_{\mathrm{coh}}(\QCoh^F_X)$ and $\mathcal D^b_{\mathrm{coh}}(\QCoh^C_X)$ may be obtained by localizing the dg nerves of bounded coherent complexes of quasi-coherent sheaves at quasi-isomorphisms; \cite[Proposition 1.3.5.15]{HA} records the corresponding localization presentations of the unbounded derived $\infty$-categories.
Combined with \Cref{lem:bounded-stable}, termwise inclusion therefore defines exact $\infty$-functors from $\Db(\Coh^F_X)$ and $\Db(\Coh^C_X)$ into those localized $\infty$-categories.
Its image has bounded coherent cohomology.
The homotopy functors of $J_F$ and $J_C$ are precisely the classical comparison functors in \cite[Proposition 2.2.15; Corollary 3.2.24]{Bau}, respectively.
Those comparison functors are equivalences; \Cref{lem:exact-detection} applies to the exact functors just constructed.
\end{proof}

\subsection{Lax equalizers}
\begin{definition}[{Lax equalizer, \cite[Definition II.1.4]{NS}}]\label{def:leq}
For functors $F,G:\cC\to\cD$, define
\[
\begin{tikzcd}
 \LEq(F,G)\ar[r]\ar[d] & (\cD)^{\Delta^1}\ar[d,"{(\ev_0,\ev_1)}"]\\
 \cC\ar[r,"{(F,G)}"']&\cD\times\cD
\end{tikzcd}
\]
as a pullback in $\Catbig_\infty$.
Following \cite[Definition 2.4]{MWC}, for an endofunctor $H:\cC\to\cC$ we set
\[
 \Frob(\cC,H)=\LEq(\id,H)
 \qquad\text{and}\qquad
 \Cart(\cC,H)=\LEq(H,\id).
\]
\end{definition}
An object of $\LEq(F,G)$ is described by $(M,a:FM\to GM)$.
A morphism is an underlying map $f:M\to N$ together with a path between $G(f)a$ and $bF(f)$ in $\Map_{\cD}(FM,GN)$.
If $\cC$ and $\cD$ are stable and $F,G$ are exact, then $\LEq(F,G)$ is stable and the forgetful functor to $\cC$ is exact \cite[Proposition 2.6(e)]{MWC}.
There is a natural equivalence of mapping spaces
\begin{equation}\label{eq:map-fiber}
 \Map_{\LEq(F,G)}((M,a),(N,b))\simeq
 \fib\left(\Map_{\cC}(M,N)
 \xrightarrow{\,G(-)a-bF(-)\,}\Map_{\cD}(FM,GN)\right),
\end{equation}
and likewise for mapping spectra, by \cite[Proposition 2.6(a)]{MWC}.

\begin{proposition}\label{prop:reversal}
Suppose $D:\cC^{\op}\simeq\cD$ is an equivalence and $F,G$ and $F',G'$ are endofunctors of $\cC$ and $\cD$, respectively.
Given natural equivalences
\[
 DF^{\op}\simeq F'D,\qquad DG^{\op}\simeq G'D
\]
in $\Fun(\cC^{\op},\cD)$, there is an equivalence
\[
 \LEq(F,G)^{\op}\simeq\LEq(G',F').
\]
\end{proposition}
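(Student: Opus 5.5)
The plan is to realize both sides as pullbacks and compare the pullback diagrams term by term. First, since $(-)^{\op}:\Catbig_\infty\to\Catbig_\infty$ is an equivalence (an involution), it preserves pullbacks. So $\LEq(F,G)^{\op}$ is the pullback of
\[
 \cC^{\op}\xrightarrow{(F^{\op},G^{\op})}\cD^{\op}\times\cD^{\op}\xleftarrow{(\ev_0,\ev_1)^{\op}}(\cD^{\Delta^1})^{\op}.
\]
Here the $\cD$ of the definition is $\cC$ itself, since $F,G$ are endofunctors.

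Next, identify $(\cC^{\Delta^1})^{\op}\simeq(\cC^{\op})^{(\Delta^1)^{\op}}$, and use the canonical isomorphism $(\Delta^1)^{\op}\cong\Delta^1$ that swaps $0$ and $1$. Under this identification, $(\ev_0,\ev_1)^{\op}$ becomes $(\ev_1,\ev_0)$ on $(\cC^{\op})^{\Delta^1}$; concretely, an arrow $a:FM\to GM$ in $\cC$ becomes an arrow $GM\to FM$ in $\cC^{\op}$. Post-composing with $D$ gives an equivalence $D_*:(\cC^{\op})^{\Delta^1}\simeq\cD^{\Delta^1}$ compatible with evaluations. So $\LEq(F,G)^{\op}$ is the pullback of
\[
 \cC^{\op}\xrightarrow{(DG^{\op},DF^{\op})}\cD\times\cD\xleftarrow{(\ev_0,\ev_1)}\cD^{\Delta^1}.
\]

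Now use the given natural equivalences $DG^{\op}\simeq G'D$ and $DF^{\op}\simeq F'D$. Together they give an equivalence, in $\Fun(\cC^{\op},\cD\times\cD)$, between the bottom functor and $(G',F')\circ D$. Since pullbacks in an $\infty$-category are invariant under equivalences of diagrams (a homotopy between legs induces an equivalence of pullbacks), $\LEq(F,G)^{\op}$ is the pullback of $\cC^{\op}\xrightarrow{(G',F')D}\cD\times\cD\leftarrow\cD^{\Delta^1}$. As $D:\cC^{\op}\to\cD$ is an equivalence, this pullback is equivalent to the pullback along $(G',F')$ itself, namely $\LEq(G',F')$.

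The step I expect to need the most care is the second one: making precise that $(\cC^{\Delta^1})^{\op}\simeq(\cC^{\op})^{\Delta^1}$ intertwines $\ev_0,\ev_1$ with $\ev_1,\ev_0$. I would argue it via $\Fun(K,\cC)^{\op}\simeq\Fun(K^{\op},\cC^{\op})$, natural in $K$, applied to the inclusions $\{0\},\{1\}\subset\Delta^1$. The rest is formal pullback manipulation in $\Catbig_\infty$. As a sanity check, the resulting equivalence sends $(M,a)$ to $DM$ with structure map
\[
 G'DM\simeq DG^{\op}M\xrightarrow{D(a)}DF^{\op}M\simeq F'DM,
\]
matching the formula in \Cref{thm:intro-leq}.
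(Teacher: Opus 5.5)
Your proposal is correct and follows the paper's proof essentially step for step. You use that $(-)^{\op}$ preserves pullbacks in $\Catbig_\infty$, and you identify $(\cC^{\Delta^1})^{\op}\simeq(\cC^{\op})^{\Delta^1}$ with $\ev_0,\ev_1$ exchanged, which gives $\LEq(F,G)^{\op}\simeq\LEq(G^{\op},F^{\op})$. You then use $D$ together with the given natural equivalences to produce an equivalence of cospans with the one defining $\LEq(G',F')$. Your justification of the evaluation swap via $\Fun(K,\cC)^{\op}\cong\Fun(K^{\op},\cC^{\op})$, and your check of the resulting structure map against $D_\omega(\tau)\circ\gamma_M$, are both accurate.
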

\begin{proof}
The functor $(-)^{\op}:\Catbig_\infty\to\Catbig_\infty$ is an involution of $\infty$-categories.
An equivalence of $\infty$-categories preserves both limits and colimits of diagrams.
In particular $(-)^{\op}$ preserves pullbacks in $\Catbig_\infty$.\footnote{This is distinct from the relation $(\lim H)^{\op}\simeq\colim H^{\op}$ for a diagram $H$ taking values in a single $\infty$-category: that relation concerns limits \emph{in} $\cC$, whereas here we take limits \emph{of} $\infty$-categories.}

Reversing an arrow identifies
$\bigl((\cC)^{\Delta^1}\bigr)^{\op}\simeq(\cC^{\op})^{\Delta^1}$, with the two evaluations exchanged.
Applying these operations to \Cref{def:leq} gives
$\LEq(F,G)^{\op}\simeq\LEq(G^{\op},F^{\op})$.
The functor $D$ and the specified natural equivalences give an equivalence between the cospan defining the latter pullback and the cospan defining $\LEq(G',F')$.
Taking pullbacks of these equivalent diagrams proves the assertion.
\end{proof}

\begin{remark}[Adjunction vs duality]
An adjunction $L\dashv R$ gives a covariant equivalence $\Cart(\cC,L)\simeq\Frob(\cC,R)$ \cite[Proposition 2.9; Corollary 2.10]{MWC}.
For Frobenius this involves $\mathrm LF^*\dashv F_*$ on the derived category.
It does not itself reverse arrows or require a dualizing complex and is distinct from \Cref{prop:reversal}.
\end{remark}

\begin{proposition}[Internal Hom pairing]\label{prop:abstract-hom}
Let $\cC$ be a closed monoidal $\infty$-category, and let
$C\mapsto B^C$ denote the right adjoint to $A\mapsto A\otimes B$,
as in \cite[Definition 4.1.1.15; Remark 4.1.1.16]{HA}.
Let $G:\cC\to\cC$ carry a lax monoidal $\infty$-functor structure
in the sense of \cite[Definition 2.1.2.7; Example 2.2.6.10]{HA},
with structure map $\mu_{A,B}:GA\otimes GB\to G(A\otimes B)$.
Then:
\begin{enumerate}
\item
There is a natural transformation
\[
 \chi_{B,C}:G B^C\longrightarrow (GB)^{GC}
\]
in $\Fun(\cC^{\op}\times\cC,\cC)$.
\item
It induces a functor
\[
 \Frob(\cC,G)^{\op}\times\Cart(\cC,G)
       \longrightarrow\Cart(\cC,G)
\]
whose value on $(B,\tau:B\to GB)$ and $(C,\kappa:GC\to C)$
has underlying object $B^C$ and structure map
\begin{equation}\label{eq:abstract-hom-structure}
 G B^C\xrightarrow{\chi_{B,C}}(GB)^{GC}
                \xrightarrow{\tau^\kappa}B^C.
\end{equation}
\end{enumerate}
\end{proposition}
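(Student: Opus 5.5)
For part (1), I construct $\chi$ as the mate of the lax monoidal structure under the tensor--internal-Hom adjunction. The evaluation counit $\mathrm{ev}_{B,C}:B^C\otimes C\to B$ is natural in $(B,C)\in\cC\times\cC^{\op}$; this holds because the family of right adjoints $C\mapsto B^C$ assembles into a functor $\cC^{\op}\times\cC\to\cC$ by \cite[Proposition 5.2.2.8]{HTT} applied to the parametrized adjunction. I then form the composite
\[
 G(B^C)\otimes GC\xrightarrow{\mu_{B^C,C}}G(B^C\otimes C)\xrightarrow{G(\mathrm{ev}_{B,C})}GB,
\]
which is natural in both variables because $\mu$ is a natural transformation of functors $\cC\times\cC\to\cC$. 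Transposing along $(-)\otimes GC\dashv(-)^{GC}$ gives $\chi_{B,C}:G(B^C)\to(GB)^{GC}$. To make this a morphism in $\Fun(\cC^{\op}\times\cC,\cC)$ rather than an objectwise family, I phrase the adjunction as an equivalence of mapping spaces $\Map(X,Y^{Z})\simeq\Map(X\otimes Z,Y)$ natural in all three variables. The transpose is then the image of a natural transformation under an equivalence of functor categories $\Fun(\cC^{\op}\times\cC,\cC)$, obtained via the Yoneda embedding from this natural equivalence.

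For part (2), I first isolate the operation $(\tau,\kappa)\mapsto\tau^\kappa$. Functoriality of internal Hom in both variables gives a functor $\cC^{\op}\times\cC\to\cC$, $(C,B)\mapsto B^C$. Applying it to arrows gives
\[
 (\cC^{\Delta^1})^{\op}\times\cC^{\Delta^1}\simeq(\cC^{\op}\times\cC)^{\Delta^1}\to\cC^{\Delta^1}.
\]
The key variance check is this: a Frobenius structure $\tau:B\to GB$ is covariant in $B$, and a Cartier structure $\kappa:GC\to C$, read in $\cC^{\op}$, is an arrow $C\to GC$. The arrow $(C\to GC,\;B\to GB)$ in $\cC^{\op}\times\cC$ therefore maps to $\tau^\kappa:B^C\to(GB)^{GC}$. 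Here $\tau^\kappa$ uses $\kappa$ contravariantly and $\tau$ covariantly, which is exactly the direction needed in \eqref{eq:abstract-hom-structure}.

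Some care is needed with the claim that $\Frob(\cC,G)^{\op}$ is the correct source. Following the identification in the proof of \Cref{prop:reversal}, $\Frob(\cC,G)^{\op}\simeq\LEq(G^{\op},\id)$ on $\cC^{\op}$. I would double-check the stated variances ($B$ in the Frobenius slot appears covariantly in $B^C$, and $C$ contravariantly), adjusting the labelling if needed so that the product of lax equalizers maps to $\LEq$ on $\cC^{\op}\times\cC$ of the functors $(\id,G)$ and $(G,\id)$ appropriately.

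With the variances fixed, I assemble the functor using the universal property of the pullback in \Cref{def:leq}. I give a functor from the product of the two lax equalizers to $\cC$, namely $(B,C)\mapsto B^C$, and a functor to $\cC^{\Delta^1}$, namely the composite arrow \eqref{eq:abstract-hom-structure}, $G(B^C)\to(GB)^{GC}\to B^C$. Both are built from the projections to $\cC$ and $\cC^{\Delta^1}$ together with the functoriality of internal Hom. The arrow is obtained by composing in $\cC^{\Delta^1}$: first the whiskering of $\chi$ along the projections, then the image of the structure arrows under internal Hom. Concretely, I use the composition functor $\cC^{\Delta^1}\times_{\cC}\cC^{\Delta^1}\to\cC^{\Delta^1}$, available via $\cC^{\Delta^2}\simeq\cC^{\Lambda^2_1}$. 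I then check that the endpoints are $G(B^C)$ and $B^C$, compatibly with the map to $\cC$, which is a homotopy-coherent matching of evaluations and can be arranged by construction.

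I expect the main obstacle to be step (1)'s coherent naturality of $\chi$ in both variables together with the bookkeeping of variances in (2). The rest is formal pullback manipulation.
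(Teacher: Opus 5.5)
There is a genuine gap in part (2), and it sits exactly where you hedged (``adjusting the labelling if needed'').

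\textbf{The convention.} The statement defines $C\mapsto B^C$ as the right adjoint of $A\mapsto A\otimes B$. So $B^C$ is the internal Hom \emph{from} $B$ \emph{to} $C$: one has $\Map(A,B^C)\simeq\Map(A\otimes B,C)$, evaluation is $B^C\otimes B\to C$, and $B^C$ is contravariant in $B$ and covariant in $C$. You use the opposite exponential convention, with evaluation $B^C\otimes C\to B$. For $\chi$ this only renames the variables.

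\textbf{Why part (2) fails as written.} Your own variance computation correctly gives $\tau^\kappa\colon B^C\to(GB)^{GC}$. This cannot be composed with $\chi\colon G(B^C)\to(GB)^{GC}$. So the claim that this is ``exactly the direction needed'' is false, and the composite $G(B^C)\to(GB)^{GC}\to B^C$ you later assemble does not exist. With the Frobenius object in the covariant slot and the Cartier object in the contravariant slot, $\tau$ and $\kappa$ both point the wrong way. In that configuration the data give no map $G(B^C)\to B^C$ at all.

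\textbf{The fix.} The correct bookkeeping is the reverse of yours. Your own observation $\Frob(\cC,G)^{\op}\simeq\LEq(G^{\op},\id)$ on $\cC^{\op}$ already dictates it: the Frobenius object goes into $\cC^{\op}$, where $\tau$ becomes an arrow $GB\to B$. Meanwhile $\kappa\colon GC\to C$ stays in $\cC$. Internal Hom then sends the arrow $(GB,GC)\to(B,C)$ of $\cC^{\op}\times\cC$ to $\tau^\kappa\colon(GB)^{GC}\to B^C$. This is why the source is $\Frob(\cC,G)^{\op}\times\Cart(\cC,G)$. Once the direction is fixed, your assembly is the same as the paper's: take the underlying functor $(B,C)\mapsto B^C$, form the arrow by composing the whiskered $\chi$ with $\tau^\kappa$, and apply the pullback property of $\Cart(\cC,G)$.

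\textbf{A smaller point in part (1).} The evaluation $B^C\otimes C\to B$ (in your convention) is not a natural transformation of functors on $\cC\times\cC^{\op}$. The variable $C$ occurs with mixed variance in the source, so the map is only dinatural there (compare \Cref{rem:resolution}). Transposing this composite therefore does not by itself give a morphism in $\Fun(\cC^{\op}\times\cC,\cC)$. Your fallback is the right repair, and it is what the paper does:
\begin{enumerate}
\item Introduce an independent test variable $A$. The composite $\Map(A,B^C)\simeq\Map(A\otimes B,C)\to\Map(G(A\otimes B),GC)\to\Map(GA\otimes GB,GC)\simeq\Map(GA,(GB)^{GC})$ is natural on $\cC^{\op}\times\cC^{\op}\times\cC$.
\item Equivalently, with $y$ the Yoneda embedding, this is a map of presheaves $yB^C\to G^*y(GB)^{GC}$.
\item Pass to the left adjoint $G_!$ of $G^*$, and use $G_!y\simeq yG$ and full faithfulness of $y$. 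This gives $\chi$, whose transpose is $G(B^C)\otimes GB\xrightarrow{\mu}G(B^C\otimes B)\xrightarrow{G(\ev)}GC$.
\end{enumerate}
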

\begin{proof}
For independent variables $A,B,C$, closedness and the lax tensor map give
\begin{align*}
 \Map(A,B^C)
 &\simeq\Map(A\otimes B,C)\\
 &\longrightarrow\Map(G(A\otimes B),GC)\\
 &\longrightarrow\Map(GA\otimes GB,GC)\\
 &\simeq\Map(GA,(GB)^{GC}).
\end{align*}
The first middle arrow is the map on mapping spaces induced by the $\infty$-functor $G$; the second
precomposes with $\mu_{A,B}$.
This composite is natural on
$\cC^{\op}\times\cC^{\op}\times\cC$.

Write $y$ for the Yoneda embedding and $G^*$ for precomposition by $G$ on presheaves, enlarging the universe if necessary.
The above composite is a natural transformation
\[
 yB^C\longrightarrow G^*y (GB)^{GC}
\]
of presheaves in the independent variable $A$.
This is a transformation of presheaf-valued functors of $(B,C)$.
The functor $\infty$-category of presheaves is presentable, and precomposition $G^*$ preserves limits (they are computed pointwise), so $G^*$ has a left adjoint $G_!$ by the adjoint functor theorem \cite[Corollary 5.5.2.9]{HTT}.
For $P$ a presheaf, the adjunction and the Yoneda lemma give
$\Map(G_!y(X),P)\simeq (G^*P)(X)=P(GX)\simeq\Map(y(GX),P)$,
hence $G_!y\simeq yG$
\cite[Proposition 5.1.3.1]{HTT}.
Taking the mate in the functor category with parameters $(B,C)$
therefore gives a natural transformation
\[
 yG B^C\longrightarrow y (GB)^{GC}.
\]
Full faithfulness of the Yoneda embedding yields the natural transformation $\chi$.
Under the tensor-Hom adjunction, $\chi_{B,C}$ corresponds to the composite
\[
 G B^C\otimes GB\xrightarrow{\mu}
 G(B^C\otimes B)\xrightarrow{G(\ev)}GC.
\]

On $\Frob(\cC,G)^{\op}\times\Cart(\cC,G)$, the universal structure
arrows $\tau$ and $\kappa$ in the defining pullbacks are natural
transformations.
Contravariance in the first variable and covariance in the second make
$\tau^\kappa$ a natural transformation
$(GB)^{GC}\to B^C$ on this product.
Thus \eqref{eq:abstract-hom-structure} is a natural transformation $G(B^C)\to B^C$ on that product of lax equalizers.
The pullback universal property of $\Cart(\cC,G)$ gives the claimed functor.
\end{proof}

\section{Finite duality and opposite Frobenius structures}\label{sec:finite}
\subsection{Quasi-coherent complexes on the Zariski site}
Throughout this section $X$ is a Noetherian scheme.
We work only with the Zariski topology of $X$, and with the derived $\infty$-categories of ordinary rings from \cite[Definition 1.3.5.8; Theorem 7.1.2.13]{HA}.

\begin{definition}\label{def:dqc}
Let $\Aff_X$ be the category of affine open subschemes of $X$, with inclusions as morphisms.
The assignment $U\mapsto\cD(\Gamma(U,\cO_U))$ is a functor $\Aff_X^{\op}\to\Catbig_\infty$: an inclusion $\Spec R_f\to\Spec R$ is sent to derived extension of scalars $(-\otimes_R^{\mathrm L}R_f)$.
Set
\[
 \Dqc(X)=\lim_{U\in\Aff_X^{\op}}\cD\bigl(\Gamma(U,\cO_U)\bigr)
\]
in $\Catbig_\infty$.
For a not necessarily affine open $V\subset X$, put
\[
 \Dqc(V)=\lim_{U\in\Aff_V^{\op}}\cD(\Gamma(U,\cO_U)).
\]
Write also $\cD(X)$ for the stable derived $\infty$-category of the Grothendieck abelian category of all $\cO_X$-modules, in the sense of \Cref{lem:grothendieck-derived}.
The functor $\Fun(K,-):\Catbig_\infty\to\Catbig_\infty$ is right adjoint to cartesian product with $K$, by the exponential law for simplicial sets.
Right adjoints preserve limits, so
$$\Fun(K,\Dqc(X))\simeq\lim_{U\in\Aff_X^{\op}}\Fun(K,\cD(\Gamma(U,\cO_U)))$$
for every simplicial set $K$.
Taking $K=\Delta^0$ and $K=\Delta^1$, an object of $\Dqc(X)$ is a point of the limit of the spaces of objects, and a morphism is a point of the limit of the spaces of morphisms.
Unpacking the limit, this is a diagram: for each affine $U$ an object $M_U\in\cD(\Gamma(U,\cO_U))$, for each inclusion a specified equivalence $M_U\otimes^{\mathrm L}\Gamma(V,\cO_V)\simeq M_V$, and the usual higher data of a limit cone.
Write $\Dbc(X)\subset\Dqc(X)$ for the full subcategory of objects $M$ whose restriction to every affine open $U=\Spec R\subset X$ lies in $\mathcal D^b_{\mathrm{coh}}(R)\subset\cD(R)$, in the sense of \Cref{lem:bounded-serre} for $\cA=\Coh(R)$, the abelian category of coherent $R$-modules.
\end{definition}

\begin{lemma}\label{lem:projections-conservative}
Let $F:I\to\Catbig_\infty$ be a diagram and write $\cC=\lim F$.
The projection functors $\cC\to F(i)$ are jointly conservative: a morphism in $\cC$ is an equivalence if and only if each of its images in $F(i)$ is an equivalence.
\end{lemma}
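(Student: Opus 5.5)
The plan is to use an explicit model for limits in $\Catbig_\infty$: the limit of a diagram is the $\infty$-category of cartesian sections of the associated cartesian fibration. The implication "equivalence in $\cC$ implies equivalence in each $F(i)$" is immediate, because every functor preserves equivalences. The converse is the real content, and I will work pointwise on morphisms. By the exponential law already used in \Cref{def:dqc}, the map $\Fun(\Delta^1,\cC)\to\lim_i\Fun(\Delta^1,F(i))$ is an equivalence, and the same holds with $\Delta^1$ replaced by the walking-equivalence $J$ (the nerve of the contractible groupoid on two objects). Here $\Fun(J,-)$ is also a right adjoint, so it preserves limits.

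The key step is to compare these two limits. For any $\infty$-category $\cE$, restriction along $\Delta^1\to J$ gives a functor $\Fun(J,\cE)\to\Fun(\Delta^1,\cE)$. This functor is fully faithful, and its essential image is the full subcategory $\Fun^{\simeq}(\Delta^1,\cE)$ spanned by the equivalences (\cite[Tag 01EC]{Kerodon}-style statement; alternatively, a morphism is an equivalence iff it extends along $\Delta^1\to J$, and the space of extensions is contractible). The construction is natural in $\cE$, so we obtain a natural transformation of $I$-indexed diagrams. Since limits in $\Catbig_\infty$ of fully faithful functors are fully faithful, and are computed on the level of mapping spaces, we get an identification
\[
\lim_i\Fun^{\simeq}(\Delta^1,F(i))\subset\lim_i\Fun(\Delta^1,F(i)).
\]
This is the full subcategory of those objects whose components are all equivalences; fullness holds because a limit of full subcategory inclusions is the full subcategory on objects that lie in each piece pointwise.

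Now let $f$ be a morphism of $\cC$ whose images in all $F(i)$ are equivalences. Its image in $\lim_i\Fun(\Delta^1,F(i))$ then lies in $\lim_i\Fun^{\simeq}(\Delta^1,F(i))\simeq\lim_i\Fun(J,F(i))\simeq\Fun(J,\cC)$. Hence $f$ extends along $\Delta^1\to J$, which means $f$ is an equivalence in $\cC$.

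The main obstacle is the fullness claim in the displayed identification. One has to check that the full subcategory of a limit determined by pointwise conditions really is the limit of the full subcategories. I will argue this on mapping spaces and spaces of objects: the space of objects of a limit is the limit of the spaces of objects, by the $K=\Delta^0$ case of \Cref{def:dqc}. Limits of spaces commute with the componentwise subspace condition, because a union of path components is detected pointwise. An alternative, if needed, is to avoid $J$ altogether: use the $K=\Delta^0$ and $K=\Delta^1$ descriptions to note that inverting $f$ amounts to finding compatible inverses, and that inverses in each $F(i)$ are unique up to contractible choice. That contractibility lets them assemble in the limit.
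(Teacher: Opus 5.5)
Your argument is correct, but it takes a different route from the paper's proof.

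The paper uses the Yoneda embedding. It first identifies mapping spaces in the limit: it computes $\Map_{\cC}(X,Y)$ as a fibre of $\cC^{\Delta^1}\simeq\lim_i F(i)^{\Delta^1}\to\cC\times\cC$, giving $\Map_{\cC}(X,Y)\simeq\lim_i\Map_{F(i)}(\pi_iX,\pi_iY)$. It then observes that if every $\pi_i(f)$ is an equivalence, each $\Map(Z,f)$ is a limit of equivalences of spaces, hence an equivalence, so $f$ is an equivalence.

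You instead corepresent invertibility by the walking isomorphism $J$. This turns conservativity into a question about the essential image of $\lim_i\Fun(J,F(i))\to\lim_i\Fun(\Delta^1,F(i))$, that is, about objects of a limit rather than mapping spaces.

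What each route costs and buys:
\begin{enumerate}
\item The paper's route needs only the elementary fact that a limit of equivalences of spaces is an equivalence. It also produces the mapping-space formula for limits, which is reused later: for the adjunction in \Cref{lem:finite-push} and for full faithfulness of $\Phi$ in \Cref{thm:descent}.
\item Your route needs a heavier input: that restriction $\Fun(J,\cE)\to\Fun(\Delta^1,\cE)$ is an equivalence onto the full subcategory of invertible arrows (Joyal's extension theorem). It also needs the fact that cores commute with limits. In exchange it avoids the Yoneda embedding altogether.
\end{enumerate}

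Three small corrections to how you wrote it:
\begin{enumerate}
\item Your conclusion only needs the essential-image statement on cores. So the step ``limits of fully faithful functors are fully faithful'' is not used. That step would in any case rest on the paper's mapping-space formula.
\item The identification of the space of objects of a limit with the limit of the spaces of objects does not come from the $K=\Delta^0$ case of the exponential law, which is tautological. It holds because the core functor $(-)^{\simeq}$ is right adjoint to the inclusion of spaces into $\infty$-categories, and so preserves limits.
\item ``$f$ extends along $\Delta^1\to J$'' should read ``$f$ is equivalent in $\Fun(\Delta^1,\cC)$ to an arrow that extends.'' This suffices, since being an equivalence is invariant under equivalence in the arrow category.
\end{enumerate}
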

\begin{proof}
Functors preserve equivalences, so it remains to reflect them.
Write $\pi_i:\cC\to F(i)$ for the projections.
The functor $(-)^{\Delta^1}$ is a right adjoint, as in \Cref{def:dqc}, hence preserves limits:
$(\cC)^{\Delta^1}\simeq\lim_i(F(i))^{\Delta^1}$.
For any $\infty$-category $\cE$, the mapping space $\Map_{\cE}(X,Y)$ is (represented by) the fibre
$\Hom_{\cE}(X,Y)=\{X\}\times_{\cE}\cE^{\Delta^1}\times_{\cE}\{Y\}$ of the source-and-target map
$(\cE)^{\Delta^1}\to\cE\times\cE$ over the point $(X,Y)$ \cite[Remark 1.2.2.5]{HTT}.
Applying this fibrewise to the limit cone for $\cC=\lim F$, and using that fibres commute with limits of spaces, one obtains
$\Map_{\cC}(X,Y)\simeq\lim_i\Map_{F(i)}(\pi_iX,\pi_iY)$.
A morphism $f$ in $\cC$ is an equivalence if and only if $\Map(Z,f)$ is an equivalence of spaces for every object $Z$, by the Yoneda embedding \cite[Proposition 5.1.3.1]{HTT}.
If each $\pi_i(f)$ is an equivalence, then each $\Map(\pi_iZ,\pi_i(f))$ is an equivalence of spaces, and a limit of equivalences of spaces is an equivalence.
Thus $f$ is an equivalence.
\end{proof}

\begin{lemma}\label{lem:dqc-affine}
For a commutative ring $R$ there is a canonical equivalence
\[
 \Dqc(\Spec R)\simeq\cD(R)\simeq\cMod_{HR}.
\]
Here $\cMod_{HR}$ is the stable $\infty$-category of modules over the Eilenberg--MacLane spectrum $HR$.
\end{lemma}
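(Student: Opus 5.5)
The plan is to show that $\Spec R$ is a terminal object of $\Aff_{\Spec R}$, so that the limit collapses. The second equivalence $\cD(R)\simeq\cMod_{HR}$ is a citation: it is \cite[Theorem 7.1.2.13]{HA}, the comparison between the derived $\infty$-category of an ordinary ring and modules over its Eilenberg--MacLane spectrum. So the work lies in the first equivalence.

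First, $X=\Spec R$ is itself an affine open subscheme of $X$. Every affine open $U\subset X$ includes into it uniquely, because $\Aff_X$ is a poset. Hence $X$ is a terminal object of $\Aff_X$, and equivalently an initial object of $\Aff_X^{\op}$.

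Second, I invoke the general fact that the limit of a diagram $I\to\Catbig_\infty$ indexed by an $\infty$-category $I$ with an initial object $i_0$ is computed by evaluation at $i_0$. Concretely, the inclusion $\{i_0\}\to I$ is initial (left cofinal), so restriction along it preserves limits \cite[Proposition 4.1.1.8]{HTT}, and a limit over $\Delta^0$ is just the value. The canonical equivalence is therefore the projection
\[
\Dqc(\Spec R)\to\cD(\Gamma(\Spec R,\cO))=\cD(R).
\]
Its quasi-inverse sends $M$ to the compatible family $U\mapsto M\otimes^{\mathrm L}_R\Gamma(U,\cO_U)$. The coherence data of this family come from the functoriality of the diagram itself, namely from composites of the base-change functors.

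The only point requiring care is that the diagram of \Cref{def:dqc} is a genuine functor $\Aff_X^{\op}\to\Catbig_\infty$, so that the cofinality statement applies. For this I would note that derived extension of scalars is functorial in the ring map. One can see this, for instance, via $\cMod_{H(-)}$ and the relative tensor product of \cite{HA}, restricted along $U\mapsto\Gamma(U,\cO_U)$. The definition already asserts this functoriality, so I expect no real obstacle beyond recording the cofinality citation. As a consistency check, \Cref{lem:projections-conservative} shows the projection is conservative, but with the initial-object argument it is not needed.
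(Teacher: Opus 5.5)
Your proposal is correct and follows the paper's proof essentially step for step. $\Spec R$ is terminal in $\Aff_{\Spec R}$, so the inclusion of the initial object of $\Aff_{\Spec R}^{\op}$ is left cofinal and evaluation at $\Spec R$ computes the limit; the paper uses the same quasi-inverse $M\mapsto(M\otimes^{\mathrm L}_R\Gamma(U,\cO_U))_U$ and the same citation of HA Theorem 7.1.2.13 for $\cD(R)\simeq\cMod_{HR}$. The only difference is in the references: the paper cites Kerodon Tags 03LQ and 02NS for the cofinality facts, whereas HTT Proposition 4.1.1.8 is the colimit version, so you would need its dual statement for limits.
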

\begin{proof}
The second equivalence is \cite[Theorem 7.1.2.13]{HA}.
We reserve the roman notation $\Mod_R$ for the ordinary abelian category of $R$-modules.
We construct a functor $\ev:\Dqc(\Spec R)\to\cD(R)$ by evaluation at the identity open $\Spec R$, i.e., by the canonical projection from the limit to the corresponding term, and prove that $\ev$ is an equivalence of $\infty$-categories.

The identity open $\Spec R$ is terminal in $\Aff_{\Spec R}$.
The indexing category $I=\Aff_{\Spec R}^{\op}$ of the limit therefore has an initial object.
The inclusion of an initial object is left cofinal \cite[\href{https://kerodon.net/tag/03LQ}{Tag 03LQ}]{Kerodon}.
Left cofinal morphisms compute limits \cite[\href{https://kerodon.net/tag/02NS}{Tag 02NS}]{Kerodon}.
Thus $\ev$ is an equivalence in $\Catbig_\infty$.
In particular it induces equivalences of all mapping spaces, not merely a bijection on objects.

The inverse sends $M\in\cD(R)$ to the family $\bigl(M\otimes_R^{\mathrm L}\Gamma(U,\cO_U)\bigr)_U$.
This is the image of $M$ under the structure maps of the diagram, so it is a section of the limit.
\end{proof}

\begin{definition}\label{def:dualizing-dqc}
Let $X$ be a locally Noetherian scheme and let $\omega\in\Dqc(X)$.
We say that $\omega$ is \textit{dualizing} if, on every affine open $\Spec R\subset X$, the corresponding object of $\cD(R)$ is a dualizing complex over the Noetherian ring $R$ in the sense of \cite[\href{https://stacks.math.columbia.edu/tag/0A7B}{Tag 0A7B}]{SP}.
\end{definition}
Under \Cref{lem:dqc-affine} and \Cref{lem:bounded-serre}(2), this matches the usual local characterization of a dualizing complex in the triangulated category $D(\cO_X)$ \cite[\href{https://stacks.math.columbia.edu/tag/0A86}{Tag 0A86}; \href{https://stacks.math.columbia.edu/tag/0A87}{Tag 0A87}]{SP}.

\begin{lemma}\label{lem:dqc-coproduct}
If $X=X'\amalg X''$ is a finite coproduct of schemes, then $\Dqc(X)\simeq\Dqc(X')\times\Dqc(X'')$.
\end{lemma}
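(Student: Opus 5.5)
The idea is to split the indexing category $\Aff_X$ along the decomposition $X=X'\amalg X''$ and compute the limit defining $\Dqc(X)$ piece by piece.

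\emph{Step 1: decompose the affines.} An affine open $U\subset X$ decomposes uniquely as $U=U'\amalg U''$ with $U'=U\cap X'$ and $U''=U\cap X''$. Both pieces are affine, since they are open and closed in $U$. There is a canonical ring isomorphism
\[
\Gamma(U,\cO_U)\cong\Gamma(U',\cO)\times\Gamma(U'',\cO).
\]
For a product ring $R=R'\times R''$, I would show that $\cD(R)\simeq\cD(R')\times\cD(R'')$. Via $M\mapsto(M\otimes^{\mathrm L}_RR',\,M\otimes^{\mathrm L}_RR'')$ this follows from the idempotent splitting of $R$: the functor is exact, and on homotopy categories it is the classical equivalence for product rings. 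So \Cref{lem:exact-detection} applies, or one can argue directly in $\cMod_{HR}$ via \Cref{lem:dqc-affine}. This equivalence is natural in $R$ for maps of product rings, hence natural in $U\in\Aff_X^{\op}$. Consequently the diagram $U\mapsto\cD(\Gamma(U,\cO_U))$ is equivalent to the product of the two diagrams $U\mapsto\cD(\Gamma(U',\cO))$ and $U\mapsto\cD(\Gamma(U'',\cO))$.

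\emph{Step 2: commute limits with products.} Limits commute with products in $\Catbig_\infty$, so
\[
\Dqc(X)\simeq\lim_{U}\cD(\Gamma(U',\cO))\times\lim_U\cD(\Gamma(U'',\cO)).
\]

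\emph{Step 3: a cofinality argument.} It remains to identify $\lim_{U\in\Aff_X^{\op}}\cD(\Gamma(U\cap X',\cO))$ with $\Dqc(X')$. The functor $\Aff_X\to\Aff_{X'}$, $U\mapsto U\cap X'$, has a fully faithful left adjoint, namely the inclusion $\Aff_{X'}\subset\Aff_X$. To check the adjunction: for $V\subset X'$ affine, $V\subset U$ if and only if $V\subset U\cap X'$. Hence on opposite categories, $\Aff_X^{\op}\to\Aff_{X'}^{\op}$ is a left adjoint, and the inclusion $\Aff_{X'}^{\op}\to\Aff_X^{\op}$ is its right adjoint. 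A functor admitting a left adjoint is left cofinal, i.e.\ limit-cofinal (Kerodon), so restricting the diagram along the inclusion $\Aff_{X'}^{\op}\subset\Aff_X^{\op}$ does not change the limit. On $\Aff_{X'}$ the diagram is literally the one defining $\Dqc(X')$, and the same argument applies to $X''$.

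The empty affine, with $\cD(0)=0$, appears harmlessly throughout.

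I expect the main obstacle to be making the naturality in Step 1 precise at the $\infty$-categorical level, that is, producing an equivalence of diagrams rather than a family of objectwise equivalences. The plan for this is to define the comparison as the natural transformation induced by the two base-change functors along the projections $\Gamma(U)\to\Gamma(U')$ and $\Gamma(U)\to\Gamma(U'')$. These projections are natural in $U$, and base change is functorial because $\cD(-)$ is a functor of rings. Once the transformation exists, being an equivalence is checked objectwise, which is exactly the product-ring statement above.
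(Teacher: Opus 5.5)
Your overall plan (split each affine, use $\cD(R'\times R'')\simeq\cD(R')\times\cD(R'')$, commute the limit with the product, then shrink the index category) is sound and is essentially the paper's route. However, Step~3 rests on a false claim and needs repair.

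\textbf{The error.} A functor $G$ with a left adjoint $L$ is \emph{right} cofinal (colimit-cofinal), not left cofinal. For each $d$, the comma category of pairs $(c,\,d\to Gc)$ has the initial object $(Ld,\eta_d)$, which is the right-cofinality criterion. It is functors with a \emph{right} adjoint that are left cofinal. Concretely, the inclusion $i^{\op}\colon\Aff_{X'}^{\op}\to\Aff_X^{\op}$ is not limit-cofinal. For $U\in\Aff_X$ with $U\cap X''\neq\emptyset$, the relevant comma category is the poset of $V\in\Aff_{X'}$ with $U\subseteq V$, and this poset is empty. As a sanity check, restricting the full diagram $U\mapsto\cD(\Gamma(U,\cO_U))$ along $i^{\op}$ produces $\Dqc(X')$, not $\Dqc(X)$. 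So ``restricting along the inclusion does not change the limit'' is not a valid principle here; it holds only for diagrams pulled back from $\Aff_{X'}^{\op}$.

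\textbf{The fix.} Apply cofinality to the other functor. Write $r(U)=U\cap X'$, and let $G'$ be the diagram defining $\Dqc(X')$; your diagram after Step~2 is exactly $G'\circ r^{\op}$.
\begin{enumerate}
\item Since $r^{\op}$ has the right adjoint $i^{\op}$, it is left cofinal. Directly: for $V\in\Aff_{X'}$ the comma category is $\{U\in\Aff_X: V\subseteq U\}$, which has $V$ as a terminal object in the opposite order.
\item Hence the restriction map $\Dqc(X')=\lim G'\to\lim_{\Aff_X^{\op}}G'\circ r^{\op}$ is an equivalence.
\item Because $r\circ i=\id$, restriction along $i^{\op}$ is a left inverse of this equivalence, so it is also an equivalence. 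This recovers the statement you wanted.
\end{enumerate}
With this change your argument is complete.

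\textbf{Comparison with the paper.} The paper identifies $\Aff_X\simeq\Aff_{X'}\times\Aff_{X''}$, under which $r$ becomes the first projection. It obtains $\cD(R\times S)\simeq\cD(R)\times\cD(S)$ by splitting chain complexes before localizing. Its closing phrase ``limits of products are products of limits'' tacitly uses the same cofinality you need: projection off the weakly contractible factor $\Aff_{X''}^{\op}$ (weakly contractible because it contains $\emptyset$) is limit-cofinal. Your Step~1, via base change along the two idempotent projections plus \Cref{lem:exact-detection}, is an equally valid alternative to the paper's chain-level argument.
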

\begin{proof}
Every affine open of $X$ is a coproduct $U'\amalg U''$ of affine opens of the factors, possibly empty, and $\Aff_X\simeq\Aff_{X'}\times\Aff_{X''}$.
The abelian categories of modules satisfy $\Mod_{R\times S}\simeq\Mod_R\times\Mod_S$, hence $\mathrm{Ch}(\Mod_{R\times S})\simeq\mathrm{Ch}(\Mod_R)\times\mathrm{Ch}(\Mod_S)$ with componentwise quasi-isomorphisms.
The derived $\infty$-categories are the localizations of these chain categories at quasi-isomorphisms \cite[Definition 1.3.5.8; Proposition 1.3.5.15]{HA}, so $\cD(R\times S)\simeq\cD(R)\times\cD(S)$.
Limits of products are products of limits.
\end{proof}

\begin{lemma}\label{lem:affine-cech}
Let $R$ be a commutative ring and let $f_1,\dots,f_n\in R$ generate the unit ideal.
The \v Cech nerve of the standard open cover $\Spec R=\bigcup_i D(f_i)$ induces an equivalence
\[
 \cD(R)\simeq\lim_{[k]\in\Delta}\prod_{i_0,\dots,i_k}\cD(R_{f_{i_0}\cdots f_{i_k}}).
\]
\end{lemma}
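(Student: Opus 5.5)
The strategy is to deduce the statement from Zariski descent for $\cD(-)$, in the form of Lurie's descent for quasi-coherent sheaves on affine schemes, applied to the finite cover $\{D(f_i)\}$. The argument has three steps.

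\textbf{Step 1: the augmented cosimplicial diagram.} Write $U=\Spec R$ and $U_k=\coprod_{i_0,\dots,i_k}D(f_{i_0}\cdots f_{i_k})$. Each $U_k$ is the $(k+1)$-fold fibre product of $U_0=\coprod_i D(f_i)$ over $U$, and it is affine, equal to $\Spec\prod R_{f_{i_0}\cdots f_{i_k}}$. Base change along the coface maps gives an augmented cosimplicial $\infty$-category
\[
 \cD(R)\to\cD\Bigl(\prod_{i_0,\dots,i_k}R_{f_{i_0}\cdots f_{i_k}}\Bigr).
\]
By \Cref{lem:dqc-coproduct} and the product decomposition of derived categories proved there, the $k$-th term identifies with $\prod\cD(R_{f_{i_0}\cdots f_{i_k}})$. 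These are exactly the terms in the statement.

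\textbf{Step 2: faithfully flat descent.} The map $R\to S:=\prod_i R_{f_i}$ is flat, and it is faithfully flat because the $f_i$ generate the unit ideal. I will invoke Lurie's theorem that $\cMod$ satisfies flat (in particular Zariski) descent for connective $E_\infty$-rings: \cite[Theorem D.6.3.1 / Corollary D.6.3.3]{SAG}, or, for Zariski covers, the standard Beck--Chevalley/Lurie--Barr--Beck argument. Concretely, I would show that the functor $\cD(R)\to\lim_\Delta\cD(S^{\otimes_R(\bullet+1)})$ is an equivalence using the Lurie--Barr--Beck theorem \cite[Theorem 4.7.5.2]{HA}. This reduces to three checks.
\begin{enumerate}
\item Base change $-\otimes_R S$ is conservative. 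A complex $M$ with $M\otimes_R R_{f_i}\simeq 0$ for all $i$ has all cohomology modules vanishing after each localization, hence vanishing, since the $D(f_i)$ cover $\Spec R$ and localization is exact.
\item Base change preserves totalizations of $(-\otimes_R S)$-split cosimplicial objects. This holds because $S$ is a finite product of flat modules, so $-\otimes_R S$ is exact and also preserves all limits. Indeed, $R_f=\colim(R\xrightarrow{f}R\to\cdots)$, and the finite-cover Čech complex lets $R_f$-tensoring commute with limits up to the Beck--Chevalley condition.
\item The Beck--Chevalley condition holds for the relevant squares. This follows from the flat base change equivalence $M\otimes_R S\otimes_S(S\otimes_R S)\simeq (M\otimes_R S)\otimes_R S$.
\end{enumerate}
Simpler than check (2), one can use that $-\otimes_R S$ is exact between stable categories. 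Then a split totalization is a retract of a finite one, which exact functors preserve; this is the usual argument in \cite[Corollary 4.7.5.3]{HA}.

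\textbf{Step 3: identify the limit.} The equivalence from Step 2 is the one induced by the Čech nerve, which is what the statement asserts.

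\textbf{Main obstacle.} The hardest part is check (2) in Step 2. I would settle it by noting that for exact functors of stable $\infty$-categories, totalizations of split cosimplicial objects are absolute limits. Exact functors therefore preserve them automatically, and only conservativity and Beck--Chevalley remain. Alternatively, I would simply cite Lurie's flat descent directly.
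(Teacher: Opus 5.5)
Your overall strategy, Lurie--Barr--Beck descent for the faithfully flat map $R\to S=\prod_iR_{f_i}$, is a legitimate route and differs from the paper's. However, the one hypothesis that carries real content is not established. Conservativity and Beck--Chevalley are fine. The gap is the claim that $G=-\otimes_RS$ preserves totalizations of $G$-split cosimplicial objects of $\cD(R)$, and both of your justifications fail.

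\begin{enumerate}
\item $-\otimes_RR_f$ does not preserve limits. For $R=\mathbb Z$ and $f=p$, the element $(1,1,\dots)$ shows $(\prod_n\mathbb Z/p^n)\otimes\mathbb Z[1/p]\neq0$, although every factor is killed.
\item A $G$-split cosimplicial object $X^\bullet$ is split only after applying $G$. In $\cD(R)$ its totalization is a genuine sequential limit $\lim_k\mathrm{Tot}_kX^\bullet$. Exactness of $G$ only lets you move $G$ past the finite stages $\mathrm{Tot}_k$, so the absolute-limit argument for split objects does not apply.
\end{enumerate}

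The principle you end with (for exact functors, only conservativity and Beck--Chevalley remain) is false. Take $\mathbb Z\to\F_p\times\mathbb Z[1/p]$. Base change is exact and conservative, and the Beck--Chevalley squares are adjointable, since they come from pushouts of rings with derived tensor products. But $\F_p\otimes^{\mathrm L}_{\mathbb Z}\mathbb Z[1/p]=0$, so the \v Cech nerve is the product of the two separate nerves. Its totalization is therefore $\mathbb Z_p\times\mathbb Z[1/p]\not\simeq\mathbb Z$, and descent fails.

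What is missing is an input that uses the specific cover. The quickest is descendability.
\begin{itemize}
\item The extended alternating \v Cech complex $0\to R\to\prod_iR_{f_i}\to\prod_{i<j}R_{f_if_j}\to\cdots\to R_{f_1\cdots f_n}\to0$ is exact. Hence $R$ lies in the thick subcategory of $\cD(R)$ generated by objects $S\otimes_RN$.
\item For $G$-split $X^\bullet$ and $N$ of that form, $X^\bullet\otimes_RN$ is split, so its $\mathrm{Tot}$-tower is pro-constant.
\item Since the $\mathrm{Tot}_k$ are finite limits, $N\mapsto\{\mathrm{Tot}_k(X^\bullet\otimes_RN)\}_k$ is exact. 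Pro-constant towers form a thick subcategory of pro-objects, so pro-constancy holds for $N=R$.
\item Every exact functor preserves limits of pro-constant towers, which gives the missing hypothesis.
\end{itemize}
Alternatively, run Lurie's flatness argument on cohomology, or cite flat descent outright; in that case the lemma is just that citation.

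Once repaired, your argument is shorter than the paper's and works for any descendable map. The paper avoids Barr--Beck: it base-changes to each $D(f_j)$, where the nerve acquires a section and is split, hence an absolute limit. It then glues using the right adjoint $G^R$ and joint conservativity of the $\mathrm{res}_j$. Note that its mate step, identifying $\mathrm{res}_j(\eta)$ with the unit of $G_j$, needs the Beck--Chevalley map $\mathrm{res}_jG^R\to G_j^R\varphi_j$ to be an equivalence. That amounts to $-\otimes_RR_{f_j}$ commuting with the totalization computing $G^R$, which is the same point descendability settles.
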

\begin{proof}
Write $G:\cD(R)\to L$ for the comparison to the displayed limit, and $\mathrm{res}_j:\cD(R)\to\cD(R_{f_j})$ for localization at $f_j$.
Each restriction $\cD(R)\to\cD(R_{f_{i_0}\cdots f_{i_k}})$ is a morphism of $\Pr^{\mathrm L}$ \cite[Definition 5.5.3.1; Proposition 5.5.3.13]{HTT}.
The family $\{\mathrm{res}_j\}_j$ is jointly conservative: if $M\otimes_R^{\mathrm L}R_{f_i}\simeq0$ for all $i$, then $\operatorname{Supp}H^*(M)\subseteq\bigcap_i V(f_i)=\emptyset$, so $M\simeq0$ by nondegeneracy of the $t$-structure; equivalences are detected by cofibers.

After restriction to one $D(f_j)$, the cover splits \cite[Definition 4.7.2.2]{HA}.
Applying $\cD(-)$ and \Cref{lem:dqc-coproduct} (finite products by induction) yields a split augmented cosimplicial object $X_j^\bullet$, hence a limit diagram by \cite[\href{https://kerodon.net/tag/04SH}{Tag 04SH}]{Kerodon}, equivalently \cite[Remark 4.7.2.3]{HA} (via \cite[Lemma 6.1.3.16]{HTT}), using that $(-)^{\mathrm{op}}$ swaps limits and colimits \emph{in} one $\infty$-category (as opposed to limits \emph{of} $\infty$-categories in \Cref{prop:reversal}).
Write $G_j:\cD(R_{f_j})\to\lim X_j^\bullet$ for the comparison; thus $G_j$ is an equivalence.

Since $G$ is a morphism of $\Pr^{\mathrm L}$ between presentable $\infty$-categories, it admits a right adjoint $G^R$ \cite[Corollary 5.5.2.9; Proposition 5.5.3.13]{HTT}; write $\eta$ for the unit of $G\dashv G^R$.
Naturality of the \v Cech diagram in the base ring gives a commutative square of left adjoints relating $\mathrm{res}_j$ to the two \v Cech cones.
The mate of that square identifies $\mathrm{res}_j(\eta)$ with the unit of $G_j$.
Each $G_j$ is an equivalence, hence each $\mathrm{res}_j(\eta)$ is an equivalence, and joint conservativity of $\{\mathrm{res}_j\}$ yields that $\eta$ is an equivalence.
Thus $G$ is fully faithful: the triangle identities identify $\Map(M,N)\to\Map(GM,GN)$ with precomposition against $\eta$ \cite[Definition 1.2.10.1]{HTT}.

Write $\varepsilon$ for the counit of $G\dashv G^R$.
The same square of left adjoints determines a square of right adjoints $G^R$ and $G_j^R$.
Write $\varphi_j:L\to\lim X_j^\bullet$ for the functor obtained by restricting the \v Cech cone to $D(f_j)$.
The mate of that square of right adjoints identifies $\varphi_j(\varepsilon)$ with the counit of $G_j$.
Each $G_j$ is an equivalence, so each local counit is an equivalence.
The functors $\varphi_j$ are jointly conservative: the $0$-simplex projections $L\to\cD(R_{f_j})$ factor through $\varphi_j$ followed by a quasi-inverse of $G_j$, since the local nerve includes $D(f_j)$ as a $0$-simplex, and $\{L\to\cD(R_{f_j})\}_j$ is jointly conservative by \Cref{lem:projections-conservative}.
Equivalences in $L$ are therefore detected by the local comparisons $G_j$, not by identifying $\varepsilon$ with a counit after the $0$-simplex projections $L\to\prod_i\cD(R_{f_i})$.
Hence $\varepsilon$ is an equivalence.
An adjunction whose unit and counit are equivalences is an adjoint equivalence, so $G$ is an equivalence with inverse $G^R$.
\end{proof}

\begin{lemma}\label{lem:dqc-properties}
\begin{enumerate}
\item
The $\infty$-category $\Dqc(X)$ is stable, presentable, and closed symmetric monoidal.
Restriction to an open is symmetric monoidal and preserves colimits.
\item
The subcategory $\Dbc(X)$ is stable.
\item
Tensor product preserves colimits separately, so internal Hom is a functor
\[
 \RHom:\Dqc(X)^{\op}\times\Dqc(X)\longrightarrow\Dqc(X),
\]
obtained by adjunction \cite[Corollary 5.5.2.9]{HTT}.
\end{enumerate}
\end{lemma}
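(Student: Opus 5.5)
The plan is to prove each part by reducing to the affine case. There \Cref{lem:dqc-affine} identifies $\cD(\Gamma(U,\cO_U))$ with $\cMod_{HR}$, which is stable, presentable and closed symmetric monoidal \cite[Theorem 7.1.2.13; Proposition 4.2.3.4; Corollary 4.2.3.7]{HA}. The transition functors $-\otimes^{\mathrm L}_R R_f$ are symmetric monoidal and colimit-preserving, so the diagram $U\mapsto\cD(\Gamma(U,\cO_U))$ factors through $\CAlg(\Pr^{\mathrm L})$.

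For (1), I use that $\Pr^{\mathrm L}\to\Catbig_\infty$ preserves small limits \cite[Proposition 5.5.3.13]{HTT}. Hence $\Dqc(X)$ is presentable, and the projections, i.e. restriction to affines, preserve colimits. Since $\Aff_X$ is essentially small, the limit is small and this applies.

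Stability follows because limits of stable $\infty$-categories along exact functors are stable and are computed in $\Catbig_\infty$ \cite[Theorem 1.1.4.4]{HA}.

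For the monoidal structure, the forgetful functor $\CAlg(\Pr^{\mathrm L})\to\Pr^{\mathrm L}$ preserves limits \cite[Corollary 3.2.2.4]{HA}. So $\Dqc(X)$ inherits a symmetric monoidal structure with $\otimes$ computed affine-locally, preserving colimits in each variable. Closedness then follows from the adjoint functor theorem \cite[Corollary 5.5.2.9]{HTT}, applied to $-\otimes B$.

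For a non-affine open $V$, restriction $\Dqc(X)\to\Dqc(V)$ is induced by the inclusion $\Aff_V\subset\Aff_X$. It is the canonical map of limits in $\CAlg(\Pr^{\mathrm L})$, hence symmetric monoidal and colimit-preserving.

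For (2), I would check that $\Dbc(X)$ is a full subcategory closed under zero, shifts and cofibers, and then invoke \cite[Lemma 1.1.3.3]{HA}. Restriction to each affine $U$ is exact, because projections from a limit of stable categories along exact functors are exact. The subcategory $\mathcal D^b_{\mathrm{coh}}(R)\subset\cD(R)$ is stable by \Cref{lem:bounded-serre}, since coherent modules form a Serre subcategory of $\Mod_R$ for Noetherian $R$. The condition defining $\Dbc(X)$ is affine-local, so closure under shifts and cofibers follows termwise.

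Part (3) is a formal consequence of (1). Tensor product preserves colimits separately, so for fixed $B$ the functor $-\otimes B$ has a right adjoint $\RHom(B,-)$. Functoriality in both variables comes from regarding $\otimes$ as a functor $\Dqc(X)\to\Fun^{\mathrm L}(\Dqc(X),\Dqc(X))$ and passing to right adjoints. This uses the equivalence $\Fun^{\mathrm L}\simeq(\Fun^{\mathrm R})^{\op}$ \cite[Proposition 5.2.6.2]{HTT}, which produces a functor $\Dqc(X)^{\op}\times\Dqc(X)\to\Dqc(X)$.

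The main obstacle is bookkeeping. One has to verify that the diagram genuinely lifts to $\CAlg(\Pr^{\mathrm L})$, so that the limit of the underlying $\infty$-categories agrees with the limit of symmetric monoidal presentable categories. I would construct the lift as the composite of $\Aff_X^{\op}\to\CAlg$, $U\mapsto H\Gamma(U,\cO_U)$, with $\cMod_{(-)}:\CAlg\to\CAlg(\Pr^{\mathrm L})$ \cite[Theorem 4.5.3.1; Remark 4.8.5.17]{HA}, and identify it with the given diagram via \Cref{lem:dqc-affine}.
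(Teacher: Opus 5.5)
Your proposal is correct and follows essentially the same route as the paper. You lift $U\mapsto\cD(\Gamma(U,\cO_U))$ to $\CAlg(\Pr^{\mathrm L})$ and use that limits there are created in $\Pr^{\mathrm L}$ and then in $\Catbig_\infty$, which gives presentability, the symmetric monoidal structure (closed by the adjoint functor theorem) and colimit-preserving monoidal restrictions. Stability then comes from the exact transition functors, and (2) follows by checking closure under shifts and cofibres affine-locally and invoking \cite[Lemma 1.1.3.3]{HA}; your explicit constructions of the lift via $R\mapsto\cMod_{HR}$ and of the bifunctor $\RHom$ via $\Fun^{\mathrm L}\simeq(\Fun^{\mathrm R})^{\op}$ only spell out bookkeeping that the paper asserts more briefly.
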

\begin{proof}
Each $\cD(R)$ is stable and presentable \cite[Proposition 1.3.5.9, Proposition 1.3.5.21]{HA}, and $\cD(R)\in\CAlg(\Pr^{\mathrm L})$ by \cite[Theorem 7.1.2.13; Definition 4.1.1.15]{HA} (tensor preserves colimits separately) together with the symmetric monoidal structure on $\Pr^{\mathrm L}$ of \cite[Proposition 4.8.1.15]{HA}.
Affine restriction is a morphism of $\CAlg(\Pr^{\mathrm L})$, so
\[
 U\longmapsto\cD\bigl(\Gamma(U,\cO_U)\bigr)
\]
is a diagram $\Aff_X^{\op}\to\CAlg(\Pr^{\mathrm L})\subset\Pr^{\mathrm L}\subset\Catbig_\infty$.
Limits in $\CAlg(\Pr^{\mathrm L})$ are created in $\Pr^{\mathrm L}$ \cite[Corollary 3.2.2.5]{HA}, and limits in $\Pr^{\mathrm L}$ are created in $\Catbig_\infty$ \cite[Proposition 5.5.3.13]{HTT}, so
\[
 \Dqc(X)=\lim_{U\in\Aff_X^{\op}}\cD\bigl(\Gamma(U,\cO_U)\bigr)\in\CAlg(\Pr^{\mathrm L}).
\]
In particular $\Dqc(X)$ is presentable and closed symmetric monoidal, restriction is strong symmetric monoidal and preserves colimits, and internal Hom exists as in \textup{(3)}.

Transitions are $t$-exact, so limit projections create finite limits and colimits affine-locally.
Stability of $\Dqc(X)$ follows from stability of each $\cD(R)$ and \cite[Corollary 1.4.2.27]{HA} affine-locally.

For (2): equivalences and fibres in $\Dqc(X)$ are detected on affines by \Cref{lem:projections-conservative}.
On $U=\Spec R$ Noetherian, $\mathcal D^b_{\mathrm{coh}}(R)\subset\cD(R)$ is \Cref{lem:bounded-serre} for $\cA=\Coh(R)\subset\Mod_R$.
If $M,N\in\Dbc(X)$, then on each affine open the restrictions lie in $\mathcal D^b_{\mathrm{coh}}(R)$, so the fibre of $M\to N$ restricts to an object of $\mathcal D^b_{\mathrm{coh}}(R)$ by \Cref{lem:bounded-serre}.
Vanishing of cohomology on an affine is the sheaf axiom, and is local on a cover.
Thus the fibre lies in $\Dbc(X)$.
Shifts likewise preserve $\Dbc(X)$.
Stability of $\Dbc(X)$ is then \cite[Lemma 1.1.3.3]{HA}.
\end{proof}

\begin{lemma}\label{lem:dqc-zariski}
Let $\{V_i\}_{i=1}^r$ be a finite Zariski open cover of $X$.
Write $V_n=\coprod_{i_0,\dots,i_n}(V_{i_0}\cap\cdots\cap V_{i_n})$ for the $n$-simplices of its \v Cech nerve.
Restriction induces an equivalence
\[
 \Dqc(X)\simeq\lim_{[n]\in\Delta}\Dqc(V_n)
\]
in $\Catbig_\infty$.
\end{lemma}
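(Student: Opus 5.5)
The plan is to reduce Zariski descent for $\Dqc$ to the affine \v Cech descent of \Cref{lem:affine-cech}, using that $\Dqc$ is defined as a right Kan extension along affines. Write $\mathcal F:\mathrm{Open}_X^{\op}\to\Catbig_\infty$ for $V\mapsto\Dqc(V)=\lim_{U\in\Aff_V^{\op}}\cD(\Gamma(U,\cO_U))$. By definition this is the right Kan extension of $U\mapsto\cD(\Gamma(U,\cO_U))$ from $\Aff_X^{\op}$ to all opens, and on affines it agrees with the original functor by \Cref{lem:dqc-affine}. By \Cref{lem:dqc-coproduct}, $\Dqc(V_n)\simeq\prod_{i_0,\dots,i_n}\Dqc(V_{i_0}\cap\cdots\cap V_{i_n})$. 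So the claim is that $\mathcal F$ is a sheaf for the cover $\{V_i\}$, that is, that the augmented cosimplicial diagram $\Dqc(X)\to\Dqc(V_\bullet)$ is a limit.

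First I will show that the restriction of $\mathcal F$ to $\Aff_X$ is a sheaf for all covers. Let $\cB$ be the basis of affine opens that are contained in some $V_i$. A standard argument (e.g.\ \cite[Proposition 1.1.4.4]{HA}-type cofinality, or Lurie's SAG comparison of sheaves on a basis) says that a functor on $\Aff_X$ which satisfies descent for finite covers by \emph{standard} opens $D(f_j)\subset\Spec R$ already satisfies descent for all affine covers, because standard opens form a basis of each affine and finite covers suffice by quasi-compactness. \Cref{lem:affine-cech} supplies exactly the standard-cover case. Since the opens in $\cB$ cofinally refine the cover, this gives the following. For any open $V$, the limit over $\Aff_V^{\op}$ can be replaced by the limit over affines contained in some member of a cover of $V$, and the inclusion is right cofinal for limits by sheafiness.

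The main step is to identify both sides with the same limit. For each intersection $W=V_{i_0}\cap\cdots\cap V_{i_n}$, $\Dqc(W)$ is a limit over $\Aff_W^{\op}$. Hence $\lim_{\Delta}\Dqc(V_\bullet)$ is an iterated limit over the Grothendieck construction of $[n]\mapsto\coprod\Aff_{V_{i_0}\cap\cdots\cap V_{i_n}}$. I will compare this with $\lim_{U\in\cB^{\op}}\cD(\Gamma(U,\cO_U))$. For a fixed affine $U\in\cB$, the simplicial set of indices $(i_0,\dots,i_n)$ with $U\subset V_{i_0}\cap\cdots\cap V_{i_n}$ is the nerve of the nonempty simplex on $\{i:U\subset V_i\}$. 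This is weakly contractible, so the projection from the Grothendieck construction to $\cB$ is cofinal in the relevant sense (Quillen's Theorem A / \cite[Tag 02NS]{Kerodon}), and the two limits agree. Finally, $\lim_{\cB^{\op}}\simeq\lim_{\Aff_X^{\op}}=\Dqc(X)$ by the sheaf property of the previous paragraph: right Kan extension along $\cB\subset\Aff_X$ recovers the functor on affines. I would then check that the resulting equivalence is the restriction map.

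The main obstacle is the passage from standard covers (\Cref{lem:affine-cech}) to arbitrary finite affine covers and bases. If the general basis-comparison is awkward to cite, I would proceed as follows instead. I would intersect further, covering each $U\cap V_i$ by standard opens of $U$, and use \Cref{lem:projections-conservative} plus the adjoint argument of \Cref{lem:affine-cech} to verify full faithfulness and essential surjectivity of the restriction functor directly, working locally on affines.
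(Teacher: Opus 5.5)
Your argument is correct, but it is organized differently from the paper's. The paper works in three layers. It first proves descent for a finite affine cover of an affine $U$ by hand: it refines by a standard cover, notes that the pulled-back nerve splits over each $D(f_k)$, and runs the unit/counit argument of \Cref{lem:affine-cech} with joint conservativity. It then extends this to arbitrary finite, possibly non-affine, covers of an affine by refinement and conservativity. Finally it treats general $X$ by writing $\Dqc(X)\simeq\lim_{U\in\Aff_X^{\op}}\lim_{[n]}\Dqc(U\cap V_n)$, exchanging the limits, and identifying $\lim_U\Dqc(U\cap V)\simeq\Dqc(V)$ via slices with a terminal object.

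You instead show once that $U\mapsto\cD(\Gamma(U,\cO_U))$ is a sheaf on the basis $\Aff_X$, and then make a single Quillen-A comparison between the \v{C}ech limit and the limit over the basis $\cB$ of affines subordinate to the cover. What your route buys: it never needs non-affine covers of affine schemes, and it in effect shows that $\Dqc$ is the right Kan extension of a basis-sheaf, so descent holds for every cover at once. What the paper's route buys: it is self-contained, using only \Cref{lem:affine-cech}, adjoint/conservativity arguments and a trivial cofinality.

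The one nontrivial input of your route is that \v{C}ech descent for standard covers implies descent for every affine covering sieve of an affine. You only cite this, and HA, Proposition 1.1.4.4 is not the right reference. It does follow from the usual local-character argument: any such sieve contains the sieve of a standard cover, and the restriction of that sieve to each member $T$ is generated by the standard cover $\{T\cap D(f_j)\}$ of $T$. Your fallback for this step is exactly the paper's first paragraph.

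Step B needs two small repairs. First, Quillen's criterion concerns the whole comma category of triples $([n],\underline{i},U')$ with $U\subset U'\subset V_{i_0}\cap\cdots\cap V_{i_n}$, not only the fibre over $U$. The morphisms $([n],\underline{i},U')\to([n],\underline{i},U)$ deformation-retract this comma category onto that fibre, so the fix is easy. Second, the fibre is the category of simplices of $[n]\mapsto S_U^{n+1}$, where $S_U=\{i:U\subset V_i\}$. This is the nerve of the indiscrete groupoid on $S_U$, not of a simplex, and it is contractible since $S_U\neq\emptyset$.

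Similarly, the inclusion $\cB\subset\Aff_X$ is not itself cofinal. What you actually use, correctly in your last step, is that right Kan extension along it recovers the sheaf on affines.
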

\begin{proof}
We first treat $U=\Spec R$ and a finite cover $\{U_j\}$ of $U$ by affine opens.
Intersections of affine opens in an affine scheme are affine, so every \v Cech stage is affine.
Refine $\{U_j\}$ by a finite standard cover $\{D(f_k)\}$ with each $D(f_k)$ contained in some $U_{j(k)}$.
Write $G$ for the comparison $\Dqc(U)\to\lim_{[n]}\Dqc(U_n)$.
The standard affine \v Cech comparison is an equivalence by \Cref{lem:affine-cech,lem:dqc-affine}.
After restriction to one $D(f_k)$, the coproduct of the opens $U_j\cap D(f_k)$ maps to $D(f_k)$ by a split epimorphism, so the pulled-back nerve is split \cite[Definition 4.7.2.2]{HA}.
Applying $\Dqc$ and \Cref{lem:dqc-coproduct} yields a split augmented cosimplicial object, hence a limit diagram by the same opposition of \cite[\href{https://kerodon.net/tag/04SH}{Tag 04SH}]{Kerodon}, equivalently \cite[Remark 4.7.2.3]{HA}, as in \Cref{lem:affine-cech}.
Thus the local comparison functors on each $D(f_k)$ are equivalences.
As in \Cref{lem:affine-cech}, $G$ is a morphism of $\Pr^{\mathrm L}$, so $G$ admits a right adjoint $G^R$ \cite[Corollary 5.5.2.9]{HTT}; write $\eta$ for the unit and $\varepsilon$ for the counit.
The commutative square of left adjoints relating $\mathrm{res}_{D(f_k)}$ to the two \v Cech cones is a morphism of adjunctions, as in \Cref{lem:affine-cech}, so $\mathrm{res}_{D(f_k)}(\eta)$ is identified with the unit of the local split comparison, and $\eta$ is an equivalence by joint conservativity of $\{\mathrm{res}_{D(f_k)}\}$.
The same square determines a square of right adjoints.
Write $\varphi_k$ for restriction of the \v Cech cone of $L=\lim_{[n]}\Dqc(U_n)$ to $D(f_k)$.
The mate of that square of right adjoints identifies $\varphi_k(\varepsilon)$ with the counit of the local split comparison, which is an equivalence.
The functors $\varphi_k$ are jointly conservative: they detect equivalences in $L$ by the local comparisons, which are the local $G_j$ of \Cref{lem:affine-cech}, together with joint conservativity of $\{\mathrm{res}_{D(f_k)}\}$.
Hence both the unit and the counit are equivalences, so $G$ is an equivalence.

Now let $\{W_j\}$ be an arbitrary finite Zariski cover of the same affine $U$.
Cover each $W_j$ by finitely many affine opens of $U$, obtaining an affine cover $\{U_\alpha\}$ which refines $\{W_j\}$.
Write $L_W=\lim_{[n]}\Dqc(W_n)$ and $L_U=\lim_{[n]}\Dqc(U_{\alpha,n})$.
The comparison $\Dqc(U)\to L_U$ is an equivalence by the previous paragraph.
The refinement of nerves gives $L_W\to L_U$, and the composite $\Dqc(U)\to L_W\to L_U$ is the affine comparison, hence an equivalence, so $\Dqc(U)\to L_W$ is fully faithful.
It remains to see that $L_W\to L_U$ is conservative.
If $E\in L_W$ maps to zero in $L_U$, then for each $j$ the component $E_j\in\Dqc(W_j)$ vanishes on each affine $U_\alpha\subset W_j$.
For an arbitrary affine $T\subset W_j$, the intersections $T\cap U_\alpha$ are affine opens covering $T$, and $E_j|_T$ vanishes on that affine cover, so $E_j|_T\simeq0$ by the affine-cover case already proved.
Thus $E_j\simeq0$ by the defining limit cone of $\Dqc(W_j)$, and $E\simeq0$.
Therefore $\Dqc(U)\to L_W$ is an equivalence.

Now let $X$ be arbitrary.
By definition and \Cref{lem:dqc-affine}, $\Dqc(X)\simeq\lim_{U\in\Aff_X^{\op}}\Dqc(U)$.
For each affine $U\subset X$, the pulled-back cover $\{U\cap V_i\}$ of $U$ satisfies $\Dqc(U)\simeq\lim_{[n]}\Dqc(U\cap V_n)$ by the previous paragraph.
Hence
\[
 \Dqc(X)\simeq\lim_{U\in\Aff_X^{\op}}\lim_{[n]\in\Delta}\Dqc(U\cap V_n).
\]
Both iterated limits are the limit of $(U,[n])\mapsto\Dqc(U\cap V_n)$ over $\Aff_X^{\op}\times\Delta$, so they may be taken in either order.

It remains to identify $\lim_{U\in\Aff_X^{\op}}\Dqc(U\cap V)\simeq\Dqc(V)$ for each open $V\subset X$, and then apply \Cref{lem:dqc-coproduct} to the coproducts $V_n$.
By definition, $\Dqc(U\cap V)\simeq\lim \Dqc(W)$ over $W\in\Aff_{U\cap V}^{\op}$.
Those $W$ are exactly the objects of $\Aff_V$ contained in $U$.
The double limit is therefore the limit of $\Dqc(W)$ over pairs $(U,W)$ with $W\subset U\cap V$.
For fixed $W_0\in\Aff_V$, the inner diagram in the variable $U$ (ranging over affine opens of $X$ containing $W_0$, as a full subcategory of $\Aff_X^{\op}$) is the constant diagram with value $\Dqc(W_0)$.
In $\Aff_X^{\op}$ morphisms run from larger opens to smaller ones, so $U=W_0$ is a terminal object of that slice.
An $\infty$-category with a terminal object is weakly contractible \cite[\href{https://kerodon.net/tag/02P2}{Tag 02P2}]{Kerodon}.
The constant cone on $\Dqc(W_0)$ carries every edge to an identity, hence is a limit diagram \cite[\href{https://kerodon.net/tag/02XX}{Tag 02XX}]{Kerodon}.
The remaining limit over $W\in\Aff_V^{\op}$ is $\Dqc(V)$.
\end{proof}

\begin{lemma}\label{lem:finite-push}
Let $f:X\to Y$ be a finite morphism of Noetherian schemes.
There is a colimit-preserving pushforward $f_*:\Dqc(X)\to\Dqc(Y)$, with left adjoint $f^*$ and right adjoint $f^!$, such that on affines,
\begin{itemize}
\item
$f_*$ is restriction of scalars along the finite ring map;
\item
$f^*$ is derived extension of scalars;
\item
$f^!$ is $\mathrm{RHom}_R(S,-)$.
\end{itemize}
\end{lemma}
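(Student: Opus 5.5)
We construct $f_*$ affine by affine and then glue through the limit description of \Cref{def:dqc}. Since a finite morphism is affine, for every affine open $V=\Spec R\subset Y$ the preimage $f^{-1}(V)=\Spec S$ is affine, with $R\to S$ a finite ring map. The assignment $V\mapsto f^{-1}(V)$ is a functor $\Aff_Y\to\Aff_X$, which gives a restriction functor
\[
 \Dqc(X)\longrightarrow\lim_{V\in\Aff_Y^{\op}}\cD(\Gamma(f^{-1}V,\cO_X)).
\]
Composing this with the limit over $V$ of restriction of scalars $\cD(S)\to\cD(R)$ yields $f_*$. For the composite to make sense, restriction of scalars must be natural in $V$. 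Concretely, for $R\to R_g$ the base-change map $(M)_R\otimes^{\mathrm L}_RR_g\to(M\otimes^{\mathrm L}_SS_g)_{R_g}$ has to be an equivalence, where $S_g=S\otimes_RR_g$. This holds because localization is flat and $S_g=S\otimes_RR_g$. The coherent higher data are handled most cleanly by working in $\Pr^{\mathrm L}$: restriction of scalars along $R\to S$ is a morphism of $\Pr^{\mathrm L}$, being colimit-preserving with left adjoint $-\otimes^{\mathrm L}_RS$. The Beck--Chevalley squares just described are equivalences, so the diagram of left adjoints $\cD(R)\to\cD(S)$ passes to right adjoints as a morphism of diagrams in $\Pr^{\mathrm L}$ (via \cite[Corollary 5.5.2.9]{HTT}). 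Taking limits then gives $f_*$. Colimit preservation follows because limits in $\Pr^{\mathrm L}$ are computed in $\Catbig_\infty$ \cite[Proposition 5.5.3.13]{HTT}, and colimits in $\Dqc(Y)$ are detected affine-locally.

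For the adjoints, $f^*$ is already present as the limit over $\Aff_Y^{\op}$ of the extension-of-scalars functors $\cD(R)\to\cD(S)$. These are compatible as a natural transformation of diagrams in $\Pr^{\mathrm L}$ because extension of scalars commutes strictly with further localization. Since each affine pair $f^*_V\dashv f_{*,V}$ is an adjunction and the Beck--Chevalley maps are equivalences, the limit functors form an adjunction $f^*\dashv f_*$. We check this on mapping spaces using the formula $\Map=\lim\Map$ from the proof of \Cref{lem:projections-conservative}, after first identifying $\Dqc(X)$ with the limit over the cofinal system $\{f^{-1}V\}$. That cofinality comes from \Cref{lem:dqc-zariski} applied to affine covers, using the same slice argument as at the end of that proof.

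The existence of $f^!$ follows from the adjoint functor theorem, since $f_*$ preserves colimits between presentable categories. Identifying it on affines requires more work. On $\Spec R$, the right adjoint of restriction of scalars is $\mathrm{RHom}_R(S,-)$. To glue, we need the base-change equivalence
\[
 \mathrm{RHom}_R(S,N)\otimes^{\mathrm L}_RR_g\simeq\mathrm{RHom}_{R_g}(S_g,N\otimes^{\mathrm L}_RR_g).
\]
This holds because $R$ is Noetherian and $S$ is a finite $R$-module: $S$ then admits a resolution by finite free modules, so $S$ is pseudo-coherent, and $\mathrm{RHom}$ out of a pseudo-coherent module commutes with flat base change, at least for $N$ bounded below. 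For unbounded $N$ this can fail. We therefore expect this step to be the main obstacle. The first approach is to observe that localization $R\to R_g$ is a smooth (étale) localization with $R_g$ compact over $R$ in the relevant sense, and to write $\mathrm{RHom}_R(S,N)\otimes R_g\simeq\colim(\mathrm{RHom}_R(S,N)\xrightarrow{g}\cdots)$. Since $\mathrm{RHom}_R(S,-)$ need not commute with this sequential colimit in general, we would instead use the equivalent description of $R_g\otimes_R(-)$ as $\mathrm{RHom}$ from the Čech complex, in which case both sides are computed by right adjoints and commute. Should that argument fail, we would restrict the identification of $f^!$ to $\mathcal D^+_{\mathrm{qc}}$, which covers all later uses with $\omega\in\Dbc(X)$. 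Once the Beck--Chevalley maps for $f^!$ are equivalences, the limit of the affine right adjoints is a right adjoint to $f_*$, and by uniqueness of adjoints it agrees with the $f^!$ produced by the adjoint functor theorem.
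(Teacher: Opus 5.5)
\textbf{Where you agree with the paper.} For $f_*$, $f^*$, the adjunction $f^*\dashv f_*$, colimit preservation and the existence of $f^!$, you follow the paper's proof. Both glue restriction and extension of scalars over $\Aff_Y^{\op}$, take the limit in $\Pr^{\mathrm L}$, transport the affine units and counits, and apply the adjoint functor theorem.

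Your cofinality step is more careful than the paper. The paper simply asserts that $V\mapsto f^{-1}V$ is an equivalence $\Aff_Y\simeq\Aff_X$, which holds for the Frobenius (a homeomorphism) but not, say, for a closed immersion. However, the slice argument from the end of \Cref{lem:dqc-zariski} does not transfer verbatim. For $W\in\Aff_X$, the affine opens $V\subset Y$ with $f(W)\subset V$ have no terminal object, and there may be none at all if $Y$ is not separated. It is cleaner to use that $V\mapsto\Dqc(f^{-1}V)$ is a Zariski sheaf on $Y$, and that a sheaf is recovered from its values on the basis $\Aff_Y$.

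\textbf{The gap is in the third bullet.} The paper proves it only over an affine base. If $Y=\Spec R$, then \Cref{lem:dqc-affine} gives $\Dqc(Y)\simeq\cD(R)$ and $\Dqc(X)\simeq\cD(S)$, $f_*$ is restriction of scalars, and uniqueness of right adjoints gives $f^!\simeq\mathrm{RHom}_R(S,-)$. No gluing of right adjoints is attempted. Compatibility of the global $f^!$ with restriction to opens is a separate statement, \Cref{lem:open-rhom}, proved only for bounded-below objects via pseudo-coherence of $S$.

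Your first approach is to prove $\mathrm{RHom}_R(S,N)\otimes^{\mathrm L}_RR_g\simeq\mathrm{RHom}_{R_g}(S_g,N_g)$ for all $N$ and then glue. This cannot succeed, because the equivalence is false. Take $R=k[t,\epsilon]/(\epsilon^2)$, $S=R/(\epsilon)$, $g=t$, and $N=\bigoplus_{m\geq1}(k[t]/t^m)[m]$ with $\epsilon$ acting by $0$.
\begin{itemize}
\item Since $N$ is a sum of $t$-power-torsion modules, $N_t\simeq0$, so the right-hand side vanishes.
\item The periodic resolution $\cdots\xrightarrow{\epsilon}R\xrightarrow{\epsilon}R\to S$ gives a Hom complex with zero differential. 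Hence $H^0\,\mathrm{RHom}_R(S,N)\cong\prod_{m\geq1}k[t]/t^m$.
\item This module has nonzero localization at $t$, because $(1,1,\dots)$ is not killed by any power of $t$.
\end{itemize}

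Your proposed repair is also mistaken on its own terms. $R_g$ is not compact in $\cD(R)$ in general. Moreover, $-\otimes_RR_g$ does not commute with products (the example is exactly such a failure), so it is not of the form $\mathrm{RHom}(C,-)$ for any $C$. Taking $\mathrm{RHom}$ out of the \v Cech complex $R\to R_g$ computes derived $g$-completion, not localization.

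Your fallback is correct: glue only on bounded-below objects, where pseudo-coherence of $S$ makes the Beck--Chevalley maps equivalences. This is precisely the content of \Cref{lem:open-rhom}. Together with the affine-base identification, it covers every later use ($\omega$ and $\Dbc$ are bounded below). So drop the unbounded attempt. State the third bullet for affine $Y$, and state compatibility with restriction only on $\Dqc^+$.
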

\begin{proof}
A finite morphism is affine \cite[\href{https://stacks.math.columbia.edu/tag/01WN}{Tag 01WN}]{SP}, so $V\mapsto f^{-1}(V)$ is an equivalence $\Aff_Y\simeq\Aff_X$, and $V=\Spec R$ gives $f^{-1}(V)=\Spec S$ with $R\to S$ finite.

On affines, $\mathrm{res}_{R\to S}:\cD(S)\to\cD(R)$ is $\cMod_{HS}\to\cMod_{HR}$ \cite[Theorem 7.1.2.13]{HA}, and lies in $\Pr^{\mathrm L}$ (in fact preserves limits and colimits).
Compatibility with localization $R\to R_g$ (via $S\otimes_R R_g\simeq S_g$) yields
\[
 \Phi\in\Fun(\Aff_Y^{\op},\Catbig_\infty),\qquad
 \Phi_V=\mathrm{res}_{R_V\to S_V}:\cD(S_V)\to\cD(R_V).
\]
Set $f_*=\lim\Phi:\Dqc(X)\to\Dqc(Y)$ \cite[Corollary 4.2.4.8; Remark 3.0.0.5]{HTT}; then $f_*$ is a morphism of $\Pr^{\mathrm L}$ by \cite[Proposition 5.5.3.13]{HTT}.

Likewise $(-\otimes_R^{\mathrm L}S)$ induces $f^*:\Dqc(Y)\to\Dqc(X)$, and the affine adjunctions $(-\otimes_R^{\mathrm L}S)\dashv\mathrm{res}_{R\to S}$ are compatible with localization, so their units and counits pass to the limits.
Mapping spaces in a limit are limits of mapping spaces (\Cref{lem:projections-conservative}), hence $f^*\dashv f_*$ on $\Dqc$.

Presentability and the fact that $f_*$ is a morphism of $\Pr^{\mathrm L}$ give a right adjoint $f^!$ by \cite[Corollary 5.5.2.9]{HTT}.
On $V=\Spec R$ one has $f^!|_V\simeq\mathrm{RHom}_R(S,-)$ by closed monoidality of $\cD(R)\simeq\cMod_{HR}$, and uniqueness of right adjoints \cite[Remark 5.2.2.2]{HTT}.
\end{proof}

\begin{lemma}\label{lem:projection}
In the situation of \Cref{lem:finite-push}, there is a natural equivalence
\[
 f_*(f^*A\otimes M)\simeq A\otimes f_*M
\]
in $\Dqc(Y)$, for $A\in\Dqc(Y)$ and $M\in\Dqc(X)$.
\end{lemma}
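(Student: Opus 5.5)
First I construct the comparison map, and then I check that it is an equivalence affine-locally. Since $f^*$ is strong symmetric monoidal (restriction of the symmetric monoidal structures on each $\cD(R)$, compatible with localization, as in \Cref{lem:dqc-properties}), the adjunction $f^*\dashv f_*$ of \Cref{lem:finite-push} endows $f_*$ with a lax monoidal structure. Concretely, there is a map
\[
 A\otimes f_*M\xrightarrow{\ \eta\otimes\id\ } f_*f^*A\otimes f_*M\longrightarrow f_*(f^*A\otimes M).
\]
The second arrow is the adjoint of $f^*(f_*f^*A\otimes f_*M)\simeq f^*f_*f^*A\otimes f^*f_*M\to f^*A\otimes M$, which is built from the counit. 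Equivalently, one takes the adjoint of
\[
 f^*(A\otimes f_*M)\simeq f^*A\otimes f^*f_*M\xrightarrow{\id\otimes\varepsilon}f^*A\otimes M.
\]
I will use this second description as the definition. It is manifestly natural in $(A,M)\in\Dqc(Y)\times\Dqc(X)$, since it is assembled from the symmetric monoidal structure of $f^*$ and the counit of an adjunction of $\infty$-functors.

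Next I show that the map is an equivalence. By \Cref{lem:projections-conservative}, together with the description of $\Dqc(Y)$ as a limit over $\Aff_Y^{\op}$, it suffices to check this after restricting to each affine $V=\Spec R$, with $f^{-1}(V)=\Spec S$. Restriction to $V$ commutes with $f_*$, $f^*$ and $\otimes$. This holds by construction of $f_*$ and $f^*$ as limits of the affine functors in \Cref{lem:finite-push}, and because restriction is symmetric monoidal by \Cref{lem:dqc-properties}. The units and counits also restrict to the affine ones, as noted in \Cref{lem:finite-push}. Hence the restricted map is the classical comparison
\[
 A\otimes^{\mathrm L}_R M\longrightarrow (A\otimes^{\mathrm L}_R S)\otimes^{\mathrm L}_S M
\]
in $\cD(R)\simeq\cMod_{HR}$, where $M$ is viewed as an $R$-module by restriction. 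This map is the associativity equivalence for relative tensor products of modules over $HR\to HS$ (\cite[Section 4.4]{HA}). One can also see it directly: both sides commute with colimits in $A$, since $\cD(R)$ is generated under colimits and shifts by $R$, and for $A=R$ the map is the identity of $M$ up to the unit equivalence $R\otimes^{\mathrm L}_RS\simeq S$.

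I expect the main obstacle to be identifying the globally defined map, after restriction, with the affine associativity equivalence, rather than merely knowing that some equivalence exists affine-locally. I would handle this by reducing via colimits in $A$ to the case $A=R$. In that case the map is $f_*$ applied to the adjoint of the counit, and the triangle identity makes it the identity. Because the source and target are exact and colimit-preserving in $A$, the reduction is legitimate once naturality is in hand, and it avoids any explicit coherence bookkeeping for relative tensor products.
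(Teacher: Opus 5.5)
Your proposal is correct and has the same skeleton as the paper's proof: build a comparison morphism in $\Fun(\Dqc(Y)\times\Dqc(X),\Dqc(Y))$, then test it on affine opens of $Y$ using joint conservativity (\Cref{lem:projections-conservative}).

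The two arguments differ in the ingredients around that skeleton.

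\emph{The paper's route.} It asserts that the comparison morphism exists ``by construction''. For the affine check it notes that $f_*,f^*$ agree with $Rf_*,Lf^*$ and imports the classical triangulated projection formula \cite[\href{https://stacks.math.columbia.edu/tag/08EU}{Tag 08EU}]{SP}, using that equivalences are detected in the homotopy category.

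\emph{Your route.} You build the map explicitly as the adjoint of $f^*(A\otimes f_*M)\simeq f^*A\otimes f^*f_*M\to f^*A\otimes M$, which makes naturality in both variables transparent. You then prove the affine case internally:
\begin{enumerate}
\item both sides are exact and colimit-preserving in $A$;
\item $\cD(R)$ is generated by $R$ under colimits and shifts;
\item for $A=R$ the triangle identity shows the map is the identity.
\end{enumerate}

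\emph{What each buys.} The paper's route is shorter and leans on the literature. However, it tacitly identifies the affine restriction of the $\infty$-categorical comparison map with the classical comparison map on homotopy categories. Your reduction to the unit avoids that identification and stays inside $\cMod_{HR}$. The price is that you need $f^*$ to be strong symmetric monoidal, with its units and counits restricting to the affine ones. Both facts are available from the $\CAlg(\Pr^{\mathrm L})$ limit description in \Cref{lem:dqc-properties} and from \Cref{lem:finite-push}.
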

\begin{proof}
Finite morphisms are affine, so $f_*$ and $f^*$ agree with $Rf_*$ and $Lf^*$ on $\mathrm D_{\mathrm{qc}}$.
On that triangulated category the displayed formula is \cite[\href{https://stacks.math.columbia.edu/tag/08EU}{Tag 08EU}]{SP}.
The comparison morphism already lies in $\Fun(\Dqc(Y)\times\Dqc(X),\Dqc(Y))$ by construction of $f_*$, $f^*$ and $\otimes$; it is an equivalence because affine restrictions are jointly conservative (\Cref{lem:projections-conservative}) and the triangulated statement applies on each affine via \Cref{lem:dqc-affine}.
\end{proof}

\begin{remark}[Relation with abstract 6FF]
\label{rem:no-6ff-import}
\Cref{lem:finite-push} gives a right adjoint $f^!$ for finite $f$, and \Cref{lem:projection} gives the projection formula $f_*(f^*A\otimes M)\simeq A\otimes f_*M$.
Scholze's construction principle takes adjoints, base change and the projection formula as \emph{input} \cite[Theorem 4.6]{Scholze-6ff}, after Mann \cite[Proposition A.5.10]{Mann}; it does not prove those facts for Zariski $\Dqc$.
The formalisms of Lecture VIII are $\Pro(\Perf)$, $\Pro(\Coh)$ and $\Ind(\Coh)$, not Zariski $\Dqc$ of ordinary modules \cite[Theorem 8.21; \S8.5--8.7]{Scholze-6ff}.
None of these references yields \Cref{lem:finite-push,lem:projection} for Noetherian schemes, nor the lax equalizers $\Frob(\Dbc,F_*)$ and $\Cart(\Dbc,F_*)$.
\end{remark}

\begin{lemma}\label{lem:grothendieck}
Let $\omega\in\Dqc(X)$ be dualizing in the sense of \Cref{def:dualizing-dqc}.
The functor
\begin{equation}\label{eq:underlying-dual}
 D_\omega=\RHom(-,\omega):\Dbc(X)^{\op}\simeq\Dbc(X)
\end{equation}
is an exact equivalence.
The unit of tensor-Hom is a natural equivalence $\id\to D_\omega\circ D_\omega^{\op}$ in $\Fun(\Dbc(X),\Dbc(X))$, and likewise $D_\omega^{\op}\circ D_\omega\simeq\id$ on $\Dbc(X)^{\op}$.
\end{lemma}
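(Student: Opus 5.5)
The plan is to build everything at the level of $\Dqc(X)$ as a functor of $\infty$-categories, and to check every equivalence claim affine-locally. The local input is the classical Grothendieck duality statement for a dualizing complex over a Noetherian ring, \cite[Tag 0A7C]{SP}.

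First, \Cref{lem:dqc-properties}(3) gives the internal Hom functor $\RHom:\Dqc(X)^{\op}\times\Dqc(X)\to\Dqc(X)$. Fixing the second variable at $\omega$ yields $D_\omega:\Dqc(X)^{\op}\to\Dqc(X)$. This functor is exact: it sends cofibre sequences to fibre sequences, since $\RHom(-,\omega)$ turns colimits into limits by adjunction.

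Next we check that $D_\omega$ preserves $\Dbc(X)$ and commutes with restriction to affines. The subtle point is that restriction to an open $U=\Spec R$ is symmetric monoidal, but it need not commute with internal Hom in general. So we first produce the comparison map $\RHom(M,\omega)|_U\to\RHom_R(M|_U,\omega|_U)$ by adjunction from the monoidal structure of restriction. We then show it is an equivalence for $M\in\Dbc(X)$. For this, reduce to affine inclusions $\Spec R_f\subset\Spec R$. There the claim is that $\mathrm{RHom}_R(M,\omega)\otimes_RR_f\simeq\mathrm{RHom}_{R_f}(M_f,\omega_f)$ for $M$ pseudo-coherent and $\omega$ bounded below, which is \cite[Tag 0A6A]{SP} applied on homotopy categories. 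By \Cref{lem:projections-conservative} and \Cref{lem:dqc-affine}, this identifies the value of $D_\omega$ on each affine with $\mathrm{RHom}_R(-,\omega_R)$. That functor preserves $\mathcal D^b_{\mathrm{coh}}(R)$ because $\omega_R$ is dualizing. Hence $D_\omega$ restricts to an exact functor $\Dbc(X)^{\op}\to\Dbc(X)$.

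For the biduality statement, the tensor-Hom adjunction provides the evaluation $M\otimes\RHom(M,\omega)\to\omega$. Its adjoint is the unit $\eta_M:M\to\RHom(\RHom(M,\omega),\omega)$, which is natural in $\Fun(\Dbc(X),\Dbc(X))$. We construct it as in the Yoneda/mate argument of \Cref{prop:abstract-hom}, so that it is a genuine natural transformation and not just a family of maps.

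To see that $\eta$ is an equivalence, use \Cref{lem:projections-conservative} to reduce to affine opens. By the compatibility established above, the restriction of $\eta_M$ is the classical biduality map $M_R\to\mathrm{RHom}_R(\mathrm{RHom}_R(M_R,\omega_R),\omega_R)$, and this is an isomorphism in $D(R)$ for $M_R\in D^b_{\mathrm{coh}}(R)$ \cite[Tag 0A7C]{SP}. Equivalences are detected on homotopy categories (\cite[Tag 004Q]{Kerodon}), so $\eta_M$ is an equivalence for every $M$.

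The statement $D_\omega^{\op}\circ D_\omega\simeq\id$ is the same transformation read in $\Dbc(X)^{\op}$, because $D_\omega$ is self-adjoint on the right: $\Map(M,D_\omega N)\simeq\Map(M\otimes N,\omega)\simeq\Map(N,D_\omega M)$. Thus $D_\omega$ and $D_\omega^{\op}$ are mutually quasi-inverse, and $D_\omega$ is an exact equivalence.

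The step I expect to be the main obstacle is the compatibility of internal Hom with affine restriction in the limit $\Dqc(X)$. Internal Hom in a limit of presentably symmetric monoidal categories is not automatically computed termwise. The coherence hypothesis on $M$ is exactly what makes the local comparison map an equivalence, and one must check that this map is natural enough to assemble over $\Aff_X$.
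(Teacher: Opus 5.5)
Your proposal is correct and follows essentially the same route as the paper: you define $D_\omega$ and the tensor--Hom unit $\eta$ on $\Dqc(X)$ and identify them affine-locally with $\mathrm{RHom}_R(-,\omega|_U)$ and the classical biduality map. You then invoke \cite[Tag 0A7C]{SP}, conclude by joint conservativity of affine restrictions (\Cref{lem:projections-conservative}), and read the statement on $\Dbc(X)^{\op}$ off the same unit. The only difference is that you make explicit, via \cite[Tag 0A6A]{SP} for pseudo-coherent first input and bounded-below $\omega$, the compatibility of internal Hom with affine restriction, which the paper compresses into an appeal to \Cref{lem:dqc-affine}; this is a worthwhile added justification rather than a change of strategy.
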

\begin{proof}
Internal Hom of \Cref{lem:dqc-properties} is exact on $\Dqc(X)$ (right adjoint in the second variable).
The unit $\eta:\id\to D_\omega\circ D_\omega^{\op}$ of tensor-Hom is a morphism in $\Fun(\Dqc(X),\Dqc(X))$.
On each affine $U=\Spec R$, \Cref{lem:dqc-affine} identifies $D_\omega$ with $\mathrm{RHom}_R(-,\omega|_U)$; by \cite[\href{https://stacks.math.columbia.edu/tag/0A7C}{Tag 0A7C}]{SP} this preserves $\mathcal D^b_{\mathrm{coh}}(R)$ and $\eta|_U$ is an isomorphism in the homotopy category, hence an equivalence in $\cD(R)$.
Affine restrictions are jointly conservative (\Cref{lem:projections-conservative}), so $D_\omega$ preserves $\Dbc(X)$ and $\eta$ is an equivalence on $\Dbc(X)$.
The opposite transformation on $\Dbc(X)^{\op}$ is the same unit, hence likewise an equivalence.
\end{proof}

\begin{lemma}\label{lem:open-rhom}
Let $f:X\to Y$ be finite, let $j_Y:V\to Y$ be a quasi-compact open immersion of Noetherian schemes, and write $X_V=X\times_Y V$, with projections $j_X:X_V\to X$ and $f_V:X_V\to V$.
The base-change transformation
\[
 j_X^*f^!\to f_V^!j_Y^*
\]
is an equivalence on objects of $\Dqc(Y)$ that are bounded below.
In particular it is an equivalence on $\Dbc(Y)$ and on dualizing complexes, so $f^!$ commutes with open restriction on those objects.
\end{lemma}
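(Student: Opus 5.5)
We first construct the base-change transformation, then check it is an equivalence affine by affine on bounded-below objects.

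\textbf{Constructing the map.} By \Cref{lem:finite-push}, $f^*\dashv f_*\dashv f^!$ and likewise for $f_V$. Restriction along an open immersion is given by projection from the defining limit, so $j_Y^*f_*\simeq f_{V*}j_X^*$ holds on the nose. Indeed, $X_V\to V$ is again finite, $f^{-1}$ identifies affine opens as in the proof of \Cref{lem:finite-push}, and $f_*$ was defined as a limit of the affine restriction functors $\Phi$.

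The transformation $j_X^*f^!\to f_V^!j_Y^*$ is the mate of this equivalence. It is adjoint to the composite
\[
 f_{V*}j_X^*f^!\simeq j_Y^*f_*f^!\longrightarrow j_Y^*,
\]
where the last arrow comes from the counit of $f_*\dashv f^!$. Everything lives in functor $\infty$-categories, so the result is a morphism in $\Fun(\Dqc(Y),\Dqc(X_V))$.

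\textbf{Reducing to affines.} By \Cref{lem:projections-conservative}, applied to $\Dqc(X_V)$ as a limit over its affine opens, it suffices to check that the map becomes an equivalence after restricting to each affine open of $X_V$. We may take these opens of the form $f_V^{-1}(W)=\Spec S_W$, with $W=\Spec R_W\subset V$ affine.

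Choose an affine $U=\Spec R\subset Y$ containing $W$. Since $Y$ is Noetherian, hence quasi-separated, $W\subset U$ is a quasi-compact open and is covered by finitely many standard opens $D(g)\subset U$. Refine once more so that the check happens on these $D(g)$. By compatibility of the whole construction with further restriction, we are reduced to a single pair $R\to R_g$, with $f$ corresponding to a finite map $R\to S$.

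\textbf{The affine comparison.} In this situation, \Cref{lem:finite-push} identifies the map with the canonical comparison
\[
 \mathrm{RHom}_R(S,M)\otimes_R^{\mathrm L}R_g\longrightarrow \mathrm{RHom}_{R_g}(S_g,M\otimes_R^{\mathrm L}R_g).
\]
This identification requires unwinding that the mate restricts to the evident map, using uniqueness of adjoints as in \Cref{lem:finite-push}.

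Now $R$ is Noetherian and $S$ is a finite $R$-module. Hence $S$ admits a resolution $P_\bullet\to S$ by finite free $R$-modules. Therefore $\mathrm{RHom}_R(S,M)$ is computed by $\Hom^\bullet(P_\bullet,M)$ whenever $M$ is bounded below. In that case each degree of the total complex is a finite product, so it commutes with the flat localization $-\otimes_R R_g$.

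This gives the isomorphism in the homotopy category, and hence an equivalence by the conservativity argument of \Cref{lem:exact-detection}, namely that isomorphisms in $h\cC$ are equivalences. Alternatively, one can cite the Stacks Project statement that $\mathrm{RHom}$ out of a pseudo-coherent complex commutes with flat base change for bounded-below targets (Tag 0A6A).

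\textbf{Consequences.} Objects of $\Dbc(Y)$ are bounded, so the statement applies to them. Dualizing complexes are locally bounded and coherent, hence lie in $\Dbc$ on each affine, and boundedness below can be checked locally.

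\textbf{Main obstacle.} The delicate step is identifying the $\infty$-categorical mate with the classical affine comparison map. I would handle it by writing both as adjoint to the same counit composite on affines, using \Cref{lem:dqc-affine}. The boundedness hypothesis is exactly what makes the finite-product argument work, since for unbounded $M$ the degreewise products become infinite.
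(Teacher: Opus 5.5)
Your proposal is correct and follows essentially the same route as the paper: define the map as the mate of the pushforward base-change equivalence $f_{V,*}j_X^*\simeq j_Y^*f_*$, reduce to affine opens by joint conservativity (\Cref{lem:projections-conservative}), identify the map there with the classical flat base-change comparison for $\mathrm{RHom}_R(S,-)$ with $S$ finite (hence pseudo-coherent), which is an isomorphism for bounded-below targets (Stacks Tag 0A6A(3)), and conclude because isomorphisms in the homotopy category are equivalences. Your finite free resolution argument after refining to a standard open $D(g)$ is a hands-on proof of that Stacks tag in the localization case, and it correctly shows that bounded-belowness is what keeps the degreewise products finite.
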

\begin{proof}
Write
\[
 \beta:j_X^*f^!\to f_V^!j_Y^*
\]
for the mate of the pushforward base-change equivalence
\[
 f_{V,*}j_X^*\simeq j_Y^*f_*
\]
under the adjunctions $f_*\dashv f^!$ and $f_{V,*}\dashv f_V^!$, using \Cref{lem:finite-push} (on affines of $V$ both sides are restriction of scalars along the same finite ring map).
Thus $\beta$ is a morphism in $\Fun(\Dqc(Y),\Dqc(X_V))$.
It is an equivalence if and only if its restrictions to affine opens of $X_V$ are equivalences (\Cref{lem:projections-conservative}; \cite[\href{https://kerodon.net/tag/01DK}{Tag 01DK}]{Kerodon}).
On such an affine, \Cref{lem:finite-push,lem:dqc-affine} identify $\beta$ with the map of \cite[\href{https://stacks.math.columbia.edu/tag/0A6A}{Tag 0A6A(3)}]{SP}; the finite module is pseudo-coherent \cite[\href{https://stacks.math.columbia.edu/tag/066E}{Tag 066E(3)}]{SP}.
For $K$ bounded below that map is an isomorphism in the homotopy category, hence an equivalence.
Objects of $\Dbc(Y)$ and dualizing complexes are bounded below in this sense (\Cref{def:dualizing-dqc}), so $\beta$ is an equivalence on those objects.
\end{proof}

In the rest of this section, $\RHom$ denotes the internal Hom of \Cref{lem:dqc-properties}.
For a bounded coherent first input and a dualizing second input, restriction to an affine open identifies this internal Hom with module $\mathrm{RHom}_R$, by the affine equivalence of \Cref{lem:dqc-affine} and the monoidal limit construction in \Cref{lem:dqc-properties}.
(Comparison with derived sheaf Hom in the triangulated category $D(\cO_U)$ is the usual one of \cite[\href{https://stacks.math.columbia.edu/tag/0A88}{Tag 0A88}]{SP}; it is a statement in $D(\cO_U)$, not a morphism in $\Dqc$.)
The same internal Hom restricts to the equivalence $D_\omega$ of \Cref{lem:grothendieck}.
For a finite morphism $f:X\to Y$, the functor $f^!$ is the right adjoint of \Cref{lem:finite-push}; on affines it is $\mathrm{RHom}_R(S,-)$.

\begin{proposition}\label{prop:finite-duality}
Let $f:X\to Y$ be finite between Noetherian schemes.
For $M\in\Dqc(X)$ and $K\in\Dqc(Y)$ there is a natural equivalence
\begin{equation}\label{eq:finite-duality}
 f_*\RHom_X(M,f^!K)\simeq\RHom_Y(f_*M,K).
\end{equation}
If $X=Y$, $f=F$, and $u:\omega\simeq F^!\omega$ is a chosen equivalence, this gives
\begin{equation}\label{eq:gamma}
 \gamma:F_*D_\omega\xrightarrow{\ \simeq\ }D_\omega F_*^{\op}
\end{equation}
as an equivalence in $\Fun(\Dbc(X)^{\op},\Dbc(X))$.
\end{proposition}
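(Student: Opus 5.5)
We construct \eqref{eq:finite-duality} by a Yoneda argument from the adjunctions of \Cref{lem:finite-push} and the projection formula of \Cref{lem:projection}, and then specialize to $F$.

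\emph{Step 1: the comparison map.} For a test object $A\in\Dqc(Y)$ we have natural equivalences of mapping spaces
\begin{align*}
 \Map_Y(A,f_*\RHom_X(M,f^!K))
 &\simeq\Map_X(f^*A,\RHom_X(M,f^!K))
 \simeq\Map_X(f^*A\otimes M,f^!K)\\
 &\simeq\Map_Y(f_*(f^*A\otimes M),K)
 \simeq\Map_Y(A\otimes f_*M,K)
 \simeq\Map_Y(A,\RHom_Y(f_*M,K)).
\end{align*}
These use, in order, $f^*\dashv f_*$, tensor--Hom on $X$, $f_*\dashv f^!$, \Cref{lem:projection}, and tensor--Hom on $Y$. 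Each step is a natural equivalence of functors on $\Dqc(Y)^{\op}\times\Dqc(X)^{\op}\times\Dqc(Y)$: the adjunctions and the projection formula are equivalences in the relevant functor $\infty$-categories. The composite is therefore an equivalence of presheaf-valued functors of $(M,K)$.

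We then transport it to the represented objects. As in the proof of \Cref{prop:abstract-hom}, the Yoneda embedding is fully faithful at the level of functor categories with parameters $(M,K)$. Hence we obtain an equivalence in $\Fun(\Dqc(X)^{\op}\times\Dqc(Y),\Dqc(Y))$, which is \eqref{eq:finite-duality}. This avoids checking an equivalence affine-locally. As a sanity check, on $\Spec S\to\Spec R$ it restricts to the usual isomorphism $\mathrm{RHom}_S(M,\mathrm{RHom}_R(S,K))\simeq\mathrm{RHom}_R(M,K)$, by \Cref{lem:dqc-affine}.

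\emph{Step 2: specialization to $F$.} Take $X=Y$, $f=F$ (finite, since $X$ is $F$-finite), and $K=\omega$. Precomposing with the equivalence $\RHom(M,u):\RHom(M,\omega)\simeq\RHom(M,F^!\omega)$, which is natural in $M$, gives
\[
 F_*\RHom(M,\omega)\xrightarrow{F_*\RHom(M,u)}F_*\RHom(M,F^!\omega)\simeq\RHom(F_*M,\omega).
\]
This is a natural equivalence $F_*D_\omega\simeq D_\omega F_*^{\op}$ in $\Fun(\Dqc(X)^{\op},\Dqc(X))$.

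\emph{Step 3: restriction to $\Dbc(X)$.} Both functors must preserve $\Dbc(X)$. For $D_\omega$ this is \Cref{lem:grothendieck}. For $F_*$ it follows because, on affines, $F_*$ is restriction of scalars along a finite ring map, which preserves bounded coherent complexes (\Cref{lem:finite-push}). Both functors are exact, and $\Dbc(X)$ is a full subcategory. Hence the equivalence restricts along the inclusion $\Fun(\Dbc(X)^{\op},\Dbc(X))\subset\Fun(\Dbc(X)^{\op},\Dqc(X))$, giving $\gamma$.

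\emph{Main obstacle.} The delicate point is Step 1's claim that the composite is coherently natural in all variables at once. In particular, the projection formula of \Cref{lem:projection} must be a morphism in the functor category, not merely objectwise. Two inputs handle this. First, \Cref{lem:projection} already asserts that the comparison lies in $\Fun(\Dqc(Y)\times\Dqc(X),\Dqc(Y))$. Second, adjunctions give equivalences of the mapping-space bifunctors. If needed, one can instead define the comparison map directly as the adjoint of the evaluation $f_*\RHom(M,f^!K)\otimes f_*M\to f_*(\RHom(M,f^!K)\otimes M)\to f_*f^!K\to K$, which is natural by construction, and then verify that it is an equivalence affine-locally using \Cref{lem:projections-conservative}.
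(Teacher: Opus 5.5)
Your proof is correct, but it reaches \eqref{eq:finite-duality} by a different route from the paper.

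The paper starts from the classical comparison morphism of Stacks Tag 0B6H and takes it as a morphism in $\Fun(\Dqc(X)^{\op}\times\Dqc(Y),\Dqc(Y))$ ``by construction of $f_*$, $f^!$ and $\RHom$''. It then shows this morphism is an equivalence using joint conservativity of affine restrictions (\Cref{lem:projections-conservative}) and the affine case (Stacks Tag 0A9Q, restriction of scalars along a finite ring map). The passage to $\gamma$ and to $\Dbc(X)$ is the same as your Steps 2 and 3.

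You instead produce the equivalence formally, by Yoneda with parameters applied to the chain $f^*\dashv f_*$, tensor--Hom on $X$, $f_*\dashv f^!$, \Cref{lem:projection}, tensor--Hom on $Y$. This does not eliminate the affine-local check, as you claim. It relocates it into \Cref{lem:projection}, which the paper proves by exactly that kind of conservativity argument.

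Your route does buy two things. First, the transformation is actually constructed as a coherent morphism in the functor $\infty$-category, rather than asserted to exist. Second, it comes with a description of $\gamma$ on mapping spaces out of a test object $A$. The paper later relies on this description in \Cref{prop:trace-formula} and \Cref{thm:bidual-leq} (``the Yoneda argument in the proof of \Cref{prop:finite-duality}'', ``the five mapping-space equivalences''), even though the printed proof does not spell that argument out.

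The paper's route, in turn, identifies the $\infty$-categorical map directly with the classical comparison morphism on homotopy categories. That identification is what is needed to match Baudin's sheaf-level formulas. In your version it becomes a separate, easy affine-local check.

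One caution about your fallback. The evaluation map has mixed variance in $M$: it is extranatural, not a natural transformation of functors on $\Dqc(X)^{\op}\times\Dqc(Y)$ (compare \Cref{rem:resolution}). So its adjoint is not ``natural by construction''. Producing that adjoint coherently essentially requires the Yoneda argument of your Step~1, which should therefore remain your primary argument.
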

\begin{proof}
On the triangulated category $\mathrm D_{\mathrm{qc}}$, the comparison morphism of \cite[\href{https://stacks.math.columbia.edu/tag/0B6H}{Tag 0B6H}]{SP} is an isomorphism after applying $DQ_Y$, by \cite[\href{https://stacks.math.columbia.edu/tag/0A9Q}{Tag 0A9Q}]{SP} (affine case: restriction of scalars along a finite ring map).
The comparison morphism lies in $\Fun(\Dqc(X)^{\op}\times\Dqc(Y),\Dqc(Y))$ by construction of $f_*$, $f^!$ and $\RHom$; it is an equivalence by joint conservativity of affine restrictions (\Cref{lem:projections-conservative}) together with \Cref{lem:dqc-affine,lem:finite-push}.

For \eqref{eq:gamma}, compose with $F_*\RHom(M,u)$ and restrict the source to $\Dbc(X)^{\op}$.
Since \eqref{eq:finite-duality} and $u$ are equivalences, the resulting $\gamma$ is an equivalence in $\Fun(\Dbc(X)^{\op},\Dqc(X))$.
On affines $F_*$ preserves $\Dbc$ \cite[\href{https://stacks.math.columbia.edu/tag/00GJ}{Tag 00GJ}]{SP}.
On $X$, locality of coherence \cite[\href{https://stacks.math.columbia.edu/tag/01XZ}{Tag 01XZ}]{SP} and locality of cohomological vanishing on a finite cover give the same conclusion.
With \Cref{lem:grothendieck}, both sides of $\gamma$ land in $\Dbc(X)$, so $\gamma$ lies in $\Fun(\Dbc(X)^{\op},\Dbc(X))$.
\end{proof}

\begin{remark}[$F_*^{\op}$ in \eqref{eq:gamma}]\label{rem:Fop-gamma}
The duality $D_\omega$ is a functor $\Dbc(X)^{\op}\to\Dbc(X)$.
The opposite functor $F_*^{\op}:\Dbc(X)^{\op}\to\Dbc(X)^{\op}$ is $F_*$ on objects and $F_*(f)^{\op}$ on morphisms.
The composite $D_\omega\circ F_*^{\op}$ therefore sends $M$ to $D_\omega(F_*M)$, while $F_*\circ D_\omega$ sends $M$ to $F_*D_\omega(M)$.
The equivalence $\gamma$ identifies these two functors $\Dbc(X)^{\op}\to\Dbc(X)$.
This is not a statement about the pullback $F^*$.
The pullback $F^*$ is a covariant functor $\Dqc(X)\to\Dqc(X)$, left adjoint to $F_*$.
Even if $F^*$ preserves $\Dbc(X)$, so that $(F^*)^{\op}$ acts on $\Dbc(X)^{\op}$, the composites $D_\omega\circ (F^*)^{\op}$ and $F^*\circ D_\omega$ are different functors from both sides of $\gamma$.
The adjunction $F^*\dashv F_*$ yields a covariant equivalence $\Cart(\cC,F^*)\simeq\Frob(\cC,F_*)$ \cite[Proposition 2.9; Corollary 2.10]{MWC}, which does not reverse arrows.
The input used here is the identification $u:\omega\simeq F^!\omega$ together with \eqref{eq:finite-duality}, i.e., Grothendieck duality for the finite morphism $F$, not the left adjoint $F^*$.
\end{remark}

\begin{definition}[{cf.\ \cite[Definition 4.2.1]{Bau}}]\label{def:unit-dualizing}
Let $X$ be a Noetherian $F$-finite scheme.
A complex $\omega$ in $D^+(\Fil\Coh^{C,\mathrm{unit}}_X)$ is a \emph{unit dualizing complex} if $R\iota(\omega)$ is a dualizing complex on $X$, where $\iota$ denotes the inclusion $\Fil\Coh^{C,\mathrm{unit}}_X\to\operatorname{Mod}(\cO_X)$ of \cite[Definition 4.2.1]{Bau}.
The functor $\iota$ is not exact in general (\Cref{rem:resolution}), so one uses $R\iota(\omega)$.
Here $\Fil\Coh^{C,\mathrm{unit}}_X$ denotes the abelian category of unit Cartier modules whose underlying $\cO_X$ is a filtered colimit of coherent $\cO_X$-modules.
This is the category denoted $\mathrm{IndCoh}^{C,\mathrm{unit}}_X$ in \cite[Definition 3.4.5]{Bau}; we write $\Fil\Coh^{C,\mathrm{unit}}_X$ to reserve $\Ind$ for the $\infty$-category of ind-objects to come later.
\end{definition}

\begin{remark}[Unit dualizing data]\label{rem:unit-choice}
By \cite[Remark 4.2.2]{Bau}, the datum of a unit dualizing complex is equivalent to a dualizing complex in the ordinary derived category together with an isomorphism to its $F^!$-transform there.
Via the affine identifications of \Cref{lem:dqc-affine,lem:finite-push} and joint conservativity of affine restrictions (\Cref{lem:projections-conservative}), those data reconstruct a pair $(\omega,u)$ with $\omega\in\Dqc(X)$ dualizing in the sense of \Cref{def:dualizing-dqc} and an equivalence $u:\omega\simeq F^!\omega$ in $\Dqc(X)$, as needed for \Cref{prop:finite-duality,thm:intrinsic}.
We do not treat Baudin's $1$-categorical morphism as a literal morphism in $\Dqc(X)$; below we take $(\omega,u)$ as given $\infty$-categorical input.
The construction of $\gamma$ in \Cref{prop:finite-duality} is natural in the dualizing complex $K$ and, after postcomposition with $F_*\RHom(M,u)$, natural in equivalences of unit pairs $(\omega,u)$.
Consequently $D_\omega^{\mathrm{LEq}}$ of \Cref{thm:intrinsic} varies naturally in such equivalences.
\end{remark}

\begin{theorem}\label{thm:intrinsic}
Let $X$ be Noetherian and $F$-finite, with a dualizing complex $\omega$ and a chosen equivalence $u:\omega\simeq F^!\omega$.
Then \eqref{eq:underlying-dual} and \eqref{eq:gamma} induce an exact equivalence
\[
 D^{\mathrm{LEq}}_\omega:
 \Frob(\Dbc(X),F_*)^{\op}\simeq\Cart(\Dbc(X),F_*).
\]
On objects one has
\[
 D_\omega^{\mathrm{LEq}}(M,\tau)=\bigl(D_\omega M,\ D_\omega(\tau)\circ\gamma_M\bigr).
\]
A quasi-inverse sends $(N,\kappa:F_*N\to N)$ to $\bigl(D_\omega N,\ \sigma\bigr)$, where the Frobenius map of the image is
\[
 \sigma:
 D_\omega N
 \xrightarrow{D_\omega(\kappa)}
 D_\omega(F_*N)
 \xrightarrow{\gamma_N^{-1}}
 F_*D_\omega N.
\]
\end{theorem}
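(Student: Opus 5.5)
The plan is to apply \Cref{prop:reversal} with $\cC=\cD=\Dbc(X)$ and $D=D_\omega$, which is an exact equivalence $\Dbc(X)^{\op}\simeq\Dbc(X)$ by \Cref{lem:grothendieck}. We take $F=\id$, $G=F_*$ on the source, so that $\LEq(\id,F_*)=\Frob(\Dbc(X),F_*)$, and $F'=\id$, $G'=F_*$ on the target, so that $\LEq(G',F')=\LEq(F_*,\id)=\Cart(\Dbc(X),F_*)$. The two required natural equivalences are then the tautological one $D_\omega\circ\id^{\op}\simeq\id\circ D_\omega$ and $D_\omega F_*^{\op}\simeq F_*D_\omega$. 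The latter is $\gamma^{-1}$, where $\gamma$ is the equivalence of \Cref{prop:finite-duality}, which lies in $\Fun(\Dbc(X)^{\op},\Dbc(X))$ as required. The proposition then gives the equivalence $D_\omega^{\mathrm{LEq}}$.

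Next I would trace objects through the proof of \Cref{prop:reversal}. An object $(M,\tau:M\to F_*M)$ of $\Frob$ becomes, in $\LEq(F_*^{\op},\id^{\op})$ built on $\Dbc(X)^{\op}$, the same arrow read in the opposite category, namely an arrow $F_*^{\op}M\to M$. Applying $D_\omega$ gives $D_\omega(\tau):D_\omega F_*M\to D_\omega M$. The cospan equivalence then transports the source along $\gamma_M:F_*D_\omega M\simeq D_\omega F_*M$, so the Cartier structure is $D_\omega(\tau)\circ\gamma_M$, as claimed. For the quasi-inverse, I would run the same reasoning with $D_\omega^{-1}$. By \Cref{lem:grothendieck}, this inverse is again $D_\omega$ (via the unit $\eta$), and the transported equivalence is $\gamma^{-1}$. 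On objects this yields $\sigma=\gamma_N^{-1}\circ D_\omega(\kappa)$. To check that this is really the inverse on objects, it suffices to note that an equivalence of cospans has an inverse equivalence of cospans whose components are the inverse functor and the inverse transformations, and that pullback is functorial in equivalences of cospans.

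For exactness, both lax equalizers are stable, and their forgetful functors are exact and jointly conservative with the structure maps, by \cite[Proposition 2.6(e)]{MWC}. The square formed by $D_\omega^{\mathrm{LEq}}$ and the exact functor $D_\omega$ commutes with these forgetful functors. Finite limits and colimits in a lax equalizer of exact functors are computed on underlying objects, so exactness of $D_\omega$ gives exactness of $D_\omega^{\mathrm{LEq}}$. Alternatively, any equivalence between stable $\infty$-categories is automatically exact.

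The main obstacle is making the opposite/evaluation swap in \Cref{prop:reversal} precise enough to pin down the explicit structure map. In particular, one must check that the equivalence of cospans uses $\gamma$ in the correct direction and that no additional inverse or $\eta$ appears. I would first verify this on the level of objects, using the mapping-space description \eqref{eq:map-fiber}. The coherence issues are then absorbed by the fact that the cospan equivalence is built from $D_\omega$ and $\gamma$ as morphisms in functor $\infty$-categories.
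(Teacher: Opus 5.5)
Your construction of $D_\omega^{\mathrm{LEq}}$, the computation of its Cartier map $D_\omega(\tau)\circ\gamma_M$, and the exactness remark are exactly the paper's proof. That proof applies \Cref{prop:reversal} to the pair $(\id,F_*)$ with $D=D_\omega$ and natural equivalences $\id$ and $\gamma^{-1}$, and then notes that an equivalence of stable $\infty$-categories is exact. The paper's proof stops there; the quasi-inverse is only revisited in \Cref{thm:bidual-leq}, through the mate $\gamma^\vee$.

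Your paragraph on the quasi-inverse has a genuine gap, and it is exactly the extra $\eta$ you were worried about. Take $D_\omega^{-1}=D_\omega^{\op}$ with adjoint-equivalence data $\eta$. The inverse equivalence of cospans then carries the conjugate of $\gamma$ by $\eta$, not $\gamma^{-1}$. Read in $\Dbc(X)$, its component at $N$ is
\[
 F_*D_\omega N\xrightarrow{\eta_{F_*D_\omega N}}D_\omega D_\omega F_*D_\omega N\xrightarrow{D_\omega(\gamma_{D_\omega N})}D_\omega F_*D_\omega D_\omega N\xrightarrow{D_\omega(F_*\eta_N)}D_\omega F_*N .
\]
This involves $\gamma$ at $D_\omega N$, and nothing formal identifies it with $\gamma_N$.

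Equivalently, $(N,\kappa)\mapsto(D_\omega N,\gamma_N^{-1}\circ D_\omega(\kappa))$ is a second application of \Cref{prop:reversal} with $D_\omega$ and $\gamma^{-1}$, so it is some equivalence. But applying $D_\omega^{\mathrm{LEq}}$ to its output and transporting along $\eta_N$ (using naturality of $\eta$) returns $(N,\kappa\circ\phi_N)$. Here
\[
 \phi_N=\eta_{F_*N}^{-1}\circ D_\omega(\gamma_N)^{-1}\circ\gamma_{D_\omega N}\circ F_*(\eta_N)
\]
is a natural automorphism of $F_*N$.

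So the missing input is $\phi\simeq\id$, i.e.\ the self-duality of $\gamma$ with respect to biduality:
\[
 \gamma_{D_\omega N}\circ F_*(\eta_N)\simeq D_\omega(\gamma_N)\circ\eta_{F_*N}\colon F_*N\longrightarrow D_\omega F_*D_\omega N .
\]
This does not follow from the hypotheses of \Cref{prop:reversal}. Consider pairs of finite-dimensional vector spaces with $D(A,B)=(B^*,A^*)$, endofunctor $\id$, and $\gamma$ acting by scalars $c_1\neq c_2$ on the two factors. All hypotheses of that proposition hold, but $\phi$ acts by $c_1/c_2$ and $c_2/c_1$, and the formula for $\sigma$ fails to be a quasi-inverse.

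In the present situation the identity does hold. By \Cref{prop:trace-formula}(1), under tensor--Hom adjunction both sides become the trace pairing
\[
 F_*N\otimes F_*D_\omega N\to F_*(N\otimes D_\omega N)\to F_*\omega\xrightarrow{\kappa_\omega}\omega .
\]
The two descriptions differ only by the symmetry of $\otimes$. The lax symmetric monoidal structure of $F_*$, as right adjoint of $F^*$, respects that symmetry. With this step added, your argument for the quasi-inverse formula goes through, and nothing else needs to change.
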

\begin{proof}
By \Cref{prop:finite-duality}, $\gamma$ is an equivalence in $\Fun(\Dbc(X)^{\op},\Dbc(X))$.
Apply \Cref{prop:reversal} to the pair $(\id,F_*)$ with $D=D_\omega$.
The required natural equivalences in $\Fun(\Dbc(X)^{\op},\Dbc(X))$ are the identity on $D_\omega$ and $\gamma^{-1}:D_\omega\circ F_*^{\op}\simeq F_*\circ D_\omega$.
A Frobenius structure $\tau:M\to F_*M$ is sent to the Cartier structure $D_\omega(\tau)\circ\gamma_M:F_*D_\omega M\to D_\omega M$.
The categories are stable by \cite[Proposition 2.6(e)]{MWC}, since $F_*$ is exact; an equivalence between stable $\infty$-categories is exact.
\end{proof}

\begin{theorem}\label{thm:bidual-leq}
Let $X$ be Noetherian and $F$-finite, with a dualizing complex $\omega$ and a chosen equivalence $u:\omega\simeq F^!\omega$.
Write $D_\omega^{\mathrm{LEq}}$ for the equivalence of \Cref{thm:intrinsic}, and write $\gamma$ for the natural equivalence of \Cref{prop:finite-duality}.
Let
\begin{equation}\label{eq:bidual-eta}
 \eta:\mathrm{id}_{\Dbc(X)}\xrightarrow{\ \simeq\ }D_\omega\circ D_\omega^{\op}
\end{equation}
be the unit of tensor-Hom in $\Fun(\Dbc(X),\Dbc(X))$ from \Cref{lem:grothendieck}, and let $\gamma^\vee$ be the mate of $\gamma^{-1}$ under $\eta$.
Write
\[
 R:\Cart(\Dbc(X),F_*)^{\op}\xrightarrow{\ \simeq\ }\Frob(\Dbc(X),F_*)
\]
for the equivalence obtained by applying \Cref{prop:reversal} to $D=D_\omega^{\op}$ with natural equivalences given by $\gamma^\vee$.
Then:
\begin{enumerate}
\item
There is a canonical equivalence
\[
 (D_\omega^{\mathrm{LEq}})^{\op}\circ D_\omega^{\mathrm{LEq}}
 \simeq
 \mathrm{id}_{\Frob(\Dbc(X),F_*)}
\]
in $\Fun\bigl(\Frob(\Dbc(X),F_*),\Frob(\Dbc(X),F_*)\bigr)$ whose underlying arrow is $\eta$.
\item
The analogous equivalence holds on $\Cart(\Dbc(X),F_*)$:
\[
 D_\omega^{\mathrm{LEq}}\circ (D_\omega^{\mathrm{LEq}})^{\op}
 \simeq
 \mathrm{id}_{\Cart(\Dbc(X),F_*)}
\]
in $\Fun\bigl(\Cart(\Dbc(X),F_*),\Cart(\Dbc(X),F_*)\bigr)$.
\item
The functor $R^{\op}:\Cart(\Dbc(X),F_*)\to\Frob(\Dbc(X),F_*)^{\op}$ realizes the objectwise quasi-inverse of \Cref{thm:intrinsic} in the functor $\infty$-category: the composites of $D_\omega^{\mathrm{LEq}}$ with $R^{\op}$ are the equivalences of \textup{(1)} and \textup{(2)}.
\end{enumerate}
\end{theorem}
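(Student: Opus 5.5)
The plan is to reduce all three statements to the naturality of \Cref{prop:reversal} in its input data. That proposition produces its equivalence by taking pullbacks of equivalent cospans. Composing two such reversal equivalences is therefore again the pullback of a composite cospan equivalence. Concretely, the cospan for $\LEq(\id,F_*)$ is sent by $D_\omega$ with $(\id,\gamma^{-1})$ to the cospan for $\LEq(F_*,\id)$. It is then sent by $D_\omega^{\op}$ with $\gamma^\vee$ back to the cospan for $\LEq(\id,F_*)$. On the base $\Dbc(X)$ the composite is $D_\omega\circ D_\omega^{\op}$. On the arrow category $\Dbc(X)^{\Delta^1}$ it is the same functor applied to arrows, with the endpoint identifications $\eta$-conjugated through $\gamma^{-1}$ and $\gamma^\vee$.

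For (1), I construct a natural equivalence of cospans between this composite and the identity cospan, whose component on $\Dbc(X)$ is $\eta$ of \eqref{eq:bidual-eta}. The key point is the definition of $\gamma^\vee$ as the mate of $\gamma^{-1}$ under $\eta$. This says precisely that the pasting
\[
 F_*\xrightarrow{F_*\eta}F_*D_\omega D_\omega^{\op}\xrightarrow{\gamma}D_\omega F_*^{\op}D_\omega^{\op}\xrightarrow{D_\omega(\gamma^\vee)}D_\omega D_\omega^{\op}F_*
\]
is identified with $\eta F_*$ up to a specified homotopy; this is the triangle identity for the mate. Hence $\eta$, applied on both $\ev_0$ and $\ev_1$, together with this homotopy, gives a morphism in $\Fun(\Delta^1\times\Lambda^2_2,\Catbig_\infty)$ from the identity cospan to the composite cospan. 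Its components are equivalences by \Cref{lem:grothendieck}. Functoriality of pullbacks in $\Catbig_\infty$ then yields a natural equivalence $\id\simeq R\circ (D_\omega^{\mathrm{LEq}})^{\op}$. After identifying $R$ with $(D_\omega^{\mathrm{LEq}})^{\op}$ as explained below, this is the equivalence of (1), and its image under the forgetful functor is $\eta$ by construction.

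Part (2) is the same argument on the Cartier side. The composite cospan there is built from $D_\omega^{\op}$ followed by $D_\omega$, so the relevant equivalence is the opposite unit $D_\omega^{\op}\circ D_\omega\simeq\id$ of \Cref{lem:grothendieck}. The needed coherence is the other triangle identity for the same mate $\gamma^\vee$.

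For (3), I first observe that $R^{\op}$ and the inverse of $D_\omega^{\mathrm{LEq}}$ agree on objects. Indeed, $R$ sends $(N,\kappa)$ to $D_\omega N$ with structure map $(\gamma^\vee)\circ D_\omega(\kappa)$. Unwinding the mate, this is $\gamma_N^{-1}\circ D_\omega(\kappa)=\sigma$, since $\gamma^\vee$ evaluated on $\Dbc(X)^{\op}$ is $\gamma^{-1}$ transported along $\eta$. Then (1) and (2) exhibit $R^{\op}$ as a two-sided inverse, and I read the statements of (1) and (2) with $R^{\op}$ in place of $(D_\omega^{\mathrm{LEq}})^{\op}$ via this identification.

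The main obstacle is the coherence step: producing the cospan morphism in the $\infty$-categorical sense rather than just objectwise. To handle it, I would express each cospan as a functor $\Lambda^2_2\to\Catbig_\infty$ and the reversal data as a natural transformation of such functors. The mate compatibility is then a $2$-cell in $\Fun(\Dbc(X)^{\op},\Dbc(X))$, supplied by the standard mate calculus for the adjoint equivalence $D_\omega\dashv D_\omega^{\op}$ given by \Cref{lem:grothendieck}.
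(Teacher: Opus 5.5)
For (1) and (2) your route is the paper's. You compose the two applications of \Cref{prop:reversal}, identify the underlying functor $D_\omega\circ D_\omega^{\op}$ with the identity through $\eta$, and use the mate identities for $\gamma^\vee$ to match the structure-map comparisons. Phrasing this as an equivalence of cospans is more explicit than the paper. Those identities do hold formally: they use only the definition of $\gamma^\vee$, naturality, and the triangle identity $D_\omega(\eta_N)\circ\eta_{D_\omega N}=\id$.

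Drop ``after identifying $R$ with $(D_\omega^{\mathrm{LEq}})^{\op}$''. The functors $R\colon\Cart(\Dbc(X),F_*)^{\op}\to\Frob(\Dbc(X),F_*)$ and $(D_\omega^{\mathrm{LEq}})^{\op}\colon\Frob(\Dbc(X),F_*)\to\Cart(\Dbc(X),F_*)^{\op}$ point in opposite directions. What you actually prove is $R\circ(D_\omega^{\mathrm{LEq}})^{\op}\simeq\id$ and $D_\omega^{\mathrm{LEq}}\circ R^{\op}\simeq\id$. That is the only well-typed reading of (1)--(2), and it needs nothing from (3).

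The gap is the object-level claim in (3) that unwinding the mate gives $\gamma^\vee_N=\gamma_N^{-1}$. Unwinding actually gives, as a map $D_\omega F_*N\to F_*D_\omega N$,
\[
 \gamma^\vee_N=\eta^{-1}_{F_*D_\omega N}\circ D_\omega\bigl(\gamma^{-1}_{D_\omega N}\bigr)\circ D_\omega F_*\bigl(\eta_N^{-1}\bigr).
\]
This equals $\gamma_N^{-1}$ if and only if $D_\omega(\gamma_N)\circ\eta_{F_*N}=\gamma_{D_\omega N}\circ F_*(\eta_N)$ as maps $F_*N\to D_\omega F_*D_\omega N$. That is a symmetry condition on $\gamma$ with respect to biduality, and the mate calculus does not provide it. For instance, take linear duality on finite-dimensional vector spaces, $F=(-)^{\oplus2}$, and $\gamma$ given by an invertible matrix $A$. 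The condition then reads $A^{T}=A$, and when it fails the formula $\sigma$ is not a quasi-inverse at all.

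In the present situation the condition does hold. By \Cref{prop:trace-formula}(1) and the definition of $\eta$, both sides are adjoint to the trace pairing $F_*N\otimes F_*D_\omega N\to F_*(N\otimes D_\omega N)\to F_*\omega\xrightarrow{\kappa_\omega}\omega$. One of them swaps the tensor factors before applying the lax structure, and they agree because the lax monoidal structure of $F_*$ is symmetric. You must supply this lemma.

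Alternatively, argue as the paper does. Parts (1) and (2) exhibit $R^{\op}$ as a two-sided inverse of $D_\omega^{\mathrm{LEq}}$, so it agrees with the quasi-inverse of \Cref{thm:intrinsic} by uniqueness of inverses. Bear in mind that the explicit formula $\sigma$ stated there rests on the same compatibility.
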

\begin{proof}
By \Cref{lem:grothendieck}, $\eta$ already lies in $\Fun(\Dbc(X),\Dbc(X))$, and likewise $D_\omega^{\op}\circ D_\omega\simeq\mathrm{id}$ on $\Dbc(X)^{\op}$.
Conjugate $\gamma^{-1}:D_\omega\circ F_*^{\op}\simeq F_*\circ D_\omega$ by $\eta$ to obtain $\gamma^\vee$, a natural equivalence for $D_\omega^{\op}$ against $F_*$ of the variance required by a second application of \Cref{prop:reversal}.
Yoneda uniqueness as in the proof of \Cref{prop:finite-duality} determines $\gamma^\vee$ up to a contractible space of choices: for every test object $A$, the induced maps of mapping spaces are those of $\gamma^{-1}$ conjugated by $\eta$.
Triangle identities for mates identify the composite natural equivalence on structure maps with the identity natural equivalence for $(\id,F_*)$.

Apply \Cref{prop:reversal} to $D=D_\omega$ with natural equivalences $(\mathrm{id},\gamma^{-1})$ as in the proof of \Cref{thm:intrinsic}, obtaining $D_\omega^{\mathrm{LEq}}:\Frob(\Dbc(X),F_*)^{\op}\simeq\Cart(\Dbc(X),F_*)$.
Apply \Cref{prop:reversal} again to $D=D_\omega^{\op}:\Dbc(X)\simeq\Dbc(X)^{\op}$ with natural equivalences given by $\gamma^\vee$, obtaining $\Cart(\Dbc(X),F_*)^{\op}\simeq\Frob(\Dbc(X),F_*)$.
The composite endofunctor of $\Frob(\Dbc(X),F_*)$ has underlying functor $D_\omega\circ D_\omega^{\op}\simeq\mathrm{id}$ via $\eta$, and structure-path composite equivalent to the identity by the mate triangles above.
Hence the composite is the identity morphism in the functor $\infty$-category, which is \textup{(1)}.
Item \textup{(2)} is the symmetric composite with the roles of $\Frob$ and $\Cart$ exchanged.
The second application of \Cref{prop:reversal} above is exactly the functor $R$ of the statement; item \textup{(3)} records that $R^{\op}$ realizes the quasi-inverse of \Cref{thm:intrinsic}, with the two composites identified with \textup{(1)} and \textup{(2)}.
\end{proof}

\begin{proposition}\label{prop:trace-formula}
Let $X$ be Noetherian and $F$-finite, with a dualizing complex $\omega$ and a chosen equivalence $u:\omega\simeq F^!\omega$.
Let $\kappa_\omega:F_*\omega\to\omega$ be the adjoint of $u$, and let $(M,\tau)$ be an object of $\Frob(\Dbc(X),F_*)$.
Write
\[
 \kappa_M:
 F_*D_\omega M
 \xrightarrow{\gamma_M}
 D_\omega(F_*M)
 \xrightarrow{D_\omega(\tau)}
 D_\omega M
\]
for the Cartier structure map of $D_\omega^{\mathrm{LEq}}(M,\tau)$ in \Cref{thm:intrinsic}.
Then:
\begin{enumerate}
\item
As a morphism in $\Fun(\Dbc(X)^{\op},\Dbc(X))$, the map $\kappa_M$ coincides with the composite
\[
 F_*\RHom(M,\omega)
 \xrightarrow{\ \simeq\ }
 \RHom(F_*M,\omega)
 \xrightarrow{\RHom(\tau,\omega)}
 \RHom(M,\omega),
\]
where the first arrow is the finite-duality identification \eqref{eq:finite-duality} at $K=\omega$, composed with $F_*\RHom(M,u)$, and is adjoint to pairing against $\kappa_\omega$.
\item
The map $\kappa_M$ of \textup{(1)} has the following local description: on each open $U\subset X$, it restricts to the composite of sheaf morphisms
\[
 F_*\shHom(M,\omega)\longrightarrow\shHom(F_*M,\omega)\longrightarrow\shHom(M,\omega)
\]
sending a local section $s$ of $F_*\shHom(M,\omega)$ to
\begin{equation}\label{eq:trace-formula}
 \kappa_\omega\circ F_*s\circ\tau.
\end{equation}
\item
If $I$ is an injective complex representing $\omega$ and satisfying the unit condition (so that $u$ is represented by a map of complexes $I\to F^!I$), then the same formula in each degree gives a model of the map $\kappa_M$ of \textup{(1)}:
\begin{equation}\label{eq:trace-formula-complex}
 F_*\Hom^\bullet(M,I)\longrightarrow\Hom^\bullet(F_*M,I),\qquad
 s\longmapsto\kappa_I\circ F_*s\circ\tau.
\end{equation}
Here $s$ denotes a degree-zero chain map of sheaves on $U$, and the formula is understood in the homotopy category of complexes before passage to $\Dbc(X)$.
\end{enumerate}
\end{proposition}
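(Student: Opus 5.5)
Item (1) is essentially a matter of unwinding definitions. By \Cref{prop:finite-duality}, $\gamma_M$ is the composite of $F_*\RHom(M,u):F_*\RHom(M,\omega)\to F_*\RHom(M,F^!\omega)$ with the finite-duality equivalence \eqref{eq:finite-duality} at $K=\omega$. The Cartier map $\kappa_M$ of \Cref{thm:intrinsic} is, by definition, $D_\omega(\tau)\circ\gamma_M$, and $D_\omega(\tau)=\RHom(\tau,\omega)$. So the displayed composite in (1) literally equals $\kappa_M$ as a morphism in $\Fun(\Dbc(X)^{\op},\Dbc(X))$; naturality in $M$ is inherited from $\gamma$.

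The only real content is the claim that the first arrow is adjoint to pairing against $\kappa_\omega$. I would check this through Yoneda. The map \eqref{eq:finite-duality} is the mate of the counit $\varepsilon:F_*F^!\to\id$. Precisely, under tensor--Hom it corresponds to
\[
F_*\RHom(M,F^!\omega)\otimes F_*M\to F_*(\RHom(M,F^!\omega)\otimes M)\to F_*F^!\omega\xrightarrow{\varepsilon}\omega,
\]
where the first arrow is the lax monoidal structure of $F_*$, coming from \Cref{lem:projection}. Precomposing with $u$ turns $\varepsilon\circ F_*u$ into $\kappa_\omega$, by the definition of the adjoint. The pairing is therefore $F_*s\otimes m\mapsto\kappa_\omega(F_*s(m))$. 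This is the same shape as the map $\chi$ in \Cref{prop:abstract-hom} with $G=F_*$, followed by $(F_*\omega\to\omega)$.

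For (2), I restrict to an open $U$ and then to affines using \Cref{lem:projections-conservative}. Since restriction is symmetric monoidal, it commutes with $\RHom$ for coherent first argument, and by \Cref{lem:open-rhom} it also commutes with $F^!$. Passing to $H^0$-sections, i.e.\ to sheaf Hom, the adjunction description from (1) becomes evaluation of a section. A local section $s$ goes to $\kappa_\omega\circ F_*s$, and applying $\RHom(\tau,\omega)$ then precomposes with $\tau$. This gives \eqref{eq:trace-formula}.

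For (3), I would choose the injective model $I$ with a chain-level $u:I\to F^!I$; the corresponding chain-level trace $\kappa_I:F_*I\to I$ is its adjoint. The underived Hom complexes $\Hom^\bullet(M,I)$ compute $\RHom$, by Tag 070J as in \Cref{lem:reflective-quotient}. Here $F^!I=\Hom(F_*\cO,I)$ is again injective, and $F_*$ is exact. So every functor in (1) has a strict model, and the counit of $F_*\dashv F^!$ is evaluation at $1$. The degreewise formula \eqref{eq:trace-formula-complex} then models $\kappa_M$ in the homotopy category of complexes.

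The main obstacle is in (3): making sure the strict chain-level models of \eqref{eq:finite-duality} and of the counit agree with the $\infty$-categorical ones up to the identifications of \Cref{lem:dqc-affine}. I would handle this by working affine-locally and appealing to the uniqueness of adjoints. That uniqueness pins down the mate up to a contractible choice, and since the statement is made only in the homotopy category, this suffices.
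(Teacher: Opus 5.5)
Your proposal is correct and follows essentially the same route as the paper. In both, $\kappa_M=D_\omega(\tau)\circ\gamma_M$ holds by construction, and the finite-duality arrow is identified through the counit of $F_*\dashv F^!$, so that $\varepsilon\circ F_*u=\kappa_\omega$. Part (2) then comes from restricting to affines and passing to sections, and part (3) from the degreewise model on an injective unit complex $I$.

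The only difference is presentational. The paper characterizes $\gamma$ by its effect on mapping spaces out of test objects (evaluating at $A=\cO$) and quotes \cite[Lemma 3.4.1]{Bau} for the sheaf formula $h\mapsto\kappa_N F_*h$. You instead read that formula off directly from the tensor--Hom adjoint of \eqref{eq:finite-duality}, using the lax monoidal structure of $F_*$.
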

\begin{proof}
\begin{enumerate}
\item
By the Yoneda argument in the proof of \Cref{prop:finite-duality}, the morphism $\gamma$ in $\Fun(\Dbc(X)^{\op},\Dbc(X))$ is uniquely characterized, up to a contractible space of choices, by the property that for every test object $A$ the induced map of mapping spaces
\[
 \Map\bigl(A,F_*D_\omega(M)\bigr)
 \longrightarrow
 \Map\bigl(A,D_\omega(F_*M)\bigr)
\]
is the comparison of \eqref{eq:finite-duality} evaluated at $K=\omega$, composed with $F_*\RHom(M,u)$ when $f=F$.
The last two of those equivalences are $F_*\dashv F^!$ and tensor-Hom on $X$.
Their composite, evaluated at $A=\cO$, is adjoint to evaluation at $1$, i.e., to the counit $F_*F^!\omega\to\omega$.
The chosen equivalence $u$ identifies that counit with $\kappa_\omega$.
Composing with $D_\omega(\tau)$ therefore identifies $\kappa_M$ with the composite in \textup{(1)}.
\item
Restricting the five mapping-space equivalences to affine opens and to discrete $\cO$-modules, the resulting $\pi_0$ statement is the sheaf isomorphism
\[
 F_*\shHom(M,N)\simeq\shHom(F_*M,N)
\]
obtained from $N\simeq F^\flat N$ and the affine formula of \Cref{lem:finite-push}, with formula $h\mapsto\kappa_NF_*h$ \cite[Lemma 3.4.1, first item and formula (3.4.1.c)]{Bau}.
Composition with $\tau$ gives the composite of sheaf morphisms in \textup{(2)}, with formula \eqref{eq:trace-formula}, which is the pairing of \cite[Construction 4.1.1]{Bau}.
This is the restriction of the map $\kappa_M$ of \textup{(1)} to opens and to sheaves.
Thus Baudin's sheaf formula and the $\infty$-morphism $\gamma$ induce the same maps of mapping spaces out of every test object $A$; we use the Baudin citation as an identification of this $\pi_0$-level sheaf form, not as a construction of $\gamma$.
Yoneda in the functor $\infty$-category, as in the proof of \Cref{prop:finite-duality}, identifies them as morphisms in $\Fun(\Dbc(X)^{\op},\Dqc(X))$, hence in $\Fun(\Dbc(X)^{\op},\Dbc(X))$.
\item
For an injective complex $I$ representing $(\omega,u)$ as above, each term of $I$ is injective, the unit condition holds on $I$, and the sheaf isomorphisms commute with the differentials and with degree-zero chain maps.
They therefore give a natural isomorphism of relative functors on bounded complexes, which models the map $\kappa_M$ of \textup{(1)} after passage to $\Dbc$, yielding \eqref{eq:trace-formula-complex}.
\end{enumerate}
\end{proof}

\begin{remark}\label{rem:sec3-no-baudin}
The proofs of \Cref{thm:intrinsic,thm:bidual-leq} use finite Grothendieck duality and \Cref{prop:reversal}.
Citations of Baudin in \Cref{rem:unit-choice,prop:trace-formula} compare $\pi_0$-level formulas with \cite[Lemma 3.4.1; Construction 4.1.1]{Bau}; they are not input to those proofs.
\Cref{thm:modules} enhances Baudin's derived Hom duality \cite[Theorem 4.2.7]{Bau}.
When $X$ has affine diagonal, \Cref{thm:comparison} identifies $D_\omega^{\mathrm{LEq}}$ with $\mathbb D_\omega$, and passage to homotopy categories recovers that triangulated duality, including its involutivity via \Cref{thm:bidual-leq}; see also \Cref{rem:intro-no-baudin}.
\end{remark}

\section{Enhancement of the coherent module duality}\label{sec:modules}
We now construct the functor on derived abelian categories for every Noetherian $F$-finite scheme.
This construction is also needed to compare the intrinsic statement with Baudin's functors and to pass to crystals.

\begin{construction}\label{cons:hom}
Let $X$ be Noetherian and $F$-finite, and let $\omega$ be a unit dualizing complex.
Choose a bounded below injective complex $I$ representing $\omega$ in the Grothendieck abelian category $\Fil\Coh^{C,\mathrm{unit}}_X$ of \Cref{def:unit-dualizing}, using \cite[Corollary 3.4.9]{Bau}.
The underlying $\cO_X$-complex is $R\iota(\omega)$.
Its terms are injective $\cO_X$-modules \cite[Corollary 3.4.10]{Bau}.
For $M\in\Ch^b(\Coh^F_X)$, put
\begin{equation}\label{eq:hom-complex}
 H_I(M)^n=\prod_a\shHom_{\cO_X}(M^a,I^{a+n}),\qquad
 d(h)=d_Ih-(-1)^nhd_M.
\end{equation}
Equip each term with the degreewise formula \eqref{eq:trace-formula-complex} of \Cref{prop:trace-formula}.
\end{construction}

\begin{lemma}\label{lem:relative-hom}
Let $X$ be Noetherian and $F$-finite, and let $\omega$ be a unit dualizing complex.
\Cref{cons:hom} defines an ordinary contravariant functor
\[
 H_I:\Ch^b(\Coh^F_X)^{\op}
           \longrightarrow\Ch^+(\QCoh^C_X)
\]
which sends quasi-isomorphisms to quasi-isomorphisms.
It induces an exact functor of $\infty$-categories
\[
 H_I:\Db(\Coh^F_X)^{\op}\longrightarrow
                     \mathcal D^b_{\mathrm{coh}}(\QCoh^C_X).
\]
\end{lemma}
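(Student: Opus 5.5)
First I check that \Cref{cons:hom} gives an ordinary functor landing in Cartier complexes. For a single term, the degree-$n$ Cartier structure on $H_I(M)^n=\prod_a\shHom_{\cO_X}(M^a,I^{a+n})$ is the map $F_*H_I(M)^n\to H_I(M)^n$, $s\mapsto\kappa_I\circ F_*s\circ\tau$, of \eqref{eq:trace-formula-complex}. This is $\cO_X$-linear because $\kappa_I$ and $\tau$ are. It commutes with $d(h)=d_Ih-(-1)^nhd_M$ because $\kappa_I$ is a map of complexes ($I$ is a complex of unit Cartier modules) and $\tau$ is a chain map. A morphism $g:M\to M'$ of Frobenius complexes satisfies $F_*g\circ\tau=\tau'\circ g$, so precomposition with $g$ commutes with the Cartier structures. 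Each term is quasi-coherent: $M^a$ is coherent and $I^{a+n}$ is a filtered colimit of coherent sheaves. Since $M$ is bounded and $I$ is bounded below, the products are finite and $H_I(M)$ is bounded below. Hence $H_I:\Ch^b(\Coh^F_X)^{\op}\to\Ch^+(\QCoh^C_X)$ is an ordinary functor, additive, and compatible with shifts and mapping cones up to the usual sign isomorphisms.

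Next I show that quasi-isomorphisms go to quasi-isomorphisms. Quasi-isomorphism in $\Ch^+(\QCoh^C_X)$ is detected on underlying $\cO_X$-complexes, because the forgetful functor is exact and faithful on cohomology. Given $g$, the cone of $H_I(g)$ is $H_I(\mathrm{cone}(g))$ up to shift, and $\mathrm{cone}(g)$ is a bounded acyclic complex of coherent sheaves. The terms $I^k$ are injective $\cO_X$-modules by \cite[Corollary 3.4.10]{Bau}, and $I$ is bounded below. The Hom-complex criterion \cite[\href{https://stacks.math.columbia.edu/tag/070J}{Tag 070J}]{SP}, in its sheaf-Hom form (injective $\cO_X$-modules restrict to injectives on opens, so $\shHom(-,I^k)$ is exact), then shows that $\shHom^\bullet(A,I)$ is acyclic for $A$ bounded acyclic. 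The same computation shows that the underlying complex of $H_I(M)$ computes $\RHom(M,\omega)$. By \Cref{lem:grothendieck} (locally \cite[\href{https://stacks.math.columbia.edu/tag/0A7C}{Tag 0A7C}]{SP}), its cohomology is coherent and bounded. So $H_I$ lands in complexes with bounded coherent cohomology.

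Finally I pass to $\infty$-categories. By \Cref{lem:bounded-stable}(2), $\Db(\Coh^F_X)^{\op}\simeq\mathrm N(\Ch^b(\Coh^F_X))^{\op}[W^{-1}]$. Since $H_I$ inverts $W$, the universal property of the localization yields a functor into the localization of $\Ch^+(\QCoh^C_X)$ at quasi-isomorphisms. By \cite[Proposition 1.3.5.13, 1.3.5.15]{HA} this localization is the relevant part of $\cD(\QCoh^C_X)$, and the functor lands in the full subcategory $\mathcal D^b_{\mathrm{coh}}(\QCoh^C_X)$ of \Cref{lem:bounded-serre}. For exactness I argue as in \Cref{prop:realization}. $H_I$ sends zero to zero, and it carries degreewise split short exact sequences of complexes to degreewise split sequences, because $\shHom(-,I^k)$ is additive. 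Cofiber sequences in $\Db$ are represented by mapping-cone sequences, so the induced functor preserves cofiber sequences, and a functor between stable $\infty$-categories that preserves the zero object and cofiber sequences is exact \cite[Proposition 1.1.4.1]{HA}.

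The step I expect to be the main obstacle is this last one, doing the passage to the localization carefully for a contravariant functor. I would handle it by applying the localization universal property to $H_I^{\op}:\Ch^b(\Coh^F_X)\to\Ch^+(\QCoh^C_X)^{\op}$, and by checking that $\mathrm{cone}$ is carried to a shifted fiber with consistent signs. A minor subtlety is the Cartier structure on $\kappa_I$ itself: it uses \Cref{cons:hom}'s choice of $I$ in $\Fil\Coh^{C,\mathrm{unit}}_X$, so that $\kappa_I$ is a genuine chain map and not only a map up to homotopy.
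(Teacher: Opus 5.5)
Your proposal is correct and follows essentially the same route as the paper. Both arguments build an ordinary contravariant functor that inverts quasi-isomorphisms, use the localization universal property (with opposition handled as you describe) to reach $\Db(\Coh^F_X)^{\op}$, get exactness from mapping cones going to shifted fibres, and show the underlying derived Hom has bounded coherent cohomology. You are more explicit than the paper on two points: the chain-level Cartier structure, and why quasi-isomorphisms are inverted (injectivity of the $I^k$ plus the Hom-complex criterion of Stacks Tag 070J); you also get bounded coherence from the dualizing property of $\omega$ (that $\RHom(-,\omega)$ preserves bounded coherent complexes) rather than from \cite[Lemma 4.2.3]{Bau}, which is an equally valid source.
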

\begin{proof}
The target $\infty$-category $\mathcal D(\QCoh^C_X)$ is the localization of $\mathrm N(\Ch(\QCoh^C_X))$ at quasi-isomorphisms \cite[Proposition 1.3.5.15]{HA}.
An ordinary functor sending quasi-isomorphisms to quasi-isomorphisms therefore induces, by the universal property of $\infty$-localization \cite[\href{https://kerodon.net/tag/01MP}{Tag 01MP}]{Kerodon}, a functor
\[
 \mathrm N(\Ch^b(\Coh^F_X))[W^{-1}]^{\op}
 \longrightarrow\mathcal D(\QCoh^C_X).
\]
The source is identified with $\Db(\Coh^F_X)^{\op}$ by \Cref{lem:bounded-stable} and \eqref{eq:relative-model}.
Opposition is an autoequivalence of $\Cat_\infty$, so it preserves localizations.
The factorization is a functor of $\infty$-categories.
Hom sends the zero complex to zero and sends a mapping cone to the corresponding shifted Hom cone, hence to a fiber, by the ordinary Hom-complex computation.
The cone description of cofibers \cite[Remark 1.3.2.17]{HA} and the exactness of the Verdier quotient in \Cref{lem:bounded-stable} prove exactness after localization.
Finally, the underlying derived Hom has bounded coherent cohomology by \cite[Lemma 4.2.3]{Bau}, applied to the pairing of \cite[Proposition 4.1.3]{Bau}, and therefore lands in $\mathcal D^b_{\mathrm{coh}}(\QCoh^C_X)$.
\end{proof}

\begin{theorem}\label{thm:modules}
Let $X$ be Noetherian and $F$-finite, and let $\omega$ be a unit dualizing complex.
The composite
\[
 \mathbb D_\omega=J_C^{-1}H_I:\Db(\Coh^F_X)^{\op}\longrightarrow\Db(\Coh^C_X)
\]
is an exact equivalence.
There is a commutative square of homotopy categories
\[
\begin{tikzcd}[column sep=large]
 h\Db(\Coh^F_X)^{\op}\ar[r,"h\mathbb D_\omega"]\ar[d,"\simeq"']&
 h\Db(\Coh^C_X)\ar[d,"\simeq"]\\
 \mathrm D^b(\Coh^F_X)^{\op}\ar[r,"{\mathrm R\mathcal Hom(-,\omega)}"']&
 \mathrm D^b(\Coh^C_X),
\end{tikzcd}
\]
where the lower functor is \cite[Definition 4.2.5]{Bau}.
\end{theorem}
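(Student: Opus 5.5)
The plan is to reduce everything to the triangulated statement \cite[Theorem 4.2.7]{Bau} and then upgrade it with \Cref{lem:exact-detection}.

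First, exactness. By \Cref{lem:relative-hom}, $H_I:\Db(\Coh^F_X)^{\op}\to\mathcal D^b_{\mathrm{coh}}(\QCoh^C_X)$ is an exact functor of $\infty$-categories. By \Cref{prop:realization}, $J_C$ is an exact equivalence. Choosing a quasi-inverse $J_C^{-1}$, which is again exact, the composite $\mathbb D_\omega=J_C^{-1}H_I$ is exact, since it is a composite of exact functors between stable $\infty$-categories.

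Next, the commutative square of homotopy categories. The left vertical arrow comes from \Cref{lem:bounded-stable}(3), and the right one from the same lemma for $\Coh^C_X$. I would compute $hH_I$ on an object $M$: it is the image in $\mathrm D^b_{\mathrm{coh}}(\QCoh^C_X)$ of the Hom complex $H_I(M)$ into the injective complex $I$, with the Cartier structure given degreewise by \eqref{eq:trace-formula-complex}. This is exactly how Baudin defines $\mathrm R\mathcal Hom(-,\omega)$ in \cite[Definition 4.2.5]{Bau}, via the pairing of \cite[Construction 4.1.1; Proposition 4.1.3]{Bau} evaluated on an injective unit representative. On morphisms both sides are induced by functoriality of the same Hom complex, so the two functors agree on the nose before localization. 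Composing with $hJ_C^{-1}$, which is a quasi-inverse of Baudin's comparison functor \cite[Corollary 3.2.24]{Bau} as noted in \Cref{prop:realization}, gives the square. It commutes up to natural isomorphism, which is the sense intended. One point needs checking: Baudin's definition may use a different injective model of $\omega$. Any two such models are related by a quasi-isomorphism of unit complexes, and this induces a natural isomorphism of the derived functors, so the choice does not matter.

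Finally, the equivalence. By \cite[Theorem 4.2.7]{Bau}, the bottom functor is an equivalence of triangulated categories, so the square shows $h\mathbb D_\omega$ is an equivalence. Now $\mathbb D_\omega$ is an exact functor between stable $\infty$-categories whose homotopy functor is an equivalence. Note that the opposite of a stable $\infty$-category is stable and $h(\cC^{\op})=(h\cC)^{\op}$. Hence \Cref{lem:exact-detection} applies and $\mathbb D_\omega$ is an equivalence.

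The main obstacle is the middle step. It requires identifying $hH_I$ precisely with Baudin's functor, including the Cartier structure maps and the independence of the injective model, rather than only matching underlying $\cO_X$-complexes. I would argue at the level of ordinary complexes, where both constructions are literally the same formula, before any localization.
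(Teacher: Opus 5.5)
Your proposal is correct and follows the paper's proof. Exactness comes from \Cref{lem:relative-hom} and \Cref{prop:realization}, and $h\mathbb D_\omega$ is identified at the level of complexes with the injective-Hom functor of \cite[Definition 4.2.5]{Bau}. The triangulated equivalence is \cite[Theorem 4.2.7]{Bau}, and \Cref{lem:exact-detection} upgrades it; your remarks on independence of the injective model and on stability of the opposite category only make explicit points the paper leaves implicit.
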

\begin{proof}
The inverse $J_C^{-1}$ exists by \Cref{prop:realization}, so the displayed composite is a defined exact $\infty$-functor.
On bounded complexes it is the injective Hom construction of \cite[Definition 4.2.5]{Bau}.
This proves the stated comparison square.
The lower functor is an equivalence by the first displayed statement of \cite[Theorem 4.2.7]{Bau}.
That statement assumes only that $X$ is Noetherian and $F$-finite with a unit dualizing complex; it does not require an affine diagonal.
The vertical functors are the identifications of homotopy categories of \Cref{lem:bounded-stable}.
Hence the homotopy functor of $\mathbb D_\omega$ is an equivalence.
Apply \Cref{lem:exact-detection} to this exact functor.
The proof of the cited first statement in \cite[Theorem 4.2.7]{Bau} uses the evaluation map of \cite[Lemma 4.2.6]{Bau} together with \cite[Lemma V.1.2; Corollary V.2.3]{Har}; we invoke that argument only through the triangulated equivalence of that theorem, after the exact $\infty$-functor has already been constructed.
\end{proof}

\begin{remark}[Resolutions and variance]\label{rem:resolution}
Let $X$ be Noetherian and $F$-finite, and let $\omega$ be a unit dualizing complex.
The complex $I$ is only required to be bounded below; finite injective dimension in the abelian category of unit Cartier modules is not assumed.
Moreover, the inclusion $\iota:\Fil\Coh^{C,\mathrm{unit}}_X\hookrightarrow\Mod(\cO_X)$ is generally not exact \cite[Remark 3.4.12]{Bau}, which is why the underlying $\cO_X$-complex of $I$ in \Cref{cons:hom} is $R\iota(\omega)$.
The finiteness of the coherent source also matters: infinite products in $\Mod(\cO_X)$ of quasi-coherent sheaves need not be quasi-coherent, as already observed in \cite[proof of Proposition 4.1.3]{Bau}.
Finally, evaluation $\RHom(M,N)\otimes M\to N$ has mixed variance in $M$: the factor $\RHom(M,N)$ is contravariant in $M$, while the tensor factor $M$ is covariant.
Consequently $(M,N)\mapsto\RHom(M,N)\otimes M$ is not a functor $\Dqc(X)^{\op}\times\Dqc(X)\to\Dqc(X)$, so the evaluation map cannot be an ordinary natural transformation of functors on that product with the displayed tensor expression as source.
The independent test variable in \Cref{prop:finite-duality} and the ordinary functor of \Cref{lem:relative-hom} give constructions with the required variance.
\end{remark}

\subsection{Comparison with derived lax equalizers}
\begin{theorem}\label{thm:comparison}
Assume that $X$ is Noetherian, $F$-finite, and has affine diagonal, and let $\omega$ be a unit dualizing complex.
There are exact equivalences, commuting with the underlying-complex functors,
\begin{align*}
 \Db(\Coh^C_X)&\simeq\Cart(\Dbc(X),F_*),\\
 \Db(\Coh^F_X)&\simeq\Frob(\Dbc(X),F_*).
\end{align*}
With the unit data induced by $I$, there is a natural equivalence making the square
\[
\begin{tikzcd}[column sep=large]
 \Db(\Coh^F_X)^{\op}\ar[r,"\mathbb D_\omega"]\ar[d,"\simeq"']&
 \Db(\Coh^C_X)\ar[d,"\simeq"]\\
 \Frob(\Dbc(X),F_*)^{\op}\ar[r,"D^{\mathrm{LEq}}_\omega"']&\Cart(\Dbc(X),F_*)
\end{tikzcd}
\]
commute in $\Catbig_\infty$, where the vertical arrows are the equivalences above and the lower functor is that of \Cref{thm:intrinsic}.
\end{theorem}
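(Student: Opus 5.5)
The plan has three stages: build the two vertical comparison functors, show they are equivalences, and then produce the natural equivalence filling the square.

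\textbf{Stage 1: the vertical functors.} I construct them from \Cref{prop:realization} together with the derived comparison theorems of Mattis--Wei\ss\ \cite{MWC}. For the Cartier side I would first identify $\mathcal D^b_{\mathrm{coh}}(\QCoh^C_X)$ with a full subcategory of $\cD(\QCoh^C_X)$. The abelian category $\QCoh^C_X$ is the abelian lax equalizer of $(F_*,\id)$ on $\QCoh_X$, and it is Grothendieck.

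\emph{Use of the affine diagonal.} This hypothesis ensures that $\cD(\QCoh_X)\simeq\Dqc(X)$; this identification is where it enters. The same hypothesis lets \cite{MWC} identify $\cD(\QCoh^C_X)$ with $\Cart(\Dqc(X),F_*)$, compatibly with the forgetful functors.

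\emph{Restricting to bounded coherent objects.} The forgetful functors are jointly conservative and $t$-exact, so the identification restricts to bounded coherent cohomology. This gives
\[
 \mathcal D^b_{\mathrm{coh}}(\QCoh^C_X)\simeq\Cart(\Dbc(X),F_*).
\]
Here I use that $\Cart(\Dbc(X),F_*)$ is the full subcategory of $\Cart(\Dqc(X),F_*)$ on objects with underlying object in $\Dbc(X)$. This holds because the defining pullback in \Cref{def:leq} is compatible with full subcategories, and because $F_*$ preserves $\Dbc$ by \Cref{prop:finite-duality}. Precomposing with $J_C$ gives the first vertical equivalence. The Frobenius side is handled identically, using $\Frob=\LEq(\id,F_*)$ and $J_F$. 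Commutation with the underlying-complex functors holds by construction, since each step is over $\Dqc(X)$.

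\textbf{Stage 2: checking the equivalences.} I would note that \Cref{lem:exact-detection} can be used to check that the composite functors are equivalences. By \eqref{eq:map-fiber}, mapping spectra in the lax equalizers are fibers of the expected maps, and these match the Ext long exact sequences for Cartier and Frobenius modules.

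\textbf{Stage 3: the natural equivalence in the square.} Both composites $\Db(\Coh^F_X)^{\op}\to\Cart(\Dbc(X),F_*)$ are maps into a pullback. A natural equivalence between them therefore consists of two pieces of data:
\begin{enumerate}
\item a natural equivalence of underlying functors $\Db(\Coh^F_X)^{\op}\to\Dbc(X)$;
\item a homotopy identifying the two structure transformations $F_*(-)\to(-)$ under (1).
\end{enumerate}

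\emph{Underlying functors.} The underlying functor of $\mathbb D_\omega$ is $M\mapsto H_I(M)$ with its Cartier structure forgotten. Because $I$ consists of injective $\cO_X$-modules (\Cref{cons:hom}), $H_I(M)$ computes $\RHom(M,\omega)=D_\omega(M)$. I would make this an $\infty$-natural equivalence on the relative category of bounded complexes, via the natural map of Hom complexes $\Hom^\bullet(M,I)\to\RHom(M,I)$, and then pass to localizations using the universal property (\cite[Tag 01MP]{Kerodon}).

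\emph{Structure maps.} On the $\mathbb D_\omega$ side the Cartier structure is by definition the degreewise formula \eqref{eq:trace-formula-complex}. On the $D^{\mathrm{LEq}}_\omega$ side the structure is $D_\omega(\tau)\circ\gamma_M$. \Cref{prop:trace-formula}(3) says exactly that this formula models $\kappa_M$. I would transport that model statement along the localization to obtain the homotopy in (2).

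\textbf{Main obstacle.} I expect the hardest part to be promoting \Cref{prop:trace-formula}(3) from an identification made objectwise or in the homotopy category to a coherent natural equivalence in $\Fun(\Db(\Coh^F_X)^{\op},\Dbc(X))$. The issue is that the structure map of a lax equalizer object is a natural transformation, and the comparison must be natural in $(M,\tau)$ up to coherent homotopy.

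My first attempt would be to construct everything at the level of relative categories before localizing. Concretely, I would write down an ordinary natural transformation between ordinary functors $\Ch^b(\Coh^F_X)^{\op}\to\Ch^+(\QCoh_X)^{\Delta^1}$. Its source would be the arrow $F_*H_I(M)\to H_I(M)$, and its target would be the model of $D_\omega(\tau)\circ\gamma_M$ obtained from the injective complex $I$ and the chain-level unit map $I\to F^!I$.

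Since both functors send quasi-isomorphisms to quasi-isomorphisms, localization would produce the required morphism in the arrow category. Joint conservativity (\Cref{lem:projections-conservative}) would then show that it is an equivalence.
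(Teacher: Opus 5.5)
Your overall architecture is the paper's. You compose the Mattis--Wei\ss\ comparisons, restricted to bounded coherent objects, with \Cref{prop:realization}. You then match the underlying functor and the Cartier arrow of the two maps into the pullback of \Cref{def:leq}, using \Cref{prop:trace-formula} and the localization universal property to supply the homotopy of arrows.

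The genuine gap is the Frobenius side of Stage 1, together with your account of where the affine diagonal enters. The Cartier comparison $\mathcal D(\QCoh^C_X)\simeq\Cart(\mathcal D(\QCoh(X)),F_*)$ of \cite[Theorem 5.1; Corollary 6.2]{MWC} needs no affine diagonal. The Frobenius comparison $\mathcal D(\QCoh^F_X)\simeq\Frob(\mathcal D(\QCoh(X)),F_*)$ is not obtained ``identically.'' It is a separate theorem, \cite[Theorem 5.7]{MWF}, and that is exactly where the paper needs the affine-diagonal hypothesis (\Cref{rem:affine-diagonal}). The two cases are not formal mirrors. By adjunction a Frobenius structure is a Cartier structure for $\mathrm LF^*$, whose abelian counterpart $F^*$ is only right exact unless $X$ is regular. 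Moreover, the unbounded Frobenius comparison has its own completeness step (\Cref{rem:descent-bounds}). As written, nothing in your proposal justifies the Frobenius vertical equivalence. Citing \cite[Theorem 5.7]{MWF}, with its $t$-exactness and compatibility with the forgetful functor, and then restricting to $\Dbc$ as you do, closes the gap.

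The remaining differences are harmless. You do not need the unbounded equivalence $\mathcal D(\QCoh(X))\simeq\Dqc(X)$; it is true under an affine diagonal, but it is not where the hypothesis is used. The paper only uses its bounded-below part $\Phi_X^+$, which holds for any qcqs $X$ by Zariski descent \cite[Lemmas 4.5 and 4.14]{MWF}. In either version you should record $\Phi_X\circ\mathcal D(F_*)\simeq F_*\circ\Phi_X$, which is what makes the transported lax equalizer the one over $\Dqc(X)$.

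Stage 2 is superfluous, since the vertical functors are already composites of equivalences. It also could not serve as an independent proof: identifying the fibres in \eqref{eq:map-fiber} with Ext sequences of Cartier or Frobenius modules is precisely the content of the comparison theorems.

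In Stage 3, the paper identifies the underlying functors differently. It observes that $\RHom(M,\omega)$ and $H_I(M)$ both represent $A\mapsto\Map(A\otimes M,\omega)$ and invokes uniqueness of right adjoints, rather than localizing a chain-level map $\Hom^\bullet(M,I)\to\RHom(M,I)$. Your route also works, provided you build that map through tensor--Hom with a test object, because evaluation has mixed variance (\Cref{rem:resolution}). Your treatment of the Cartier arrows via \Cref{prop:trace-formula}(3) is the same as the paper's.
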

\begin{remark}[Affine diagonal]\label{rem:affine-diagonal}
Having affine diagonal sits between quasi-separated and separated: it is stronger than quasi-separated and weaker than separated.
A scheme has affine diagonal if and only if the intersection of any two affine opens is affine \cite[\href{https://stacks.math.columbia.edu/tag/0BDX}{Tag 0BDX}]{SP}.
The affine-diagonal hypothesis is required for the Frobenius comparison \cite[Theorem 5.7]{MWF} (and for the geometric setup of \cite[Definition 4.10]{MWF} used in the proof).
The Cartier comparison \cite[Theorem 5.1; Corollary 6.2]{MWC} does not need an affine diagonal.
We retain the hypothesis in \Cref{thm:comparison} because both sides are used to identify $\mathbb D_\omega$ with $D^{\mathrm{LEq}}_\omega$.
\end{remark}
\begin{proof}
Since $X$ is Noetherian it is quasi-compact, so it is geometric in the terminology of \cite[Definition 4.10]{MWF}.
The derived comparison theorems are
\begin{align*}
 \mathcal D(\QCoh^C_X)&\simeq
       \Cart(\mathcal D(\QCoh(X)),F_*),
       &&\text{\cite[Theorem 5.1; Corollary 6.2]{MWC}},\\
 \mathcal D(\QCoh^F_X)&\simeq
       \Frob(\mathcal D(\QCoh(X)),F_*),
       &&\text{\cite[Theorem 5.7]{MWF}}.
\end{align*}
They are $t$-exact and their construction commutes with the forgetful functors, as specified in those theorems.
The functor $F_*$ preserves $\Dbc$ by \Cref{prop:finite-duality}, using \cite[\href{https://stacks.math.columbia.edu/tag/01Y6}{Tag 01Y6}]{SP} and exactness of $F_*$.
The defining pullback of \Cref{def:leq} therefore identifies $\Cart(\Dbc(X),F_*)$ with the full subcategory of $\Cart(\mathcal D(\QCoh(X)),F_*)$ on objects whose underlying complex lies in $\Dbc(X)$, once $\Dbc(X)$ is identified with the bounded coherent subcategory of $\mathcal D(\QCoh(X))$ by the functor $\Phi_X$ of the next paragraph, and likewise for $\Frob$.

Next we compare $\mathcal D(\QCoh(X))$ with $\Dqc(X)$.
For each affine open $U=\Spec R\subset X$, derived global sections give an exact functor $\QCoh(X)\to\operatorname{Mod}_R$, hence a functor $\mathcal D(\QCoh(X))\to\mathcal D(R)$.
These functors are natural in $U$, so the universal property of \Cref{def:dqc} gives a functor
\[
 \Phi_X:\mathcal D(\QCoh(X))\longrightarrow\Dqc(X)
\]
in $\Catbig_\infty$, natural in open immersions.
If $X=\Spec R$, then $\QCoh(X)\simeq\operatorname{Mod}_R$ as abelian categories \cite[\href{https://stacks.math.columbia.edu/tag/01IB}{Tag 01IB}]{SP}, so $\Phi_X$ is the equivalence of \Cref{lem:dqc-affine} composed with \cite[Theorem 7.1.2.13]{HA}.
Write $\mathcal D^+(\QCoh(X))$ and $\Dqc^+(X)$ for the full subcategories of cohomologically bounded-below objects in $\mathcal D(\QCoh(X))$ and $\Dqc(X)$, respectively.
The restriction
\[
 \Phi_X^+:\mathcal D^+(\QCoh(X))\longrightarrow\Dqc^+(X)
\]
is a morphism of Zariski sheaves of $\infty$-categories on a geometric $X$: the target sheaf $U\mapsto\Dqc^+(U)$ is obtained by restricting the sheaf of \Cref{lem:dqc-zariski}, and the source sheaf $U\mapsto\mathcal D^+(\QCoh(U))$ is a sheaf by \cite[Lemma 4.14]{MWF}, where $\mathcal D^+$ in that reference denotes homologically bounded-above objects.
Since $X$ is qcqs and $\Phi_X^+$ is an equivalence on every affine, \cite[Lemma 4.5]{MWF} implies that $\Phi_X^+$ is an equivalence on $\mathcal D^+(\QCoh(X))$.
The full subcategory $\Dbc(X)$ of $\Dqc(X)$ is identified with the bounded coherent subcategory of $\mathcal D(\QCoh(X))$: coherence of cohomology is local on a finite affine cover \cite[\href{https://stacks.math.columbia.edu/tag/01XZ}{Tag 01XZ}]{SP}, and vanishing of a cohomology sheaf is local on an open cover.
Now apply \Cref{prop:realization}.

The two composites $\Phi_X\circ\mathcal D(F_*)$ and $F_*\circ\Phi_X$ become, after each projection $\Dqc(X)\to\mathcal D(R)$, the same functor $F_*\circ\Gamma(U,-)$: affine Frobenius is restriction of scalars, and $\Phi_X$ is built from derived global sections.
Uniqueness of morphisms into a limit in $\Catbig_\infty$ therefore gives an equivalence
\[
 \Phi_X\circ\mathcal D(F_*)\simeq F_*\circ\Phi_X
\]
in $\Fun(\mathcal D(\QCoh(X)),\Dqc(X))$, and likewise on the Cartier side.
Transport of the defining pullback of \Cref{def:leq} along this natural equivalence identifies $\Cart(\Dbc(X),F_*)$ with the corresponding full subcategory of $\Cart(\mathcal D(\QCoh(X)),F_*)$.

It remains to identify $\mathbb D_\omega$ with $D^{\mathrm{LEq}}_\omega$ as morphisms in
\[
 \Fun\bigl(\Db(\Coh^F_X)^{\op},\,\Db(\Coh^C_X)\bigr),
\]
after composing with the equivalences $\Db(\Coh^F_X)\simeq\Frob(\Dbc(X),F_*)$ and $\Db(\Coh^C_X)\simeq\Cart(\Dbc(X),F_*)$ already established.
By \Cref{def:leq}, a functor $\mathcal E\to\Cart(\Dbc(X),F_*)$ is a pair consisting of an underlying functor $U:\mathcal E\to\Dbc(X)$ and a morphism $\kappa:F_*U\to U$ in $\Fun(\mathcal E,\Dbc(X))$.
Write $\mathcal E=\Db(\Coh^F_X)^{\op}$.
After the displayed equivalences, $D^{\mathrm{LEq}}_\omega$ is such a pair, with underlying functor
\[
 U_{\mathrm{LEq}}=\RHom(-,\omega)
\]
and Cartier arrow $\kappa=D_\omega(\tau)\circ\gamma$.
The functor $\mathbb D_\omega=J_C^{-1}H_I$ likewise determines a pair, with underlying functor
\[
 U_{\mathrm{mod}}=J_C^{-1}H_I
\]
and Cartier arrow the localization of the degreewise pairing of \cite[Construction 4.1.1]{Bau} on \eqref{eq:hom-complex}.

The underlying functors $U_{\mathrm{LEq}}$ and $U_{\mathrm{mod}}$ are identified in $\Fun(\mathcal E,\Dbc(X))$ as follows.
After the identification of $\Dbc$ with the bounded coherent subcategory of $\Dqc$ via $\Phi_X$, both values $U_{\mathrm{LEq}}(M)$ and $U_{\mathrm{mod}}(M)$ are the value at $\omega$ of a right adjoint to $-\otimes M$, representing the functor $A\mapsto\Map(A\otimes M,\omega)$.
\begin{enumerate}
\item
On the one hand, $U_{\mathrm{LEq}}(M)=\RHom(M,\omega)$ is that value by the tensor--Hom adjunction $(-\otimes M)\dashv\RHom(M,-)$ of \Cref{lem:dqc-properties}: $\Map(A\otimes M,\omega)\simeq\Map\bigl(A,\RHom(M,\omega)\bigr)$.
\item
On the other hand, $U_{\mathrm{mod}}(M)$ is computed by Hom into the bounded-below injective complex $I$ of \Cref{cons:hom}.
On each affine $U=\Spec R$, \Cref{lem:dqc-affine} identifies $\Dqc(U)$ with $\cD(R)$.
By \cite[Proposition 1.3.5.15]{HA}, the $\infty$-category $\cD(R)$ is obtained by localizing the ordinary nerve of chain complexes at quasi-isomorphisms.
By \cite[Proposition 1.3.5.6]{HA}, a bounded-below complex of injectives is fibrant in the injective model structure.
By \cite[Lemma 1.3.5.11; Proposition 1.3.5.14]{HA}, chain-complex Hom into a fibrant complex sends quasi-isomorphisms in the source to quasi-isomorphisms of Hom complexes; those lemmas concern Hom complexes, not mapping spaces in $\cD(R)$.
Mapping spaces in $\cD(R)$ are those of the differential graded nerve: \cite[Remark 1.3.1.12]{HA} identifies them with the Dold--Kan construction on the truncated Hom complex, and \cite[Proposition 1.3.1.17]{HA} compares that nerve with the nerve of the associated simplicial category.
Thus on affines, $H_I(M)$ computes $\Map(A\otimes M,\omega)$ for test objects $A$, hence the same value at $\omega$ of a right adjoint to $-\otimes M$.
\end{enumerate}
Uniqueness of right adjoints \cite[Remark 5.2.2.2]{HTT} gives $U_{\mathrm{LEq}}\simeq U_{\mathrm{mod}}$ in $\Fun$.

The Cartier arrows then match: \Cref{prop:trace-formula} identifies $\gamma$ with \cite[Lemma 3.4.1, first item and (3.4.1.c)]{Bau} in $\Fun(\Dbc(X)^{\op},\Dqc(X))$, and \cite[Construction 4.1.1]{Bau} is that isomorphism composed with $D_\omega(\tau)$.
The localization universal property \cite[\href{https://kerodon.net/tag/01MP}{Tag 01MP}]{Kerodon} applied to the ordinary natural transformation of \cite[Construction 4.1.1]{Bau} on bounded complexes produces this composite $D_\omega(\tau)\circ\gamma$ as a morphism in $\Fun$.
The two pairs $(U,\kappa)$ therefore coincide as morphisms into the cospan defining $\Cart(\Dbc(X),F_*)$.
The pullback universal property of \Cref{def:leq} gives an equivalence in $\Fun(\mathcal E,\Cart(\Dbc(X),F_*))$, and hence in $\Fun(\mathcal E,\Db(\Coh^C_X))$ after postcomposition with the equivalence $\Cart(\Dbc(X),F_*)\simeq\Db(\Coh^C_X)$.
\end{proof}

\section{Stable categories of crystals}\label{sec:crystals}
Let $X$ be Noetherian and $F$-finite.
Let $\Nil^F_X\subset\Coh^F_X$ and $\Nil^C_X\subset\Coh^C_X$ consist of the modules on which an iterate of the structure map is zero.
For Cartier modules the $n$-fold iterate of the structure map is
\[
 \kappa^{n}=\kappa\circ F_*\kappa\circ\cdots\circ F_*^{n-1}\kappa;
\]
for Frobenius modules it is
\[
 \tau^{n}=F_*^{n-1}(\tau)\circ\cdots\circ\tau:M\to F_*^nM.
\]
Here the superscript $n$ indexes that $n$-fold iterate of $\kappa$ or $\tau$; it is not a cohomological degree.
The respective locally nilpotent quasi-coherent subcategories are denoted $\LNil^F_X$ and $\LNil^C_X$.
They define the abelian categories
\[
 \Crys^F_X=\Coh^F_X/\Nil^F_X,\qquad
 \Crys^C_X=\Coh^C_X/\Nil^C_X,
\]
\[
 \QCrys^F_X=\QCoh^F_X/\LNil^F_X,\qquad
 \QCrys^C_X=\QCoh^C_X/\LNil^C_X
\]
of \cite[Definition 2.2.8; Definitions 3.3.1 and 3.3.4]{Bau}.
The quasi-crystal categories are Grothendieck abelian \cite[Lemma 2.2.10; Lemma 3.3.5]{Bau}.

\begin{definition}\label{def:nil-complex}
Let $X$ be Noetherian and $F$-finite.
Let
\[
 \cN_F(X)\subset\Db(\Coh^F_X),\qquad
 \cN_C(X)\subset\Db(\Coh^C_X)
\]
be the full subcategories of objects all of whose cohomology modules are nilpotent.
A map $f$ is a nil-quasi-isomorphism if the kernel and cokernel of each $H^i(f)$ are nilpotent; let $W_F$ and $W_C$ be these classes on the Frobenius and Cartier sides, respectively.
Let
\[
 \KF(X)\subset\mathcal D(\QCrys^F_X),\qquad
 \KC(X)\subset\mathcal D(\QCrys^C_X)
\]
be the full subcategories of objects with bounded cohomology lying in the essential images of
\[
 \Crys^F_X\to\QCrys^F_X,\qquad
 \Crys^C_X\to\QCrys^C_X,
\]
respectively.
This is not a priori the essential image of $\Db(\Crys^F_X)$ or of $\Db(\Crys^C_X)$; that identification is \Cref{prop:quotient-comparison}.
\end{definition}

\begin{definition}[{cf. \cite[Lecture 26, Definition 1]{Chromatic}}]\label{def:thick}
We say a full subcategory of a stable $\infty$-category is \textit{thick} if it is stable and closed under retracts.
\end{definition}

\begin{lemma}\label{lem:nil-stable}
Let $X$ be Noetherian and $F$-finite.
\begin{enumerate}
\item
The subcategories $\cN_F(X)$ and $\cN_C(X)$ are thick.
\item
A map $f$ lies in $W_F$ (resp.\ $W_C$) if and only if $\cofib(f)\in\cN_F(X)$ (resp.\ $\cN_C(X)$).
\item
The categories $\KF(X)$ and $\KC(X)$ are stable.
\end{enumerate}
\end{lemma}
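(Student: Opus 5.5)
The plan is to reduce all three parts to the fact that $\Nil^F_X\subset\Coh^F_X$ and $\Nil^C_X\subset\Coh^C_X$ are Serre subcategories, and $\Crys^F_X\to\QCrys^F_X$ (resp.\ Cartier) has essential image a Serre subcategory of $\QCrys^F_X$. Then \Cref{lem:bounded-serre} and the long exact cohomology sequence of a cofiber sequence do the rest. For the Serre property of $\Nil$, we argue as follows. Suppose $0\to M'\to M\to M''\to0$ is exact in $\Coh^F_X$. If $\tau^n$ kills $M$, it kills the submodule $M'$ and the quotient $M''$, since $F_*^n$ is exact. Conversely, if $\tau^a$ kills $M'$ and $\tau^b$ kills $M''$, then $\tau^b$ maps $M$ into $F_*^bM'$, and applying $F_*^b(\tau^a)$ shows $\tau^{a+b}=0$ on $M$. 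The Cartier side is dual, using $\kappa^{a+b}=\kappa^a\circ F_*^a\kappa^b$. This is cited in \cite[Definition 2.2.8; Definition 3.3.1]{Bau} as well, and I would simply invoke it there.

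For (1), first note that $\cN_F(X)$ is closed under shifts. For a cofiber sequence $A\to B\to C$ in $\Db(\Coh^F_X)$, the homotopy category is the classical bounded derived category by \Cref{lem:bounded-stable}(3), so there is a long exact sequence $H^i(A)\to H^i(B)\to H^i(C)\to H^{i+1}(A)$ in $\Coh^F_X$. If two of the three terms have nilpotent cohomology, then each cohomology of the third is an extension of a subobject of one nilpotent module by a quotient of another. By the Serre property, it is therefore nilpotent. Thus $\cN_F(X)$ is closed under fibres and cofibres, hence stable by \cite[Lemma 1.1.3.3]{HA}. Retracts are handled by the same principle: $H^i$ of a retract is a retract, hence a subobject, of a nilpotent module. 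The Cartier case is identical.

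For (2), take a map $f:A\to B$ and its cofiber sequence $A\to B\to\cofib(f)$. The long exact sequence gives short exact sequences $0\to\operatorname{coker}H^i(f)\to H^i(\cofib f)\to\ker H^{i+1}(f)\to0$. By the Serre property, $H^i(\cofib f)$ is nilpotent for all $i$ if and only if all kernels and cokernels of the $H^i(f)$ are nilpotent.

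For (3), $\QCrys^F_X$ is Grothendieck abelian \cite[Lemma 2.2.10]{Bau}, so \Cref{lem:bounded-serre} applies once we know that the essential image $\cA$ of $\Crys^F_X\to\QCrys^F_X$ is a Serre subcategory. This is the step I expect to be the main obstacle, since closure under subobjects and extensions in the quotient category is not formal. The approach I would try first uses $\QCrys^F_X=\QCoh^F_X/\LNil^F_X$. A subobject or quotient in $\QCrys^F_X$ of the image of a coherent $M$ lifts, after modifying by locally nilpotent pieces, to a sub- or quotient Frobenius module of $M$, and such a module is coherent because $X$ is Noetherian. For extensions, I would lift an extension to $\QCoh^F_X$ and use that a quasi-coherent Frobenius module is the filtered union of its coherent Frobenius submodules (Noetherianity plus $F$-finiteness), and then exhaust modulo $\LNil$. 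This should be Baudin's statement that crystals form a Serre subcategory of quasi-crystals \cite[Lemma 2.2.10; Lemma 3.3.5]{Bau}, and I would cite it if it is recorded there. Given that, $\KF(X)=\mathcal D^b_{\cA}(\QCrys^F_X)$ is stable by \Cref{lem:bounded-serre}(1), and likewise for $\KC(X)$.
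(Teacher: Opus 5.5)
Your argument is the paper's argument. For (1) and (2) you use the same ingredients: the Serre property of $\Nil$, the sequence $0\to\operatorname{coker}H^i(f)\to H^i(\cofib f)\to\ker H^{i+1}(f)\to0$, \cite[Lemma 1.1.3.3]{HA}, and retracts. For (3) you make the same reduction to \Cref{lem:bounded-serre} via the Serre property of crystals inside quasi-crystals. The paper does not prove that Serre property; it cites \cite[Lemma 3.3.6]{Bau} on the Cartier side and \cite[Proposition 3.4.2(b)(c)]{BP} on the Frobenius side. The references you name, \cite[Lemma 2.2.10; Lemma 3.3.5]{Bau}, only say that $\QCrys^F_X$ and $\QCrys^C_X$ are Grothendieck abelian, so you should replace them.

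Do not use your fallback sketch for extension-closure as written. A quasi-coherent Frobenius module need not be a filtered union of coherent Frobenius submodules, even modulo $\LNil^F_X$. Take $R=\F_p[x]$ and $M=R[1/x]$ with $\tau(m)=m^p$. A finitely generated $\tau$-stable submodule cannot contain an element with a pole, because $\tau^k$ multiplies the pole order by $p^k$. So the union of the coherent Frobenius submodules is $R$, and $M/R$ is not locally nilpotent, since $x^{-p^k}\neq0$ there.

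If you want to avoid the citation, use a weaker statement after lifting the extension to $\QCoh^F_X$. Suppose $N\subset M$ is locally nilpotent and $M/N$ is coherent. Then $M$ contains a coherent Frobenius submodule $D$ with $D+N=M$. To see this on an affine, lift generators of $M/N$ to elements $c_i$ and write $\tau(c_i)=\sum_j r_{ij}c_j+n_i$ with $n_i\in N$. Then $D=\sum_iRc_i+\sum_{i,a}R\,\tau^a(n_i)$ is $\tau$-stable. It is finitely generated because each $n_i$ has only finitely many nonzero iterates. One then globalizes over a finite affine cover.

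Applying this first to trim the quotient, and then, after dividing by a nilpotent piece of the sub, to trim the sub, produces a coherent representative of the extension. The key input is local nilpotence, not $F$-finiteness.
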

\begin{proof}
The Frobenius and Cartier cases are parallel.
\begin{enumerate}
\item
Nilpotent coherent objects form a Serre subcategory: if $\tau^{r}=0$ (resp.\ $\kappa^{r}=0$), the same vanishing holds for subobjects and quotients, and if two objects in a short exact sequence satisfy $\tau^{r}=0$ and $\tau^{s}=0$ (resp.\ $\kappa^{r}=0$ and $\kappa^{s}=0$), the middle term satisfies $\tau^{r+s}=0$ (resp.\ $\kappa^{r+s}=0$).
For the standard $t$-structure, a cofiber sequence gives in each degree an exact sequence
\[
 0\longrightarrow\operatorname{coker}H^i(f)
 \longrightarrow H^i(\cofib f)
 \longrightarrow\ker H^{i+1}(f)\longrightarrow0.
\]
This is the cohomology sequence of the mapping-cone model for the cofiber, after passage to the homotopy category of \Cref{lem:bounded-stable}; the mapping cone is a cofiber by \cite[Remark 1.3.2.17]{HA}.
The Serre property therefore closes $\cN_F(X)$ and $\cN_C(X)$ under finite cofibers and shifts.
Stability of these full subcategories follows from \cite[Lemma 1.1.3.3]{HA}.
Cohomology preserves retract diagrams and nilpotence passes to retracts, so $\cN_F(X)$ and $\cN_C(X)$ are closed under retracts, hence thick in the sense of \Cref{def:thick}.
\item
The same exact sequence and the Serre property show that $f$ lies in $W_F$ if and only if $\cofib(f)\in\cN_F(X)$, and likewise for $W_C$ and $\cN_C(X)$.
\item
Stability of $\KF(X)$ and $\KC(X)$ is the case of \Cref{lem:bounded-serre} for the Serre subcategory given by the essential image of crystals inside quasi-crystals, which is \cite[Lemma 3.3.6]{Bau} on the Cartier side and \cite[Proposition 3.4.2(b)(c)]{BP} on the Frobenius side.
\end{enumerate}
\end{proof}

\begin{proposition}\label{prop:quotient-comparison}
Let $X$ be Noetherian and $F$-finite.
\begin{enumerate}
\item
Write
\[
 \Db(\Coh^F_X)/\cN_F(X)\,:=\,\Db(\Coh^F_X)[W_F^{-1}],\qquad
 \Db(\Coh^C_X)/\cN_C(X)\,:=\,\Db(\Coh^C_X)[W_C^{-1}]
\]
for the localizations of \cite[Theorem I.3.3(i)]{NS}; they are stable and the quotient functors are exact.
\item
Termwise application of the abelian quotients $\QCoh^F_X\to\QCrys^F_X$ and $\QCoh^C_X\to\QCrys^C_X$ induces exact equivalences
\begin{equation}\label{eq:crystal-quotient}
 \Db(\Coh^F_X)/\cN_F(X)
 \xrightarrow{\ \simeq\ }\KF(X),\qquad
 \Db(\Coh^C_X)/\cN_C(X)
 \xrightarrow{\ \simeq\ }\KC(X).
\end{equation}
\item
The functors $\Db(\Crys^F_X)\to\KF(X)$ and $\Db(\Crys^C_X)\to\KC(X)$ induced by inclusion are also exact equivalences.
\end{enumerate}
\end{proposition}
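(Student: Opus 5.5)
For \textup{(1)} we apply \cite[Theorem I.3.3(i)]{NS} directly. By \Cref{lem:nil-stable}(1) the subcategories $\cN_F(X)$ and $\cN_C(X)$ are thick, and by \Cref{lem:nil-stable}(2) the classes $W_F$ and $W_C$ are exactly the maps whose cofibers lie in them. The Dwyer--Kan localization at $W_F$ (resp.\ $W_C$) is therefore the Verdier quotient by $\cN_F(X)$ (resp.\ $\cN_C(X)$). That quotient is stable, the quotient functor is exact, and the universal property is that of exact functors killing the thick subcategory. The Frobenius and Cartier cases are parallel throughout, so below we write only the Cartier case.

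For \textup{(2)} we first build the functor. Consider the composite $\Db(\Coh^C_X)\xrightarrow{J_C}\mathcal D^b_{\mathrm{coh}}(\QCoh^C_X)\to\mathcal D(\QCrys^C_X)$, where the second arrow is the derived functor of the exact quotient $\QCoh^C_X\to\QCrys^C_X$ applied termwise. It is exact, and it lands in $\KC(X)$ because each cohomology sheaf $H^i$ is coherent and maps to a crystal. It sends objects of $\cN_C(X)$ to zero, since nilpotent modules are killed in the quotient. By the universal property in \textup{(1)} it factors through an exact functor $\Phi_C$ out of $\Db(\Coh^C_X)/\cN_C(X)$.

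By \Cref{lem:exact-detection} it then suffices to show that $h\Phi_C$ is an equivalence, and the homotopy category of a Verdier quotient is the triangulated Verdier quotient. Essential surjectivity reduces, by induction on cohomological amplitude using truncation triangles in $\KC(X)$, to objects of the heart. Those are crystals, which lift to coherent Cartier modules by the definition of $\Crys^C_X$. Full faithfulness is the main obstacle. We would prove it by a dévissage to the heart via the five lemma: it suffices to show that $\Ext^n$ between coherent Cartier modules $A,B$, computed in the quotient $h\Db(\Coh^C_X)/\cN_C$, agrees with $\Ext^n_{\QCrys}(A,B)$. For this we would use the description of morphisms in the quotient as a colimit over nil-quasi-isomorphisms $A'\to A$, together with the fact that $\QCrys$ is the localization of $\QCoh^C$ at $\LNil$. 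Via \Cref{lem:reflective-quotient} the right adjoint realizes $\mathcal D(\QCrys)$ as a reflective subcategory, so $\Ext_{\QCrys}(A,B)=\Ext_{\QCoh^C}(A,\mathrm RG B)$. The hard input is that $\mathrm RGB$ is a colimit of coherent approximations of $B$ modulo locally nilpotent pieces. Concretely, every locally nilpotent quasi-coherent extension is a filtered colimit of nilpotent coherent ones, and $\Ext^n_{\QCoh^C}(A,-)$ commutes with filtered colimits for coherent $A$. This is essentially the triangulated comparison in Baudin's \cite[Lemma 3.3.6]{Bau} on the Cartier side and \cite[Proposition 3.4.2]{BP} on the Frobenius side, and we would cite those triangulated statements and upgrade them with \Cref{lem:exact-detection}.

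For \textup{(3)} we use the same strategy. The inclusion $\Crys^C_X\to\QCrys^C_X$ is exact with Serre-closed essential image, so it induces an exact functor $\Db(\Crys^C_X)\to\KC(X)$ that is $t$-exact with equivalent hearts. It is essentially surjective by amplitude induction. Full faithfulness reduces to the agreement of Yoneda $\Ext$ groups computed in $\Crys^C_X$ and in $\QCrys^C_X$. Every quasi-crystal is the filtered union of its crystal subobjects, so we would verify effaceability: any class in $\Ext^n_{\QCrys}(A,B)$ with $A,B$ crystals dies after enlarging $B$ to a crystal. Equivalently, one can compose the equivalence of \textup{(2)} with the natural map $\Db(\Coh^C_X)/\cN_C\to\Db(\Crys^C_X)$, which is an equivalence by the standard result on derived categories of Serre quotients (Miyachi). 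Finally, \Cref{lem:exact-detection} upgrades the resulting triangulated equivalence to an equivalence of stable $\infty$-categories.
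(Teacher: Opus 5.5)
Your scaffolding is the same as the paper's. Part (1) is the same appeal to \cite[Theorem I.3.3(i)]{NS} through \Cref{lem:nil-stable}. In (2)--(3) the paper also builds exact functors by termwise application of the abelian quotient (resp.\ inclusion), checks that they land in $\KF(X)$, $\KC(X)$ and kill $\cN_F(X)$, $\cN_C(X)$, and reduces to homotopy categories by \Cref{lem:exact-detection}.

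The gap is the one step with real content, full faithfulness of $h\Phi_C$ and its Frobenius analogue, which you do not establish. In the sketch via $\mathrm RG$, the key claim is that $\mathrm RG(qB)$ is a colimit of coherent approximations $B\to B'$ with cofibre in $\cN_C(X)$. That claim is precisely the comparison to be proved, and it is left unproved. Your two supporting facts concern modules, whereas $\cofib(B\to\mathrm RG(qB))$ is only known to be a complex with locally nilpotent cohomology.

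The effaceability route in (3) rests on a false premise: a quasi-crystal need not be a union of crystals. Take $X=\Spec k$ with $k$ perfect, and $M=\bigoplus_{e\ge0}k\,e_e$ with $\kappa(a e_e)=a^{1/p}e_{e+1}$.
\begin{enumerate}
\item Since $\kappa$ is injective, $M$ is not locally nilpotent and has no nonzero locally nilpotent submodule, so $qM\neq0$ in $\QCrys^C_X$.
\item $M$ has no nonzero finite-dimensional $\kappa$-stable subspace: look at the largest index occurring.
\item Hence Gabriel's formula for morphisms in $\QCoh^C_X/\LNil^C_X$ gives $\Hom_{\QCrys^C_X}(qC,qM)=0$ for every coherent Cartier module $C$.
\end{enumerate}
So $qM$ has no nonzero crystal subobject at all.

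The references you propose to cite do not fill this hole either. In this paper, \cite[Lemma 3.3.6]{Bau} and \cite[Proposition 3.4.2(b)(c)]{BP} supply only the Serre property of crystals inside quasi-crystals (\Cref{lem:nil-stable}(3), \Cref{cor:crystal-t}). That gives stability of $\KC(X)$ and $\KF(X)$, but no comparison of $\operatorname{Ext}$ groups. The paper closes the step by importing the derived comparison theorems themselves:
\begin{enumerate}
\item For (2), it identifies $h\bigl(\Db(\Coh^C_X)[W_C^{-1}]\bigr)$ with Baudin's $S^{-1}\mathrm D^b_{\mathrm{coh}}(\QCoh^C)$ via \Cref{prop:realization}. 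It checks that nil-quasi-isomorphisms are preserved and reflected, and cites \cite[Corollary 3.3.9]{Bau} and \cite[Theorem 5.3.1(b)(c)]{BP}.
\item For (3), it cites \cite[Corollary 3.3.8]{Bau} and \cite[Proposition 2.2.15]{Bau}.
\end{enumerate}

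Your Miyachi remark is correct, and it is a real economy the paper does not exploit. Since $\Nil^C_X\subset\Coh^C_X$ is a Serre subcategory, $h\bigl(\Db(\Coh^C_X)/\cN_C(X)\bigr)\simeq\mathrm D^b(\Crys^C_X)$ holds for formal reasons. Hence (2) and (3) are equivalent, and only one comparison theorem per side needs to be cited.

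A minor point: amplitude induction gives essential surjectivity only once fullness is known, because the attaching maps must be lifted. So that step should come after full faithfulness.
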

\begin{proof}
The Frobenius and Cartier cases are parallel.
\begin{enumerate}
\item
By \Cref{lem:nil-stable}, $W_F$ (resp.\ $W_C$) is the class of morphisms whose cofiber lies in $\cN_F(X)$ (resp.\ $\cN_C(X)$), so \cite[Theorem I.3.3(i)]{NS} applies to the small stable $\infty$-categories $\Db(\Coh^F_X)$ and $\Db(\Coh^C_X)$ of \Cref{def:bounded}.
This gives the stated localizations, their stability, and the exactness of the quotient functors.
\item
Let $q_F:\QCoh^F_X\to\QCrys^F_X$ and $q_C:\QCoh^C_X\to\QCrys^C_X$ be the exact abelian quotient functors.
Termwise application gives ordinary functors $\Ch^b(\Coh^F_X)\to\Ch(\QCrys^F_X)$ and $\Ch^b(\Coh^C_X)\to\Ch(\QCrys^C_X)$ sending quasi-isomorphisms to quasi-isomorphisms, because $q_F$ and $q_C$ are exact.
They also send mapping cones to mapping cones, because $q_F$ and $q_C$ are additive and exact.
Localizing the sources by \Cref{lem:bounded-stable} and the targets by \cite[Propositions 1.3.5.13 and 1.3.5.15]{HA}, the localization universal property \cite[\href{https://kerodon.net/tag/01MP}{Tag 01MP}]{Kerodon} produces $\infty$-functors $\Db(\Coh^F_X)\to\mathcal D(\QCrys^F_X)$ and $\Db(\Coh^C_X)\to\mathcal D(\QCrys^C_X)$.
The cone description of cofibres \cite[Remark 1.3.2.17]{HA} and the exactness of the Verdier quotient in \Cref{lem:bounded-stable} prove exactness after localization.
On representing complexes one has $H^i(q_F(M))=q_F H^i(M)$ and $H^i(q_C(M))=q_C H^i(M)$, so the functors send $\Db(\Coh^F_X)$ into $\KF(X)$ and $\Db(\Coh^C_X)$ into $\KC(X)$ as defined in \Cref{def:nil-complex}, and kill $\cN_F(X)$ and $\cN_C(X)$ respectively.
The exact universal property of \cite[Theorem I.3.3(i)]{NS} now gives the functors in \eqref{eq:crystal-quotient}.

Write $L_F$ and $L_C$ for these functors.
Their homotopy functors are the classical localization comparisons: the homotopy category of an $\infty$-localization is the localization of the homotopy category \cite[\href{https://kerodon.net/tag/01MV}{Tag 01MV}]{Kerodon}.
The sources $h(\Db(\Coh^F_X)[W_F^{-1}])$ and $h(\Db(\Coh^C_X)[W_C^{-1}])$ are identified with Baudin's $S^{-1}D^b_{\mathrm{coh}}(\QCoh^F)$ and $S^{-1}D^b_{\mathrm{coh}}(\QCoh^C)$ by \Cref{prop:realization}.
Nil-quasi-isomorphisms are preserved and reflected because both comparison functors are equivalences which detect those maps on cohomology, by the same argument as \cite[Lemma 4.3.4]{Bau}, using the $\QCoh$ comparison of \cite[Corollary 3.2.24]{Bau} on the Cartier side.
On the Cartier side the resulting comparison is an equivalence by \cite[Corollary 3.3.9]{Bau}.
On the Frobenius side it is \cite[Theorem 5.3.1(b)(c)]{BP}, as recorded for Noetherian schemes in \cite[proof of Proposition 2.2.15]{Bau}; that argument uses Noetherianity rather than the separated-scheme conventions of the book.
Both source and target are stable and the functors are exact, so \Cref{lem:exact-detection} proves the $\infty$-categorical equivalences.
\item
Termwise inclusion similarly constructs the exact functors $\Db(\Crys^F_X)\to\KF(X)$ and $\Db(\Crys^C_X)\to\KC(X)$, again by sending mapping cones to mapping cones and applying \cite[\href{https://kerodon.net/tag/01MP}{Tag 01MP}]{Kerodon} together with \cite[Remark 1.3.2.17]{HA}.
Their homotopy functors are equivalences by \cite[Corollary 3.3.8]{Bau} on the Cartier side and by \cite[Proposition 2.2.15]{Bau} on the Frobenius side.
Apply \Cref{lem:exact-detection} once more.
\end{enumerate}
\end{proof}

\begin{lemma}\label{lem:nil-heart-generation}
Let $X$ be Noetherian and $F$-finite.
Let $M\in\cN_F(X)$ (resp.\ $\cN_C(X)$), and suppose the cohomology of $M$ is concentrated in degrees $[a,b]$.
Then $M$ may be rebuilt from the shifted heart objects
\[
 H^i(M)[-i]\in\Nil^F_X
 \qquad\text{(resp.\ $\Nil^C_X$)},
\]
for $a\le i\le b$, by finitely many fibre sequences of standard truncations.
In particular, $\cN_F(X)$ (resp.\ $\cN_C(X)$) is the smallest stable subcategory of $\Db(\Coh^F_X)$ (resp.\ $\Db(\Coh^C_X)$) containing $\Nil^F_X$ (resp.\ $\Nil^C_X$).
\end{lemma}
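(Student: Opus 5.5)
We argue by induction on the cohomological amplitude $b-a$, using the standard $t$-structure on $\Db(\Coh^F_X)$ (resp.\ $\Db(\Coh^C_X)$), whose heart is $\Coh^F_X$ (resp.\ $\Coh^C_X$). We treat the Frobenius case; the Cartier case is identical. Truncations are computed on representing bounded complexes by the usual $\tau^{\le n}$, $\tau^{\ge n+1}$, which are functors on $\Ch^b(\Coh^F_X)$ preserving quasi-isomorphisms. They therefore descend to $\Db(\Coh^F_X)$ via \Cref{lem:bounded-stable}(2), and the short exact sequence of complexes
\[
 0\to\tau^{\le n}M\to M\to\tau^{\ge n+1}M\to0
\]
gives a cofibre (equivalently fibre) sequence in the stable $\infty$-category. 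To see this, compare the sequence with the mapping cone, which is a cofibre by \cite[Remark 1.3.2.17]{HA}; the cone of the first map is quasi-isomorphic to the quotient complex.

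If $b=a$, the complex $M$ is quasi-isomorphic to $H^a(M)[-a]$: use the maps $\tau^{\le a}M\to M$ and $\tau^{\le a}M\to H^a(M)[-a]$, which are quasi-isomorphisms because the cohomology is concentrated in degree $a$. Its cohomology $H^a(M)$ is nilpotent by the definition of $\cN_F(X)$, so it lies in $\Nil^F_X$.

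For $b>a$, take the fibre sequence $\tau^{\le b-1}M\to M\to\tau^{\ge b}M$. The left term has cohomology $H^i(M)$ for $a\le i\le b-1$, and the right term is $H^b(M)[-b]$. Both therefore lie in $\cN_F(X)$ and have smaller amplitude, so induction rebuilds $\tau^{\le b-1}M$ from the objects $H^i(M)[-i]$ with $a\le i\le b-1$. Then $M$ is the extension, that is, the cofibre of the connecting map $H^b(M)[-b-1]\to\tau^{\le b-1}M$. This uses $b-a$ fibre sequences in total.

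For the \emph{in particular}: $\cN_F(X)$ is stable by \Cref{lem:nil-stable}(1) and contains $\Nil^F_X$ (placed in degree $0$). Any stable subcategory containing $\Nil^F_X$ is closed under shifts and extensions, so by the rebuilding above it contains every object of $\cN_F(X)$. This gives minimality.

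The only delicate step, and the one I expect to be the main obstacle, is justifying that the truncation short exact sequences are cofibre sequences in $\Db(\Coh^F_X)$ as defined in \Cref{def:bounded}, rather than merely distinguished triangles in the homotopy category. If the direct comparison is awkward, a fallback suffices. The objects $\tau^{\le b-1}M$ and $H^b(M)[-b]$ exist in $\Db(\Coh^F_X)$, and one can take the fibre in the stable category of the map $M\to\tau^{\ge b}M$. This fibre is identified with $\tau^{\le b-1}M$ on the homotopy category through the triangulated truncation triangle, and objects are determined up to equivalence by their images in $h\Db$, which is all the statement requires.
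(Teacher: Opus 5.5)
Your argument is correct and is essentially the paper's proof. You induct on the amplitude, use the standard truncation fibre sequences to split off one shifted cohomology object $H^i(M)[-i]$ at a time, and conclude because a stable subcategory is closed under shifts and extensions. The paper instead restricts the standard $t$-structure of $\mathcal D(\QCoh^F_X)$ along $J_F$ (\Cref{prop:realization}) and uses the abstract truncation fibre sequences of the $t$-structure formalism, whereas you build the truncations and their cofibre sequences at chain level in $\Ch^b(\Coh^F_X)$.

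One small correction: with the usual truncation $\tau^{\ge n+1}M=(0\to M^{n+1}/\operatorname{im}d^n\to M^{n+2}\to\cdots)$, the sequence $0\to\tau^{\le n}M\to M\to\tau^{\ge n+1}M\to0$ is not exact in the middle. Take the quotient $M/\tau^{\le n}M$ as the third term instead; it is quasi-isomorphic to $\tau^{\ge n+1}M$, so the rest of your argument goes through unchanged.
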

\begin{proof}
The standard $t$-structures on $\mathcal D(\QCoh^F_X)$ and $\mathcal D(\QCoh^C_X)$ of \Cref{lem:grothendieck-derived} restrict to $\Db(\Coh^F_X)$ and $\Db(\Coh^C_X)$ along \Cref{prop:realization}.
By \cite[Definition 1.2.1.1]{HA}, a $t$-structure determines truncation functors.
By \cite[Proposition 1.2.1.5]{HA}, these truncations fit into fibre sequences.
The fibre sequence
\[
 \tau_{\geq n}M\longrightarrow M\longrightarrow\tau_{\leq n-1}M
\]
is recorded in \cite[Remark 1.2.1.8]{HA}.
(Here we use cohomological indexing; the relation to HA's homological convention is $\cC^{\leq n}=\cC^{\mathrm{HA}}_{\geq -n}$, as in \Cref{sec:perverse}.)
Truncations preserve bounded coherent cohomology and, by the Serre property, nilpotent cohomology, so they preserve $\cN_F(X)$ and $\cN_C(X)$.
If $M$ has cohomology concentrated in degrees $[a,b]$, the fibre of $\tau_{\geq a+1}M\to M$ is equivalent to $H^a(M)[-a]$, and inductively each successive truncation step adjoins one shifted cohomology object $H^i(M)[-i]$ until the amplitude is exhausted.
This rebuilds $M$ from those shifted heart objects.
The last sentence follows because a stable subcategory is closed under fibres and shifts \cite[Lemma 1.1.3.3]{HA}.
\end{proof}

\begin{theorem}\label{thm:crystals}
Let $X$ be Noetherian and $F$-finite, and let $\omega$ be a unit dualizing complex.
\begin{enumerate}
\item
The equivalence $\mathbb D_\omega$ of \Cref{thm:modules} restricts to an exact equivalence
\[
 \mathbb D_\omega:\cN_F(X)^{\op}\xrightarrow{\ \simeq\ }\cN_C(X),
\]
and induces an exact equivalence
\[
 \mathbb D^{\mathrm{crys}}_\omega:\KF(X)^{\op}\simeq\KC(X).
\]
\item
Under \Cref{prop:quotient-comparison}, there is a natural equivalence making the square
\[
\begin{tikzcd}[column sep=large]
 \Db(\Coh^F_X)^{\op}\ar[r,"\mathbb D_\omega"]\ar[d,"L_F^{\op}"']&
 \Db(\Coh^C_X)\ar[d,"L_C"]\\
 \KF(X)^{\op}\ar[r,"\mathbb D^{\mathrm{crys}}_\omega"']&\KC(X)
\end{tikzcd}
\]
commute in $\Catbig_\infty$, where $L_F$ and $L_C$ denote the localization functors of \eqref{eq:crystal-quotient}.
\item
On homotopy categories the lower equivalence is \cite[Theorem 4.3.5]{Bau}.
\end{enumerate}
\end{theorem}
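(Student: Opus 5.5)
We first show that $\mathbb D_\omega$ carries $\cN_F(X)^{\op}$ into $\cN_C(X)$, and that a quasi-inverse carries $\cN_C(X)$ back into $\cN_F(X)$.

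By \Cref{lem:nil-heart-generation}, $\cN_F(X)$ is the smallest stable subcategory containing the heart objects of $\Nil^F_X$. Since $\mathbb D_\omega$ is exact, and $\cN_C(X)$ is thick by \Cref{lem:nil-stable}(1), it suffices to check a single $M\in\Nil^F_X$ with $\tau^r=0$. For such $M$, the Cartier structure on $H_I(M)$ is given degreewise by $s\mapsto\kappa_I\circ F_*s\circ\tau$, as in \eqref{eq:trace-formula-complex}. Iterating $r$ times, the $r$-fold Cartier iterate on $H_I(M)$ factors through precomposition with $\tau^r=0$, so it vanishes on the complex. Hence each cohomology Cartier module of $\mathbb D_\omega(M)$ has vanishing $r$-th iterate, i.e.\ is nilpotent.

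The same computation on the Cartier side handles the converse. We use the quasi-inverse of $\mathbb D_\omega$, which on homotopy categories is Baudin's inverse Hom duality. Alternatively, and more cleanly, we argue via \Cref{lem:exact-detection}: \cite[Theorem 4.3.5]{Bau} (or \cite[Lemma 4.3.4]{Bau}) says the triangulated duality preserves and reflects nilpotence of cohomology. Then $\mathbb D_\omega$ restricts to a fully faithful exact functor $\cN_F(X)^{\op}\to\cN_C(X)$, and it is essentially surjective because the inverse preserves nilpotent objects. This proves the first half of (1).

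Next we descend to the quotients. Since $\mathbb D_\omega$ is exact, it maps cofibers to (shifted) fibers. So by \Cref{lem:nil-stable}(2), it sends $W_F$ (viewed in the opposite category) into $W_C$, and its inverse sends $W_C$ into $W_F$. We apply the universal property of the Verdier quotient \cite[Theorem I.3.3(i)]{NS} to $L_C\circ\mathbb D_\omega$ and to $L_F^{\op}\circ\mathbb D_\omega^{-1}$. Here we use that $(-)^{\op}$ preserves localizations, so $\Db(\Coh^F_X)^{\op}[W_F^{-1}]\simeq(\Db(\Coh^F_X)[W_F^{-1}])^{\op}$. This yields exact functors between the quotients, and the universal property makes them mutually inverse up to natural equivalence. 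Transporting along the equivalences \eqref{eq:crystal-quotient} of \Cref{prop:quotient-comparison} defines $\mathbb D^{\mathrm{crys}}_\omega$. The commuting square in (2) is exactly the factorization datum provided by the universal property.

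For (3), we pass to homotopy categories. The homotopy category of the localization is the localization of the homotopy category \cite[\href{https://kerodon.net/tag/01MV}{Tag 01MV}]{Kerodon}, and the induced functor is the one obtained from $h\mathbb D_\omega$, which is Baudin's Hom duality by \Cref{thm:modules}. Baudin's crystal duality \cite[Theorem 4.3.5]{Bau} is defined by the same factorization through $S^{-1}$, so uniqueness of factorizations through a localization identifies the two.

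The main obstacle is the essential-surjectivity and reflection part of the first step. Showing that the inverse of $\mathbb D_\omega$ preserves nilpotence requires the explicit inverse pairing on the Cartier side. I would first try to cite Baudin's lemma that the derived Hom duality detects nil-quasi-isomorphisms. Failing that, I would run the iterate computation with the biduality of \Cref{thm:bidual-leq} and the unit formula $\kappa_I$.
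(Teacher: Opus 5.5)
Your proposal is correct and follows essentially the same route as the paper. You reduce to nilpotent heart objects via \Cref{lem:nil-heart-generation} and kill the $r$-fold Cartier iterate on $H_I(M)$ with the degreewise trace formula. The converse is the analogous iterate computation for Baudin's inverse pairing, where the Frobenius iterate involves $(F^\flat)^r$ and the adjoint of $\kappa^r$. Items (2) and (3) then come from the universal properties of the Verdier quotient and of localization on homotopy categories.

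The only caution is your fallback via \Cref{thm:bidual-leq}. It would silently require the affine-diagonal hypothesis, because relating $D^{\mathrm{LEq}}_\omega$ to $\mathbb D_\omega$ goes through \Cref{thm:comparison}. The direct Cartier-side computation is the version that works in the stated generality.
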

\begin{proof}
\begin{enumerate}
\item
Membership in $\cN_F(X)$ and in $\cN_C(X)$ is a condition on cohomology, hence is detected on the homotopy category.
First let $M$ be a coherent nilpotent Frobenius module with $\tau_M^{r}=0$.
Iteration of \eqref{eq:trace-formula} shows that every term of $H_I(M)$ has zero $r$th Cartier iterate.
Its cohomology is therefore nilpotent by the Serre property, so $\mathbb D_\omega(M)\in\cN_C(X)$.
For a coherent nilpotent Cartier module, the inverse homotopy functor is the pairing of \cite[Construction 4.1.4; Proposition 4.1.6]{Bau}.
In a unit target, its $r$th Frobenius iterate is obtained by applying $(F^\flat)^r$ to the Hom map and composing with the adjoint of $\kappa_M^{r}$ and the inverse unit isomorphism.
It vanishes when $\kappa_M^{r}=0$.
The same bounded Hom model thus shows that $h\mathbb D_\omega^{-1}$ carries such a heart object into $\cN_F(X)$.

By \Cref{lem:nil-heart-generation}, every object of $\cN_F(X)$ (resp.\ $\cN_C(X)$) is rebuilt from shifts of heart objects in $\Nil^F_X$ (resp.\ $\Nil^C_X$) by finitely many truncation fibre sequences.
Since $h\mathbb D_\omega$ and $h\mathbb D_\omega^{-1}$ preserve those heart objects, exactness of the ambient equivalence $\mathbb D_\omega$ and stability of $\cN_F(X)$ and $\cN_C(X)$ imply that $\mathbb D_\omega$ restricts to an exact equivalence $\cN_F(X)^{\op}\simeq\cN_C(X)$.
In particular $\mathbb D_\omega$ preserves and reflects $W_F$ and $W_C$, by \Cref{lem:nil-stable}.
The localization universal property of \cite[Theorem I.3.3(i)]{NS} therefore gives an exact equivalence of the localizations.
Use \Cref{prop:quotient-comparison} to identify the localized categories with $\KF(X)$ and $\KC(X)$, obtaining $\mathbb D^{\mathrm{crys}}_\omega$.
\item
The same localization universal property gives a natural equivalence making the displayed square commute in $\Catbig_\infty$.
\item
The same universal property after taking homotopy categories identifies the lower functor with Baudin's localized pairing \cite[Theorem 4.3.5]{Bau}.
\end{enumerate}
\end{proof}

\begin{remark}[Mapping spaces of crystals]\label{rem:crystal-maps}
The proof of \Cref{prop:quotient-comparison} uses stability and exactness before the homotopy-category comparison.
An equivalence of homotopy categories alone would not suffice.
More explicitly, \cite[Theorem I.3.3(ii)]{NS} computes the quotient mapping spaces by filtered colimits
\[
 \Map_{\Db(\Coh^F_X)/\cN_F}(M,N)
 \simeq\mathop{\operatorname{colim}}_{(Z\to N)\in\cN_F/N}
            \Map_{\Db(\Coh^F_X)}(M,\cofib(Z\to N)),
\]
\[
 \Map_{\Db(\Coh^C_X)/\cN_C}(M,N)
 \simeq\mathop{\operatorname{colim}}_{(Z\to N)\in\cN_C/N}
            \Map_{\Db(\Coh^C_X)}(M,\cofib(Z\to N)).
\]
The colimits are taken in the $\infty$-category of spaces, indexed by the $\infty$-categorical slices $\cN_F/N$ and $\cN_C/N$, which are small after the replacement of \Cref{def:bounded}.
Thus the quotients give the higher mapping information as well as the usual categories of fractions.
\end{remark}

\section{Perverse \texorpdfstring{$t$}{t}-structures and normalization}\label{sec:perverse}
All $t$-structures in this section use cohomological indexing.
For an exact contravariant equivalence $D:\cC^{\op}\simeq\cD$, the structure transported from $(\cD^{\leq0},\cD^{\geq0})$ has
\begin{equation}\label{eq:transport}
 {}^D\cC^{\leq0}=\{M:DM\in\cD^{\geq0}\},\qquad
 {}^D\cC^{\geq0}=\{M:DM\in\cD^{\leq0}\}.
\end{equation}
An equivalence of stable $\infty$-categories induces an equivalence of homotopy categories, and $h(\cC^{\op})=(h\cC)^{\op}$.
A $t$-structure on a stable $\infty$-category is a $t$-structure on its homotopy category \cite[Definition 1.2.1.4]{HA}, so \eqref{eq:transport} is a $t$-structure by transport along $hD$, using the opposite $t$-structure.
The relation to the homological convention of \cite[Warning 1.2.1.13]{HA} is
$\cC^{\leq n}=\cC^{\mathrm{HA}}_{\geq -n}$ and
$\cC^{\geq n}=\cC^{\mathrm{HA}}_{\leq -n}$.

\begin{definition}\label{def:dual-perversity}
Let $X$ be Noetherian and $F$-finite, and let $\omega$ be a unit dualizing complex.
For $x\in X$, write $d_\omega(x)$ for the unique integer such that
\[
 \omega_x\bigl[d_\omega(x)\bigr]
\]
is a normalized dualizing complex over $\cO_{X,x}$ \cite[\href{https://stacks.math.columbia.edu/tag/0AWF}{Tag 0AWF}]{SP}.
The stalk is dualizing: after passing to an affine, this is localization of a dualizing complex \cite[\href{https://stacks.math.columbia.edu/tag/0A7G}{Tag 0A7G}]{SP}.
By that same Tag~0AWF, $-d_\omega$ is a dimension function \cite[\href{https://stacks.math.columbia.edu/tag/02I9}{Tag 02I9}]{SP}: it drops by one under each immediate specialization, so $d_\omega$ rises by one, and for any specialization $x\rightsquigarrow y$ one has
\[
 d_\omega(y)-d_\omega(x)
 =\operatorname{codim}\bigl(\overline{\{y\}},\,\overline{\{x\}}\bigr)
\]
\cite[\href{https://stacks.math.columbia.edu/tag/02IA}{Tag 02IA}]{SP}.
For example, if $X$ is irreducible of dimension $n$ and $\omega$ is shifted so that $d_\omega(\xi)=-n$ at the generic point $\xi$, then
\[
 d_\omega(x)=-\dim\overline{\{x\}}
\]
for every $x\in X$; the middle-perversity normalization \eqref{eq:normalization} is exactly this choice of shift (see \Cref{cor:middle}).
We use the shift convention $H^i(K[n])=H^{i+n}(K)$.
Applied to the normalized complex $\omega_x[d_\omega(x)]$, \cite[\href{https://stacks.math.columbia.edu/tag/0A82}{Tag 0A82}]{SP} gives
\[
 R\Gamma_{\mathfrak m_x}(\omega_x)\simeq E_x[-d_\omega(x)],
\]
where $E_x$ is an injective hull of $k(x)$.
In particular $d_\omega(x)$ is the unique cohomological degree in which $R\Gamma_{\mathfrak m_x}(\omega_x)$ is nonzero.
Define
\[
 {}^{\omega}\Db(\Coh^F_X)^{\leq0}
 \qquad\text{and}\qquad
 {}^{\omega}\Db(\Coh^F_X)^{\geq0}
\]
by
\begin{align}
 M\in{}^{\omega}\Db(\Coh^F_X)^{\leq0}
 &\quad\Longleftrightarrow\quad H^j(M_x)=0
               \quad(j>d_\omega(x),\ x\in X),\label{eq:perv-stalk}\\
 M\in{}^{\omega}\Db(\Coh^F_X)^{\geq0}
 &\quad\Longleftrightarrow\quad
 H^j_{\mathfrak m_x}(M_x)=0
               \quad(j<d_\omega(x),\ x\in X),\label{eq:perv-support}
\end{align}
(The stalk cohomology in \eqref{eq:perv-stalk} versus local cohomology in \eqref{eq:perv-support} is intentional: local duality interchanges them, cf.\ \Cref{lem:local-duality}.)
Likewise define
\[
 {}^{\omega}\Db(\Coh^C_X)^{\leq0}
 \qquad\text{and}\qquad
 {}^{\omega}\Db(\Coh^C_X)^{\geq0}
\]
by the same stalk and support vanishing conditions with $\Coh^F_X$ replaced by $\Coh^C_X$.
The complexes in these formulas are the underlying $\cO_X$-complexes.
Write
\[
 \Perv_\omega(\Coh^F_X)
 \qquad\text{and}\qquad
 \Perv_\omega(\Coh^C_X)
\]
for the intersections of these two full subcategories on each side, which will be their abelian hearts.
\end{definition}

\begin{lemma}\label{lem:local-duality}
Let $X$ be Noetherian and $F$-finite, and let $\omega$ be a unit dualizing complex.
Let $M\in\Db(\Coh^F_X)$ or $M\in\Db(\Coh^C_X)$, write $N=D_\omega M$ for the underlying dual, fix $x\in X$, and set $d=d_\omega(x)$.
Let $E_x$ be an injective hull of the residue field of $\cO_{X,x}$.
There is an isomorphism of finite modules
\begin{equation}\label{eq:local-duality-test}
 \widehat{H^n(N_x)}\cong
 \Hom_{\cO_{X,x}}\bigl(H^{d-n}_{\mathfrak m_x}(M_x),E_x\bigr).
\end{equation}
In particular $H^n(N_x)=0$ if and only if $H^{d-n}_{\mathfrak m_x}(M_x)=0$.
\end{lemma}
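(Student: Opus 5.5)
The plan is to reduce to Grothendieck local duality over the Noetherian local ring $A=\cO_{X,x}$, using only the underlying $\cO_X$-complexes. The Frobenius or Cartier structure plays no role, because the statement concerns only $M$ and $N=D_\omega M$ as objects of $\Dbc(X)$. By \Cref{prop:realization} and the identification of $\Dbc(X)$ with bounded coherent complexes used in the proof of \Cref{lem:grothendieck}, we may regard $M$ as an object of $\mathcal D^b_{\mathrm{coh}}$. Then $N\simeq\RHom(M,\omega)$.

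First, pass to stalks. Choose an affine open $\Spec R\ni x$; by \Cref{lem:dqc-affine} $N|_{\Spec R}=\mathrm{RHom}_R(M_R,\omega_R)$. Since $M_R$ is pseudo-coherent, $\mathrm{RHom}$ commutes with the flat localization $R\to A$, which gives
\[
 N_x\simeq\mathrm{RHom}_A(M_x,\omega_x).
\]
Next, shift to the normalized complex $\omega'_x=\omega_x[d]$ of \Cref{def:dual-perversity}. With the convention $H^i(K[n])=H^{i+n}(K)$ this yields
\[
 H^n(N_x)=H^{n-d}\,\mathrm{RHom}_A(M_x,\omega'_x).
\]

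Now apply local duality for the normalized dualizing complex, in the form of \cite[\href{https://stacks.math.columbia.edu/tag/0A84}{Tag 0A84}]{SP}: for $K\in\mathcal D^b_{\mathrm{coh}}(A)$,
\[
 \mathrm{Ext}^{-i}_A(K,\omega'_x)^{\wedge}\cong\Hom_A\bigl(H^i_{\mathfrak m_x}(K),E_x\bigr).
\]
Take $K=M_x$ and $i=d-n$, so that $-i=n-d$. This gives exactly \eqref{eq:local-duality-test}, where the hat denotes $\mathfrak m_x$-adic completion. The left side is a completion of a finite module, hence finite over $\widehat A$. The right side is finite because $H^i_{\mathfrak m}(M_x)$ is Artinian and Matlis duality sends Artinian modules to finite $\widehat A$-modules. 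This matches the phrase ``finite modules'' in the statement.

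The final claim then follows in two steps. First, $H^n(N_x)$ is finite over the local ring $A$, so it vanishes if and only if its completion vanishes, since $A\to\widehat A$ is faithfully flat. Second, Matlis duality into $E_x$ is faithful on $\mathfrak m$-torsion modules, so $\Hom_A(H,E_x)=0$ if and only if $H=0$, because $E_x$ is an injective cogenerator for such modules.

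The main obstacle is bookkeeping of signs and indices. I must check that the normalization convention of Tag~0AWF, which fixes $R\Gamma_{\mathfrak m}(\omega'_x)\simeq E_x$ in degree $0$, matches the sign in $R\Gamma_{\mathfrak m_x}(\omega_x)\simeq E_x[-d]$ recorded in \Cref{def:dual-perversity}. I would verify this on the test case $M=\cO_X$, where $N=\omega$, $n=d$, and $H^0_{\mathfrak m}(A)$ is dual to $\widehat{H^d(\omega_x)}$. If the indices come out as $-d-n$ rather than $d-n$, then the convention for $d_\omega$ must be read accordingly. A secondary point is ensuring that the stalk of the $\infty$-categorical internal Hom agrees with the module $\mathrm{RHom}$; this is the affine identification recorded after \Cref{lem:open-rhom}.
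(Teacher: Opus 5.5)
Your proposal is correct and follows the paper's proof essentially step for step. Both arguments use the same ingredients: flat localization of $\mathrm{RHom}$ via pseudo-coherence to get $N_x\simeq\mathrm{RHom}(M_x,\omega_x)$, a shift to the normalized complex $\omega_x[d]$, local duality (Tag~0A84) with $K=M_x$, and then faithful flatness of completion together with the cogenerator property of $E_x$. The index check you flagged does come out as $d-n$ under the paper's convention $R\Gamma_{\mathfrak m_x}(\omega_x)\simeq E_x[-d]$. The only cosmetic difference is that the paper states local duality at the derived level and cites Tag~0A06 separately to pass from derived completion to ordinary completion on cohomology.
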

\begin{proof}
The statement \cite[\href{https://stacks.math.columbia.edu/tag/0A84}{Tag 0A84}]{SP} applies to a normalized dualizing complex:
\[
 R\Hom_{\cO_{X,x}}(K,\omega_{\mathrm{norm}})^\wedge
 \simeq
 R\Hom_{\cO_{X,x}}\bigl(R\Gamma_{\mathfrak m_x}(K),E_x[0]\bigr).
\]
Take $\omega_{\mathrm{norm}}=\omega_x[d]$, as in \Cref{def:dual-perversity}, and $K=M_x$.
The identification $N_x\simeq R\Hom_{\cO_{X,x}}(M_x,\omega_x)$ is the stalk case of the affine comparison of internal Hom in \Cref{lem:dqc-properties,lem:dqc-affine}: localization $A\to A_{\mathfrak p}$ is flat, a bounded coherent first input is a pseudo-coherent complex \cite[\href{https://stacks.math.columbia.edu/tag/066E}{Tag 066E(2)}]{SP}, and \cite[\href{https://stacks.math.columbia.edu/tag/0A6A}{Tag 0A6A(3)}]{SP} gives the morphism.
Then $R\Hom(M_x,\omega_x[d])\simeq N_x[d]$, so
\[
 (N_x[d])^\wedge
 \simeq
 R\Hom\bigl(R\Gamma_{\mathfrak m_x}(M_x),E_x[0]\bigr).
\]
The complex $E_x[0]$ is injective, so the cohomology of the right-hand side is
$\Hom(H^{-k}_{\mathfrak m_x}(M_x),E_x)$ in degree $k$.
The cohomology of $N_x$ is finite.
The cohomology of the derived completion is the ordinary $\mathfrak m_x$-adic completion of that cohomology \cite[\href{https://stacks.math.columbia.edu/tag/0A06}{Tag 0A06}]{SP}.
Thus $H^k((N_x[d])^\wedge)\cong\widehat{H^{k+d}(N_x)}$.
Set $n=k+d$ to obtain \eqref{eq:local-duality-test}.
Faithful flatness of completion \cite[\href{https://stacks.math.columbia.edu/tag/00MC}{Tag 00MC}]{SP} detects vanishing of finite modules.
The module $E_x$ is an injective cogenerator of the category of $\mathfrak m_x$-torsion modules, so the right-hand side of \eqref{eq:local-duality-test} vanishes if and only if $H^{d-n}_{\mathfrak m_x}(M_x)$ vanishes.
\end{proof}

\begin{definition}[{Bounded $t$-structure; cf.\ \cite[1.3.3]{BBD}}]\label{def:bounded-t}
A $t$-structure $(\cC^{\leq0},\cC^{\geq0})$ on a triangulated category---or on a stable $\infty$-category, via \cite[Definition 1.2.1.4]{HA}---is \emph{bounded} if
\[
 \bigcup_{n\in\mathbb Z}\cC^{\leq n}
 =\cC
 =\bigcup_{n\in\mathbb Z}\cC^{\geq n},
\]
equivalently if every object lies in $\cC^{\geq a}\cap\cC^{\leq b}$ for some integers $a\leq b$.
\end{definition}

\begin{lemma}\label{lem:perverse-t}
Let $X$ be Noetherian and $F$-finite, and let $\omega$ be a unit dualizing complex.
The subcategories in \Cref{def:dual-perversity} are bounded $t$-structures on $\Db(\Coh^F_X)$ and $\Db(\Coh^C_X)$ in the sense of \Cref{def:bounded-t}.
\end{lemma}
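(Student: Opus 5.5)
The plan is to obtain both perverse $t$-structures by transporting the standard $t$-structure across the duality $\mathbb D_\omega$ of \Cref{thm:modules}, via \eqref{eq:transport}, rather than checking the $t$-structure axioms directly. Boundedness will then be read off from the standard side.

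\textbf{Step 1: the standard $t$-structure.} By \Cref{prop:realization} and \Cref{lem:grothendieck-derived}, the standard $t$-structure on $\Db(\Coh^C_X)$ is bounded in the sense of \Cref{def:bounded-t}. Its heart is $\Coh^C_X$. The same holds on the Frobenius side.

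\textbf{Step 2: identify the transported classes.} Transport the standard $t$-structure on $\Db(\Coh^C_X)$ along the exact equivalence $\mathbb D_\omega:\Db(\Coh^F_X)^{\op}\simeq\Db(\Coh^C_X)$, as in \eqref{eq:transport}. This gives a $t$-structure on $\Db(\Coh^F_X)$ with
\[
{}^{\mathbb D}\cC^{\leq0}=\{M: H^n(\mathbb D_\omega M)=0 \text{ for } n<0\},\qquad {}^{\mathbb D}\cC^{\geq0}=\{M: H^n(\mathbb D_\omega M)=0 \text{ for } n>0\}.
\]
By \Cref{thm:modules}, the underlying $\cO_X$-complex of $\mathbb D_\omega M$ is $N=D_\omega M=\RHom(M,\omega)$. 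Cohomology vanishing of a coherent complex is detected on stalks at all $x\in X$, so the conditions become $H^n(N_x)=0$ for all $x$. By \Cref{lem:local-duality}, $H^n(N_x)=0$ if and only if $H^{d_\omega(x)-n}_{\mathfrak m_x}(M_x)=0$. Setting $j=d_\omega(x)-n$, the condition ``$H^n(N_x)=0$ for $n>0$'' becomes ``$H^j_{\mathfrak m_x}(M_x)=0$ for $j<d_\omega(x)$''. This is \eqref{eq:perv-support}, so ${}^{\mathbb D}\cC^{\geq0}$ matches ${}^{\omega}\Db(\Coh^F_X)^{\geq0}$.

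\textbf{Step 3: the stalk condition (main obstacle).} Step 2 turns the other half into a local cohomology condition: $H^j_{\mathfrak m_x}(M_x)=0$ for $j>d_\omega(x)$ and all $x$. I must show this is equivalent to the stalk condition \eqref{eq:perv-stalk}. I would prove this by Noetherian induction, using the standard facts recorded below and the dimension-function property $d_\omega(y)-d_\omega(x)=\operatorname{codim}$ from \Cref{def:dual-perversity}.
\begin{enumerate}
\item[($\Leftarrow$)] Suppose $H^j(M_x)=0$ for $j>d_\omega(x)$. Local cohomology $H^i_{\mathfrak m_x}$ of a module $H^k(M_x)$ is supported on primes specializing away from $x$ and vanishes for $i>\dim$ of that module. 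Combined with the dimension-function property, this bounds $H^j_{\mathfrak m_x}(M_x)$ through the spectral sequence $H^i_{\mathfrak m_x}(H^k M_x)\Rightarrow H^{i+k}_{\mathfrak m_x}(M_x)$.
\item[($\Rightarrow$)] Take a generic point of the support of a top nonvanishing $H^j(M_x)$ and localize there. At that point the module is $\mathfrak m$-torsion, so local cohomology equals stalk cohomology, and the inductive hypothesis yields the stalk condition.
\end{enumerate}
The delicate point is getting the indices exactly right against the normalization, for instance $H^j_{\mathfrak m_y}(M_y)\neq0$ at a closed point $y$ of the support. If this direct argument proves messy, I would instead apply the symmetric argument on the other side, using the involutivity $\mathbb D_\omega\mathbb D_\omega^{-1}\simeq\id$, so that only the support condition ever needs identification.

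\textbf{Step 4: the Cartier side and boundedness.} The same argument applied to $\mathbb D_\omega^{-1}$ gives the $t$-structure on $\Db(\Coh^C_X)$. Boundedness follows because $\mathbb D_\omega$ is an equivalence and the standard $t$-structure is bounded: every $\mathbb D_\omega M$ lies in $\cC^{\geq a}\cap\cC^{\leq b}$, so $M$ lies in the transported $\cC^{\geq -b}\cap\cC^{\leq -a}$.
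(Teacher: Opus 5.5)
Your Steps 1, 2 and 4 match the paper: you transport along $\mathbb D_\omega$ via \eqref{eq:transport}, apply \Cref{lem:local-duality} once to match \eqref{eq:perv-support} with $\mathbb D_\omega M\in\cD^{\leq0}$, and inherit boundedness from the standard side. Your direction $(\Leftarrow)$ in Step 3 also works: Grothendieck vanishing together with $d_\omega(x)-d_\omega(z)=\operatorname{codim}$ kills every $E_2^{i,q}=H^i_{\mathfrak m_x}(H^q(M_x))$ with $i+q>d_\omega(x)$.

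\textbf{The gap is in $(\Rightarrow)$.} You claim that at a generic point $y$ of the support of the top nonvanishing cohomology ``local cohomology equals stalk cohomology''. This is false. Only $H^j(M_y)$ is torsion there, and the edge term $E_2^{0,j}=H^j(M_y)$ can be killed by $d_2$ into $H^2_{\mathfrak m_y}(H^{j-1}M_y)$.

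For example, let $(A,\mathfrak m)$ be a two-dimensional regular local ring and $E$ an injective hull of $A/\mathfrak m$. The fibre $M$ of a nonzero map $A/\mathfrak m\to A[2]$ has $H^{-1}M=A$ and $H^0M=A/\mathfrak m$. Yet $H^0_{\mathfrak m}(M)=0$, while $H^1_{\mathfrak m}(M)$ is the cokernel of $A/\mathfrak m\hookrightarrow E$.

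\textbf{The missing idea is the right extremal choice.}
\begin{enumerate}
\item Among pairs $(y,q)$ with $H^q(M_y)\neq0$ and $q>d_\omega(y)$, take one maximizing $q-d_\omega(y)$. This exists because $M$ is bounded and $d_\omega$ is bounded on the quasi-compact $X$.
\item Then $y$ is a generic point of $\operatorname{Supp}H^q(M)$, since a proper generization would increase $q-d_\omega$.
\item Every $E_2^{i,q+1-i}$ vanishes. A nonzero one would give, by Grothendieck vanishing and the dimension-function property, a generization $z$ of $y$ with $H^{q+1-i}(M_z)\neq0$ and $(q+1-i)-d_\omega(z)\geq q+1-d_\omega(y)$, contradicting maximality.
\item Hence $E_2^{0,q}=H^q(M_y)\neq0$ survives, so $H^q_{\mathfrak m_y}(M_y)\neq0$ with $q>d_\omega(y)$.
\end{enumerate}

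\textbf{On your fallback.} It is essentially the paper's route, but it does not reduce to what Step 2 already proves. The paper applies \Cref{lem:local-duality} a second time, to $N$ with $D_\omega N\simeq M$ (\Cref{lem:grothendieck}). This turns \eqref{eq:perv-stalk} into the condition $H^i_{\mathfrak m_x}(N_x)=0$ for all $i<0$ and all $x$. That is a support condition on $N$ for the zero perversity, not the $d_\omega$-condition handled in Step 2. One still has to know that it cuts out the standard $\cD^{\geq0}$; the paper takes this from \cite[Lemma 6.7]{MWC}.

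That fact is easy precisely because $R\Gamma_{\mathfrak m}$ is left exact. If $q<0$ is the lowest degree with $H^q(N)\neq0$ and $x$ is a generic point of $\operatorname{Supp}H^q(N)$, then $H^q_{\mathfrak m_x}(N_x)=H^q(N)_x\neq0$.

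Dualizing a second time moves the problem from the top degree, where local cohomology spreads cohomology upward, to the bottom degree, where it does not. That is what the paper's route buys over your direct comparison, which needs the spectral-sequence bookkeeping above.
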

\begin{proof}
The conditions \eqref{eq:perv-stalk} and \eqref{eq:perv-support} depend only on the underlying complex in $\mathcal D^b_{\mathrm{coh}}(X)$.
Let $M\in\Db(\Coh^F_X)$, set $N=\mathbb D_\omega M$ and $d=d_\omega(x)$.
By \Cref{lem:local-duality}, $H^n(N_x)=0$ for all $n>0$ if and only if $H^{j}_{\mathfrak m_x}(M_x)=0$ for all $j<d$.
Thus \eqref{eq:perv-support} holds if and only if $N$ lies in the standard $\mathrm D^{\leq0}$ on $\Db(\Coh^C_X)$.
The same lemma with $M$ and $N$ interchanged shows that $H^j(M_x)=0$ for all $j>d$ if and only if $H^i_{\mathfrak m_x}(N_x)=0$ for all $i<0$.
For bounded $N$, that vanishing at every point is the zero-perversity condition for $\mathrm D^{\geq0}$: in \cite[Theorem 6.6]{MWC} the functor $Ri^!_x$ is sections with support in $\{x\}$, as in \cite[Chapter IV, Variation 8]{Har}, hence $R\Gamma_{\mathfrak m_x}$ of the stalk, and that perversity is the standard $t$-structure on underlying complexes \cite[Lemma 6.7]{MWC}.
Thus \eqref{eq:perv-stalk} holds if and only if $N$ lies in the standard $\mathrm D^{\geq0}$.
The same argument with Frobenius and Cartier interchanged treats objects of $\Db(\Coh^C_X)$.
These are the four equalities of \Cref{thm:perverse}(1).
The standard $t$-structure on $\Db(\Coh^C_X)$ (resp.\ $\Db(\Coh^F_X)$) is a $t$-structure, so its preimage under the exact equivalence $\mathbb D_\omega$ of \Cref{thm:modules} is a $t$-structure, by \eqref{eq:transport}.
Every object of $\mathcal D^b_{\mathrm{coh}}(X)$ has bounded coherent cohomology, and $d_\omega$ takes values in a finite range on the points that meet the support of a given complex, so this $t$-structure is bounded in the sense of \Cref{def:bounded-t}.
Pulling the same conditions back along the forgetful functors $\Db(\Coh^F_X)\to\mathcal D^b_{\mathrm{coh}}(X)$ and $\Db(\Coh^C_X)\to\mathcal D^b_{\mathrm{coh}}(X)$ yields bounded $t$-structures on the module categories.
\end{proof}

\begin{theorem}\label{thm:perverse}
Let $X$ be Noetherian and $F$-finite, and let $\omega$ be a unit dualizing complex.
\begin{enumerate}
\item
The equivalence $\mathbb D_\omega$ of \Cref{thm:modules} exchanges the standard $t$-structure on one module category with the perverse $t$-structure of \Cref{lem:perverse-t} on the other, with the inequalities reversed as in \eqref{eq:transport}, i.e.,
\[
 \begin{aligned}
 {}^{\omega}\Db(\Coh^F_X)^{\leq0}
 &=\mathbb D_\omega^{-1}\bigl(\Db(\Coh^C_X)^{\geq0}\bigr),
 &
 {}^{\omega}\Db(\Coh^F_X)^{\geq0}
 &=\mathbb D_\omega^{-1}\bigl(\Db(\Coh^C_X)^{\leq0}\bigr),
 \\
 {}^{\omega}\Db(\Coh^C_X)^{\leq0}
 &=\mathbb D_\omega\bigl(\Db(\Coh^F_X)^{\geq0}\bigr),
 &
 {}^{\omega}\Db(\Coh^C_X)^{\geq0}
 &=\mathbb D_\omega\bigl(\Db(\Coh^F_X)^{\leq0}\bigr).
 \end{aligned}
\]
\item
There are anti-equivalences of abelian hearts
\[
 \Perv_\omega(\Coh^F_X)^{\op}\simeq\Coh^C_X,
 \qquad
 \Perv_\omega(\Coh^C_X)^{\op}\simeq\Coh^F_X.
\]
\item
The forgetful functors $\Db(\Coh^F_X)\to\mathcal D^b_{\mathrm{coh}}(X)$ and $\Db(\Coh^C_X)\to\mathcal D^b_{\mathrm{coh}}(X)$ are $t$-exact for these perverse $t$-structures.
\end{enumerate}
\end{theorem}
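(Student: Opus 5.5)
The plan is to deduce all three parts from \Cref{lem:local-duality} together with the transport formula \eqref{eq:transport}, largely reusing the bookkeeping already carried out in the proof of \Cref{lem:perverse-t}.

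For part \textup{(1)}, fix $M\in\Db(\Coh^F_X)$ and set $N=\mathbb D_\omega M$. By \Cref{thm:modules} and \Cref{lem:relative-hom}, the underlying $\cO_X$-complex of $N$ is $D_\omega$ of the underlying complex of $M$. We then apply \Cref{lem:local-duality} pointwise with $d=d_\omega(x)$.

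\begin{itemize}
\item Vanishing of $H^n(N_x)$ for all $n>0$ and all $x$ is equivalent to vanishing of $H^j_{\mathfrak m_x}(M_x)$ for $j<d_\omega(x)$. This gives ${}^\omega\Db(\Coh^F_X)^{\geq0}=\mathbb D_\omega^{-1}(\Db(\Coh^C_X)^{\leq0})$, since stalk cohomology detects the standard $\mathrm D^{\leq0}$.
\item For the other inclusion, apply \Cref{lem:local-duality} with the roles of $M$ and $N$ swapped, using $D_\omega N\simeq M$ from \Cref{lem:grothendieck}. This identifies $H^j(M_x)=0$ for $j>d$ with $H^i_{\mathfrak m_x}(N_x)=0$ for $i<0$.
\item It remains to see that the latter condition at all points characterizes the standard $\mathrm D^{\geq0}$ on bounded coherent complexes. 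One direction is clear. For the converse, take the smallest $i$ with $H^i(N)\neq0$ and $i<0$, and let $x$ be a generic point of an associated prime of $H^i(N)$. Then $H^0_{\mathfrak m_x}(H^i(N)_x)\neq0$, and this class survives as $H^i_{\mathfrak m_x}(N_x)$ by the spectral sequence, since lower cohomology vanishes. Alternatively, cite \cite[Lemma 6.7]{MWC} as in \Cref{lem:perverse-t}.
\end{itemize}

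The Cartier-side equalities follow by the same argument with $\mathbb D_\omega^{-1}$ in place of $\mathbb D_\omega$, again using that its underlying functor is $D_\omega$ by involutivity, i.e.\ \Cref{thm:bidual-leq} or \Cref{lem:grothendieck}. Rewriting $\mathbb D_\omega^{-1}$-preimages as $\mathbb D_\omega$-images then gives the last row.

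For part \textup{(2)}, part \textup{(1)} says the perverse $t$-structure is exactly the transported structure \eqref{eq:transport} along the exact anti-equivalence. An anti-equivalence of stable $\infty$-categories carrying one $t$-structure to the opposite of another restricts to an anti-equivalence of hearts. The standard heart of $\Db(\Coh^C_X)$ is $\Coh^C_X$ by \Cref{lem:bounded-stable} together with \Cref{prop:realization} and \Cref{lem:grothendieck-derived}, so $\Perv_\omega(\Coh^F_X)^{\op}\simeq\Coh^C_X$. The symmetric statement comes from $\mathbb D_\omega^{-1}$.

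For part \textup{(3)}, the perverse $t$-structure on $\mathcal D^b_{\mathrm{coh}}(X)$ is given by the same conditions \eqref{eq:perv-stalk} and \eqref{eq:perv-support}, and it is a $t$-structure by the same transport along $D_\omega$ of \Cref{lem:grothendieck}. Since both perverse aisles on the module categories are by definition the preimages of these aisles under the forgetful functors, the forgetful functors preserve both halves and are therefore $t$-exact.

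The main obstacle is the identification, used in part \textup{(1)}, between the pointwise local-cohomology condition and the standard $\mathrm D^{\geq0}$. I would settle it via the associated-point argument sketched above rather than relying only on the external citation.
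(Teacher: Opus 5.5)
Your proposal is correct and follows the paper's proof. The four equalities come from \Cref{lem:local-duality}, applied once to $M$ and once with the roles of $M$ and its dual exchanged, combined with the transport formula \eqref{eq:transport}; parts (2) and (3) are then formal consequences (restrict $\mathbb D_\omega$ to the hearts, and note that the perverse conditions depend only on underlying complexes). The one deviation is that you prove directly that vanishing of $H^i_{\mathfrak m_x}(N_x)$ for all $i<0$ and all $x$ characterizes the standard $\mathrm D^{\geq0}$, using an associated point of the lowest nonzero cohomology sheaf, where the paper cites Mattis--Wei\ss; your argument is sound and makes this step self-contained.
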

\begin{proof}
\begin{enumerate}
\item
The four equalities are those established in the proof of \Cref{lem:perverse-t}, by \Cref{lem:local-duality} and transport of the standard $t$-structure along $\mathbb D_\omega$ as in \eqref{eq:transport}.
\item
Hearts of stable $\infty$-categorical $t$-structures are nerves of abelian categories \cite[Remark 1.2.1.12]{HA}; restriction of $\mathbb D_\omega$ gives the stated anti-equivalences of hearts.
\item
The forgetful functor to $\mathcal D^b_{\mathrm{coh}}(X)$ preserves the full subcategories of \Cref{def:dual-perversity} by construction, so it is $t$-exact.
\end{enumerate}
\end{proof}

\begin{remark}\label{rem:perverse-aside}
The $t$-exact forgetful functors of \Cref{thm:perverse}(3) do not identify $\Db(\Coh^C_X)$ with $\mathcal D(\mathrm{Cart}(\mathrm{Mod}(X),F_*))$ as in \cite[Theorem 6.9]{MWC}; that comparison is not used.
The equality $d_\omega(x)=-\dim\overline{\{x\}}$ is a separate normalization, treated in \Cref{cor:middle}.
\end{remark}

\begin{corollary}\label{cor:middle}
Suppose that, after shifting $\omega$ on each connected component, its local degree satisfies
\begin{equation}\label{eq:normalization}
 d_\omega(x)=-\dim\overline{\{x\}}\qquad(x\in X).
\end{equation}
Then \Cref{thm:perverse} holds for the middle-perversity formulas of \cite[Definition 5.2.1]{Bau}, namely
\begin{align}
 M\in{}^{\omega}\Db(\Coh^F_X)^{\leq0}
 &\quad\Longleftrightarrow\quad H^j(M_x)=0
               \quad\bigl(j>-\dim\overline{\{x\}},\ x\in X\bigr),\label{eq:middle-stalk}\\
 M\in{}^{\omega}\Db(\Coh^F_X)^{\geq0}
 &\quad\Longleftrightarrow\quad
 H^j_{\mathfrak m_x}(M_x)=0
               \quad\bigl(j<-\dim\overline{\{x\}},\ x\in X\bigr),\label{eq:middle-support}
\end{align}
and the same formulas with $\Coh^F_X$ replaced by $\Coh^C_X$.
\end{corollary}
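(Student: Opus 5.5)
The plan is to specialize \Cref{thm:perverse} by substituting the normalization \eqref{eq:normalization} into \Cref{def:dual-perversity}. The content of the corollary is that the perverse $t$-structure of \Cref{lem:perverse-t}, defined via $d_\omega$, coincides with Baudin's middle-perversity $t$-structure once $d_\omega(x)=-\dim\overline{\{x\}}$. The first step is to justify the phrase ``after shifting $\omega$ on each connected component''. By \Cref{lem:dqc-coproduct}, $\Dqc(X)$, $\Dbc(X)$ and the module categories split over the finitely many connected components of the Noetherian scheme $X$. A shift $\omega[k]$ of a unit dualizing complex on one component is again a unit dualizing complex: the unit datum $u$ shifts along, and $F^!$ commutes with shifts. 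Such a shift replaces $\mathbb D_\omega$ by $[k]\circ\mathbb D_\omega$ and $d_\omega$ by $d_\omega-k$ on that component. So we may assume \eqref{eq:normalization} holds for the given $\omega$ itself.

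With \eqref{eq:normalization} in force, the defining conditions \eqref{eq:perv-stalk} and \eqref{eq:perv-support} become literally \eqref{eq:middle-stalk} and \eqref{eq:middle-support}, because $d_\omega(x)$ is replaced by $-\dim\overline{\{x\}}$ pointwise. The same holds on the Cartier side. Hence the subcategories ${}^{\omega}\Db(\Coh^F_X)^{\leq0}$ and ${}^{\omega}\Db(\Coh^F_X)^{\geq0}$ equal Baudin's middle-perversity subcategories of \cite[Definition 5.2.1]{Bau}, viewed inside the stable $\infty$-category via \Cref{lem:bounded-stable}(3); likewise on the Cartier side. Parts (1)--(3) of \Cref{thm:perverse} then apply verbatim to these formulas. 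In particular \Cref{lem:perverse-t} shows they form bounded $t$-structures, and the heart anti-equivalences follow.

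The main obstacle is a compatibility check: Baudin's Definition 5.2.1 must use the same stalk and local-cohomology conditions, with the same sign and shift conventions. Here the convention $H^i(K[n])=H^{i+n}(K)$ and the identification $R\Gamma_{\mathfrak m_x}(\omega_x)\simeq E_x[-d_\omega(x)]$ from \Cref{def:dual-perversity} matter. I would compare the two conditions pointwise. I would also note that \eqref{eq:normalization} is consistent with $-d_\omega$ being a dimension function, since both sides rise by one along immediate specializations; this matches the example in \Cref{def:dual-perversity}. The hypothesis is genuinely needed, since \Cref{rem:middle-counterexample} shows that this shift cannot always be arranged.
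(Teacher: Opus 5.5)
Your proposal is correct and follows the paper's proof, which is the one-line substitution of \eqref{eq:normalization} into \eqref{eq:perv-stalk} and \eqref{eq:perv-support}. Your reduction to the shifted unit dualizing complex, using $d_{\omega[k]}=d_\omega-k$ on each of the finitely many components, is a harmless elaboration of what the paper leaves implicit.

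One small correction to your aside: $-\dim\overline{\{x\}}$ does not rise by exactly one along immediate specializations in general. It does so only under the hypothesis, and that failure is precisely \Cref{rem:middle-counterexample}. The aside plays no role in the argument.
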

\begin{proof}
Substitute \eqref{eq:normalization} in \eqref{eq:perv-stalk} and \eqref{eq:perv-support}.
\end{proof}

\begin{remark}[Why \eqref{eq:normalization} has to be added as an explicit condition]\label{rem:middle-counterexample}
Condition \eqref{eq:normalization} does not follow from Noetherianity, $F$-finiteness and existence of a unit dualizing complex.
Let
\[
 R=\F_p[[t]][x],\qquad X=\operatorname{Spec}R.
\]
This is a regular Noetherian ring of dimension two.
Its Frobenius is finite free on the $p$-basis
\[
 B=\{t^ix^j:0\leq i,j<p\}
\]
as an $R$-module.
Define an $R$-linear Cartier operator $\kappa:F_*R\to R$ on the unique expression of an element in this basis by
\[
 \kappa\Bigl(F_*\sum_{0\leq i,j<p}a_{ij}^{p}t^ix^j\Bigr)=a_{p-1,p-1}.
\]
We show that $(R,\kappa)$ is a unit Cartier module. In fact, the $R$-bilinear pairing
\[
 F_*R\times F_*R\to R,\qquad
 (F_*a,\,F_*b)\mapsto\kappa\bigl(F_*(ab)\bigr)
\]
is perfect: on basis elements, $\kappa(F_*(t^ix^j\cdot t^kx^\ell))$ equals $1$ if $i+k=p-1$ and $j+\ell=p-1$, and equals $0$ otherwise.
(If $i+k\geq p$ or $j+\ell\geq p$, rewrite the product in the basis $B$ by $F_*(t^p m)=t\,F_*(m)$ and $F_*(x^p m)=x\,F_*(m)$; the coefficient of $F_*(t^{p-1}x^{p-1})$ is still zero whenever $(i,j)$ and $(k,\ell)$ are not complementary.)
Thus with respect to $B$ the pairing matrix is a permutation matrix, and the induced map
\[
 F_*R\to\Hom_R(F_*R,R),\qquad
 F_*a\mapsto\bigl(F_*b\mapsto\kappa(F_*(ab))\bigr)
\]
is an isomorphism.
The complex $R[0]$ is dualizing since $R$ is regular of finite dimension \cite[\href{https://stacks.math.columbia.edu/tag/0AWX}{Tag 0AWX}]{SP}.
On a regular scheme the inclusion of unit Cartier modules is exact \cite[Lemma 3.4.11]{Bau}, so $R[0]$ is a unit dualizing complex.

The ideal $\mathfrak q=(tx-1)$ is maximal, with $R/\mathfrak q\simeq\F_p((t))$, and has height one.
The generic point specializes to $\mathfrak q$ in codimension one, while $-\dim\overline{\{x\}}$ changes from $-2$ to $0$.
It cannot be the local degree of a shifted dualizing complex, whose change along such a specialization is one \cite[\href{https://stacks.math.columbia.edu/tag/0AWF}{Tag 0AWF}]{SP}.
The catenarity argument in \cite[proof of Lemma 5.2.2]{Bau} therefore does not justify \eqref{eq:normalization} in that generality.
\end{remark}

\begin{corollary}\label{cor:crystal-t}
Each of $\KF(X)$ and $\KC(X)$ has its standard bounded $t$-structure, with
\[
 \KF(X)^{\heartsuit}=\Crys^F_X,\qquad
 \KC(X)^{\heartsuit}=\Crys^C_X.
\]
Transport along $\mathbb D^{\mathrm{crys}}_\omega$ gives bounded dual $t$-structures and anti-equivalences between their hearts and the opposite standard crystal hearts.
\end{corollary}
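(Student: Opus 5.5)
The plan is to deduce both assertions from \Cref{prop:quotient-comparison} together with \Cref{thm:crystals} and the transport formalism \eqref{eq:transport}.

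\emph{Step 1: the standard $t$-structure on $\KF(X)$ and $\KC(X)$.} Recall that $\KF(X)\subset\mathcal D(\QCrys^F_X)$ is the full subcategory of bounded complexes whose cohomology lies in the essential image of $\Crys^F_X$. This essential image is a Serre subcategory (\Cref{lem:nil-stable}(3)). The standard $t$-structure on $\mathcal D(\QCrys^F_X)$ (\Cref{lem:grothendieck-derived}, since $\QCrys^F_X$ is Grothendieck abelian) then restricts to $\KF(X)$: the truncations $\tau_{\le n}$ and $\tau_{\ge n}$ only keep or discard cohomology objects, so they preserve both boundedness and the condition on cohomology.

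Boundedness in the sense of \Cref{def:bounded-t} holds because objects of $\KF(X)$ have bounded cohomology by definition.

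\emph{Step 2: the heart.} The heart of the restricted $t$-structure is the full subcategory of $\mathcal D(\QCrys^F_X)^\heartsuit\simeq\QCrys^F_X$ (\Cref{lem:grothendieck-derived}) on objects in the essential image of $\Crys^F_X$. The main point to check is that $\Crys^F_X\to\QCrys^F_X$ is fully faithful, so that this essential image is equivalent to $\Crys^F_X$. I would obtain this from \Cref{prop:quotient-comparison}(3): the equivalence $\Db(\Crys^F_X)\simeq\KF(X)$ is induced by termwise inclusion. Restricting it to complexes concentrated in degree $0$ shows that the functor on hearts is fully faithful, since $\Hom$ in the heart equals $\pi_0\Map$ in both categories. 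Under the same equivalence, the standard $t$-structure on $\Db(\Crys^F_X)$ corresponds to the restricted one, because cohomology is computed termwise and the inclusion is exact. This identifies $\KF(X)^\heartsuit$ with $\Crys^F_X$. The Cartier side is identical, using \cite[Corollary 3.3.8]{Bau} through \Cref{prop:quotient-comparison}(3).

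\emph{Step 3: transport.} Apply \eqref{eq:transport} to the exact equivalence $\mathbb D^{\mathrm{crys}}_\omega:\KF(X)^{\op}\simeq\KC(X)$ of \Cref{thm:crystals}(1), and also to its opposite quasi-inverse. This produces $t$-structures on $\KF(X)$ pulled back from the standard one on $\KC(X)$, and conversely. As noted after \eqref{eq:transport}, these are $t$-structures, since $h(\cC^{\op})=(h\cC)^{\op}$ and the opposite of a $t$-structure is a $t$-structure with $\le$ and $\ge$ exchanged.

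\emph{Step 4: boundedness and hearts of the transported structures.} The transported $t$-structures are bounded because the image of any object lies in some $\cC^{\ge a}\cap\cC^{\le b}$, and transport preserves this. Their hearts are carried by $\mathbb D^{\mathrm{crys}}_\omega$ onto the opposite of the standard heart. By Step 2 this gives anti-equivalences with $\Crys^C_X$ and $\Crys^F_X$ respectively, where the hearts are abelian by \cite[Remark 1.2.1.12]{HA}.

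The only step needing real care is the full faithfulness of $\Crys\to\QCrys$ in Step 2. I expect \Cref{prop:quotient-comparison}(3) to settle it directly, with \cite[Proposition 3.4.2]{BP} as a fallback on the Frobenius side.
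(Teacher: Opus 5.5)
Your proposal is correct and follows essentially the same route as the paper. You restrict the standard $t$-structure of $\mathcal D(\QCrys^F_X)$ (resp.\ $\mathcal D(\QCrys^C_X)$) using the Serre property of the essential image of crystals, identify the heart with $\Crys^F_X$ (resp.\ $\Crys^C_X$) through the $t$-exact equivalence $\Db(\Crys^F_X)\simeq\KF(X)$ of \Cref{prop:quotient-comparison}(3), and then transport along $\mathbb D^{\mathrm{crys}}_\omega$ by \eqref{eq:transport}; your explicit derivation of full faithfulness of $\Crys^F_X\to\QCrys^F_X$ from that equivalence just unpacks the paper's step that a $t$-exact equivalence identifies hearts.
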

\begin{proof}
The Frobenius and Cartier cases are parallel.
By \Cref{def:nil-complex}, $\KF(X)$ (resp.\ $\KC(X)$) is the full subcategory of $\mathcal D(\QCrys^F_X)$ (resp.\ $\mathcal D(\QCrys^C_X)$) on objects with bounded cohomology in the essential image of $\Crys^F_X$ (resp.\ $\Crys^C_X$).
That essential image is a Serre subcategory \cite[Lemma 3.3.6]{Bau} on the Cartier side, and \cite[Proposition 3.4.2(b)(c)]{BP} on both abelian sides.
The standard truncations of $\mathcal D(\QCrys^F_X)$ and $\mathcal D(\QCrys^C_X)$ do not change cohomology objects in the remaining range.
The cohomology long exact sequence of a fiber sequence keeps those objects in a Serre subcategory of the heart, and boundedness is preserved, so the truncations preserve $\KF(X)$ and $\KC(X)$.
Stability of $\KF(X)$ and $\KC(X)$ is \Cref{lem:bounded-serre}.
On the homotopy categories, the restricted pairs of full subcategories $(\cdot)^{\leq0}$ and $(\cdot)^{\geq0}$ satisfy the three axioms of \cite[Definition 1.2.1.1]{HA}: the truncation fiber sequences of \cite[Remark 1.2.1.8]{HA} remain in $\KF(X)$ and $\KC(X)$.
A $t$-structure on a stable $\infty$-category is a $t$-structure on its homotopy category, as before \eqref{eq:transport}.
The hearts are those Serre subcategories.
The inclusions $\Db(\Crys^F_X)\to\KF(X)$ and $\Db(\Crys^C_X)\to\KC(X)$ of \Cref{prop:quotient-comparison} are $t$-exact for the standard $t$-structures, because the abelian inclusions are exact, and they are equivalences, so the hearts are $\Crys^F_X$ and $\Crys^C_X$.
Now apply \eqref{eq:transport} to \Cref{thm:crystals}.
\end{proof}

\section{Finite Zariski descent}\label{sec:descent}
In this section we work in the larger universe $\mathbb V$ of \Cref{subsec:conventions}: $\Catbig_\infty$ is an $\infty$-category in $\mathbb V$, and so is the arrow $\infty$-category $(\Catbig_\infty)^{\Delta^1}$ \cite[\href{https://kerodon.net/tag/0066}{Tag 0066}]{Kerodon}.
Write $\mathrm{End}(\Catbig_\infty)$ for the pullback
\[
\begin{tikzcd}
 \mathrm{End}(\Catbig_\infty)\ar[r]\ar[d] &
 (\Catbig_\infty)^{\Delta^1}\ar[d,"{(\ev_0,\ev_1)}"]\\
 \Catbig_\infty\ar[r,"{\Delta}"']&
 \Catbig_\infty\times\Catbig_\infty,
\end{tikzcd}
\]
formed in $\mathbb V$, i.e., the pullback of $(\ev_0,\ev_1)$ along the diagonal $\Delta$.
In terms of lax equalizers,
\[
 \mathrm{End}(\Catbig_\infty)=\LEq(\mathrm{id}_{\Catbig_\infty},\mathrm{id}_{\Catbig_\infty}).
\]
An object is a pair $(\cC,T)$ with $T:\cC\to\cC$.
A \emph{diagram of $\infty$-categories with endofunctors} is a functor $I\to\mathrm{End}(\Catbig_\infty)$.
Limits in $\mathrm{End}(\Catbig_\infty)$ are computed on underlying $\infty$-categories: the forgetful functor $\mathrm{End}(\Catbig_\infty)\to\Catbig_\infty$ preserves limits, and
\[
 \lim_i(\cC_i,T_i)\simeq\bigl(\lim_i\cC_i,\,\lim_i T_i\bigr).
\]

\begin{lemma}\label{lem:open-push}
Let $f:X\to Y$ be finite between Noetherian schemes, let $j_Y:V\to Y$ be a quasi-compact open immersion, and write $X_V=X\times_Y V$.
There is a natural equivalence
\[
 j_Y^*f_*\simeq f_{V,*}j_X^*
\]
in $\Fun(\Dqc(X),\Dqc(V))$.
\end{lemma}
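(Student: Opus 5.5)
The plan is to build the equivalence directly from the limit description of $\Dqc$ in \Cref{def:dqc}, following the construction of $f_*$ in \Cref{lem:finite-push}. Since $f$ is finite, hence affine, preimage gives an equivalence $\Aff_Y\simeq\Aff_X$, $U\mapsto f^{-1}(U)$. Under it, $\Aff_V$ (affine opens of $Y$ contained in $V$) corresponds to $\Aff_{X_V}$, because $f^{-1}(U)\subset X_V$ exactly when $U\subset V$. In \Cref{lem:finite-push}, $f_*$ was defined as $\lim\Phi$, where $\Phi$ is the natural transformation of $\Aff_Y^{\op}$-indexed diagrams with components $\mathrm{res}_{R_U\to S_U}:\cD(S_U)\to\cD(R_U)$. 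Likewise $f_{V,*}=\lim(\Phi|_{\Aff_V^{\op}})$ over the subdiagram. Concretely, we will show that both composites $j_Y^*f_*$ and $f_{V,*}j_X^*$ are the limit over $\Aff_V^{\op}$ of the restricted transformation $\Phi|_{\Aff_V^{\op}}$, precomposed with the canonical projections.

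\emph{Step 1 (the left side).} By \Cref{def:dqc} and the final paragraph of the proof of \Cref{lem:dqc-zariski}, restriction $j_Y^*:\Dqc(Y)\to\Dqc(V)$ is the canonical map of limits induced by the full inclusion $\Aff_V^{\op}\subset\Aff_Y^{\op}$. Thus its composite with the projection to $\cD(R_U)$, for $U\in\Aff_V$, is the projection of $\Dqc(Y)$ at $U$. Similarly $j_X^*$ is induced by $\Aff_{X_V}^{\op}\subset\Aff_X^{\op}$. By the definition of $\lim\Phi$, the projection at $U\in\Aff_V$ of $j_Y^*f_*$ is $\mathrm{res}_{R_U\to S_U}\circ\mathrm{pr}_{f^{-1}U}$.

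\emph{Step 2 (the right side).} By the definition of $f_{V,*}$, the projection at $U$ of $f_{V,*}j_X^*$ is $\mathrm{res}_{R_U\to S_U}\circ\mathrm{pr}_{f^{-1}U}\circ j_X^*=\mathrm{res}_{R_U\to S_U}\circ\mathrm{pr}_{f^{-1}U}$.

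\emph{Step 3 (assembling).} The two cones over the diagram $U\mapsto\cD(R_U)$ on $\Aff_V^{\op}$ are equivalent. This is the functoriality of limits in $\Fun(\Aff_V^{\op},\Catbig_\infty)$, applied to the restriction of the natural transformation $\Phi$ (a morphism of diagrams) along the inclusion of indexing categories. The universal property of $\Dqc(V)=\lim_{\Aff_V^{\op}}$ then yields an equivalence $j_Y^*f_*\simeq f_{V,*}j_X^*$ in $\Fun(\Dqc(X),\Dqc(V))$. Here we use that $\Fun(\Dqc(X),-)$ preserves limits, as in \Cref{def:dqc}, so that a map into the limit is the same as a compatible cone.

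An alternative check: there is always a base-change comparison map, defined as the mate of $j_X^*f^*\simeq f_V^*j_Y^*$. It is an equivalence by \Cref{lem:projections-conservative}, since on each affine $U\subset V$ both sides are restriction of scalars along $R_U\to S_U$.

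The main obstacle is the coherence bookkeeping. The restriction of $\Phi$ along $\Aff_V^{\op}\subset\Aff_Y^{\op}$ must be compatible with the identification $\Aff_{X_V}\simeq\Aff_V$, and the limit of a restricted transformation must commute with the restriction maps of limits. I expect to handle this by viewing $\Phi$ as a single functor $\Aff_Y^{\op}\to(\Catbig_\infty)^{\Delta^1}$ and using that the limit functor $\Fun(K,\Catbig_\infty)\to\Catbig_\infty$ is natural in $K$ under restriction.
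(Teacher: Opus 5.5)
Your proposal is correct and is essentially the paper's argument. The paper likewise gets the comparison $j_Y^*f_*\to f_{V,*}j_X^*$ by restricting the gluing description of $f_*$ from \Cref{lem:finite-push} along $j_Y$, using that over each affine $U\subset V$ both sides are restriction of scalars along $R_U\to S_U$; it then checks invertibility via joint conservativity of affine restrictions (\Cref{lem:projections-conservative}) and the pointwise criterion for natural transformations, a step your identification of the two cones over $\Aff_V^{\op}$ makes automatic.

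One harmless overstatement: for non-surjective finite $f$, $f^{-1}(U)\subset X_V$ does not force $U\subset V$, and $U\mapsto f^{-1}(U)$ is then not an equivalence $\Aff_Y\simeq\Aff_X$. Your argument only uses the functor $U\mapsto f^{-1}(U)$ from $\Aff_V$ to $\Aff_{X_V}$, so nothing breaks.
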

\begin{proof}
By \Cref{lem:finite-push}, $f_*$ and $f_{V,*}$ are obtained by gluing restriction of scalars along finite ring maps on affines of $Y$ and of $V$, respectively.
Restriction along $j_Y$ therefore gives a comparison morphism
\[
 j_Y^*f_*\to f_{V,*}j_X^*
\]
in $\Fun(\Dqc(X),\Dqc(V))$.
On every affine open of $V$ both sides are restriction of scalars along the same finite ring map, so the comparison is an equivalence there via \Cref{lem:dqc-affine}.
Affine restrictions are jointly conservative (\Cref{lem:projections-conservative}), hence the comparison is an equivalence in $\Fun(\Dqc(X),\Dqc(V))$ \cite[\href{https://kerodon.net/tag/01DK}{Tag 01DK}]{Kerodon}.
\end{proof}

\begin{lemma}\label{lem:leq-limits}
Let $(\cC_i,T_i)$ be a diagram in $\mathrm{End}(\Catbig_\infty)$, and let $(\cC,T)$ be its limit.
Then
\[
 \Cart(\cC,T)\simeq\lim_i\Cart(\cC_i,T_i),\qquad
 \Frob(\cC,T)\simeq\lim_i\Frob(\cC_i,T_i).
\]
If in addition one is given a diagram of equivalences as in \Cref{prop:reversal}, applying \Cref{prop:reversal} to the limit yields the limit of the levelwise applications.
\end{lemma}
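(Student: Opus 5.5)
The plan is to treat $\Cart(-,-)$ and $\Frob(-,-)$ as functors $\mathrm{End}(\Catbig_\infty)\to\Catbig_\infty$ built out of limit-preserving pieces, and then commute limits with limits. By \Cref{def:leq}, $\Cart(\cC,T)$ is the pullback of
\[
 \cC\xrightarrow{(T,\id)}\cC\times\cC\xleftarrow{(\ev_0,\ev_1)}\cC^{\Delta^1},
\]
and $\Frob(\cC,T)$ is the same with $(\id,T)$. Each vertex of this cospan depends functorially on $(\cC,T)\in\mathrm{End}(\Catbig_\infty)$. First, $(\cC,T)\mapsto\cC$ is the forgetful functor, which preserves limits by the description of limits in $\mathrm{End}(\Catbig_\infty)$ recorded before \Cref{lem:open-push}. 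Second, $(\cC,T)\mapsto\cC\times\cC$ is a product of two copies of that functor. Third, $(\cC,T)\mapsto\cC^{\Delta^1}$ is the forgetful functor followed by $\Fun(\Delta^1,-)$, which preserves limits because it is a right adjoint, as in \Cref{def:dqc}.

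The arrows $(T,\id)$ and $(\ev_0,\ev_1)$ are also natural in $(\cC,T)$. The evaluations are natural by construction. For $(T,\id)$, naturality holds because $\mathrm{End}(\Catbig_\infty)=\LEq(\id,\id)$ carries a universal arrow $T$ in the pullback of \Cref{def:leq}, which exhibits a natural transformation from the forgetful functor to itself. So the cospan is a diagram $\mathrm{End}(\Catbig_\infty)\to\Fun(\Lambda^2_2,\Catbig_\infty)$, and $\Cart$ is its composite with $\lim_{\Lambda^2_2}$.

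Given a diagram $i\mapsto(\cC_i,T_i)$ with limit $(\cC,T)$, the cospan for $(\cC,T)$ is the pointwise limit of the cospans for $(\cC_i,T_i)$, by the three preservation statements above. Since limits commute with limits, this gives
\[
 \Cart(\cC,T)\simeq\lim_{\Lambda^2_2}\lim_i\simeq\lim_i\lim_{\Lambda^2_2}\simeq\lim_i\Cart(\cC_i,T_i),
\]
and the same argument with $(\id,T)$ handles $\Frob$. The main obstacle I expect is justifying that the structure arrow on the limit is the limit of the arrows $T_i$, so that the cospans really agree as diagrams and not only objectwise. I would get this from the formula $\lim_i(\cC_i,T_i)\simeq(\lim_i\cC_i,\lim_iT_i)$, read as saying that the arrow component $\mathrm{End}(\Catbig_\infty)\to(\Catbig_\infty)^{\Delta^1}$ preserves limits, because limits in an arrow category are computed pointwise.

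For the final clause, I would package the data of \Cref{prop:reversal} as a diagram indexed by $I$. Its values are tuples $(\cC,\cD,D,F,G,F',G',\alpha,\beta)$, which can be modelled as limits of arrow and functor categories in $\Catbig_\infty$. The proof of \Cref{prop:reversal} builds the equivalence $\LEq(F,G)^{\op}\simeq\LEq(G',F')$ from three natural ingredients: the involution $(-)^{\op}$, which is an autoequivalence and so preserves limits; the identification $((\cC)^{\Delta^1})^{\op}\simeq(\cC^{\op})^{\Delta^1}$, which is natural in $\cC$; and an equivalence of cospans induced by $D$, $\alpha$ and $\beta$. The levelwise equivalences therefore assemble into an equivalence of $I$-indexed diagrams of cospans. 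Taking $\lim_i$ and then pulling back, or equivalently pulling back the limit cospan, gives the same equivalence by the commutation of limits used above.
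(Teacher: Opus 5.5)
Your proposal is correct and follows essentially the same route as the paper: commute $\lim_i$ with the defining pullback of \Cref{def:leq}, using that $(-)^{\Delta^1}$ is a right adjoint and that $(-)^{\op}$ is a limit-preserving involution for the reversal clause. The ``obstacle'' you flag about the structure arrow resolves itself. Limits in $\Fun(\Lambda^2_2,\Catbig_\infty)$ are computed pointwise, so once each of the three vertices of your cospan functor preserves limits, the arrows $(T,\id)$ and $(\ev_0,\ev_1)$ on the limit agree with the limits of the levelwise arrows.
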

\begin{proof}
For an $\infty$-category $\cD$, the simplicial set $(\cD)^{\Delta^1}$ is an $\infty$-category \cite[\href{https://kerodon.net/tag/0066}{Tag 0066}]{Kerodon}; equivalently \cite[Proposition 1.2.7.3]{HTT}.
The category of $\infty$-categories is cartesian closed \cite[\href{https://kerodon.net/tag/05UH}{Tag 05UH}]{Kerodon}.
The exponential law of simplicial sets \cite[\href{https://kerodon.net/tag/0061}{Tag 0061}]{Kerodon} is an isomorphism $\Fun(\cC\times\Delta^1,\cD)\cong\Fun(\cC,(\cD)^{\Delta^1})$ of simplicial sets, hence of $\infty$-categories when $\cD$ is an $\infty$-category.
Mapping spaces in $\Catbig_\infty$ are the maximal Kan complexes of $\Fun$ \cite[Definition 3.0.0.1]{HTT}.
This isomorphism therefore induces an equivalence of mapping spaces, and the adjunction $(-\times\Delta^1)\dashv(-)^{\Delta^1}$ of $\infty$-functors on $\Catbig_\infty$ follows from the mapping-space criterion \cite[Proposition 5.2.2.8]{HTT}.
Right adjoints of $\infty$-functors preserve limits \cite[Proposition 5.2.3.5]{HTT}.
A limit functor is right adjoint to the diagonal \cite[\href{https://kerodon.net/tag/02JL}{Tag 02JL}]{Kerodon}, so products and pullbacks preserve limits.
The $\infty$-category $\Catbig_\infty$ has small limits \cite[Corollary 4.2.4.8; Remark 3.0.0.5]{HTT}.
Apply these facts to the pullback of \Cref{def:leq}.
The functor $(-)^{\op}:\Catbig_\infty\to\Catbig_\infty$ is an involution, as in the proof of \Cref{prop:reversal}, so it preserves limits of $\infty$-categories.
A levelwise equivalence in $\Fun(I,\Catbig_\infty)$ is an equivalence \cite[\href{https://kerodon.net/tag/01DK}{Tag 01DK}]{Kerodon}.
The limit of a diagram of data $(D_i,\gamma_i)$ as in \Cref{prop:reversal} is a pair $(\lim D_i,\lim\gamma_i)$ with the limit natural equivalences, so \Cref{prop:reversal} applies to the limit pair.
The resulting equivalence is then the limit of the levelwise equivalences, because $\Cart$ and $\Frob$ preserve the limits already identified.
\end{proof}

\begin{theorem}\label{thm:descent}
Let $X$ be Noetherian and $F$-finite, and let $\{U_i\}_{i=1}^r$ be a finite Zariski open cover.
Write $U_\bullet$ for its \v Cech nerve.
\begin{enumerate}
\item
Restriction induces equivalences
\begin{align*}
 \Dbc(X)&\simeq\lim_{[n]\in\Delta}\Dbc(U_n),\\
 \Cart(\Dbc(X),F_*)&\simeq
       \lim_{[n]\in\Delta}\Cart(\Dbc(U_n),F_*),\\
 \Frob(\Dbc(X),F_*)&\simeq
       \lim_{[n]\in\Delta}\Frob(\Dbc(U_n),F_*).
\end{align*}
\item
For fixed unit dualizing data $(\omega,u)$, the equivalence of \Cref{thm:intrinsic} is the limit of the corresponding equivalences on $U_\bullet$.
\item
When $X$ has affine diagonal, \Cref{thm:comparison} identifies $\Db(\Coh^F_X)$ with $\Frob(\Dbc(X),F_*)$, hence with $\lim_{[n]}\Frob(\Dbc(U_n),F_*)$, and likewise on the Cartier side.
\end{enumerate}
\end{theorem}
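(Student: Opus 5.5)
The plan has three steps: first descend $\Dbc$ from $\Dqc$, then descend the pairs $(\Dbc(U_n),F_*)$ as a diagram in $\mathrm{End}(\Catbig_\infty)$, and finally feed this into \Cref{lem:leq-limits}.

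For the first equivalence in (1), I start from \Cref{lem:dqc-zariski}, which gives $\Dqc(X)\simeq\lim_{[n]}\Dqc(U_n)$. Since $\Dbc$ is a full subcategory of $\Dqc$ in each degree, I cut this equivalence down by the defining condition. Limits of full subcategories, each stable under the transition maps, are computed as the full subcategory of the limit. The relevant objects are those whose image at each level lies in the subcategory. Open restriction preserves bounded coherent cohomology, so the transition maps restrict to $\Dbc$. Conversely, an object of $\Dqc(X)$ lies in $\Dbc(X)$ if each restriction to $U_0=\coprod U_i$ does. This is because coherence and vanishing of cohomology are local on a finite cover (Tag 01XZ, as in \Cref{lem:dqc-properties}(2)), together with \Cref{lem:dqc-coproduct}. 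Joint conservativity (\Cref{lem:projections-conservative}) controls morphisms, so the restricted comparison is fully faithful and essentially surjective.

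Next I promote the cosimplicial diagram $[n]\mapsto\Dbc(U_n)$ to a diagram $[n]\mapsto(\Dbc(U_n),F_*)$ in $\mathrm{End}(\Catbig_\infty)$. Absolute Frobenius commutes with every open immersion, and $U_n\times_{X,F}X=U_n$ because $F$ is a homeomorphism. Hence \Cref{lem:open-push}, applied to $f=F$ and $j=U_n\to U_m$, gives the natural equivalences $j^*F_*\simeq F_*j^*$. The main obstacle is coherence: I need these equivalences assembled into an honest functor $\Delta\to\mathrm{End}(\Catbig_\infty)$ rather than merely levelwise squares. I would obtain the coherence by constructing $F_*$ itself as a limit. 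On each affine, $F_*$ is restriction of scalars along Frobenius, and that is strictly natural in localizations $R\to R_g$. So the assignment $U\mapsto(\cD(\Gamma(U,\cO_U)),F_*)$ is already a functor $\Aff_X^{\op}\to\mathrm{End}(\Catbig_\infty)$, exactly as in the construction of $f_*$ in \Cref{lem:finite-push}. The pairs over $X$ and over each $U_n$ are then limits of restrictions of this single functor, and the reindexing argument of \Cref{lem:dqc-zariski} runs verbatim in $\mathrm{End}(\Catbig_\infty)$. That argument applies because limits there are computed on underlying categories. Restricting to $\Dbc$ as in the previous paragraph yields $(\Dbc(X),F_*)\simeq\lim_{[n]}(\Dbc(U_n),F_*)$. \Cref{lem:leq-limits} then gives the $\Cart$ and $\Frob$ equivalences in (1).

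For (2), I apply the same mechanism to the duality data. By \Cref{lem:dqc-properties}, restriction is symmetric monoidal, so internal Hom commutes with open restriction on bounded coherent first inputs; this is checked on affines by flatness of localization, as in \Cref{lem:local-duality}. Hence $D_{\omega}$ restricts to $D_{\omega|_{U_n}}$. By \Cref{lem:open-rhom}, $F^!\omega$ restricts to $F^!(\omega|_{U_n})$, so $u$ restricts to unit data on each $U_n$. The comparison map of \Cref{prop:finite-duality} is built from the adjunction and internal Hom, both of which are compatible with restriction, so $\gamma$ restricts to $\gamma_{U_n}$. Coherence follows as before by working over $\Aff_X^{\op}$. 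The last clause of \Cref{lem:leq-limits} then identifies $D^{\mathrm{LEq}}_\omega$ with the limit of the levelwise equivalences. Finally, (3) is immediate: compose (1) with \Cref{thm:comparison}.
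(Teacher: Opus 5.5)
Your proposal is correct and follows the paper's proof. Both arguments cut $\Dbc$ out of the Zariski descent of \Cref{lem:dqc-zariski} as a full subcategory, assemble the pairs $(\Dbc(U_n),F_*)$ into a diagram in $\mathrm{End}(\Catbig_\infty)$, apply \Cref{lem:leq-limits}, and for (2) restrict the data $(D_\omega,u,\gamma)$ along open immersions via \Cref{lem:open-rhom} and the affine comparison of internal Hom.

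The only real difference is how the \v Cech diagram's coherence is obtained. The paper glues the levelwise equivalences of \Cref{lem:open-push} and checks compatibility with composition on affines; you instead restrict a single functor $\Aff_X^{\op}\to\mathrm{End}(\Catbig_\infty)$, which supplies all higher coherences at once. You should also state explicitly that $F_*$ preserves $\Dbc$ (by \Cref{prop:finite-duality}, using $F$-finiteness) before restricting that diagram to $\Dbc$.
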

\begin{proof}
\begin{enumerate}
\item
Quasi-coherent $\infty$-categories satisfy Zariski descent by \Cref{lem:dqc-zariski}: there is an equivalence $\Dqc(X)\simeq\lim_{[n]}\Dqc(U_n)$ in $\Catbig_\infty$.
Restriction $\Dqc(X)\to\Dqc(U_n)$ preserves the full subcategory $\Dbc$, so there is a cone, and therefore a functor
\[
 \Phi:\Dbc(X)\longrightarrow\lim_{[n]\in\Delta}\Dbc(U_n)
\]
in $\Cat_\infty$.
By \Cref{def:dqc}, the inclusion $\Dbc\subset\Dqc$ is that of a full subcategory, hence fully faithful.
Mapping spaces in a limit are limits of mapping spaces, as in \Cref{lem:projections-conservative}.
Hence $\Phi$ is fully faithful.

For essential surjectivity, an object of the \v Cech limit is an object $M\in\Dqc(X)$ whose restriction to each $U_n$, and in particular to each $U_i$, lies in $\Dbc(U_i)$.
Refine each $U_i$ by a finite affine cover.
Coherence of cohomology sheaves of $M$ may be tested on a finite affine cover \cite[\href{https://stacks.math.columbia.edu/tag/01XZ}{Tag 01XZ}]{SP}.
Vanishing of a sheaf is local on an open cover.
Since the cover $\{U_i\}$ is finite, if the restrictions have cohomology in $[a_i,b_i]$ then the cohomology of $M$ is globally in $[\min_i a_i,\max_i b_i]$.
Thus $M\in\Dbc(X)$, and $\Phi$ is an equivalence.

The \v Cech diagram $[n]\mapsto\Dqc(U_n)$ is the one constructed in \Cref{lem:dqc-zariski}.
Each simplicial operator $\alpha:[m]\to[n]$ is sent to a coproduct of open immersions $U_n\to U_m$.
On each summand, \Cref{lem:open-push} gives a morphism $j^*F_{W,*}\to F_{V,*}j^*$ in $\Fun(\Dqc(W),\Dqc(V))$, which is an equivalence.
On a finite coproduct, $F_*$ is the product of the summands by \Cref{lem:dqc-coproduct} and the affine formula for restriction of scalars.
These equivalences are compatible with composition of operators: after restriction to every affine open, both composites are restriction of scalars along the same finite ring map, so \cite[\href{https://kerodon.net/tag/01DK}{Tag 01DK}]{Kerodon} applies in $\Fun$.
Thus the family of endofunctors $F_{U_n,*}$ is a morphism of the \v Cech diagram in $\Catbig_\infty$, i.e., a functor $\Delta\to\mathrm{End}(\Catbig_\infty)$ whose value at $[n]$ is $(\Dqc(U_n),F_{U_n,*})$.
The functor $F_*$ preserves $\Dbc$ by the affine case \cite[\href{https://stacks.math.columbia.edu/tag/00GJ}{Tag 00GJ}]{SP}, locality of coherence of cohomology sheaves as already used for $\Phi$, and locality of cohomological vanishing on a finite cover, and so does restriction, so the diagram restricts to $\Dbc$.
Its limit cone is $(\Dbc(X),F_*)$, by the equivalence $\Phi$ and the identification of limits in $\mathrm{End}(\Catbig_\infty)$.
Apply \Cref{lem:leq-limits} to obtain the other two equivalences.
\item
The same simplicial operators act on the data of \Cref{prop:reversal} as follows.
For each open immersion $j$, the affine identification of internal Hom in \Cref{lem:dqc-properties,lem:dqc-affine} together with \cite[\href{https://stacks.math.columbia.edu/tag/0A6A}{Tag 0A6A(3)}]{SP} gives a morphism $j^*D_{\omega_X}\to D_{\omega_U}(j^*)^{\op}$ in $\Fun(\Dbc(X)^{\op},\Dqc(U))$, which is an equivalence.
\Cref{lem:open-rhom} gives $j^*F^!\omega_X\simeq F^!j^*\omega_X$ on dualizing complexes, so the unit $u$ restricts.
The natural equivalence $\gamma$ of \Cref{prop:finite-duality} is natural in open immersions by \Cref{lem:finite-push,lem:open-push,lem:open-rhom,lem:projection}, so it restricts to a morphism in $\Fun(\Dbc(U)^{\op},\Dbc(U))$.
Compatibility with composition of \v Cech operators is again detected on affines.
This is a diagram of data as in \Cref{prop:reversal} on $U_\bullet$, in the sense of \Cref{lem:leq-limits}.
That lemma identifies the global equivalence of \Cref{thm:intrinsic} with the limit of the local equivalences.
\item
Assume $X$ has affine diagonal.
Opens and finite coproducts of schemes with affine diagonal still have affine diagonal, so \Cref{thm:comparison} applies to $X$ and to each $U_n$.
Composing the global comparison with the limit equivalences of \textup{(1)} gives the identifications in \textup{(3)}.
\end{enumerate}
\end{proof}

\begin{remark}[Bounds and completeness]\label{rem:descent-bounds}
On the non-quasi-compact scheme $\coprod_{n\geq0}\operatorname{Spec}k$, the object equal to $k[n]$ on the $n$th component is bounded coherent on each member of the component cover, but has no global finite cohomological bound.
Thus global boundedness cannot be dropped from the finite-cover argument.
The unbounded derived Frobenius comparison has a separate completeness step \cite[Lemma 5.1; Proposition 5.2; Corollary 5.5; Theorem 5.7]{MWF}.
The additional descent issue in the method of Mattis--Wei\ss\ is explained in \cite[Remark 4.3]{MWF}.
\end{remark}

\section{Ind--Pro duality}\label{sec:indpro}
Recall that for a $\mathbb U$-small stable $\infty$-category $\cC$, the $\infty$-category of pro-objects in $\cC$ is defined by
$ \Pro(\cC):=\Ind(\cC^{\op})^{\op}$.
The completion is taken in the fixed universe $\mathbb V$ of \Cref{subsec:conventions}.
If $\cC$ is small and stable, then $\Ind(\cC)$ is stable \cite[Proposition 1.1.3.6]{HA}.
Essentially small sources are replaced by small equivalent $\infty$-categories as in \Cref{subsec:conventions}.


\begin{corollary}\label{cor:indpro-modules}
Let $X$ be Noetherian and $F$-finite, and let $\omega$ be a unit dualizing complex.
\begin{enumerate}
\item
The equivalence $\Pro(\cC)^{\op}\simeq\Ind(\cD)$ yields
\begin{align}
 \Pro(\Dbc(X))^{\op}
  &\simeq
 \Ind(\Dbc(X)),\label{eq:indpro-dbc}\\
 \Pro\bigl(\Frob(\Dbc(X),F_*)\bigr)^{\op}
  &\simeq
 \Ind\bigl(\Cart(\Dbc(X),F_*)\bigr),\label{eq:indpro-leq}\\
 \Pro\bigl(\Db(\Coh^F_X)\bigr)^{\op}
  &\simeq
 \Ind\bigl(\Db(\Coh^C_X)\bigr),\label{eq:indpro-modules-dc}\\
 \Pro\bigl(\cN_F(X)\bigr)^{\op}
  &\simeq
 \Ind\bigl(\cN_C(X)\bigr),\label{eq:indpro-nil}\\
 \Pro\bigl(\KF(X)\bigr)^{\op}
  &\simeq
 \Ind\bigl(\KC(X)\bigr).\label{eq:indpro-k}
\end{align}
\item
\Cref{prop:quotient-comparison} identifies $\KF(X)$ with $\Db(\Coh^F_X)/\cN_F(X)$ and with $\Db(\Crys^F_X)$, and likewise on the Cartier side, hence identifies \eqref{eq:indpro-k} with
\begin{align}
 \Pro\bigl(\Db(\Coh^F_X)/\cN_F(X)\bigr)^{\op}
  &\simeq
 \Ind\bigl(\Db(\Coh^C_X)/\cN_C(X)\bigr),\label{eq:indpro-quot}\\
 \Pro\bigl(\Db(\Crys^F_X)\bigr)^{\op}
  &\simeq
 \Ind\bigl(\Db(\Crys^C_X)\bigr).\label{eq:indpro-crys}
\end{align}
\item
If in addition $X$ has affine diagonal, \Cref{thm:comparison} identifies $\Frob(\Dbc(X),F_*)$ with $\Db(\Coh^F_X)$ and $\Cart(\Dbc(X),F_*)$ with $\Db(\Coh^C_X)$, hence identifies \eqref{eq:indpro-leq} and \eqref{eq:indpro-modules-dc}.
\end{enumerate}
\end{corollary}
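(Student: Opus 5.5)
The plan is to first prove the abstract Ind--Pro statement of \Cref{thm:intro-indpro}, and then feed in the dualities already established.

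\textbf{Abstract step.} Let $D:\cC^{\op}\simeq\cD$ be an exact equivalence of small stable $\infty$-categories. Then $\Ind(-)$ is functorial in equivalences: an equivalence of small $\infty$-categories induces an equivalence of Ind-completions, because both completions satisfy the same universal property \cite[Proposition 5.3.5.10]{HTT}. This gives $\Ind(\cC^{\op})\simeq\Ind(\cD)$. By the definition $\Pro(\cC)=\Ind(\cC^{\op})^{\op}$, the left-hand side is $\Pro(\cC)^{\op}$. Stability of $\Ind(\cD)$ is \cite[Proposition 1.1.3.6]{HA}, so $\Pro(\cC)^{\op}$ is stable as well. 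The extension $\Ind(D)$ preserves filtered colimits and restricts to the exact functor $D$ on compact generators. Since finite colimits in $\Ind$ are computed levelwise on filtered diagrams, the extension is exact; in any case, an equivalence between stable $\infty$-categories is automatically exact. The only care needed is the universe bookkeeping: essentially small inputs are replaced by small ones, as in \Cref{subsec:conventions}.

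\textbf{Part (1).} Each line of (1) is an instance of the abstract step applied to an equivalence established earlier:
\begin{itemize}
\item \eqref{eq:indpro-dbc} uses $D_\omega$ from \Cref{lem:grothendieck};
\item \eqref{eq:indpro-leq} uses $D_\omega^{\mathrm{LEq}}$ from \Cref{thm:intrinsic}, with $(\omega,u)$ supplied by \Cref{rem:unit-choice};
\item \eqref{eq:indpro-modules-dc} uses $\mathbb D_\omega$ from \Cref{thm:modules};
\item \eqref{eq:indpro-nil} and \eqref{eq:indpro-k} use the restriction and the induced functor $\mathbb D^{\mathrm{crys}}_\omega$ from \Cref{thm:crystals}(1).
\end{itemize}
In each case we must check that the categories are small, or essentially small, and stable:
\begin{itemize}
\item $\Dbc(X)$ is stable by \Cref{lem:dqc-properties}, and essentially small since $X$ is Noetherian;
\item the lax equalizers are stable by \cite[Proposition 2.6(e)]{MWC};
\item $\Db(-)$ is stable by \Cref{lem:bounded-stable};
\item $\cN_F$, $\cN_C$, $\KF$ and $\KC$ are stable by \Cref{lem:nil-stable}.
\end{itemize}

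\textbf{Parts (2) and (3).} For (2), we apply $\Ind$ and $\Pro$ to the exact equivalences of \Cref{prop:quotient-comparison}(2),(3). Functoriality of the completions then transports \eqref{eq:indpro-k} into \eqref{eq:indpro-quot} and \eqref{eq:indpro-crys}. For (3), we do the same with \Cref{thm:comparison}. There, the commuting square, which holds up to a natural equivalence, becomes a commuting square of completed functors, since the completion is functorial on $\Fun$.

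\textbf{Main obstacle.} The main obstacle is essential smallness, in particular of $\Frob(\Dbc(X),F_*)$ and of the localizations $\KF$ and $\KC$. For the lax equalizers we would argue that a pullback of essentially small $\infty$-categories is essentially small. For the quotients we would use the colimit formula for mapping spaces of \cite[Theorem I.3.3(ii)]{NS}, which expresses them as small colimits.
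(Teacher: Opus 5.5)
Your proposal is correct and follows the same route as the paper: you get $\Pro(\cC)^{\op}=\Ind(\cC^{\op})\simeq\Ind(\cD)$ from the universal property of $\Ind$ \cite[Proposition 5.3.5.10]{HTT}, feed in the dualities of \Cref{lem:grothendieck,thm:intrinsic,thm:modules,thm:crystals} for (1), and transport along \Cref{prop:quotient-comparison,thm:comparison} for (2) and (3). Your explicit essential-smallness checks go beyond the paper, which simply invokes the replacement convention of \Cref{subsec:conventions}; for $\KF(X)$ and $\KC(X)$ it is quicker to use $\KF(X)\simeq\Db(\Crys^F_X)$ and its Cartier analogue than the mapping-space formula of \cite[Theorem I.3.3(ii)]{NS}.
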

\begin{proof}
\begin{enumerate}
\item
The dualities of \textup{(1)} are the exact equivalences of small stable $\infty$-categories in \Cref{lem:grothendieck,thm:intrinsic,thm:modules,thm:crystals}.
For \Cref{thm:intrinsic}, the pair $(\omega,u)$ is the affine reconstruction recorded in \Cref{rem:unit-choice}.
\item
The identifications of $\KF(X)$ with $\Db(\Coh^F_X)/\cN_F(X)$ and with $\Db(\Crys^F_X)$, and likewise on the Cartier side, are \Cref{prop:quotient-comparison}; hence that proposition identifies \eqref{eq:indpro-k} with \eqref{eq:indpro-quot} and \eqref{eq:indpro-crys}.
\item
Assume $X$ has affine diagonal.
\Cref{thm:comparison} gives $\Db(\Coh^F_X)\simeq\Frob(\Dbc(X),F_*)$ and $\Db(\Coh^C_X)\simeq\Cart(\Dbc(X),F_*)$, hence identifies \eqref{eq:indpro-leq} and \eqref{eq:indpro-modules-dc}.
\end{enumerate}
\end{proof}

\begin{remark}[Geometric versus coefficient Ind--Pro]\label{rem:chow-indpro}
Chowdhury extends a presentable six-functor formalism on a geometric setup $(C,E)$ to certain Ind- and Pro-objects of $C$ itself \cite[Definitions 4.1.6 and 4.1.8; Theorem 4.2.1]{Chow-IndPro}.
For a pro-object $\{X_k\}$ in that restricted class one has $D^{\mathrm{pro}}(\mathcal X)\simeq\operatorname{colim}^* D(X_k)$ in presentable $\infty$-categories \cite[Remark 4.2.2(2)]{Chow-IndPro}.
That construction changes the geometric input, not the coefficient category of a fixed $X$.
The equivalence $\Pro(\cC)^{\op}\simeq\Ind(\cD)$ is the opposite: 
it extends a duality already constructed on small coefficient categories of a fixed $X$ (\Cref{cor:indpro-modules}), by the universal property of $\Ind$ \cite[Proposition 5.3.5.10]{HTT}.
The two procedures do not imply each other.
In particular, Chowdhury's target is presentable, while a Pro-category of coefficients need not be presentable.
\end{remark}

\end{document}